\newcounter{TmpEnumi}
\numberwithin{equation}{section}
\def\today{\number\day\space\ifcase\month\or
 January\or February\or
   March\or April\or May\or June\or
    July\or August\or September\or
   October\or November\or December\fi\
     \number\year}
\theoremstyle{definition}
\newtheorem{thm}{Theorem}[section]
\newtheorem{lem}[thm]{Lemma}
\newtheorem{prp}[thm]{Proposition}
\newtheorem{dfn}[thm]{Definition}
\newtheorem{cor}[thm]{Corollary}
\newtheorem{ctn}[thm]{Construction}
\newtheorem{rmk}[thm]{Remark}
\newtheorem{ntn}[thm]{Notation}
\newtheorem{exa}[thm]{Example}
\newtheorem{qst}[thm]{Question}
\newcommand{\beq}{\begin{equation}}
\newcommand{\eeq}{\end{equation}}
\newcommand{\beqr}{\begin{eqnarray*}}
\newcommand{\eeqr}{\end{eqnarray*}}
\newcommand{\bal}{\begin{align*}}
\newcommand{\eal}{\end{align*}}
\newcommand{\bei}{\begin{itemize}}
\newcommand{\eei}{\end{itemize}}
\newcommand{\limi}[1]{\lim_{{#1} \to \infty}}
\newcommand{\af}{\alpha}
\newcommand{\bt}{\beta}
\newcommand{\gm}{\gamma}
\newcommand{\dt}{\delta}
\newcommand{\ep}{\varepsilon}
\newcommand{\et}{\eta}
\newcommand{\io}{\iota}
\newcommand{\ld}{\lambda}
\newcommand{\sm}{\sigma}
\newcommand{\kp}{\kappa}
\newcommand{\ph}{\varphi}
\newcommand{\ps}{\psi}
\newcommand{\rh}{\rho}
\newcommand{\ta}{\tau}
\newcommand{\Z}{{\mathbb{Z}}}
\newcommand{\R}{{\mathbb{R}}}
\newcommand{\C}{{\mathbb{C}}}
\newcommand{\N}{{\mathbb{Z}}_{> 0}}
\newcommand{\Nz}{{\mathbb{Z}}_{\geq 0}}
\newcommand{\Cu}{{\operatorname{Cu}}}
\newcommand{\id}{{\operatorname{id}}}
\newcommand{\ev}{{\operatorname{ev}}}
\newcommand{\spec}{{\operatorname{sp}}}
\newcommand{\diag}{{\operatorname{diag}}}
\newcommand{\supp}{{\operatorname{supp}}}
\newcommand{\rank}{{\operatorname{rank}}}
\newcommand{\card}{{\operatorname{card}}}
\newcommand{\Aut}{{\operatorname{Aut}}}
\newcommand{\QT}{{\operatorname{QT}}}
\newcommand{\T}{{\operatorname{T}}}
\newcommand{\W}{{\operatorname{W}}}
\newcommand{\tr}{{\operatorname{tr}}}
\newcommand{\rc}{{\operatorname{rc}}}
\newcommand{\dirlim}{\varinjlim}
\newcommand{\Mi}{M_{\infty}}
\newcommand{\andeqn}{\qquad {\mbox{and}} \qquad}
\newcommand{\wolog}{without loss of generality}
\newcommand{\Wolog}{Without loss of generality}
\newcommand{\tfae}{the following are equivalent}
\newcommand{\ifo}{if and only if}
\newcommand{\ca}{C*-algebra}
\newcommand{\uca}{unital C*-algebra}
\newcommand{\ssuc}{stably finite simple unital C*-algebra}
\newcommand{\hm}{homomorphism}
\newcommand{\fd}{finite-dimensional}
\newcommand{\pj}{projection}
\newcommand{\CGAa}{C^* (G, A, \af)}
\newcommand{\ct}{continuous}
\newcommand{\cfn}{continuous function}
\newcommand{\cms}{compact metric space}
\newcommand{\mh}{minimal homeomorphism}
\newcommand{\I}{\infty}
\newcommand{\E}{\varnothing}
\newcommand{\Lem}[1]{Lemma~\ref{#1}}
\newcommand{\Def}[1]{Definition~\ref{#1}}
\newcommand{\Thm}[1]{Theorem~\ref{#1}}
\newcommand{\Prp}[1]{Proposition~\ref{#1}}
\newcommand{\Cor}[1]{Corollary~\ref{#1}}
\newcommand{\Rmk}[1]{Remark~\ref{#1}}
\newcommand{\Ntn}[1]{Notation~\ref{#1}}
\newcommand{\Ctn}[1]{Construction~\ref{#1}}
\title[Radius of comparison of the crossed product]{The
  Cuntz semigroup and the radius of comparison of the crossed product by
  a finite group}
\author{M.~Ali Asadi-Vasfi}
\address{Department of Mathematics, University of Oregon,
     Eugene OR 97403-1222, USA.}
\email[]{Aliasadi@uoregon.edu}
\curraddr{School of Mathematics, Statistics and Computer Science,
College of Science, University of Tehran, Tehran, Iran.}
\email[]{Asadi.ali@ut.ac.ir}
\author{Nasser Golestani}
\address{Department of Pure Mathematics,
 Faculty of Mathematical Sciences,
 Tarbiat Modares University, P.O.\  Box 14115--134, Tehran, Iran}
\email[]{n.golestani@modares.ac.ir}
\author{N.~Christopher Phillips}
\address{Department of Mathematics, University  of Oregon,
       Eugene OR 97403-1222, USA.}
\date{17~August 2019}
\subjclass[2010]{Primary 46L55;
 Secondary 19K14; 46L80.}
\begin{document}

\begin{abstract}
Let $G$ be a finite group,
let $A$ be
an infinite-dimensional stably finite simple unital C*-algebra,
and let $\alpha \colon G \to \Aut (A)$
be an action of $G$ on $A$ which has the
weak tracial Rokhlin property.
Let $A^{\af}$ be the fixed point algebra.
Then the radius of comparison satisfies
$\rc (A^{\alpha}) \leq \rc (A)$ and
$\rc \big( \CGAa \big) \leq \frac{1}{\card (G)} \cdot \rc (A)$.
The inclusion of $A^{\alpha}$ in~$A$
induces an isomorphism from the purely positive part of the
Cuntz semigroup $\Cu (A^{\alpha})$ to the fixed points
of the purely positive part of $\Cu (A)$,
and the purely positive part of
$\Cu \big( \CGAa \big)$ is isomorphic to this semigroup.
We construct an example
in which $G = \Z / 2 \Z$,
$A$ is a simple unital AH~algebra,
$\alpha$ has the Rokhlin property,
$\rc (A) > 0$, $\rc (A^{\af}) = \rc (A)$,
and $\rc (\CGAa) = \frac{1}{2} \rc (A)$.
\end{abstract}

\maketitle
\tableofcontents


\section{Introduction}\label{Sec_Intro}

We prove that if $G$ is a finite group,
$A$ is an infinite-dimensional stably finite simple unital C*-algebra,
and $\alpha \colon G \to \Aut (A)$
is an action of $G$ on $A$ which
has the weak tracial Rokhlin property,
then the radii of comparison
(see below for further discussion) of $A$,
the crossed product,
and the fixed point algebra
are related by
\[
\rc (A^{\alpha}) \leq \rc (A)
\andeqn
\rc \bigl( \CGAa \bigr) \leq \frac{1}{\card (G)} \cdot \rc (A).
\]
See Theorem~\ref{Main.Thm1}
and Theorem~\ref{T_9412_RcCrPrd}.
These inequalities fail
for general pointwise outer actions;
see Example~\ref{R_9412_TrRPNeeded}.

In fact,
we prove a much stronger result,
relating the Cuntz semigroups
(see below and Section~\ref{Sec_Cu} for further discussion):
the inclusion of $A^{\alpha}$ in~$A$
induces an isomorphism from the subsemigroup
$\Cu_{+} (A^{\alpha}) \subseteq \Cu (A^{\alpha})$
consisting of zero and the
purely positive elements
(recalled in Definition~\ref{D_9421_Pure})
to the fixed points of $\Cu_{+} (A)$
under the action induced by~$\af$.
By Example~\ref{E_9421_NotOnK0},
the restriction to the purely positive part is necessary.
If $A$ has stable rank one,
then one can use $\W (A)$ in place of $\Cu (A)$.
We consider this to be a striking result,
since the Cuntz semigroup is often considered to be too complicated
to compute for \ca{s} without strict comparison.

We further give an example of
an infinite-dimensional stably finite simple unital C*-algebra~$A$
and an action $\af \colon \Z / 2 \Z \to \Aut (A)$
which even has the Rokhlin property,
and for which
the radii of comparison of $A$,
the crossed product,
and the fixed point algebra are all strictly positive.
It is initially not obvious that such an example should exist.
The algebra~$A$ even has stable rank one.

Along the way, we estimate
(Theorem~\ref{Ourcornertheorem})
the radius of comparison of a corner
of a simple unital C*-algebra.
This result is surely known,
but we have not found it in the literature.
We also prove (Lemma~\ref{proj.quasitrace})
that if $A$ is simple and unital,
$G$ has order~$n$,
and $\af \colon G \to \Aut (A)$
has the weak tracial Rokhlin property,
then every quasitrace on $\CGAa$
takes the value $\frac{1}{n}$
on the average of the unitaries in the crossed product
which correspond to the  elements of~$G$.

The importance of the Cuntz semigroup
has become apparent
in work related to the Elliott classification program.
See~\cite{APT11}
for a survey of many aspects of the Cuntz semigroup.
It is generally large and complicated;
roughly speaking,
among simple nuclear \ca{s},
the classifiable ones are those whose Cuntz semigroups
are easily accessible.
With the near completion of the Elliott program,
attention is turning to nonclassifiable \ca{s},
and the Cuntz semigroup is the main additional available invariant.
Given its complexity,
it is somewhat surprising that there is such a strong connection
between the Cuntz semigroup of a simple \ca{}
and the Cuntz semigroup of its crossed product
by a weak tracial Rokhlin action.
It seems, also by comparison with~\cite{Ph14},
that the purely positive part of the Cuntz semigroup
does not see differences which are ``small in trace''.

The radius of comparison
is a numerical invariant, based on the Cuntz semigroup,
which was introduced
in Section~6 of~\cite{Tom06}
to distinguish examples of nonisomorphic simple separable
unital AH~algebras
with the same Elliott invariant.
Its importance goes well beyond this application.
For example,
it is now conjectured that if $h$ is a \mh{}
of a \cms~$X$,
then $\rc \bigl( C^* (\Z, X, h) \bigr)$
is equal to half the mean dimension of~$h$;
mean dimension is an invariant introduced in dynamics
which at the time had no apparent connection with C*-algebras.
The radius of comparison also plays a key role
in a recent example
of a simple separable unital AH~algebra
whose Elliott invariant has an automorphism
not implemented by any automorphism of the algebra~\cite{HP19}.

The weak tracial Rokhlin property
(Definition 2.2 of~\cite{GHS17};
see Definition~\ref{W_T_R_P_def} below)
is a generalization of the tracial Rokhlin property
(Definition~1.2 of~\cite{PhT1})
which uses positive elements instead of projections.
It is a slight modification
of the generalized tracial Rokhlin property
of Definition 5.2 of~\cite{HO13}.
It is much more common than the Rokhlin property,
any of the
higher dimensional Rokhlin properties with commuting towers,
or even the tracial Rokhlin property.
See Example 3.12 of~\cite{Ph_FrSrv}
for a collection of examples of actions of finite groups
which have the tracial Rokhlin property
(and hence the weak tracial Rokhlin property)
but not the Rokhlin property.
It is shown in~\cite{AGJP17}
that if $A$ is a simple \ca{}
which is tracially ${\mathcal{Z}}$-absorbing,
and if the minimal tensor product $A^{\otimes n}$
of $n$ copies of~$A$ is finite,
then the permutation action of $S_n$ on $A^{\otimes n}$
has the weak tracial Rokhlin property.
Using ${\mathcal{Z}}^{\otimes n} \cong {\mathcal{Z}}$,
one gets in particular an action of $S_n$ on ${\mathcal{Z}}$
which has the weak tracial Rokhlin property.
(This was proved for the generalized tracial Rokhlin property
in Example 5.10 of~\cite{HO13}.)
However, by Corollary 4.8(1) of~\cite{HrsPh1},
there is no action on ${\mathcal{Z}}$
which has
any higher dimensional Rokhlin property with commuting towers.

The Rokhlin property case of our Cuntz semigroup
is already known, even for nonunital C*-algebras
(Theorem~4.1 of~\cite{GdlStg}).
That paper
does not consider the radius of comparison,
does not consider $\W (A)$,
and gives no example like that in our Section~\ref{Sec_Ex},
in which the algebra is simple and has nonzero radius of comparison.
Moreover, as pointed out above,
the weak tracial Rokhlin property is much more common than
the Rokhlin property.

Something close to the case $\rc (A) = 0$
of our radius of comparison result is also already known.
Let $A$ be a simple separable nuclear unital C*-algebra.
If $\rc (A) = 0$,
and if one assumes that the set of extreme points
of $\T (A)$ is compact and finite-dimensional
then $A$ is ${\mathcal{Z}}$-stable
(Corollary~7.9 of~\cite{KbgRdm4};
Corollary~1.2 of~\cite{Sato3};
Corollary~4.7 of~\cite{TWW})
and, in particular, tracially ${\mathcal{Z}}$-absorbing
in the sense of Definition 2.1 of~\cite{HO13}.
If $G$ is finite and
$\af \colon G \to \Aut (A)$
has the generalized tracial Rokhlin property
(Definition 5.2 of~\cite{HO13}),
then $C^* (G, A, \af)$
is tracially ${\mathcal{Z}}$-absorbing
by Theorem 5.6 of~\cite{HO13}.
Therefore $A$ has strict comparison by Theorem 3.3 of~\cite{HO13}.
(The group need not be finite;
see Definition 6.1 of~\cite{HO13}
and Theorem 6.7 of~\cite{HO13}
for results for~$\Z$,
and~\cite{OvPhWn} for some results
for actions of countable amenable groups.)

Despite the relative
abundance of actions with the weak tracial Rokhlin property,
it is not obvious that there are actions
with this property
on \ssuc{s} with strictly positive radius of comparison.
Getting the Rokhlin property seems even harder.
For example,
according to Theorems 3.4 and~3.5 of~\cite{Iz2},
if in addition $A$ is nuclear,
satisfies the Universal Coefficient Theorem,
and has tracial rank zero
(or is a unital Kirchberg algebra
satisfying the Universal Coefficient Theorem),
and $\alpha \colon G \to \Aut (A)$
is an action of $G$ on $A$ which has the Rokhlin property
and is trivial on K-theory,
then $A$ is stable
under tensoring with the $\card (G)^{\infty}$~UHF algebra.
The same conclusion,
under somewhat different hypotheses,
is obtained in Theorem 5.10 of~\cite{GdlStg}.
One might naively expect something like this to be true more generally.
In fact, though, we exhibit an
action $\alpha \colon G \to \Aut (A)$
with the Rokhlin property
(not just the weak tracial Rokhlin property),
in which $A$ is a simple unital AH~algebra,
$G = \Z / 2 \Z$
(although a similar construction will work for any finite group),
and $A$, $A^{\af}$, and $\CGAa$
all have finite but nonzero radius of comparison.

When $G$ has order~$n$
and $\alpha \colon G \to \Aut (A)$
is an action of $G$ on $A$ which has the Rokhlin property,
the usual method of proving properties
of $\CGAa$
is local approximation by algebras of the form
$M_n (e A e)$ for suitable projections $e \in A$.
See Theorem 3.2 of~\cite{OskPhl3};
there are a number of applications of this method in that paper.
Weaker versions of this are true for
versions of the weak tracial Rokhlin property,
and are implicit in Sections 5 and~6 of~\cite{HO13}
and in~\cite{OvPhWn}.
This method does not seem to work
even for the Rokhlin property case of our radius of comparison result;
the best we could get this way
is $\rc \bigl( \CGAa \bigr) \leq \rc (A)$.
The difficulty is with $\rc (e A e)$.
Instead,
we first prove that $\rc (A^{\af}) \leq \rc (A)$.
Using the notation of Definition~\ref{D_9422_dtau_dfn}
and Definition~\ref{rc_dfn} below,
suppose we have $a, b \in (A^{\af})_{+}$
with $d_{\ta} (a) + \rc (A) < d_{\ta} (b)$
for every normalized quasitrace~$\ta$ on~$A^{\af}$.
This applies in particular for
normalized quasitraces $\ta$ on~$A$,
so $a \precsim_A b$.
Thus,
there is $v \in A$ such that
$\| v b v^* - a \|$ is small.
Now use the Rokhlin property to ``average'' $v$ over~$G$,
as described in Remark 10.3.9 and Exercise 10.3.10 of~\cite{GKPT18},
getting $w \in A^{\af}$ such that $\| w b w^* - a \|$ is small.
The generalization to the weak tracial Rokhlin property
uses the same idea,
but is considerably more technical,
and requires generalizations of some of
the Cuntz comparison results in~\cite{Ph14}.

The construction of an action with the Rokhlin property
is a modification of the idea used in~\cite{HP19}
to find an automorphism of the Elliott invariant
of a simple AH~algebra which does not lift to an automorphism
of the algebra.
The construction there ``merged'' two direct systems whose
direct limits had different radii of comparison but the same
Elliott invariant.
This ``merging'' was done by adding a very small number
of maps which go from one of the original systems to the other.
Here,
we ``merge'' two copies of the same direct system,
with the system having been chosen
so that its direct limit has large radius of comparison.
The action exchanges the two copies of this system.
The Rokhlin projections are, roughly speaking,
the identities of the algebras in the two original systems.

This paper is organized as follows.
Section~\ref{Sec_Cu} contains information
on the Cuntz semigroup,
Cuntz comparison,
quasitraces,
and the radius of comparison.
It also contains several approximation results
which are used repeatedly.
Some of this material is new or at least not in the literature,
and some definitions and results are stated here
for the convenience of the reader and for easy reference.
In Section~\ref{Sec_rcFix}
we prove injectivity on the purely positive part for the map
$\Cu (A^{\af}) \to \Cu (A)^{\af}$.
This is enough to prove the bound on the radius of comparison
of a crossed product
by an action with the weak tracial Rokhlin property,
and our surjectivity result does not seem to help
with the reverse inequality,
so we prove the bound in Section~\ref{Sec_rcCP}.
Section~\ref{Sec_Surj} contains
our surjectivity result on the purely positive part for the map
$\Cu (A^{\af}) \to \Cu (A)^{\af}$,
as well as results on $\W (A^{\af}) \to \W (A)^{\af}$
when $A$ has stable rank one.
Since $\W (A)$ is not considered in~\cite{GdlStg},
we also prove the corresponding result for Rokhlin actions
on unital but not necessarily simple \ca{s}.
In Section~\ref{Sec_Ex},
we construct the example referred to above.
In Section~\ref{Sec_Q},
we state a few open problems.


\section{Preliminaries}\label{Sec_Cu}

In this section,
we collect for easy reference
some information on the Cuntz semigroup, quasitraces,
and the radius of comparison.
A fair amount is already in the literature,
but there are several facts we did not find,
among them,
the estimate in Theorem~\ref{Ourcornertheorem}
for the radius of comparison of a corner.
Lemma~\ref{Lem.ANP.Dec.18}
is definitely new.

\subsection{Cuntz subequivalence}\label{Sec_CuSub}

\begin{ntn}\label{N_9408_StdNotation}
We use the following standard notation.
If $A$ is a \ca, or if $A = M_{\infty} (B)$
for a C*-algebra~$B$, we write $A_{+}$ for the
set of positive elements of $A$.
\end{ntn}

Parts (\ref{Cuntz_def_property_a}) and~(\ref{Cuntz_def_property_b})
of the following definition are originally from~\cite{Cun78}.
The usual notation for Cuntz subequivalence is $a \precsim b$.
We include $A$ in the notation
because we need to use Cuntz subequivalence
with respect to subalgebras.

\begin{dfn}\label{Cuntz_def_property}
Let $A$ be a \ca.
\begin{enumerate}
\item\label{Cuntz_def_property_a}
For $a, b \in M_{\infty} (A)_{+}$,
we say that $a$ is {\emph{Cuntz subequivalent to~$b$ in~$A$}},
written $a \precsim_{A} b$,
if there is a sequence $(v_n)_{n = 1}^{\infty}$ in $M_{\infty} (A)$
such that
\[
\limi{n} v_n b v_n^* = a.
\]
\item\label{Cuntz_def_property_b}
We say that $a$ and $b$ are {\emph{Cuntz equivalent in~$A$}},
written $a \sim_{A} b$,
if $a \precsim_{A} b$ and $b \precsim_{A} a$.
This relation is an equivalence relation,
and we write $\langle a \rangle_A$ for the equivalence class of~$a$.
We define $\W (A) = M_{\infty} (A)_{+} / \sim_A$,
together with the commutative semigroup operation
$\langle a \rangle_A + \langle b \rangle_A
 = \langle a \oplus b \rangle_A$
and the partial order
$\langle a \rangle_A \leq \langle b \rangle_A$
if $a \precsim_{A} b$.
We write $0$ for~$\langle 0 \rangle_A$.
\item\label{Cuntz_def_property_d}
We take $\Cu (A) = \W (K \otimes A)$.
We write the classes as $\langle a \rangle_A$
for $a \in (K \otimes A)_{+}$.
\item\label{Cuntz_def_property_e}
Let $A$ and $B$ be C*-algebras,
and let $\ph \colon A \to B$ be a \hm.
We use the same letter for the induced maps
$M_n (A) \to M_n (B)$
for $n \in \N$ and
$\Mi (A) \to \Mi (B)$.
We define
$\W (\ph) \colon \W (A) \to \W (B)$
and $\Cu (\ph) \colon \Cu (A) \to \Cu (B)$
by $\langle a \rangle_A \mapsto \langle \ph (a) \rangle_B$
for $a \in M_{\infty} (A)_{+}$
or $a \in (K \otimes A)_{+}$ as appropriate.
\end{enumerate}
\end{dfn}

\begin{dfn}
Let $A$ be a \ca,
let $a \in A_{+}$,
and let $\ep \geq  0$.
Let $f \colon [0, \infty) \to
[0, \infty)$  be the function
$f (t) = \max (0, \, t - \ep) = (t - \ep)_{+}$.
Then, by functional calculus, define $(a - \ep)_{+} = f (a)$.
\end{dfn}

Part (\ref{PhiB.Lem_18_4_11}) of the following
is taken from Proposition 2.4  of~\cite{Ror92}.
Parts (\ref{PhiB.Lem_18_4_8}) and~(\ref{PhiB.Lem_18_4_10.a})
are Lemma 2.5(i) and Lemma 2.5(ii) of ~\cite{KR00}.
Part (\ref{A.P.T}) is Lemma~2.2 of~\cite{KR02}.
Part (\ref{Item_9420_LgSb_1_6}) is Corollary 1.6 of~\cite{Ph14}.
Part (\ref{PhiB.Lem_18_4_4}) is taken from the discussion after
Definition~2.3 of~\cite{KR00} and Proposition~2.3(ii) of~\cite{ERS11}.

\begin{lem}\label{PhiB.Lem_18_4}
Let $A$ be a \ca.
\begin{enumerate}
\item\label{PhiB.Lem_18_4_11}
Let $a, b \in A_{+}$.
Then \tfae:
\begin{enumerate}
\item\label{PhiB.Lem_18_4_11.a}
$a \precsim_A b$.
\item\label{PhiB.Lem_18_4_11.b}
$(a - \ep)_{+} \precsim_A b$ for all $\ep > 0$.
\item\label{PhiB.Lem_18_4_11.c}
For every $\ep > 0$ there is $\dt > 0$ such that
$(a - \ep)_{+} \precsim_A (b - \dt)_{+}$.
\end{enumerate}
\item\label{PhiB.Lem_18_4_8}
Let $a \in A_{+}$ and let $\ep_1, \ep_2 > 0$.
Then
\[
\big( ( a - \ep_1)_{+} - \ep_2 \big)_{+}
 = \big( a - ( \ep_1 + \ep_2 ) \big)_{+}.
\]
\item\label{PhiB.Lem_18_4_10}
Let $a, b \in A_{+}$ and let $\ep > 0$.
If $\| a - b \| < \ep$, then:
\begin{enumerate}
\item\label{PhiB.Lem_18_4_10.a}
$(a - \ep)_{+} \precsim_A b$.
\item\label{A.P.T}
There is a contraction $d$ in $A$ such that
$d b d^*= ( a - \ep )_{+}$.
\item\label{Item_9420_LgSb_1_6}
For any $\ld > 0$,
we have $(a - \ld - \ep)_{+} \precsim_A (b - \ld)_{+}$.
\end{enumerate}
\item\label{PhiB.Lem_18_4_4}
Let $c \in A$ and let $\lambda \geq 0$.
Then $(c^* c - \lambda)_{+} \sim_A (c c^* - \lambda)_{+}$.
\end{enumerate}
\end{lem}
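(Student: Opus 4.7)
The plan is to prove each of the four parts separately, with continuous functional calculus and the cutoff $a \mapsto (a - \ep)_{+}$ as the main recurring devices.

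Part~(\ref{PhiB.Lem_18_4_8}) is the pointwise scalar identity $\bigl( (t - \ep_1)_{+} - \ep_2 \bigr)_{+} = \bigl( t - (\ep_1 + \ep_2) \bigr)_{+}$ on $[0, \I)$, applied to~$a$ by functional calculus. For part~(\ref{PhiB.Lem_18_4_4}) with $\ld > 0$, define $g \colon [0, \I) \to [0, \I)$ by $g (0) = 0$ and $g (t) = \bigl( (t - \ld)_{+}/t \bigr)^{1/2}$ for $t > 0$, and set $w = g (c^*c) c^*$. A direct computation gives $w w^* = g (c^*c)^2 (c^*c) = (c^*c - \ld)_{+}$, and the intertwining $c h (c^*c) = h (cc^*) c$ (valid for any continuous~$h$ with $h (0) = 0$) gives $w^* w = c g (c^*c)^2 c^* = g (cc^*)^2 (cc^*) = (cc^* - \ld)_{+}$. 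The claim then reduces to the case $\ld = 0$, that is, $x^*x \sim_A xx^*$, which follows by setting $v_{\ep} = x^* (xx^* + \ep)^{-1/2}$ and computing $v_{\ep} (xx^*) v_{\ep}^* = \ph_{\ep} (x^*x) \cdot x^*x \to x^*x$ as $\ep \to 0^{+}$, where $\ph_{\ep} (t) = t/(t + \ep)$; the reverse direction is symmetric.

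For part~(\ref{PhiB.Lem_18_4_10}), the core step is the construction of the exact contraction~$d$ in~(\ref{A.P.T}). The starting point is $x = h (a)$, where $h (t) = \bigl( (t - \ep)_{+}/t \bigr)^{1/2}$ for $t > 0$ and $h (0) = 0$: this $x$ is a contraction satisfying $x^* a x = (a - \ep)_{+}$ and $\| x^* b x - (a - \ep)_{+} \| < \ep$. Upgrading this approximate equality to the exact $d b d^* = (a - \ep)_{+}$ is the Kirchberg--R\o rdam step: one uses that $(a - \ep)_{+}$ lies in the hereditary subalgebra $\ov{b A b}$ (since $a$ is close to~$b$ and the cutoff removes the spectral region where the perturbation matters), and extracts~$d$ via part~(\ref{PhiB.Lem_18_4_4}) applied to a square-root factorization of the corrected element. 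Once~(\ref{A.P.T}) is in hand, (\ref{PhiB.Lem_18_4_10.a}) is immediate by using the constant sequence $v_n = d$. For~(\ref{Item_9420_LgSb_1_6}), observe that $t \mapsto (t - \ld)_{+}$ is $1$-Lipschitz, so $\| (a - \ld)_{+} - (b - \ld)_{+} \| \leq \| a - b \| < \ep$; applying~(\ref{PhiB.Lem_18_4_10.a}) to the pair $(a - \ld)_{+}$, $(b - \ld)_{+}$ and then~(\ref{PhiB.Lem_18_4_8}) yields $(a - \ld - \ep)_{+} \precsim_A (b - \ld)_{+}$.

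For part~(\ref{PhiB.Lem_18_4_11}), (\ref{PhiB.Lem_18_4_11.c})$\Rightarrow$(\ref{PhiB.Lem_18_4_11.b}) is immediate from $(b - \dt)_{+} \precsim_A b$. For (\ref{PhiB.Lem_18_4_11.b})$\Rightarrow$(\ref{PhiB.Lem_18_4_11.a}), I pick for each~$n$ a witness $v_n$ of $(a - 1/n)_{+} \precsim_A b$ with $\| v_n b v_n^* - (a - 1/n)_{+} \| < 1/n$, so $v_n b v_n^* \to a$. For (\ref{PhiB.Lem_18_4_11.a})$\Rightarrow$(\ref{PhiB.Lem_18_4_11.c}), given $\ep > 0$ I pick $v$ with $\| v b v^* - a \| < \ep/2$; by~(\ref{Item_9420_LgSb_1_6}), $(a - \ep)_{+} \precsim_A (v b v^* - \ep/2)_{+}$. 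Choosing $\dt > 0$ small enough that $\| v b v^* - v (b - \dt)_{+} v^* \| < \ep/2$, part~(\ref{PhiB.Lem_18_4_10.a}) then gives $(v b v^* - \ep/2)_{+} \precsim_A v (b - \dt)_{+} v^* \precsim_A (b - \dt)_{+}$. The principal obstacle is the exact construction of~$d$ in~(\ref{A.P.T}): producing the equality $d b d^* = (a - \ep)_{+}$ rather than a mere norm-approximation is what powers all of~(\ref{PhiB.Lem_18_4_10}) and the quantitative passages in~(\ref{PhiB.Lem_18_4_11}).
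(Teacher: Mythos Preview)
The paper gives no proof of this lemma at all: the paragraph immediately preceding the statement is simply a list of citations (R{\o}rdam~\cite{Ror92}, Kirchberg--R{\o}rdam~\cite{KR00,KR02}, Phillips~\cite{Ph14}, Elliott--Robert--Santiago~\cite{ERS11}) covering each part.  Your proposal therefore goes considerably further than the paper does, and most of your sketches are correct.  Parts~(\ref{PhiB.Lem_18_4_8}) and~(\ref{PhiB.Lem_18_4_4}) are handled cleanly; the reduction of~(\ref{PhiB.Lem_18_4_4}) to the $\ld = 0$ case via $w = g(c^*c) c^*$ with $w w^* = (c^* c - \ld)_{+}$ and $w^* w = (c c^* - \ld)_{+}$ is a nice way to organise it.  Your derivation of~(\ref{Item_9420_LgSb_1_6}) from~(\ref{PhiB.Lem_18_4_10.a}) via the $1$-Lipschitz property of $t \mapsto (t - \ld)_{+}$, and the three implications in~(\ref{PhiB.Lem_18_4_11}), are standard and correct.

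The one genuine gap is~(\ref{A.P.T}).  You correctly produce the contraction $x = h(a)$ with $x a x = (a - \ep)_{+}$ and $\| x b x - (a - \ep)_{+} \| < \ep$, but the passage from this approximate factorization to the \emph{exact} equality $d b d^* = (a - \ep)_{+}$ with $\| d \| \leq 1$ is not carried out.  You invoke ``the Kirchberg--R{\o}rdam step'' and the assertion $(a - \ep)_{+} \in \ov{b A b}$, but neither is justified, and both need real work.  In particular, the naive route fails: from $\| a - b \| < \ep$ one gets $a - \ep \cdot 1 \leq b$ in the unitization, but this does \emph{not} imply $(a - \ep)_{+} \leq b$, since $t \mapsto t_{+}$ is not operator monotone (easy $2 \times 2$ counterexamples exist).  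And even once one knows $(a - \ep)_{+} \in \ov{b A b}$, extracting a \emph{contraction} $d$ with exact equality, rather than merely some $d$ with $\| d \|$ bounded, is the substance of \cite[Lemma~2.2]{KR02} and is not recoverable from part~(\ref{PhiB.Lem_18_4_4}) alone.  Since the paper also disposes of~(\ref{A.P.T}) by citing exactly this lemma, you are not behind the paper; but if you intend a self-contained account, this step must be written out in full.
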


\begin{lem}\label{elem.lem1}
Suppose $t \in [0, 1]$ and $s \in [0, 1)$.
Then:
\begin{enumerate}
\item\label{12.31.18.a}
$2t - t^{2} - s > 0$ \ifo{}
$t - 1 + \sqrt{1 - s} > 0$.
\item\label{12.31.18.b}
$1 - \sqrt{1 - s} \geq \frac{s}{2}$.
\end{enumerate}
\end{lem}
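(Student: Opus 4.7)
For part~(\ref{12.31.18.a}), the plan is to rewrite $2t - t^{2} - s$ by completing the square: since $2t - t^{2} - s = (1-s) - (t-1)^{2}$, the inequality $2t - t^{2} - s > 0$ is equivalent to $(t-1)^{2} < 1 - s$, i.e.\ $|t - 1| < \sqrt{1-s}$, which in turn is equivalent to
\[
1 - \sqrt{1 - s} \;<\; t \;<\; 1 + \sqrt{1 - s}.
\]
I would then observe that the upper inequality is automatic because $t \leq 1$ and $\sqrt{1-s} > 0$ (since $s < 1$), so only the lower inequality $t > 1 - \sqrt{1-s}$ survives, and this is exactly $t - 1 + \sqrt{1 - s} > 0$.

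For part~(\ref{12.31.18.b}), my plan is to rearrange and square. The desired inequality is equivalent to $1 - s/2 \geq \sqrt{1 - s}$. Since $s \in [0, 1)$, the left-hand side satisfies $1 - s/2 > 1/2 > 0$, so both sides are nonnegative and squaring preserves the inequality. Squaring gives $1 - s + s^{2}/4 \geq 1 - s$, which reduces to $s^{2}/4 \geq 0$ and therefore holds.

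Neither part presents a real obstacle; both are purely algebraic manipulations with real numbers, and the only care needed is in part~(\ref{12.31.18.a}) to verify that the upper half of the two-sided inequality from completing the square is automatic given the hypothesis $t \in [0,1]$, and in part~(\ref{12.31.18.b}) to verify that the quantity being squared is nonnegative so that squaring is a valid equivalence.
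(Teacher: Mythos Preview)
Your proof is correct. The paper's own proof simply reads ``These statements are easy to check,'' so you have supplied exactly the kind of routine verification the authors left to the reader; there is nothing to compare.
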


\begin{proof}
These statements are easy to check.
\end{proof}

The following lemma is a generalization
of Lemma~1.8 of \cite{Ph14} or Lemma~12.1.5 of \cite{GKPT18}.

\begin{lem}\label{Lem.ANP.Dec.18}
Let $A$ be a unital \ca, let $a, g \in A$
satisfy $0 \leq a, g \leq 1$,
and let $\ep_1, \ep_2 \geq 0$.
Then
\[
\big( a - (\ep_1 + \ep_2) \big)_{+}
\precsim_{A} \big( (1 - g ) a (1 - g ) - \ep_1 \big)_{+}
         \oplus \Big( g - \frac{\ep_2 }{2} \Big)_{+}.
\]
\end{lem}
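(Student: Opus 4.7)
The strategy is to adapt the proof of Lemma~1.8 in~\cite{Ph14} (equivalently Lemma~12.1.5 in~\cite{GKPT18}) using the identity $1 = (1 - g)^{2} + (2 g - g^{2})$. Set $c_{1} = (1 - g)\, a^{1/2}$ and $c_{2} = (2 g - g^{2})^{1/2}\, a^{1/2}$, and regard these as the entries of a column $c = (c_{1}, c_{2})^{\mathrm{T}}$, so that
\[
c^{*} c \;=\; c_{1}^{*} c_{1} + c_{2}^{*} c_{2} \;=\; a^{1/2}\big[(1-g)^{2} + (2g-g^{2})\big]\, a^{1/2} \;=\; a,
\]
while $M := c c^{*} \in M_{2}(A)_{+}$ has diagonal entries $M_{11} = c_{1} c_{1}^{*} = (1 - g)\, a\, (1 - g)$ and $M_{22} = c_{2} c_{2}^{*} = (2 g - g^{2})^{1/2}\, a\, (2 g - g^{2})^{1/2}$. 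Lemma~\ref{PhiB.Lem_18_4}(\ref{PhiB.Lem_18_4_4}) gives $(a - \ep_{1} - \ep_{2})_{+} \sim_{A} (M - \ep_{1} - \ep_{2})_{+}$, reducing the problem to a comparison in $M_{2}(A)$.

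Next, I dominate $M$ by a block diagonal compatible with the asymmetric cutoffs. The operator AM--GM inequality $u^{*} v + v^{*} u \leq \ld\, u^{*} u + \ld^{-1}\, v^{*} v$, applied to the quadratic form of $M = c c^{*}$, gives
\[
M \;\leq\; (1 + \ld)\, M_{11} \,\oplus\, (1 + \ld^{-1})\, M_{22}
\qquad \text{in } M_{2}(A), \quad \ld > 0.
\]
Taking $\ld = \ep_{2}/\ep_{1}$, so that $(\ep_{1} + \ep_{2})/(1 + \ld) = \ep_{1}$ and $(\ep_{1} + \ep_{2})/(1 + \ld^{-1}) = \ep_{2}$, and using the standard fact that $X \leq Y$ in $A_{+}$ implies $(X - \eta)_{+} \precsim_{A} (Y - \eta)_{+}$, together with $(\gamma b - \eta)_{+} \sim_{A} (b - \eta/\gamma)_{+}$ for scalars $\gamma > 0$, I obtain
\[
(M - \ep_{1} - \ep_{2})_{+}
\;\precsim_{A}\;
(M_{11} - \ep_{1})_{+} \oplus (M_{22} - \ep_{2})_{+}.
\]

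It remains to show $(M_{22} - \ep_{2})_{+} \precsim_{A} (g - \ep_{2}/2)_{+}$. Introduce the truncation $y' = a^{1/2}\,(2 g - g^{2} - \ep_{2})_{+}\, a^{1/2}$; functional calculus on $g$ yields $\|c_{2}^{*} c_{2} - y'\| \leq \ep_{2}$, so Lemma~\ref{PhiB.Lem_18_4}(\ref{PhiB.Lem_18_4_10.a}) combined with the characterization in part~(\ref{PhiB.Lem_18_4_11}) gives $(c_{2}^{*} c_{2} - \ep_{2})_{+} \precsim_{A} y'$. A further application of Lemma~\ref{PhiB.Lem_18_4}(\ref{PhiB.Lem_18_4_4}), with $c'' = (2g - g^{2} - \ep_{2})_{+}^{1/2}\, a^{1/2}$, together with $0 \leq a \leq 1$, shows $y' \precsim_{A} (2 g - g^{2} - \ep_{2})_{+}$. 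Lemma~\ref{elem.lem1} then implies that the scalar function $(2t - t^{2} - \ep_{2})_{+}$ vanishes on $[0, 1 - \sqrt{1 - \ep_{2}}]$, and since $1 - \sqrt{1 - \ep_{2}} \geq \ep_{2}/2$ its support is contained in that of $(t - \ep_{2}/2)_{+}$; applying functional calculus to $g$ yields $(2 g - g^{2} - \ep_{2})_{+} \precsim_{A} (g - \ep_{2}/2)_{+}$. The chain is closed by Lemma~\ref{PhiB.Lem_18_4}(\ref{PhiB.Lem_18_4_4}) applied to $c_{2}$, which gives $(M_{22} - \ep_{2})_{+} \sim_{A} (c_{2}^{*} c_{2} - \ep_{2})_{+}$. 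The main technical hurdle is the ``operator inequality implies Cuntz subequivalence of cutoffs'' step used in the middle paragraph; it is standard but delicate, and I expect to verify it via Douglas factorization combined with an approximation argument.
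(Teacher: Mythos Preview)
Your argument is essentially correct and tracks the paper's proof closely: the same decomposition $1=(1-g)^{2}+(2g-g^{2})$, the same use of $(c^{*}c-\lambda)_{+}\sim_{A}(cc^{*}-\lambda)_{+}$, and the same functional-calculus reduction of the second summand via Lemma~\ref{elem.lem1}. The one substantive difference is the asymmetric splitting step. The paper simply invokes Lemma~1.5 of~\cite{Ph14} to get
\[
\bigl(a-(\ep_{1}+\ep_{2})\bigr)_{+}
\precsim_{A}
\bigl(a^{1/2}(1-h)a^{1/2}-\ep_{1}\bigr)_{+}
\oplus
\bigl(a^{1/2}h\,a^{1/2}-\ep_{2}\bigr)_{+},
\]
whereas you rebuild this by passing to the $2\times 2$ matrix $M=cc^{*}$, bounding it above via operator AM--GM with parameter $\lambda=\ep_{2}/\ep_{1}$, and then appealing to ``$X\leq Y\Rightarrow (X-\eta)_{+}\precsim_{A}(Y-\eta)_{+}$''. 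Your route is a legitimate alternative derivation of (a form of) Lemma~1.5 of~\cite{Ph14}; the paper's is shorter because it treats that lemma as a black box.

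Two small points to clean up. First, $\lambda=\ep_{2}/\ep_{1}$ is undefined when $\ep_{1}=0$ (and the dual issue arises when $\ep_{2}=0$); these boundary cases are easy but need a sentence. Second, the ``standard fact'' you flag does not require Douglas factorization or approximation: since $t\leq (t-\eta)_{+}+\eta$ for $t\geq 0$, one has $X\leq Y\leq (Y-\eta)_{+}+\eta\cdot 1$; for $\eta>0$ the element $b=(Y-\eta)_{+}+\eta\cdot 1$ is invertible in~$\widetilde{A}$, so $c=b^{-1/2}Xb^{-1/2}$ is a positive contraction with $X=b^{1/2}cb^{1/2}$, and Lemma~1.7 of~\cite{Ph14} (exactly the tool the paper uses on the second summand) gives $(X-\eta)_{+}\precsim_{A}(b-\eta)_{+}=(Y-\eta)_{+}$.
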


\begin{proof}
We may clearly assume $\ep_2 < 1$.
Set $h = 2 g - g^2$.
Functional calculus and Lemma~\ref{elem.lem1}(\ref{12.31.18.a})
imply that
\begin{equation}\label{Eq_9422_hep}
(h - \ep_2)_{+} \sim_{A} \big( g - [1 - (1 - \ep_2)^{1/2}] \big)_{+}.
\end{equation}

Since $\ep_2 \in [0, \, 1)$, it follows from
\Lem{elem.lem1}(\ref{12.31.18.b}) that
$1 - \sqrt{1 - \ep_2} \geq \frac{\ep_2}{2}$.
So
\begin{equation}\label{1.4.19.22}
\big( g - 1 + (1 - \ep_2)^{1/2} \big)_{+}
 \precsim_A \Big( g - \frac{\ep_2}{2} \Big)_{+}.
\end{equation}

Set $b = \big( (1 - g ) a (1 - g ) - \ep_1 \big)_{+}$.
Using Lemma~1.5 of~\cite{Ph14}
at the first step,
\Lem{PhiB.Lem_18_4}(\ref{PhiB.Lem_18_4_4})
at the second step,
$\| a \| \leq 1$ and Lemma~1.7 of~\cite{Ph14}
on the second summand at the third step,
(\ref{Eq_9422_hep}) at the fourth step,
and (\ref{1.4.19.22}) at the last step,
we get
\begin{align*}
\big( a - (\ep_1 + \ep_2) \big)_{+}
& \precsim_A \bigl( a^{1/2} (1 - h) a^{1/2} - \ep_1 \bigr)_{+}
         \oplus \bigl( a^{1/2} h a^{1/2} - \ep_{2} \bigr)_{+}
\\
& \sim_A \bigl( (1 - g) a (1 - g) - \ep_1 \bigr)_{+}
         \oplus \bigl( h^{1/2} a h^{1/2} - \ep_2 \bigr)_{+}
\\
& \precsim_A b \oplus ( h - \ep_2 )_{+}
\\
& \sim b \oplus \bigl( g - 1 + (1 - \ep_2)^{1/2} \bigr)_{+}
  \precsim_A b \oplus \Big( g - \frac{\ep_2}{2} \Big)_{+}.
\end{align*}
This completes the proof.
\end{proof}

Let $a, b \in A_{+}$.
If $a \precsim_A b$ then by definition
there is a sequence $(v_n)_{n = 1}^{\infty}$ in $A$
such that
$\limi{n} v_n b v_n^* = a$.
But there need not be a bounded sequence with this property.
As a substitute,
we have the following result,
originally from \cite{AGJP17}.
We give a proof for the sake of completeness.
(There is a similar result in Lemma~2.4(ii) of~\cite{KR02}, but there
is a gap in the proof.)

\begin{lem}\label{A.G.J.P}
Let $A$ be a \ca, let $a, b \in A_{+}$, and let $\dt> 0$.
If $a \precsim_{A} ( b - \dt )_{+}$,
then there exists a sequence $(w_n)_{n \in \N}$ in $A$ such that
$\| a - w_n b w_n^* \| \to 0$
and $\| w_n \| \leq \|a \|^{1/2} \dt^{- 1/2}$
for every $n \in \N$.
\end{lem}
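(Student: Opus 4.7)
The plan is to start from the defining sequence $(v_n)$ with $v_n (b-\delta)_{+} v_n^{*} \to a$, and to ``cut off'' each $v_n$ on the right by a bounded functional-calculus element that behaves like $b^{-1/2}$ on the spectral range above~$\delta$. Concretely, I would choose a continuous function $h \colon [0, \infty) \to [0, \infty)$ with $h(t) = t^{-1/2}$ for $t \geq \delta$ and $\| h \|_{\infty} \leq \delta^{-1/2}$ (for example $h(t) = \delta^{-1/2}$ on $[0, \delta]$ and $h(t) = t^{-1/2}$ on $[\delta, \infty)$), and then set
\[
w_n = v_n (b - \delta)_{+}^{1/2} h (b).
\]
Everything is then a computation in the commutative $C^{*}$-algebra generated by~$b$.

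First I would verify $w_n b w_n^{*} \to a$. Since $(b - \delta)_{+}^{1/2}$ and $h(b)$ commute and both commute with~$b$, the element $w_n b w_n^{*}$ equals $v_n f(b) v_n^{*}$ where $f(t) = (t - \delta)_{+}^{1/2} h(t)^{2} t (t - \delta)_{+}^{1/2} = (t - \delta)_{+} h(t)^{2} t$. For $t \geq \delta$ this evaluates to $(t - \delta) \cdot t^{-1} \cdot t = t - \delta$, and for $t \leq \delta$ it vanishes, so $f(b) = (b - \delta)_{+}$. Hence $w_n b w_n^{*} = v_n (b - \delta)_{+} v_n^{*} \to a$.

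Next I would bound $\| w_n \|$. Using $\| w_n \|^{2} = \| w_n w_n^{*} \|$ and commutativity again,
\[
w_n w_n^{*} = v_n \bigl[ (b - \delta)_{+} h(b)^{2} \bigr] v_n^{*}.
\]
The spectral function $(t - \delta)_{+} h(t)^{2}$ equals $0$ for $t \leq \delta$ and $1 - \delta/t$ for $t \geq \delta$, and in the latter range it is dominated by $\delta^{-1} (t - \delta)_{+}$ because $t^{-1} \leq \delta^{-1}$. Hence $(b - \delta)_{+} h(b)^{2} \leq \delta^{-1} (b - \delta)_{+}$, and conjugating by $v_n$ preserves the inequality, giving
\[
\| w_n \|^{2} \leq \delta^{-1} \| v_n (b - \delta)_{+} v_n^{*} \|.
\]

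The main obstacle is that this argument only yields $\limsup_{n} \| w_n \| \leq \| a \|^{1/2} \delta^{-1/2}$, not the uniform bound claimed for every~$n$. To close that gap I would prearrange the sequence $(v_n)$: if $\| v_n (b - \delta)_{+} v_n^{*} \| > \| a \|$, replace $v_n$ by $\lambda_n v_n$ with $\lambda_n \in (0, 1]$ chosen so that $\lambda_n^{2} \| v_n (b - \delta)_{+} v_n^{*} \| = \| a \|$; since $\| v_n (b - \delta)_{+} v_n^{*} \| \to \| a \|$ we have $\lambda_n \to 1$, so the rescaled sequence still satisfies $\lambda_n^{2} v_n (b - \delta)_{+} v_n^{*} \to a$, and now $\| v_n (b - \delta)_{+} v_n^{*} \| \leq \| a \|$ for every~$n$, whence $\| w_n \| \leq \| a \|^{1/2} \delta^{-1/2}$ throughout. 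A brief remark on why $h$ can be chosen continuously with the stated sup-norm bound completes the proof.
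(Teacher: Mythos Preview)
Your proof is correct and takes a genuinely different, more elementary route than the paper. The paper proceeds via two results from Kirchberg--R{\o}rdam \cite{KR02}: given $v_n$ with $\| v_n (b-\delta)_{+} v_n^{*} - a \| < 1/n$, it first applies \Lem{PhiB.Lem_18_4}(\ref{A.P.T}) to get a contraction $d_n$ with $(a - 1/n)_{+} = d_n v_n (b-\delta)_{+} v_n^{*} d_n^{*}$, and then applies Lemma~2.4(i) of~\cite{KR02} to produce $w_n$ with the \emph{exact} equality $(a - 1/n)_{+} = w_n b w_n^{*}$ and the norm bound $\| w_n \| \leq \| (a - 1/n)_{+} \|^{1/2} \delta^{-1/2}$. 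You instead work directly with a regularized inverse square root $h(b)$, so that $w_n = v_n (b-\delta)_{+}^{1/2} h(b)$ satisfies $w_n b w_n^{*} = v_n (b-\delta)_{+} v_n^{*}$ on the nose; the price is the extra rescaling step to upgrade the $\limsup$ bound to a bound valid for every~$n$. Your argument is fully self-contained (no appeal to \cite{KR02}), while the paper's yields the slightly stronger intermediate conclusion of exact equality at each stage. One tiny point: your rescaling argument tacitly assumes $a \neq 0$ (otherwise $\lambda_n \to 1$ need not hold), but the case $a = 0$ is trivial with $w_n = 0$. Note also that although $h(0) \neq 0$, the product $(t-\delta)_{+}^{1/2} h(t)$ vanishes at~$0$, so $(b-\delta)_{+}^{1/2} h(b) \in C^{*}(b) \subseteq A$ and hence $w_n \in A$ even when $A$ is nonunital.
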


\begin{proof}
Let $n \in \N$.
Since $a \precsim_A (b - \dt)_{+}$, there exists $v_n \in A$ such that
\[ \|a -v_n (b - \dt)_{+} v_n^* \|< \frac{1}{n}.
\]
Using \Lem{PhiB.Lem_18_4}(\ref{A.P.T}), we find
a contraction $d_n \in A$ such that
\[
\Big( a - \frac{1}{n} \Big)_{+} = d_n v_n (b - \dt)_{+} v_n^* d_n^*.
\]
Now, applying Lemma~2.4(i) of~\cite{KR02},
we get $w_n \in A$ such that
\[
\Big( a - \frac{1}{n} \Big)_{+} = w_n b w_n^*
\qquad
\mbox{and}
\qquad
\| w_n \|
 \leq \Big\| \Big( a - \frac{1}{n} \Big)_{+} \Big\|^{1/2} \dt^{-1/2}.
\]
Therefore $w_n b w_n^* \to a$ and
$\| w_n\| \leq \| a \|^{1/2} \dt^{-1/2}$.
\end{proof}


\subsection{Quasitraces on \ca{s}}
The following definition is from \cite{Hag14}.
Parts (\ref{quasitrace.a}), (\ref{quasitrace.b}), and
(\ref{quasitrace.c})
correspond to the definition of a quasitrace in \cite{BH82}.
What we and \cite{Hag14} call a quasitrace
is called a ``2-quasitrace'' in \cite{BH82}.

\begin{dfn}\label{quasitrace}
Let $A$ be a \ca.
A function $\tau \colon A \to \mathbb{C}$
is a {\emph{quasitrace}} if the following hold:
\begin{enumerate}
\item\label{quasitrace.a}
$\tau (x^* x) = \tau (x x^*) \geq 0$ for all $x \in A$.
\item\label{quasitrace.b}
$\tau (a + i b ) = \tau (a) + i \tau ( b ) $
for $a, b \in A_{\mathrm{sa}}$.
\item\label{quasitrace.c}
$\tau |_B$ is linear
for every commutative C*-subalgebra $B \subseteq A$.
\item\label{quasitrace.d}
There is a function $\tau_2 \colon M_2 (A) \to \mathbb{C}$
satisfying
(\ref{quasitrace.a}), (\ref{quasitrace.b}), and (\ref{quasitrace.c})
with $M_2 (A)$ in place of $A$,
and such that,
with $(e_{j, k})_{j, k = 1}^{2}$
denoting the standard system of matrix units in $M_2 (\mathbb{C})$,
for all $x \in A$ we have
\[
\tau (x) = \tau_2 (x \otimes e_{1, 1}).
\]
\end{enumerate}
A quasitrace $\tau$ on a unital \ca{}
is {\emph{normalized}} if $\tau (1) = 1$.
The set of normalized quasitraces on $A$ is denoted by $\QT (A)$.
\end{dfn}

All quasitraces on a unital exact \ca{} are
traces, by Theorem~5.11 of~\cite{Hag14}.

\begin{prp}[\cite{BH82}]\label{P_9312_ExistQT}
Let $A$ be a stably finite unital \ca.
Then $\QT (A) \neq \varnothing$.
\end{prp}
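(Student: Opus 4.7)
The plan is to follow the original argument of Blackadar-Handelman, passing from stable finiteness to a quasitrace through the intermediate notion of a \emph{dimension function}. The starting point is the Murray-von Neumann semigroup $V(A)$ of equivalence classes of \pj s in $\Mi(A)$, regarded as an ordered abelian monoid. Stable finiteness of $A$ translates exactly into the statement that $[1_A]$ is not \emph{infinite} in $V(A)$: there is no nonzero class $x$ with $[1_A] + x = [1_A]$.

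Next, I would apply the state-extension theorem for ordered abelian monoids (Goodearl-Handelman) to produce an order-preserving monoid homomorphism $s \colon V(A) \to [0, \infty)$ with $s([1_A]) = 1$, and extend $s$ to a lower semicontinuous normalized dimension function $d \colon \Mi(A)_{+} \to [0, \infty]$---a Cuntz-monotone, $\oplus$-additive function with $d(1_A) = 1$---by approximating arbitrary positive elements by cut-downs $(a - \ep)_{+}$ and using the characterizations of $\precsim_A$ from \Lem{PhiB.Lem_18_4}(\ref{PhiB.Lem_18_4_11}) to check well-definedness and monotonicity of the extension.

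The final step is to build the quasitrace by the layer-cake formula
\[
\tau(a) = \int_0^\infty d\bigl((a - t)_{+}\bigr)\, dt
\]
for $a \in A_{+}$, extended to self-adjoint elements via $\tau(a_{+}) - \tau(a_{-})$ and then to all of $A$ by real/imaginary decomposition as required by \Def{quasitrace}(\ref{quasitrace.b}). The identity $\tau(x^*x) = \tau(xx^*)$ follows from \Lem{PhiB.Lem_18_4}(\ref{PhiB.Lem_18_4_4}) since $d$ respects Cuntz equivalence. Linearity on a commutative C*-subalgebra $B \subseteq A$ is obtained by identifying $d|_{B_{+}}$ with a regular Borel measure on $\spec(B)$, so that $\tau|_B$ is integration against that measure. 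The $M_2$-compatibility axiom \Def{quasitrace}(\ref{quasitrace.d}) is automatic because $d$ is already defined on $\Mi(A)_{+} \supseteq M_2(A)_{+}$: take $\tau_2$ to be the function obtained from $d|_{M_2(A)_{+}}$ by the same formula.

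The main obstacle is the order-theoretic first step. $V(A)$ need not be cancellative and its order need not be archimedean; translating the algebraic condition ``stably finite'' into ``$[1_A]$ is not infinite'' and then invoking a Hahn-Banach-type state-extension for ordered monoids is the technical core of the argument. The dimension-function-to-quasitrace conversion afterward is comparatively mechanical, although verification of the non-obvious axioms in \Def{quasitrace}---especially linearity on commutative subalgebras and the existence of a compatible $\tau_2$---still requires genuine care.
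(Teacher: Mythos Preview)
Your sketch is correct and is essentially a reconstruction of the Blackadar--Handelman argument; the paper itself does not give an independent proof but simply cites the discussion after Proposition~II.4.6 of~\cite{BH82}, which is exactly the dimension-function route you outline. So the approaches coincide, with your version being more explicit than the paper's one-line citation.
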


\begin{proof}
This is in the discussion after Proposition II.4.6 of~\cite{BH82}.
\end{proof}

Part~(\ref{BH82_Cor_2_2_3}) of the following proposition is
Corollary II.2.3 of~\cite{BH82} and Parts
(\ref{BH82_Cor_2_2_5_a}) through~(\ref{BH82_Cor_2_2_5_f})
are taken from Corollary II.2.5 of~\cite{BH82}.
That paper uses $\| \tau \|$ instead of $N (\tau)$.
We want to avoid conflict
with the definition of the norm of a linear functional.

\begin{prp}[\cite{BH82}]\label{BH82_Cor_2_2_5}
Let $\tau \colon A \to \mathbb{C}$ be a quasitrace on a \ca~$A$,
and define
\[
N (\tau)
 = \sup \bigl( \bigl\{ \tau (a) \colon
   {\mbox{$a \in A_{+}$ and $\|a \| \leq 1$}} \bigr\} \bigr).
\]
Then:
\begin{enumerate}
\item\label{BH82_Cor_2_2_3}
$N (\tau) < \infty$.
\item\label{BH82_Cor_2_2_5_a}
If $A$ is unital and $\tau \in \QT (A)$, then $N (\tau) = 1$.
\item\label{BH82_Cor_2_2_5_b}
$\tau$ is order-preserving.
\item\label{BH82_Cor_2_2_5_c}
If $a, b \in A_{\mathrm{sa}}$, then
$|\tau (a) - \tau (b)| \leq N (\tau) \| a - b \|$.
\item\label{BH82_Cor_2_2_5_e}
$\tau$ is norm-continuous.
\item\label{BH82_Cor_2_2_5_f}
If $a, b \in A_{+}$, then $\tau (a + b)
\leq 2 \big( \tau (a) + \tau (b) \big)$.
\end{enumerate}
\end{prp}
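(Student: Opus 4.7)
The plan is to extract a few immediate consequences of the axioms first and then bootstrap from them. From axiom~(\ref{quasitrace.a}), setting $x = a^{1/2} u$ for $a \in A_{+}$ and any unitary $u$ in the unitization gives $x^{*} x = u^{*} a u$ and $x x^{*} = a$, hence \emph{unitary invariance} $\tau(u^{*} a u) = \tau(a)$. From axiom~(\ref{quasitrace.c}), whenever $a, b \in A_{+}$ satisfy $a b = 0$, the subalgebra $C^{*} (a, b)$ is commutative, so $\tau (a + b) = \tau (a) + \tau (b)$. Axiom~(\ref{quasitrace.d}) transports both facts into $M_{2} (A)$: taking $x = a^{1/2} \otimes e_{1, 2}$ yields $\tau_{2} (a \otimes e_{1, 1}) = \tau_{2} (a \otimes e_{2, 2}) = \tau (a)$, and orthogonal additivity applied to~$\tau_{2}$ gives the workhorse identity
\[
\tau_{2} \bigl( a \otimes e_{1, 1} + b \otimes e_{2, 2} \bigr) = \tau (a) + \tau (b)
\qquad \text{for all } a, b \in A_{+}.
\]

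The main obstacle is part~(\ref{BH82_Cor_2_2_5_b}), order preservation on self-adjoints, since it propagates into all remaining parts. Given $0 \leq a \leq b$ in $A$, a Douglas-type factorization produces a contraction~$C$ with $a = C^{*} b C$; the goal is $\tau (C^{*} b C) \leq \tau (b)$. One dilates~$C$ to a partial isometry $V \in M_{2} (\widetilde{A})$ whose first column has entries $C$ and $(1 - C^{*} C)^{1/2}$ and whose second column is zero, and computes, for $x = (b^{1/2} \otimes e_{1, 1}) V$, that $x^{*} x = (C^{*} b C) \otimes e_{1, 1}$ while $x x^{*} = (b^{1/2} C C^{*} b^{1/2}) \otimes e_{1, 1}$. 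Axiom~(\ref{quasitrace.a}) applied to~$\tau_{2}$ then gives $\tau (C^{*} b C) = \tau (b^{1/2} C C^{*} b^{1/2})$, reducing the problem to $\tau (b^{1/2} C C^{*} b^{1/2}) \leq \tau (b)$. This reduction is the delicate step of~\cite{BH82}: one approximates $b$ by invertibles, repeatedly applies the rotation trick above to move contractions around, and exploits linearity of~$\tau$ on the commutative subalgebras generated by~$b$ alone and by $C C^{*}$ alone. The $2$-quasitrace axiom is genuinely essential here, since it is what converts inner-conjugation estimates in $A$ into commutative-subalgebra estimates in $M_{2} (A)$. For arbitrary self-adjoint $a \leq b$, a scalar shift $a + \|a\| \cdot 1 \leq b + \|a\| \cdot 1$ inside $\widetilde{A}$ together with linearity of $\tau$ on $C^{*} (a, 1)$ reduces to the positive case.

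With~(\ref{BH82_Cor_2_2_5_b}) in hand, the remaining parts follow routinely. For~(\ref{BH82_Cor_2_2_3}): $a \leq \|a\| \cdot 1$ in $\widetilde{A}$, order preservation, and linearity on $C^{*} (1)$ give $\tau (a) \leq \|a\| \cdot \tau (1)$, so $N (\tau) \leq \tau (1) < \infty$. Part~(\ref{BH82_Cor_2_2_5_a}) is then immediate, since $N (\tau) \leq 1$ for normalized~$\tau$, with equality attained at $a = 1$. Part~(\ref{BH82_Cor_2_2_5_c}) comes from $-\|a - b\| \cdot 1 \leq a - b \leq \|a - b\| \cdot 1$, order preservation, and linearity on $C^{*} (a - b, 1)$. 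Part~(\ref{BH82_Cor_2_2_5_e}) follows from~(\ref{BH82_Cor_2_2_5_c}) by decomposing $x = \Re (x) + i \Im (x)$ and using axiom~(\ref{quasitrace.b}). For~(\ref{BH82_Cor_2_2_5_f}), set
\[
w = \begin{pmatrix} a^{1/2} & 0 \\ b^{1/2} & 0 \end{pmatrix} \in M_{2} (A),
\]
so that $w^{*} w = (a + b) \otimes e_{1, 1}$, while $2 \diag (a, b) - w w^{*}$ equals $\eta \eta^{*}$ for the analogous~$\eta$ with $-b^{1/2}$ in place of $b^{1/2}$, and is therefore positive; axiom~(\ref{quasitrace.a}) applied to~$\tau_{2}$ gives $\tau (a + b) = \tau_{2} (w^{*} w) = \tau_{2} (w w^{*})$, and order preservation together with the workhorse identity applied to $\diag (a, b)$ yields $\tau (a + b) \leq 2 \tau_{2} (\diag (a, b)) = 2 (\tau (a) + \tau (b))$.
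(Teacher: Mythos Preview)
The paper does not supply its own proof here: the proposition is stated with attribution to~\cite{BH82}, pointing Part~(\ref{BH82_Cor_2_2_3}) to Corollary~II.2.3 there and the rest to Corollary~II.2.5. Your sketch therefore goes well beyond what the paper itself offers, and it correctly traces the Blackadar--Handelman argument. The preliminary identities (unitary invariance, orthogonal additivity, and the diagonal formula $\tau_{2}\bigl(\diag(a,b)\bigr) = \tau(a) + \tau(b)$ in $M_{2}(A)$) are right, the derivations of parts (\ref{BH82_Cor_2_2_3}), (\ref{BH82_Cor_2_2_5_a}), (\ref{BH82_Cor_2_2_5_c}), (\ref{BH82_Cor_2_2_5_e}) from order preservation are standard and correct, and the matrix trick for~(\ref{BH82_Cor_2_2_5_f}) via $ww^{*} \leq 2\,\diag(a,b)$ is clean.

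Two minor remarks. First, the Halmos dilation~$V$ is not actually used: setting $x = b^{1/2}C$ directly in~$A$ and invoking axiom~(\ref{quasitrace.a}) already gives $\tau(C^{*}bC) = \tau(b^{1/2}CC^{*}b^{1/2})$. Second, your argument for~(\ref{BH82_Cor_2_2_3}) silently extends~$\tau$ to the unitization and evaluates $\tau(1)$; in the non-unital case this extension needs a word of justification, and in~\cite{BH82} boundedness is in fact established by a somewhat different route. You are honest that the residual inequality $\tau(b^{1/2}CC^{*}b^{1/2}) \leq \tau(b)$ is the real content and defer it to~\cite{BH82}; that is indeed where the $2$-quasitrace axiom does genuine work (and note that your proof of~(\ref{BH82_Cor_2_2_5_f}) also implicitly uses order preservation for~$\tau_{2}$, which requires knowing that $\tau_{2}$ is itself a $2$-quasitrace --- again part of the~\cite{BH82} package).
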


\begin{prp}[\cite{BH82}]\label{P_9305_BH82_Cor2225}
Let $A$ be a \ca{}
and let $\tau$ be a quasitrace on~$A$.
Then $\tau$ extends uniquely to a quasitrace~$\ta_{\I}$
on $M_{\infty} (A)$
such that,
with $(e_{j, k})_{j, k = 1}^{\I}$
denoting the standard system of matrix units
in $M_{\infty} (\mathbb{C})$,
we have $\tau_{\I} (a \otimes e_{j, j}) = \tau (a)$
for all $a \in A$ and $j \in \N$.
\end{prp}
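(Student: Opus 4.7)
The plan is to extend $\tau$ through a tower of matrix algebras and assemble the extensions into a function on $M_{\I} (A) = \bigcup_n M_n (A)$. Definition~\ref{quasitrace}(\ref{quasitrace.d}) supplies an extension $\tau_2$ of $\tau$ to a quasitrace on $M_2 (A)$. Viewing $\tau_2$ as a quasitrace on the C*-algebra $M_2 (A)$ and applying Definition~\ref{quasitrace}(\ref{quasitrace.d}) once more produces a quasitrace $\tau_4$ on $M_4 (A) = M_2 (M_2 (A))$ extending $\tau_2$. Iterating, I obtain a quasitrace $\tau_{2^k}$ on $M_{2^k} (A)$ for every $k \in \Nz$, each extending the previous one, and in particular extending~$\tau$.

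For an arbitrary $n \in \N$, I would choose $k$ with $2^k \geq n$ and define $\tau_n$ to be the restriction of $\tau_{2^k}$ to the corner $M_n (A) \subseteq M_{2^k} (A)$ obtained from the upper-left block embedding. The essential point is that $\tau_n$ is independent of the choice of~$k$. For this I would verify that any quasitrace $\sigma$ on $M_m (A)$ which restricts to $\tau$ on the corner $e_{1, 1} M_m (A) e_{1, 1} \cong A$ is already determined by~$\tau$. The trace identity Definition~\ref{quasitrace}(\ref{quasitrace.a}) applied to $x = a^{1/2} \otimes e_{j, 1}$ with $a \in A_{+}$ gives $\sigma (a \otimes e_{j, j}) = \tau (a)$ for all~$j$, which pins down $\sigma$ on diagonal matrices with entries in~$A$. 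Combining this with the linearity on commutative C*-subalgebras (Definition~\ref{quasitrace}(\ref{quasitrace.c})) applied to the C*-algebra generated by a self-adjoint element of $M_m (A)$, together with the matrix-unit manipulations in Corollary II.2.5 of~\cite{BH82}, determines $\sigma$ everywhere.

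With a compatible family $(\tau_n)_{n \in \N}$ in hand, I would define $\tau_{\I} \colon M_{\I} (A) \to \mathbb{C}$ by $\tau_{\I} (x) = \tau_n (x)$ whenever $x \in M_n (A)$. The normalization $\tau_{\I} (a \otimes e_{j, j}) = \tau (a)$ is then immediate. The axioms of a quasitrace transfer from the $\tau_n$: any finitely many elements of $M_{\I} (A)$, or a $2 \times 2$ matrix over $M_{\I} (A)$, lie in a common $M_n (A)$, handling Definition~\ref{quasitrace}(\ref{quasitrace.a}), (\ref{quasitrace.b}), and (\ref{quasitrace.d}); for~(\ref{quasitrace.c}), linearity on a commutative $*$-subalgebra $B \subseteq M_{\I} (A)$ reduces to linearity on its finitely generated subalgebras, each of which sits inside some $M_n (A)$. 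Uniqueness of $\tau_{\I}$ is inherited from the uniqueness at each finite level. The main obstacle I anticipate is the uniqueness argument at the matrix level in the second paragraph, where genuine content from~\cite{BH82} is needed rather than a purely formal induction.
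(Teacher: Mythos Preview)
Your doubling argument in the first paragraph has a gap you did not flag. Definition~\ref{quasitrace}(\ref{quasitrace.d}) guarantees only that $\tau_2$ satisfies (\ref{quasitrace.a}), (\ref{quasitrace.b}), and (\ref{quasitrace.c}) on $M_2(A)$; it does \emph{not} assert that $\tau_2$ itself satisfies~(\ref{quasitrace.d}), i.e., that $\tau_2$ is a quasitrace on $M_2(A)$ in the sense of Definition~\ref{quasitrace}. So you cannot simply ``apply Definition~\ref{quasitrace}(\ref{quasitrace.d}) once more'' to obtain~$\tau_4$. Showing that a $2$-quasitrace on $A$ automatically extends to all $M_n(A)$ is exactly the nontrivial content of Proposition~II.4.1 of~\cite{BH82}, and its proof is not a formal iteration of the definition. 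You correctly sense that real input from~\cite{BH82} is required, but you locate it in the uniqueness step; the more serious obstacle is existence.

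The paper's proof sidesteps all of this by citing Proposition~II.4.1 of~\cite{BH82} directly for both existence and uniqueness of the extension to each $M_n(A)$. Uniqueness at level $n+1$ then forces the restriction to $M_n(A)$ to agree with the level-$n$ extension, giving a compatible tower, and $\tau_\infty$ is assembled as you describe in your third paragraph. Once you accept the citation for the finite levels, your third paragraph is exactly what the paper does; the first two paragraphs are attempting to reprove what is being cited, and the attempt is incomplete as it stands.
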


We denote the restriction of $\ta_{\I}$ to $M_n (A)$ by $\ta_n$.
When no confusion is likely,
we abbreviate $\ta_{\I}$ and $\ta_n$ to~$\ta$.

\begin{proof}[Proof of Proposition~\ref{P_9305_BH82_Cor2225}]
For $M_n (A)$ in place of $M_{\infty} (A)$,
this is Proposition~II.4.1 of~\cite{BH82}.
By uniqueness there, for all $n \in \N$,
the restriction to $M_n (A)$ of the extension to $M_{n + 1} (A)$
is the extension to $M_n (A)$.
This implies existence of the extension to $M_{\infty} (A)$,
and uniqueness is now immediate.
\end{proof}

The following lemma is part of Proposition~3.2 of~\cite{Hag14}.
(There is a misprint there:
it cites Theorem I.1.1 of~\cite{BH82},
but apparently Theorem I.1.17 is intended.)
Given Proposition \ref{BH82_Cor_2_2_5},
we can give a simple direct proof,
which is the same as for traces except for an extra factor
of~$2$ in the proof of closure under addition.

\begin{lem}\label{Ntau.ideal}
Let $\tau$  be a quasitrace on a \ca~$A$.
Then the set
\[
J_{\tau} = \{ x \in A \colon \tau (x^*x) = 0  \}
\]
is a closed two-sided ideal in~$A$.
\end{lem}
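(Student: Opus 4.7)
The plan is to verify the standard checklist for a closed two-sided ideal: $J_{\tau}$ is closed under scalar multiplication, closed under addition, stable under left and right multiplication by arbitrary elements of~$A$, and topologically closed. The main tools will be the quasitrace identity $\tau(x^*x)=\tau(xx^*)$, order-preservation and norm-continuity of~$\tau$ from Proposition~\ref{BH82_Cor_2_2_5}, linearity on commutative C*-subalgebras from part~(\ref{quasitrace.c}) of Definition~\ref{quasitrace}, and the factor-of-$2$ subadditivity $\tau(a+b)\leq 2(\tau(a)+\tau(b))$ for $a,b\in A_{+}$ from Proposition~\ref{BH82_Cor_2_2_5}(\ref{BH82_Cor_2_2_5_f}).

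For scalar multiplication by $\lambda\in\C$, note that $(\lambda x)^*(\lambda x)=|\lambda|^2 x^*x$ lies in the commutative C*-subalgebra generated by $x^*x$, so linearity there gives $\tau((\lambda x)^*(\lambda x))=|\lambda|^2\tau(x^*x)=0$. For left multiplication by $a\in A$, the operator inequality $x^*a^*ax\leq\|a\|^2 x^*x$ combined with order-preservation and linearity on $C^*(x^*x)$ yields $\tau((ax)^*(ax))\leq\|a\|^2\tau(x^*x)=0$. For right multiplication, I first apply the quasitrace identity to rewrite $\tau((xa)^*(xa))=\tau(xaa^*x^*)$, bound this above by $\|a\|^2\tau(xx^*)$ in the same way, and then apply the identity once more to collapse this to $\|a\|^2\tau(x^*x)=0$.

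The step that requires the extra care flagged by the authors is closure under addition. Starting from the standard C*-algebra inequality $(x+y)^*(x+y)\leq 2(x^*x+y^*y)$, order-preservation and linearity on the commutative subalgebra generated by $x^*x+y^*y$ yield $\tau((x+y)^*(x+y))\leq 2\tau(x^*x+y^*y)$, and then Proposition~\ref{BH82_Cor_2_2_5}(\ref{BH82_Cor_2_2_5_f}) bounds this by $4\bigl(\tau(x^*x)+\tau(y^*y)\bigr)=0$ when $x,y\in J_{\tau}$. The spurious factor of~$4$ is harmless because we only need the quantity to vanish, not to compute its precise value; this is exactly the weakening relative to the trace case mentioned by the authors.

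Finally, for topological closedness, if $(x_n)_{n \in \N}$ is a sequence in $J_{\tau}$ with $x_n\to x$ in norm, then $x_n^*x_n\to x^*x$ in norm, so Proposition~\ref{BH82_Cor_2_2_5}(\ref{BH82_Cor_2_2_5_e}) gives $\tau(x^*x)=\lim_{n \to \infty}\tau(x_n^*x_n)=0$, hence $x\in J_{\tau}$. No step is genuinely hard; the only conceptual obstacle relative to the classical trace case is the nonlinearity of~$\tau$, which is sidestepped whenever scalars multiply an element of a fixed commutative C*-subalgebra and is absorbed by the harmless constants in the additivity step.
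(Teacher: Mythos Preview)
Your proof is correct and follows essentially the same approach as the paper's: the addition step via $(x+y)^*(x+y)\leq 2(x^*x+y^*y)$ together with Proposition~\ref{BH82_Cor_2_2_5}(\ref{BH82_Cor_2_2_5_f}) is exactly what the paper does, and your left-ideal argument is identical. The only cosmetic differences are that the paper handles right multiplication by first noting $J_\tau$ is $*$-closed and then writing $xa=(a^*x^*)^*$, rather than invoking the quasitrace identity twice as you do, and that you spell out topological closedness explicitly while the paper leaves it implicit.
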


\begin{proof}
It is obvious that $J_{\ta}$ is closed under scalar
multiplication and $x \mapsto x^*$.

Let $x, y \in J_{\ta}$.
Then
\[
(x + y)^* (x + y)
 \leq (x + y)^* (x + y) + (x - y)^* (x - y)
 = 2 x^* x + 2 y^* y,
\]
so,
by Proposition \ref{BH82_Cor_2_2_5}(\ref{BH82_Cor_2_2_5_b})
and Proposition \ref{BH82_Cor_2_2_5}(\ref{BH82_Cor_2_2_5_f}),
\[
0 \leq \ta \bigl( (x + y)^* (x + y) \bigr)
  \leq \ta \bigl( 2 x^* x + 2 y^* y \bigr)
  \leq 4 \bigl( \ta ( x^* x ) + \ta ( y^* y ) \bigr)
  = 0.
\]
Hence $x + y \in J_{\ta}$.

Let $x \in J_{\ta}$ and let $a \in A$.
Then,
using Proposition \ref{BH82_Cor_2_2_5}(\ref{BH82_Cor_2_2_5_b}),
\[
0 \leq \ta ( (a x)^* (a x) )
  \leq \| a^* a \| \ta (x^* x)
  = 0,
\]
so $a x \in J_{\ta}$.
Now
$x a = (a^* x^*)^* \in J_{\ta}$.
\end{proof}

We will need Murray-von Neumann equivalence.
We use notation which distinguishes it from Cuntz equivalence.

\begin{dfn}\label{N_9422_MvN}
Let $A$ be a \ca, and let $p, q \in K \otimes A$ be projections.
We say {\emph{$p$ is Murray-von Neumann subequivalent to $q$}},
denoted $p \lessapprox q$, if there exists $v \in K \otimes A$ such that
$p = v v^*$ and $v^* v \leq q$.
We say that $p$ and $q$ are
{\emph{Murray-von Neumann equivalent}}, denoted $p \approx q$,
if there exists $v \in K \otimes A$
such that $p = v v^*$ and $v^* v = q$.
\end{dfn}

It is well known
that $p \lessapprox q$
if and only if $p \precsim_{A} q$.
However,
it is in general not true that
$p \sim_A q$ implies $p \approx q$.
For example, this fails in a purely infinite simple \ca{}
with nonzero $K_0$-group.
However, if $A$ is stably finite then
$p \sim_A q$ and $p \approx q$ are equivalent.

\subsection{Radius of comparison}
The following definition is Definition~12.1.7 of~\cite{GKPT18}.

\begin{dfn}\label{D_9422_dtau_dfn}
Let $A$ be a unital \ca,
and let $\tau \in \QT (A)$.
Recalling the notation of and
after Proposition~\ref{P_9305_BH82_Cor2225},
define $d_{\tau} \colon \Mi (A)_{+} \to [0, \infty)$
by
\[
d_{\tau} (a) = \lim_{n \to \infty} \tau (a^{1/n})
\]
for $a \in \Mi (A)_{+}$.
We also use the same notation for the corresponding functions
on $\Cu (A)$ and $\W (A)$.
\end{dfn}

The following is Definition~6.1 of~\cite{Tom06},
except that we allow $r = 0$ in~(\ref{rc_dfn.a}).
This change makes no difference.

\begin{dfn}\label{rc_dfn}
Let $A$ be a stably finite unital C*-algebra.

\begin{enumerate}
\item\label{rc_dfn.a}
Let $r \in [0, \I)$.
We say that $A$ has {\emph{$r$-comparison}} if whenever
$a, b \in M_{\infty} (A)_{+}$ satisfy
$d_{\ta} (a) + r < d_{\ta} (b)$
for all $\ta \in \QT (A)$,
then $a \precsim_A b$.
\item\label{rc_dfn.b}
The {\emph{radius of comparison}} of~$A$,
denoted ${\operatorname{rc}} (A)$, is
\[
\rc (A)
 = \inf \big( \big\{ r \in [0, \I) \colon
    {\mbox{$A$ has $r$-comparison}} \big\} \big)
\]
if it exists, and $\infty$ otherwise.
\end{enumerate}
\end{dfn}

If $A$ is simple,
then the infimum in \Def{rc_dfn}(\ref{rc_dfn.b})
is attained,
that is, $A$ has ${\operatorname{rc}} (A)$-comparison;
see Proposition~6.3 of~\cite{Tom06}.
For exact \ca{s}, one only needs to consider extreme tracial states;
see Lemma~2.3 of \cite{EN13}.

By Proposition 6.12 of~\cite{Ph14}, the radius of comparison
of a simple \uca{}
is the same
whether computed using $\W (A)$ or ${\operatorname{Cu}} (A)$.


\subsection{The radius of comparison of a corner}\label{Sec_Corner}
We give bounds on the radius of comparison of a full corner
in a matrix algebra over a \ca.
The result is surely known,
but we have not seen a proof in the literature.
It will be needed in Section~\ref{Sec_rcCP} below,
to relate $\rc (A^{\af})$ to $\rc (C^* (G, A, \af) )$.

\begin{lem}\label{Ourlemma.1112}
Let $A$ be a stably finite \uca,
let $n \in \N$, and
let $p$ be a full projection in $M_n (A)$.
Then,
recalling the notation in and
after Proposition~\ref{P_9305_BH82_Cor2225}:
\begin{enumerate}
\item\label{Ourlemma.1112_a}
$\inf_{\tau \in \QT (A)} \tau_n (p) > 0$.
\item\label{Ourlemma.1112_b}
The map
$\theta \colon \QT (A) \to \QT (p M_n (A) p)$,
given by
$\tau \mapsto \frac{1}{\tau_n (p)} \tau_n |_{p M_n (A) p}$,
is bijective.
\end{enumerate}
\end{lem}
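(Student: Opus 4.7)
For~(\ref{Ourlemma.1112_a}), the plan is: since $p$ is full in the unital algebra $M_n(A)$, the algebraic two-sided ideal $M_n(A) p M_n(A)$ is norm-dense in $M_n(A)$; by a standard invertibility trick (any element within distance $1$ of the unit is invertible, so the unit lies in every dense two-sided ideal of a unital C*-algebra), I obtain a decomposition $1_{M_n(A)} = \sum_{i=1}^{k} a_i p b_i$ with $a_i, b_i \in M_n(A)$. I assemble the row $u = (a_1 p, \dots, a_k p) \in M_{1,k}(M_n(A))$ and the column $v = (p b_1, \dots, p b_k)^{\mathrm{T}}$, so that $uv = 1_{M_n(A)}$ and $u = uP$ with $P = \mathrm{diag}(p,\dots,p) \in M_k(M_n(A))$. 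Then $1_{M_n(A)} = u v v^* u^* \leq \|v\|^2 u u^*$ and $u^* u = P u^* u P \leq \|u\|^2 P$, so combining with Lemma~\ref{PhiB.Lem_18_4}(\ref{PhiB.Lem_18_4_4}) gives $\langle 1_{M_n(A)} \rangle \leq k \langle p \rangle$ in $\Cu(A)$. Applying the dimension function then yields $n = d_\tau(1_{M_n(A)}) \leq k \tau_n(p)$ for every $\tau \in \QT(A)$, so $\tau_n(p) \geq n/k$ uniformly.

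For~(\ref{Ourlemma.1112_b}), well-definedness of $\theta$ is routine: the restriction of a quasitrace to the corner $p M_n(A) p$ is a quasitrace (axioms~(a)--(c) are inherited; axiom~(d) follows by restricting $\tau_{2n}$ to the corner $(p \otimes 1_2) M_{2n}(A) (p \otimes 1_2) \cong M_2(p M_n(A) p)$), and the factor $\tau_n(p)^{-1}$ is a legitimate rescaling by part~(a). For injectivity, I will use the tracial identity $\tau_n(xy) = \tau_n(yx)$ for 2-quasitraces (Proposition~II.2.12 of~\cite{BH82}) to rewrite, for any $x \in M_n(A)$,
\[
\tau_n(x) = \sum_{i=1}^{k} \tau_n(a_i p b_i x) = \sum_{i=1}^{k} \tau_n(p b_i x a_i p),
\]
which expresses $\tau_n$ on all of $M_n(A)$ in terms of its restriction to $p M_n(A) p$; combined with the normalization $\tau_n(1_{M_n(A)}) = n$, this forces $\theta(\tau) = \theta(\tau')$ to imply $\tau = \tau'$.

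The main obstacle is surjectivity. Given $\sigma \in \QT(p M_n(A) p)$, the natural candidate for its preimage is the functional
\[
\tilde\sigma(x) = \sum_{i=1}^{k} \sigma(p b_i x a_i p) \qquad (x \in M_n(A));
\]
I would then restrict $\tilde\sigma$ to $A \otimes e_{11} \cong A$ and normalize to produce $\tau \in \QT(A)$. The work is to verify that $\tilde\sigma$ is independent of the chosen decomposition of $1_{M_n(A)}$, is a genuine 2-quasitrace (the most delicate point being axiom~(d), which I expect to handle by running the same construction on $M_2(p M_n(A) p) \cong (p \otimes 1_2) M_{2n}(A) (p \otimes 1_2)$ using the $M_2$ extension of $\sigma$), and satisfies $\theta(\tau) = \sigma$. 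A cleaner but less explicit alternative is to obtain the bijection abstractly from the Morita equivalence $\cK \otimes p M_n(A) p \cong \cK \otimes M_n(A)$ supplied by fullness, under which quasitrace cones correspond canonically up to a scale factor, and then match this up with the normalization $\tau_n(p)^{-1}$.
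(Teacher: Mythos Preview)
Your argument for part~(\ref{Ourlemma.1112_a}) is correct but takes a different route from the paper. The paper reduces to $n=1$, observes that $J_\tau = \{x : \tau(x^*x)=0\}$ is a closed two-sided ideal (Lemma~\ref{Ntau.ideal}), so fullness of $p$ forces $\tau(p)>0$ for every $\tau$, and then invokes compactness of $\QT(A)$ and continuity of $\tau \mapsto \tau(p)$. Your explicit Cuntz-comparison argument yields the sharper uniform bound $\tau_n(p)\geq n/k$, at the cost of a little more work.

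For part~(\ref{Ourlemma.1112_b}), however, your explicit injectivity argument has a genuine gap. The displayed identity
\[
\tau_n(x) \;=\; \sum_{i=1}^{k} \tau_n(a_i p b_i x)
\]
presupposes that $\tau_n$ is additive on the summands $a_i p b_i x$. These elements are neither self-adjoint nor pairwise commuting in general, and additivity on such elements is exactly what distinguishes traces from quasitraces; Definition~\ref{quasitrace}(\ref{quasitrace.c}) only gives linearity on commutative subalgebras. The tracial identity $\tau_n(xy)=\tau_n(yx)$ you cite from~\cite{BH82} is fine, but it does not supply the missing additivity. The same obstruction recurs in your proposed surjectivity formula $\tilde\sigma(x)=\sum_i \sigma(p b_i x a_i p)$: verifying that $\tilde\sigma$ is itself a quasitrace (in particular, that it is linear on commutative subalgebras and satisfies axiom~(\ref{quasitrace.d})) cannot be done term-by-term without already knowing additivity.

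The paper avoids all of this by citing Proposition~II.4.2 of~\cite{BH82} (together with Proposition~\ref{P_9305_BH82_Cor2225}), which establishes directly the bijection between normalized quasitraces on a unital C*-algebra and on a full corner. Your ``cleaner alternative'' via Morita equivalence is essentially this, and is the right way to proceed; the explicit formula-based approach, while natural for traces, does not survive the passage to quasitraces.
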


\begin{proof}
Since $\ta \mapsto \frac{1}{n} \ta_n$
is a bijection from $\QT (A)$ to $\QT \bigl( M_n (A) \bigr)$,
it is easily seen that it suffices to prove the result
when $n = 1$.

Since $A$ is unital and $p$ is full in $A$,
it follows that $A p A = A$.
Therefore, using \Lem{Ntau.ideal},
$\tau (p) > 0$ for all $\tau \in \QT (A)$.
Since $\QT (A)$ is nonempty (by Proposition~\ref{P_9312_ExistQT})
and compact,
and since $\tau \mapsto \tau (p)$ is \ct,
(\ref{Ourlemma.1112_a}) follows.

To prove (\ref{Ourlemma.1112_b}), clearly
$\frac{1}{\tau (p)} \tau |_{p A p} \in \QT (p A p)$.
Bijectivity of $\theta$ now follows
from Proposition II.4.2 of~\cite{BH82}
and Proposition~\ref{P_9305_BH82_Cor2225}.
\end{proof}

\begin{lem} \label{Ourlemma2221}
Let $A$ be a stably finite unital \ca,
let $n \in \N$, and let $p$ be
a full projection
in $M_n (A)$.
Recalling the notation in and
after Proposition~\ref{P_9305_BH82_Cor2225},
if
$\lambda
 = \inf \bigl( \bigl\{ \tau_n (p) \colon
     \tau \in \QT (A) \bigr\} \bigr)$,
then $0 < \ld \leq n$ and
\[
\rc \bigl(p M_{n} (A) p \bigr)
 \leq \frac{1}{\lambda} \cdot \rc (A).
\]
\end{lem}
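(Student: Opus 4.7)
The plan is to reduce the problem to the radius-of-comparison hypothesis on $A$ by translating quasitraces on $p M_n(A) p$ to quasitraces on $A$ via Lemma~\ref{Ourlemma.1112}, and then rescaling the defining inequality of $\rc$ by the appropriate factor.

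First I would dispose of the easy inequality $0 < \lambda \leq n$: strict positivity is immediate from Lemma~\ref{Ourlemma.1112}(\ref{Ourlemma.1112_a}), while $\lambda \leq n$ follows from $p \leq 1_{M_n(A)}$ and the fact that $\tau_n (1_{M_n(A)}) = n \tau (1) = n$ for every $\tau \in \QT (A)$ (using Proposition~\ref{BH82_Cor_2_2_5}(\ref{BH82_Cor_2_2_5_b}) and \ref{BH82_Cor_2_2_5}(\ref{BH82_Cor_2_2_5_a})). Also, if $\rc (A) = \I$ there is nothing to prove, so assume $\rc (A) < \I$.

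For the main inequality, I would fix $r > \tfrac{1}{\lambda} \rc (A)$ and show that $p M_n (A) p$ has $r$-comparison. Pick $s$ with $\rc (A) < s < r \lambda$, so that $A$ has $s$-comparison. Suppose $a, b \in M_{\I} (p M_n (A) p)_{+}$ satisfy $d_{\sigma} (a) + r < d_{\sigma} (b)$ for every $\sigma \in \QT (p M_n (A) p)$. By Lemma~\ref{Ourlemma.1112}(\ref{Ourlemma.1112_b}), every such $\sigma$ has the form $\sigma = \tfrac{1}{\tau_n (p)} \tau_n|_{p M_n (A) p}$ for a unique $\tau \in \QT (A)$; by the uniqueness of the extensions to $M_{\I}$ (Proposition~\ref{P_9305_BH82_Cor2225}), the induced functions on the Cuntz semigroup satisfy $d_{\sigma} (x) = \tfrac{1}{\tau_n (p)} d_{\tau} (x)$ for $x \in M_{\I} (p M_n (A) p)_{+}$ (where on the right we view $x$ inside $M_{\I} (A)$). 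Multiplying the hypothesis by $\tau_n (p)$ gives
\[
d_{\tau} (a) + r \tau_n (p) < d_{\tau} (b).
\]
Since $r \tau_n (p) \geq r \lambda > s$ for every $\tau \in \QT (A)$, this yields $d_{\tau} (a) + s < d_{\tau} (b)$ for all $\tau \in \QT (A)$, so $s$-comparison in $A$ gives $a \precsim_A b$.

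It remains to upgrade $a \precsim_A b$ to $a \precsim_{p M_n (A) p} b$. This is a standard hereditary subalgebra argument: with $q_m = \diag (p, p, \ldots, p)$ the unit of $M_m (p M_n (A) p) \subseteq M_m (M_n (A))$ and sufficiently large $m$, any sequence $(v_k)$ in $M_{\I} (A)$ with $v_k b v_k^* \to a$ can be replaced by $(q_m v_k q_m)$, since $q_m a q_m = a$, $q_m b q_m = b$, and $q_m v_k q_m \in M_m (p M_n (A) p)$. This gives $a \precsim_{p M_n (A) p} b$, so $p M_n (A) p$ has $r$-comparison for every $r > \tfrac{1}{\lambda} \rc (A)$, and the desired inequality follows. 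The only step with any substance is the rescaling identity $d_\sigma = \tfrac{1}{\tau_n(p)} d_\tau$ combined with the bijection in Lemma~\ref{Ourlemma.1112}(\ref{Ourlemma.1112_b}); everything else is bookkeeping.
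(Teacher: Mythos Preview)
Your proof is correct and follows essentially the same approach as the paper: use the bijection $\QT(A) \to \QT(pM_n(A)p)$ from Lemma~\ref{Ourlemma.1112}(\ref{Ourlemma.1112_b}) to rescale the dimension functions by $\tau_n(p)$, then invoke $r$-comparison in~$A$. The paper is terser---it works directly with $\tfrac{1}{\lambda}\rc(A)$ rather than introducing an intermediate $s$, and simply asserts $a \precsim_A b \Rightarrow a \precsim_{pM_n(A)p} b$ without spelling out the compression argument---but your added care with the $\epsilon$-room (picking $s$ with $\rc(A) < s < r\lambda$) is arguably cleaner for general stably finite~$A$, where the infimum defining $\rc(A)$ is not known to be attained.
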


\begin{proof}
By \Lem{Ourlemma.1112}(\ref{Ourlemma.1112_a})
we have
$\lambda > 0$.
Since $p \leq 1_{M_n (A)}$ and $\tau_n (1_{M_n (A)}) = n$
for all $\ta \in \QT (A)$,
and since $\QT (A) \neq \E$ by Proposition~\ref{P_9312_ExistQT},
it follows from
Proposition \ref{BH82_Cor_2_2_5}{(\ref{BH82_Cor_2_2_5_b})} that
$\lambda \leq n$.

Now let $m \in \N$,
let $a, b \in M_{m} (p M_n (A) p)_{+} \subseteq M_{m n} (A)_{+}$,
and suppose that
$d_{\rho} (a) + \frac{1}{\lambda} \cdot \rc (A) < d_{\rho} (b)$
for all $\rho \in \QT (p M_n (A) p)$.
By \Lem{Ourlemma.1112}{(\ref{Ourlemma.1112_b})},
this is the same as
\[
d_{\ta} (a) + \frac{\ta_n (p)}{\lambda} \cdot \rc (A) < d_{\ta} (b)
\]
for all $\ta \in \QT (A)$.
Since $\ld \leq \ta_n (p)$,
it follows that $a \precsim_{A} b$,
so $a \precsim_{p M_n (A) p} b$.
\end{proof}

\begin{thm}\label{Ourcornertheorem}
Let $A$ be a stably finite unital \ca, let $n \in \N$,
and let $p$ be a full
projection in $M_{n} (A)$.
Recalling the notation in and
after Proposition~\ref{P_9305_BH82_Cor2225},
define
\[
\lambda
 = \inf \bigl( \bigl\{ \tau_{n} (p) \colon
     \tau \in \QT (A) \bigr\} \bigr)
\andeqn
\eta
 = \sup \bigl( \bigl\{ \tau_{n} (p) \colon
     \tau \in \QT (A) \bigr\} \bigr).
\]
Then $0 < \ld \leq \et \leq n$ and
\[
\frac{1}{\eta} \cdot \rc (A)
 \leq \rc \big( p M_{n} (A) p \big)
 \leq \frac{1}{\lambda} \cdot \rc (A).
\]
\end{thm}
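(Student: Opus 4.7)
The right-hand inequality is Lemma~\ref{Ourlemma2221}, and the bounds $0 < \lambda \leq \eta \leq n$ follow from Lemma~\ref{Ourlemma.1112}(\ref{Ourlemma.1112_a}), the inequality $p \leq 1_{M_n(A)}$, and $\tau_n(1_{M_n(A)}) = n$. The substantive content is the lower bound $\frac{1}{\eta} \cdot \rc(A) \leq \rc(B)$, where $B := p M_n(A) p$. The plan is to apply Lemma~\ref{Ourlemma2221} in the reverse direction, realising $A$ as a full corner of a matrix algebra over~$B$.

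Since $p$ is full in the unital algebra $M_n(A)$, Brown's stable isomorphism theorem applied to the full hereditary subalgebra $B \subseteq M_n(A)$ yields $K \otimes B \cong K \otimes M_n(A) \cong K \otimes A$. The class $[1_A]$ is a projection class in $\Cu(A) \cong \Cu(B)$, so for some $N$ it is represented by a projection $e \in M_N(B)$ which is Murray-von Neumann equivalent to $1_A \otimes e_{11} \in M_{Nn}(A)$ via a partial isometry $v \in M_{Nn}(A)$; after enlarging $N$, we may also arrange $e$ to be full in $M_N(B)$. Because $e \in M_N(B) = (1_N \otimes p) M_{Nn}(A) (1_N \otimes p)$, we have $e M_{Nn}(A) e = e M_N(B) e$, and conjugation by $v$ gives a $*$-isomorphism $e M_N(B) e \cong (1_A \otimes e_{11}) M_{Nn}(A) (1_A \otimes e_{11}) = A$. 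The quasitracial identity $\tau_{Nn}(v v^*) = \tau_{Nn}(v^* v)$ then gives $\tau_{Nn}(e) = \tau(1_A) = 1$ for every $\tau \in \QT(A)$.

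Applying Lemma~\ref{Ourlemma2221} to $B$ with the full projection~$e$:
\[
\rc(A) = \rc\bigl(e M_N(B) e\bigr) \leq \frac{1}{\lambda'} \cdot \rc(B),
\qquad
\lambda' = \inf_{\rho \in \QT(B)} \rho_N(e).
\]
For $\tau \in \QT(A)$ and $\rho = \theta(\tau) = \frac{1}{\tau_n(p)} \tau_n|_B$ from Lemma~\ref{Ourlemma.1112}(\ref{Ourlemma.1112_b}), uniqueness of extension in Proposition~\ref{P_9305_BH82_Cor2225} gives $\rho_N(e) = \frac{1}{\tau_n(p)} \tau_{Nn}(e) = 1/\tau_n(p)$. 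Taking the infimum over $\rho$ yields $\lambda' = 1/\sup_\tau \tau_n(p) = 1/\eta$, and hence $\rc(A) \leq \eta \cdot \rc(B)$, as required.

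The main obstacle is the construction of $e$: simultaneously full in $M_N(B)$, Murray-von Neumann equivalent in the stabilisation to $1_A \otimes e_{11}$, and with the partial isometry $v$ sitting inside $M_{Nn}(A)$. This combines Brown's stable isomorphism theorem with the preservation of projection classes under the induced Cuntz-semigroup isomorphism, and uses that a Murray-von Neumann equivalence in $K \otimes A$ between two projections lying in a finite corner can be implemented within that corner (since $v = e v (1_A \otimes e_{11}) \in M_{Nn}(A)$). Once $e$ is in hand, the remainder of the argument is a single application of Lemma~\ref{Ourlemma2221} together with the trace-scaling identity built into the bijection~$\theta$.
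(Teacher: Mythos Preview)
Your proof is correct and follows essentially the same strategy as the paper: realise $A$ as a full corner $e M_N(B) e$ in a matrix algebra over $B = p M_n(A) p$, apply Lemma~\ref{Ourlemma2221} with the roles reversed, and compute $\inf_{\rho} \rho_N(e) = 1/\eta$ via the bijection~$\theta$ of Lemma~\ref{Ourlemma.1112}(\ref{Ourlemma.1112_b}) together with $\tau_{Nn}(e) = \tau(1_A) = 1$. The only difference is in how you obtain~$e$: the paper argues directly from fullness of~$p$ that $1_A$ is Murray--von~Neumann equivalent to a projection $q \in M_m(B)$ (since $p$ full gives $1_{M_n(A)} \precsim p^{\oplus k}$ for some~$k$, hence $1_A \lessapprox$ a projection in $M_k(B)$), whereas you route through Brown's stable isomorphism theorem and the induced Cuntz-semigroup isomorphism---a heavier tool than needed, but perfectly valid here since both algebras are unital (hence $\sigma$-unital). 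Your remark that enlarging~$N$ is needed to make $e$ full is unnecessary: fullness of~$e$ in $M_N(B)$ is automatic, since $e$ corresponds under the stable isomorphism to the full projection~$1_A$, and ideals of $M_N(B)$ are exactly restrictions of ideals of $K \otimes B$.
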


\begin{proof}
The parts involving $\ld$ are \Lem{Ourlemma2221}.
Since $\QT (A) \neq \E$
(by Proposition~\ref{P_9312_ExistQT}),
the relations $\ld \leq \et \leq n$
are clear.

Since $p$ is full,
there are $m \in \N$ and a projection
$q \in M_m (p M_n (A) p)$ such that $1_A \approx q$.
Then
$A \cong q M_m (p M_n (A) p) q$.
Apply \Lem{Ourlemma2221}
with $p M_n (A) p$ in place of~$A$,
with $m$ in place of~$n$,
and with $q$ in place of~$p$.
We get
\begin{equation}\label{Eq_9306_rcA}
\rc (A)
 \leq \left( \frac{1}{\inf \bigl( \bigl\{ \sm_m (q) \colon
           \sm \in \QT (p M_n (A) p) \bigr\} \bigr)} \right)
      \rc (p M_n (A) p).
\end{equation}
By \Lem{Ourlemma.1112}{(\ref{Ourlemma.1112_b})},
\begin{equation}\label{Eq_9306_CorrQT}
\QT (p M_n (A) p)
 = \left\{ \tau_n (p)^{-1} \tau_n |_{p M_n (A) p} \colon
         \ta \in \QT (A) \right\}.
\end{equation}
If $\sm \in \QT (p M_n (A) p)$ and $\ta \in \QT (A)$
is the corresponding quasitrace from~(\ref{Eq_9306_CorrQT}),
then, using $q \approx 1_A$,
we get
$\sm_m (q) = \frac{\ta_{m n} (q)}{\ta_{n} (p)} = \frac{1}{\ta_n (p)}$.
Therefore
\[
\inf \bigl( \bigl\{ \sm_m (q) \colon
     \sm \in \QT (p M_n (A) p) \bigr\} \bigr)
  = \frac{1}{\sup \bigl( \bigl\{ \ta_n (p) \colon
     \ta \in \QT (A) \bigr\} \bigr)}
  = \frac{1}{\et}.
\]
So~(\ref{Eq_9306_rcA})
implies that $\frac{1}{\eta} \cdot \rc (A) \leq \rc (p M_n (A) p)$.
\end{proof}


\subsection{Approximation lemmas}\label{Sec_App}
This subsection contains several approximation lemmas
which will be needed frequently.

\begin{lem}\label{9625_FuncCalcCom}
Let $M \in (0, \I)$,
let $f \colon [0, M] \to \C$ be continuous,
and let $\ep > 0$.
Then there is $\delta > 0$
such that whenever $A$ is a \ca{}
and $a, x \in A$
satisfy
\begin{equation*}
a \in A_{+},
\qquad
\| a \| \leq M,
\qquad
\| x \| \leq M,
\andeqn
\| a x - x a \| < \delta,
\end{equation*}
then $\| f (a) x - x f (a) \| < \ep$.
\end{lem}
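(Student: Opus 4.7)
The plan is to reduce to the polynomial case via Stone--Weierstrass, where the commutator identity makes the estimate straightforward.

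First I would handle polynomials. For any $k \in \N$, the telescoping identity
\[
a^k x - x a^k = \sum_{j = 0}^{k - 1} a^j (a x - x a) a^{k - 1 - j}
\]
together with $\|a\| \leq M$ gives $\| a^k x - x a^k \| \leq k M^{k - 1} \| a x - x a \|$. Consequently, for any polynomial $p (t) = \sum_{k = 0}^{N} c_k t^k$, one obtains
\[
\| p (a) x - x p (a) \|
 \leq \Big( \ssum{k = 1}^{N} |c_k| k M^{k - 1} \Big) \| a x - x a \|.
\]

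Next I would approximate $f$ uniformly on $[0, M]$. By the Stone--Weierstrass theorem, choose a polynomial $p$ with $\sup_{t \in [0, M]} | f (t) - p (t) | < \frac{\ep}{4 M}$. Since $\sm (a) \subseteq [0, M]$, the continuous functional calculus yields $\| f (a) - p (a) \| < \frac{\ep}{4 M}$. Using $\| x \| \leq M$ and the triangle inequality,
\[
\| f (a) x - x f (a) \|
 \leq 2 \| x \| \cdot \| f (a) - p (a) \| + \| p (a) x - x p (a) \|
 < \frac{\ep}{2} + \| p (a) x - x p (a) \|.
\]

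Finally, with the polynomial $p$ now fixed, set
\[
\delta = \frac{\ep}{2} \Big( 1 + \ssum{k = 1}^{N} |c_k| k M^{k - 1} \Big)^{-1}.
\]
Then whenever $a, x$ satisfy the hypotheses and $\| a x - x a \| < \delta$, the polynomial estimate above gives $\| p (a) x - x p (a) \| < \frac{\ep}{2}$, and combining with the previous display yields $\| f (a) x - x f (a) \| < \ep$, as required. The whole argument is elementary; the only ``step'' to watch is that $\dt$ is allowed to depend on $f$, $\ep$, and $M$ (so the polynomial degree~$N$ is fixed before $\dt$ is chosen), which is exactly what the statement permits.
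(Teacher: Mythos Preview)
Your argument is correct and is the standard polynomial-approximation proof of this folklore lemma. The paper itself does not actually prove it: it cites Lemma~2.5 of~\cite{ArPh} for the case $M = 1$ and remarks that the general case is the same (or follows by scaling). Your write-up is precisely the direct argument one expects that reference to contain, so there is no meaningful difference in approach.
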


\begin{proof}
The case $M = 1$ is Lemma~2.5 of~\cite{ArPh}.
The proof of this version is the same.
\end{proof}

The statement can also be gotten from Lemma 2.5 of~\cite{ArPh}
by scaling.

\begin{lem}\label{L_9420_NearZero}
Let $f, g \colon [0, \I) \to [0, \I)$
be \cfn{s} such that $f (0) = g (0) = 0$,
let $\ep > 0$,
and let $M \in (0, \I)$.
Then there is $\dt > 0$ such that whenever $A$ is a \ca,
and $a, b \in A_{+}$
satisfy $\| a b \| < \dt$ and $\| a \|, \, \| b \| \leq M$,
then $\| f (a) g (b) \| < \ep$.
\end{lem}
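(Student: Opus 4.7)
The plan is to reduce the statement to the case of polynomials via Stone--Weierstrass, and then exploit that $f(0) = g(0) = 0$ lets us drop constant terms, so that every monomial appearing factors through $ab$.

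First I would fix $\eta > 0$ (to be chosen later) and, using that $f, g \colon [0, M] \to \C$ are continuous with $f(0) = g(0) = 0$, approximate them uniformly on $[0, M]$ by polynomials without constant term:
\[
p (t) = \sum_{k = 1}^{N} \alpha_k t^k,
\qquad
q (t) = \sum_{j = 1}^{N} \beta_j t^j,
\]
with $\sup_{t \in [0, M]} |f (t) - p (t)| < \eta$ and $\sup_{t \in [0, M]} |g (t) - q (t)| < \eta$. By functional calculus, whenever $a, b \in A_{+}$ satisfy $\| a \|, \| b \| \leq M$, we have $\| f (a) - p (a) \| < \eta$, $\| g (b) - q (b) \| < \eta$, and moreover $\| p (a) \| \leq \sup_{[0, M]} |f| + \eta$ and $\| g (b) \| \leq \sup_{[0, M]} |g|$.

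The key observation is the factorization
\[
a^k b^j = a^{k - 1} (a b) b^{j - 1}
\qquad
\text{for all } k, j \geq 1,
\]
which gives $\| a^k b^j \| \leq M^{k + j - 2} \| a b \|$. Summing the expansion of $p (a) q (b) = \sum_{k, j} \alpha_k \beta_j a^k b^j$ yields
\[
\| p (a) q (b) \| \leq C \| a b \|,
\qquad \text{where} \qquad
C = \sum_{k = 1}^{N} \sum_{j = 1}^{N} |\alpha_k| |\beta_j| M^{k + j - 2}
\]
is a constant depending only on $p$, $q$, and $M$, not on $a$, $b$, or $A$.

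Combining via the triangle inequality,
\[
\| f (a) g (b) \|
 \leq \| f (a) - p (a) \| \cdot \| g (b) \|
   + \| p (a) \| \cdot \| g (b) - q (b) \|
   + \| p (a) q (b) \|
 \leq \eta \bigl( \sup_{[0, M]} |g| \bigr)
   + \bigl( \sup_{[0, M]} |f| + \eta \bigr) \eta
   + C \| a b \|.
\]
I would then choose $\eta > 0$ small enough that the first two summands are together less than $\frac{\ep}{2}$, and then choose $\dt > 0$ small enough that $C \dt < \frac{\ep}{2}$. Whenever $\| a b \| < \dt$, this gives $\| f (a) g (b) \| < \ep$, as required.

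There is no serious obstacle here; the argument is a standard functional-calculus approximation. The only point worth highlighting is that the hypothesis $f (0) = g (0) = 0$ is used in an essential way: it is exactly what allows the approximating polynomials to have no constant term, so that every monomial $a^k b^j$ appearing in the expansion of $p (a) q (b)$ has both $k, j \geq 1$ and hence contains an interior factor $a b$.
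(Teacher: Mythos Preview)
Your proof is correct and is essentially the same as the paper's own argument: approximate $f$ and $g$ on $[0,M]$ by polynomials without constant term, use the factorization $a^k b^j = a^{k-1}(ab)b^{j-1}$ to bound $\|p(a)q(b)\|$ by a constant times $\|ab\|$, and finish with the triangle inequality. The only differences are cosmetic---the paper tracks explicit constants ($m$, $n$, $R$, $M^{m+n}$) where you use a single $C$, and splits $\ep$ into thirds rather than halves.
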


This lemma can be proved by approximating $a$ and $b$ by
positive elements whose product is zero,
but a direct proof seems just as easy.

\begin{proof}[Proof of Lemma~\ref{L_9420_NearZero}]
\Wolog{} $M \geq 1$ and $\ep < 1$.
Set
$C = \max \big( \| f |_{[0, M]} \|_{\I},
                 \, \| g |_{[0, M]} \|_{\I} \big)$.
Choose $m, n \in \Nz$ and
\[
\af_1, \af_2, \ldots, \af_m, \bt_1, \bt_2, \ldots, \bt_n \in \R
\]
such that
the polynomial functions
with no constant term,
given by
$f_0 (\ld) = \sum_{k = 1}^m \af_k \ld^k$
and
$g_0 (\ld) = \sum_{l = 1}^n \bt_l \ld^l$
for $\ld \in [0, M]$,
satisfy
\[
| f_0 (\ld) - f (\ld) | < \frac{\ep}{3 (C + 1)}
\andeqn
| g_0 (\ld) - g (\ld) | < \frac{\ep}{3 (C + 1)}
\]
for all $\ld \in [0, M]$.
\Wolog{} $m, n \geq 1$.
Define
\[
R = \max \big( | \af_1 |, | \af_2 |, \ldots, | \af_m |,
   | \bt_1 |, | \bt_2 |, \ldots, | \bt_n | \big)
  + 1
\quad {\mbox{and}} \quad
\dt = \frac{\ep}{3 m n R^2 M^{m + n}}.
\]

Now let $A$, $a$, and~$b$ be as in the hypotheses.
Using $M \geq 1$ at the second step, we have
\begin{align*}
\| f_0 (a) g_0 (b) \|
& \leq \Bigg( \sum_{k = 1}^m | \af_k | \cdot \| a \|^{k - 1} \Bigg)
          \| a b \|
          \Bigg( \sum_{l = 1}^n | \bt_l | \cdot \| b \|^{l - 1} \Bigg)
\\
& < (m R M^{m - 1}) \dt (n R M^{n - 1})
  \leq \frac{\ep}{3}.
\end{align*}
Therefore,
since $\ep < 1$ implies $\| f_0 (a) \| \leq \| f (a) \| + 1$,
\begin{align*}
\| f (a) g (b) \|
& \leq \| f (a) - f_0 (a) \| \cdot \| g (b) \|
         + \| f_0 (a) \| \cdot \| g (b) - g_0 (b) \|
         + \| f_0 (a) g_0 (b) \|
\\
& \leq \left( \frac{\ep}{3 (C + 1)} \right) C
       + (C + 1) \left( \frac{\ep}{3 (C + 1)} \right)
       + \frac{\ep}{3}
  < \ep.
\end{align*}
This completes the proof.
\end{proof}


\section{Injectivity of
  $\Cu_{+} (A^{\af}) \to \Cu_{+} (A)^{\af}$}\label{Sec_rcFix}

In this section,
we prove that if $G$ is finite,
$A$ is unital, stably finite, and simple,
and $\alpha \colon G \to \Aut (A)$
has the weak tracial Rokhlin property,
then the inclusion $A^{\af} \to A$
induces an isomorphism from the ordered semigroup
of purely positive elements
$\Cu_{+} (A^{\af}) \cup \{ 0 \}$
(see Definition~\ref{D_9421_Pure} below)
to a subsemigroup of $\Cu (A)$.
Example~\ref{E_9421_NotOnK0}
shows that this result fails if we do not
discard the classes of projections.

\begin{ntn}\label{N_9408_StdNotation_CP}
Let $\af \colon G \to \Aut (A)$
be an action of a finite group $G$ on a \uca~$A$.
For $g \in G$,
we let $u_g$ be the element of $C_{\mathrm{c}} (G, A, \af)$
which takes the value $1$ at $g$
and $0$ at the other elements of~$G$.
We use the same notation for its image in $C^* (G, A, \af)$.
We denote by $A^{\alpha}$ the fixed
point algebra, given by
\[
A^{\alpha} = \big\{ a \in A \colon
\alpha_g (a) = a \mbox{ for all } g \in G \big\}.
\]
We extend this notation to the elements of various objects associated
with~$A$
under the actions induced by~$\af$,
getting,
for example, $(K \otimes A)^{\alpha}$,
$K_0 (A)^{\alpha}$,
$\W (A)^{\alpha}$,
$\Cu (A)^{\alpha}$,
etc.
\end{ntn}

The following definition,
without Condition (\ref{Def.w.t.r.p.d})
but also requiring $\| f_g \| = 1$,
appears in Definition 5.2 of~\cite{HO13}
under the name \emph{generalized tracial Rokhlin property}.
Definition 2.2 of \cite{GHS17}
includes Condition~(\ref{Def.w.t.r.p.d})
but only has approximate orthogonality
of the contractions.
By Proposition~3.10 of~\cite{FG17},
Definition 2.2 of \cite{GHS17} is equivalent to our definition.
Condition~(\ref{Def.w.t.r.p.d}) is needed to ensure that
the trivial action on $\mathbb{C}$ or a purely infinite simple
unital \ca{} does not have the weak tracial Rokhlin property.

\begin{dfn}\label{W_T_R_P_def}
Let $G$ be a finite group,
let $A$ be a simple unital \ca,
and let $\alpha \colon G \to \Aut (A)$  be an action of
$G$ on $A$.
We say that $\alpha$ has the
\emph{weak tracial Rokhlin property} if for every $\ep > 0$,
every finite set $F \subseteq A$, and every positive
element $x \in A$ with $\| x \| = 1$,
there exist orthogonal positive contractions
$f_g \in A$ for $g \in G$ such that,
with $f = \sum_{g \in G} f_g$, the following hold:
\begin{enumerate}
\item\label{Def.w.t.r.p.a}
$\| a f_g - f_g a \| < \ep$ for all $g \in G$ and all $a \in F$.
\item\label{Def.w.t.r.p.b}
$\| \alpha_{g} ( f_h ) - f_{g h} \| < \ep$ for all $g, h \in G$.
\item\label{Def.w.t.r.p.c}
$1 - f \precsim_A x$.
\item\label{Def.w.t.r.p.d}
$\|  f x f \| > 1 - \ep$.
\end{enumerate}
\end{dfn}

In \Def{W_T_R_P_def},
if $G \neq \{ 1 \}$ the algebra $A$ can't be type~I,
since $\alpha$ must be pointwise outer.
(See Proposition 3.2 of \cite{FG17}.)
Therefore $A$ is infinite-dimensional.
(For clarity, we often explicitly include
infinite-dimensionality in hypotheses anyway.)

\begin{lem}\label{invariant_contractions}
Let $G$ be a finite group,
let $A$ be an infinite-dimensional simple unital \ca,
and let $\alpha \colon G \to \Aut (A)$  be an action of
$G$ on $A$ which has the
weak tracial Rokhlin property.
Then for every $\ep > 0$,
every finite set $F \subseteq A$, and every positive
element $x \in A$ with $\| x \| = 1$,
there exist positive contractions
$e_g, f_g \in A$ for $g \in G$ such that,
with $e = \sum_{g \in G} e_g$ and $f = \sum_{g \in G} f_g$,
the following hold:
\begin{enumerate}
\item\label{W.T.R.P.66}
$\| e_g e_h \| < \ep$ and $\| f_g f_h \| < \ep$
for all $g, h \in G$.
\item\label{W.T.R.P_11}
$\| a e_g - e_g a \| < \ep$ and $\| a f_g - f_g a \| < \ep$
for all $g \in G$ and all $a \in F$.
\item\label{W.T.R.P_22}
$\alpha_{g} ( e_h ) = e_{g h}$
and $\alpha_{g} ( f_h ) = f_{g h}$ for all $g, h \in G$.
\item\label{W.T.R.P.33}
$(1 - f - \ep)_{+} \precsim_A x$.
\item\label{W.T.R.P.44}
$\| f x f \| > 1 - \ep$.
\item\label{W.T.R.P.55}
$e \in A^{\alpha}$, $f \in A^{\alpha}$, and $\| f \| = 1$.
\item\label{Item_9525_New_ef}
$e_g f_g = f_g$ for all $g \in G$.
\setcounter{TmpEnumi}{\value{enumi}}
\end{enumerate}
\end{lem}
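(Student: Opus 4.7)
The plan is to extract approximately equivariant orthogonal positive contractions from \Def{W_T_R_P_def}, enforce exact equivariance by averaging over~$G$, cut down via functional calculus to arrange $e_g f_g = f_g$, and then rescale to achieve $\| f \| = 1$. Fix auxiliary tolerances $\eta, \dt > 0$, whose precise values will be chosen at the end. Applying \Def{W_T_R_P_def} with parameter~$\eta$, finite set~$F$, and the given~$x$ yields orthogonal positive contractions $\tilde f_g \in A$ ($g \in G$) satisfying the four conclusions of that definition with $\eta$ in place of~$\ep$. Set
\[
\bar f_g
 = \frac{1}{\card (G)} \sum_{h \in G} \af_h \bigl( \tilde f_{h^{-1} g} \bigr).
\]
Then each $\bar f_g$ is a positive contraction, $\af_k (\bar f_g) = \bar f_{k g}$ exactly, and $\| \bar f_g - \tilde f_g \| < \eta$ for all $g, k \in G$. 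By exact equivariance the $\bar f_g$ have a common norm, and $\| \tilde f x \tilde f \| > 1 - \eta$ together with the approximate equivariance of the $\tilde f_g$ forces this common norm to be at least $(1 - \eta)^{1/2} - 2 \eta$.

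Next, choose continuous functions $\ph, \ps \colon [0, 1] \to [0, 1]$ with $\ph (0) = \ps (0) = 0$ and $\ps \ph = \ph$, for instance $\ph (t) = (t - \dt)_{+}$ and $\ps (t) = \min (1, t / \dt)$. Set $f_g' = \ph (\bar f_g)$ and $e_g = \ps (\bar f_g)$. Then $e_g f_g' = f_g'$ by functional calculus, exact equivariance of the $f_g'$ and the $e_g$ is inherited from~$\bar f_g$, and approximate orthogonality of the $f_g'$ (and of the $e_g$) follows from $\| \bar f_g \bar f_{g'} \|$ being of order~$\eta$ for $g \neq g'$ (a consequence of the exact orthogonality of the $\tilde f_g$ and $\| \bar f_g - \tilde f_g \| < \eta$) via \Lem{L_9420_NearZero}. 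Let $f' = \sum_{g \in G} f_g'$; approximate orthogonality forces $\| f' \|$ to lie in $(0, 1]$ for small parameters. Define $f_g = f_g' / \| f' \|$. The identity $e_g f_g = f_g$ and the exact equivariance survive the rescaling, $e, f \in A^{\af}$, and $\| f \| = 1$ by construction.

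The remaining verifications are then routine. Approximate commutation of $e_g$ and $f_g$ with~$F$ follows from \Lem{9625_FuncCalcCom} applied to $\ph$ and~$\ps$, using the bound $\| a \bar f_g - \bar f_g a \| \leq (2 \| a \| + 1) \eta$ inherited from the $\tilde f_g$. The subequivalence $(1 - f - \ep)_{+} \precsim_A x$ follows from $1 - \tilde f \precsim_A x$ and the norm estimate $\| f - \tilde f \| \leq \card (G) (\eta + \dt) + (1 - \| f' \|)$, combined with \Lem{PhiB.Lem_18_4}(\ref{Item_9420_LgSb_1_6}); and $\| f x f \| > 1 - \ep$ follows from $\| \tilde f x \tilde f \| > 1 - \eta$, the closeness of $f'$ to~$\tilde f$, and the fact that division by $\| f' \|^2 \leq 1$ only enlarges the norm. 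The main obstacle is the parameter chase: $\eta$ and $\dt$ must be chosen small enough (relative to $\ep$, $\card (G)$, and $\max_{a \in F} \| a \|$) that every small quantity above is bounded by~$\ep$, while $\dt$ must remain below the common norm of the $\bar f_g$ so that $\| f' \|$ stays bounded away from zero and the final rescaling is well defined.
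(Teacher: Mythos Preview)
Your proposal is correct and follows essentially the same strategy as the paper: apply functional calculus with functions $\ps, \ph$ satisfying $\ps \ph = \ph$ to obtain the relation $e_g f_g = f_g$, use exact equivariance of the resulting elements, and rescale so that $\| f \| = 1$. The paper's proof differs only in that it skips your averaging step: rather than forming $\bar f_g = \card (G)^{-1} \sum_h \af_h (\tilde f_{h^{-1} g})$, it simply takes the single Rokhlin element $d_1$ at the identity and sets $e_g = \af_g (s (d_1))$, $f_g = \nu^{-1} \af_g (t (d_1))$. Since your averaged family already satisfies $\bar f_g = \af_g (\bar f_1)$, your construction is in effect the same translation trick applied to $\bar f_1$ instead of $\tilde f_1$; the averaging is harmless but redundant, and the paper's version yields slightly cleaner estimates.
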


For most applications,
we do not need the elements~$e_g$.
Also,
presumably one can arrange to have
$\| e \| \leq 1$,
but we don't need this.

\begin{proof}[Proof of Lemma~\ref{invariant_contractions}]
Set $n = \card (G)$.

We may assume $\ep < \frac{1}{2}$.
Let $F \subseteq A$ be a finite set, and
let $x \in A_{+}$ satisfy $\| x \| = 1$.
Define
\begin{equation}\label{Eq_9526_DefRh}
M = \max \Bigl( 1, \, \max_{a \in F} \| a \| \Bigr)
\andeqn
\rh = \frac{\ep}{4 ( 1 + 3 n ) n + 1}.
\end{equation}
Define \cfn{s} $s, t \colon [0, 1] \to [0, 1]$
by
\[
s (\ld) = \left\{ \begin{array}{ll}
     \rh^{-1} \ld   & \hspace{1em}  0 \leq \ld \leq \rh  \\
     1         & \hspace{1em}  \rh \leq \ld \leq 1
                     \rule{0em}{2.5ex}
    \end{array} \right.
\andeqn
t (\ld) = \left\{ \begin{array}{ll}
     0         & \hspace{1em}  0 \leq \ld \leq \rh  \\
     \frac{\ld - \rh}{1 - \rh}   & \hspace{1em}  \rh \leq \ld \leq 1.
                     \rule{0em}{2.5ex}  \\
    \end{array} \right.
\]
Thus,
if $c \in A_{+}$ satisfies $\| c \| \leq 1$,
then
\begin{equation}\label{Eq_9528_StSt}
s (c) t (c) = t (c)
\andeqn
\| t (c) - c \| \leq \rh.
\end{equation}

Use Lemma~\ref{L_9420_NearZero}
to choose $\ep_0 > 0$ such that whenever
$B$ is a \ca{}
and $a, b \in B_{+}$ satisfy $\| a \| \leq 1$,
$\| b \| \leq 1$,
and $\| a b \| < \ep_0$,
then
\[
\| s (a) s (b) \| < \frac{\ep}{4}
\andeqn
\| t (a) t (b) \| < \frac{\ep}{4}.
\]
Use Lemma~\ref{9625_FuncCalcCom}
to choose $\ep_1 > 0$ such that whenever
$B$ is a \ca{}
and $a \in B_{+}$ and $z \in B$ satisfy $\| a \| \leq M$,
$\| z \| \leq M$,
and $\| a z - z a \| < \ep_1$,
then
\[
\| s (a) z - z s (a) \| < \frac{\ep}{2}
\andeqn
\| t (a) z - z t (a) \| < \frac{\ep}{2}.
\]
Define
\begin{equation}\label{Eq_9526_DefEp}
\ep' = \min \left( \rh, \, \frac{\ep_0}{2},
            \, \frac{\ep_1}{2 M + 1}
 \right).
\end{equation}
Applying \Def{W_T_R_P_def} with with $F$ and $x$ as given
and with $\ep'$ in place of $\ep$,
we get orthogonal positive contractions
$d_g \in A$ for $g \in G$ such that,
with $d = \sum_{g \in G} d_g$, the following hold:
\begin{enumerate}
\setcounter{enumi}{\value{TmpEnumi}}
\item\label{W.T.R.P_111}
$\| a d_g - d_g a \| < \ep'$ for all $g \in G$ and all $a \in F$.
\item\label{W.T.R.P_222}
$\| \alpha_{g} ( d_h ) - d_{g h} \|  < \ep'$ for all $g, h \in G$.
\item\label{W.T.R.P.333}
$1 - d \precsim_A x$.
\item\label{W.T.R.P.444}
$\| d x d  \| > 1 - \ep'$.
\setcounter{TmpEnumi}{\value{enumi}}
\end{enumerate}

Define
$\nu = \big\| \sum_{g \in G} \alpha_{g} (t (d_1)) \big\|$.
We claim that
\begin{equation}\label{W.T.R.P_111_prime}
| \nu - 1 | < 3 n \rh
\andeqn
\nu > \frac{1}{2}.
\end{equation}
To prove the claim, first
use $\| x \| = 1$ and $\| d \| \leq 1$ to get
\begin{equation}\label{Eq_9528_Star}
1 - \rh \leq 1 - \ep' < \| d x d \| \leq \| d \|^2 \leq \| d \|.
\end{equation}
Second, using the second part of~(\ref{Eq_9528_StSt})
at the second step,
\begin{align*}
\Bigg\| \sum_{g \in G} \alpha_{g} (t (d_1)) - d \Bigg\|
& \leq \sum_{g \in G} \| \alpha_{g} (t (d_1) - d_1) \|
       + \sum_{g \in G} \| \alpha_{g} (d_1) - d_g \|
\\
& < n \rh + n \ep'
  \leq 2 n \rh.
\end{align*}
This relation implies that
$\bigl| \nu - \| d \| \bigr| < 2 n \rh$,
so
\begin{equation}\label{1.3.19.11}
\nu < \| d \| + 2 n \rh
   \leq 1 + 2 n \rh
   \leq 1 + 3 n \rh,
\end{equation}
and, using~(\ref{Eq_9528_Star}) and $\ep < \frac{1}{2}$,
\begin{equation}\label{1.3.19.22}
\nu > \| d \| - 2 n \rh
  > 1 - \rh - 2 n \rh
  \geq 1 - 3 n \rh
  > 1 - \ep
  > \frac{1}{2}.
\end{equation}
Using (\ref{1.3.19.11}) and  (\ref{1.3.19.22}), we get
$| \nu - 1 | < 3 n \rh$.
The claim is proved.

Define
$e_g = \alpha_{g} (s (d_1) )$
and $f_{g} = \frac{1}{\nu} \alpha_{g} (t (d_1) )$
for $g \in G$,
and define
$e = \sum_{g \in G} e_g$
and $f = \sum_{g \in G} f_g$.
Clearly $\| e_g \| \leq 1$ and $\| f_g \| \leq 1$ for all $g \in G$,
and $\| f \| = 1$.
Part~(\ref{W.T.R.P_22}) of the conclusion is immediate.
Clearly $e, f \in A^{\alpha}$,
so we have~(\ref{W.T.R.P.55}),
and (\ref{Item_9525_New_ef})
follows from~(\ref{Eq_9528_StSt}).

Now we claim that:
\begin{enumerate}
\setcounter{enumi}{\value{TmpEnumi}}
\item\label{W.T.R.P_222_prime}
$\| f_g - d_g \|
 < 2 ( 1 + 3 n ) \rh$ for all $g \in G$.
\end{enumerate}
To prove the claim,
use the second part of~(\ref{Eq_9528_StSt})
and (\ref{W.T.R.P_222}) at the second step and
(\ref{W.T.R.P_111_prime}) at the third step to get
\begin{align*}
\| f_g - d_g \|
& \leq \Big\| \frac{1}{\nu} \alpha_{g} (t (d_1))
              - \alpha_{g} (t (d_1)) \Big\|
        + \| \alpha_{g} (t (d_1) - d_1) \|
                 + \| \alpha_{g} (d_1) - d_g \|
\\
& \leq \frac{1}{\nu} | \nu - 1 | \cdot \| t (d_1) \| + \rh + \ep'
  < 2 ( 1 + 3 n ) \rh,
\end{align*}
as desired.

We prove Part~(\ref{W.T.R.P.66}) of the conclusion.
For $g \neq h$,
using $d_g d_h = 0$ at the first step,
(\ref{W.T.R.P_222}) at the second step,
and (\ref{Eq_9526_DefEp}) at the third step,
we have
\[
\| \af_g (d_1) \af_h (d_1) \|
 \leq \| \af_g (d_1) \| \cdot \| \af_h (d_1) - d_h \|
       + \| \af_g (d_1) - d_g \| \cdot \| d_h \|
 < 2 \ep'
 \leq \ep_0.
\]
The choice of $\ep_0$
and the relations $e_g = s ( \af_g (d_1) )$
and $f_g = \nu^{-1} t ( \af_g (d_1) )$
now imply
\[
\| e_g e_h \| < \frac{\ep}{4} \leq \ep
\andeqn
\| f_g f_h \|
 < \nu^{-2} \left( \frac{\ep}{4} \right)
 < 4 \left( \frac{\ep}{4} \right)
 = \ep,
\]
which is~(\ref{W.T.R.P.66}).

We prove~(\ref{W.T.R.P_11}).
So let $g \in G$ and let $a \in F$.
Using~(\ref{W.T.R.P_111}) and~(\ref{W.T.R.P_222}) at the second step,
and using~(\ref{Eq_9526_DefEp}) at the third step,
we get
\[
\| a \af_g (d_1) - \af_g (d_1) a \|
 \leq 2 \| a \| \cdot \| \af_g (d_1) - d_g \| + \| a d_g - d_g a \|
 < (2 M + 1) \ep'
 \leq \ep_1.
\]
The choice of $\ep_1$
implies
\[
\| a e_g - e_g a \|
 = \| a s ( \af_g (d_1) ) - s ( \af_g (d_1) ) a \|
 < \frac{\ep}{2}
 < \ep
\]
and
\[
\| a f_g - f_g a \|
 = \frac{1}{\nu} \| a t ( \af_g (d_1) ) - t ( \af_g (d_1) ) a \|
 < 2 \left( \frac{\ep}{2} \right)
 = \ep,
\]
as desired.

To prove~(\ref{W.T.R.P.33}), we estimate,
using (\ref{W.T.R.P_222_prime}) at the third step
and (\ref{Eq_9526_DefRh}) at the last step
\begin{equation}\label{1.3.19.10}
\| (1 - f) - (1 - d) \|
 = \| d - f \|
  \leq \sum_{g \in G } \|  f_g - d_g \|
 < n \cdot 2 ( 1 + 3 n ) \rh
  < \ep.
\end{equation}
Therefore, using
\Lem{PhiB.Lem_18_4}(\ref{PhiB.Lem_18_4_10.a}) at the first step and
(\ref{W.T.R.P.333}) at the second step,
\[
(1 - f - \ep)_{+} \precsim_{A} 1 - d \precsim_{A} x.
\]

For~(\ref{W.T.R.P.44}), we estimate,
using part of~(\ref{1.3.19.10}) at the second step,
\[
\| d x d - f x f \|
 \leq \| d x \| \cdot \| d - f \| + \| d - f \| \cdot \| x f \|
 < 4 n ( 1 + 3 n ) \rh.
\]
Therefore, using (\ref{W.T.R.P.444}) at the second step,
\[
\| f x f \|
 > \| d x d \|
     - 4 n ( 1 + 3 n ) \rh
 > 1 - \ep'
     - 4 n ( 1 + 3 n ) \rh
  \geq 1 - \ep.
\]
This completes the proof.
\end{proof}

\begin{lem}\label{Invariant.Cuntz1}
Let $G$ be a finite group,
let $A$ be a \ca,
let $\alpha \colon G \to \Aut (A)$  be an action of
$G$ on $A$, and let $a, b \in \left(A^{\alpha}\right)_{+}$.
If $a \in \overline{b A b}$, then $a \in \overline{b A^{\alpha} b}$ and
$a \precsim_{A^{\alpha}} b$.
\end{lem}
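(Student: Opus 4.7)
The plan is to exploit the averaging conditional expectation $E \colon A \to A^{\af}$ defined by $E(x) = \tfrac{1}{\card(G)} \sum_{g \in G} \af_g(x)$. This is contractive, fixes $A^{\af}$ pointwise, and satisfies the bimodule property $E(cxd) = c E(x) d$ for $c, d \in A^{\af}$.

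For the first assertion, since $a \in \ov{bAb}$, choose a sequence $(x_n)_{n \in \N}$ in $A$ with $\limi{n} b x_n b = a$. Because $b \in A^{\af}$, for every $g \in G$ we have $\af_g (b x_n b) = b \af_g (x_n) b$, hence $E(b x_n b) = b E(x_n) b$. Since $E$ is contractive and $E(a) = a$, applying $E$ to $b x_n b \to a$ yields $b E(x_n) b \to a$, with $E(x_n) \in A^{\af}$. This shows $a \in \ov{b A^{\af} b}$.

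For the second assertion, observe that $b^{1/m} \in A^{\af}$ for every $m \in \N$ (the fixed point algebra is closed under continuous functional calculus on self-adjoint elements), and $(b^{1/m})_{m \in \N}$ is an approximate identity for the hereditary C*-subalgebra $\ov{b A^{\af} b} \S A^{\af}$. Hence $b^{1/m} a b^{1/m} \to a$ in $A^{\af}$. Fix $\ep > 0$ and choose $m$ large enough that $\| b^{1/m} a b^{1/m} - a \| < \ep$. By \Lem{PhiB.Lem_18_4}(\ref{PhiB.Lem_18_4_10.a}), applied inside $A^{\af}$,
\[
(a - \ep)_{+} \precsim_{A^{\af}} b^{1/m} a b^{1/m}.
\]
Since $0 \leq b^{1/m} a b^{1/m} \leq \| a \| b^{2/m}$, we further have $b^{1/m} a b^{1/m} \precsim_{A^{\af}} b^{2/m} \sim_{A^{\af}} b$, the last Cuntz equivalence being a standard consequence of functional calculus on positive elements. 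Combining these gives $(a - \ep)_{+} \precsim_{A^{\af}} b$ for every $\ep > 0$, and then \Lem{PhiB.Lem_18_4}(\ref{PhiB.Lem_18_4_11}) (implication (\ref{PhiB.Lem_18_4_11.b})$\Rightarrow$(\ref{PhiB.Lem_18_4_11.a})) yields $a \precsim_{A^{\af}} b$.

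There is no real obstacle: once the conditional expectation $E$ is in hand, everything reduces to routine manipulations already packaged in \Lem{PhiB.Lem_18_4}. The only subtle point worth flagging is that one must not try to push a Cuntz subequivalence $v_n b v_n^* \to a$ in $A$ through $E$ directly, because $E(v_n b v_n^*) = E(v_n b v_n^*)$ need not factor as $w_n b w_n^*$; the clean move is to first apply $E$ to an approximation of the form $b x_n b$, which does behave well, and only afterwards extract the Cuntz estimate.
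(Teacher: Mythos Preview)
Your proof is correct and follows essentially the same approach as the paper: both average over~$G$ (you via the conditional expectation~$E$, the paper by writing the average explicitly) to pass from $b A b$ to $b A^{\af} b$, and then deduce $a \precsim_{A^{\af}} b$ from $a \in \ov{b A^{\af} b}$. The only difference is cosmetic: the paper records the second conclusion with the single phrase ``whence also $a \precsim_{A^{\af}} b$,'' treating it as a standard fact about hereditary subalgebras, while you spell out the approximate-identity argument via $b^{1/m} a b^{1/m}$ and \Lem{PhiB.Lem_18_4}.
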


\begin{proof}
Let $\ep > 0$.
Choose $c \in A$ such that
$\| a - b c b \| < \ep$.
Then
$\| a - b  \alpha_{g} ( c ) b \| < \ep$ for all $g \in G$.
So $d = \frac{1}{\card (G)} \sum_{g \in G} \alpha_{g} (c)$
satisfies
\[
d \in A^{\alpha}
\andeqn
\| a - b d b \|
\leq
 \frac{1}{\card (G)} \sum_{g \in G} \|  a - b \, \alpha_{g} (c) \, b \|
< \ep.
\]
Since $\ep > 0$ is arbitrary,
$a \in \overline{b A^{\alpha} b}$,
whence also $a \precsim_{A^{\alpha}} b$.
\end{proof}

\begin{lem}\label{Invariant.Cuntz2}
Let $\alpha \colon G \to \Aut (A)$
be an action of a finite group $G$ on a \ca{} $A$,
let $\dt \in \left( 0, \, [2 \, \card (G)]^{-2}\right)$, and let
$a, b \in (A^{\alpha})_{+}$ with $\| b \| = 1$ and  $\| a \| \leq 1$.
If $x$ is a positive element in $\overline{b A b}$ with
$a \precsim_{A} (x^2 - \tfrac{1}{2})_{+}$ and
$\| x \alpha_{g} (x) \| < \dt$ for all $g \in G \setminus \{ 1 \}$,
then there exists $t \in A^{\alpha}$ such that:
\begin{enumerate}
\item\label{1.2.19.a}
$ ( a - \dt^{1/2})_{+} \precsim_{A^{\alpha}} t t^*$.
\item\label{1.2.19.b}
$t^* t \in \overline{b A b}$.
\item\label{1.2.19.c}
$( a - \dt^{1/2})_{+} \precsim_{A^{\alpha}} b$.
\end{enumerate}
\end{lem}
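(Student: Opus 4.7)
Set $n = \card(G)$. The plan is to lift the Cuntz subequivalence $a \precsim_A (x^2 - \tfrac{1}{2})_+$ to an approximate equation via Lemma~\ref{A.G.J.P}, then $G$-average the resulting element, and use the near-orthogonality $\| x \af_g(x) \| < \dt$ for $g \neq 1$ to absorb the cross terms into the prescribed error budget~$\dt^{1/2}$.

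First I would apply Lemma~\ref{A.G.J.P} with ``$b$'' equal to $x^2$ and threshold $\tfrac{1}{2}$, obtaining a sequence $(w_m)_{m \in \N}$ in $A$ with $w_m x^2 w_m^* \to a$ and $\| w_m \| \leq \sqrt{2 \| a \|} \leq \sqrt{2}$. I would then set $t_m = n^{-1/2} \sum_{g \in G} \af_g ( w_m x )$, which is automatically fixed by~$\af$. Expanding the product and separating the diagonal indices gives
\[
t_m t_m^* = n^{-1} \sum_{g \in G} \af_g ( w_m x^2 w_m^* )
 + n^{-1} \sum_{g \neq h} \af_g (w_m) \af_g (x) \af_h (x) \af_h (w_m)^* .
\]
Since $a = \af_g(a)$ and $\af_g$ is isometric, the first sum converges to $a$ as $m \to \I$. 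Each of the $n(n - 1)$ terms in the second sum has norm at most $\| w_m \|^2 \| x \af_{g^{-1} h}(x) \| < 2 \dt$, so the second sum has norm less than $2 (n - 1) \dt < 2 n \dt$. The hypothesis $\dt < [2 n]^{-2}$ is equivalent to $2 n \dt < \dt^{1/2}$, so for all sufficiently large~$m$ we obtain $\| t_m t_m^* - a \| < \dt^{1/2}$; I would fix such an~$m$ and put $t = t_m$.

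The three conclusions should then follow quickly. For~(\ref{1.2.19.a}), Lemma~\ref{PhiB.Lem_18_4}(\ref{PhiB.Lem_18_4_10.a}) gives $( a - \dt^{1/2} )_+ \precsim_A t t^*$, and since both elements lie in $( A^{\af} )_{+}$, Lemma~\ref{Invariant.Cuntz1} upgrades this to $\precsim_{A^{\af}}$. For~(\ref{1.2.19.b}), the fixedness of $b$ yields $\af_g ( x ) \in \ov{b A b}$ for every $g$, so every summand of the analogous expansion
\[
t^* t = n^{-1} \sum_{g, h \in G} \af_g (x) \af_g ( w_m^* ) \af_h (w_m) \af_h (x)
\]
lies in the hereditary subalgebra $\ov{b A b}$, and hence so does $t^* t$. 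For~(\ref{1.2.19.c}), Lemma~\ref{PhiB.Lem_18_4}(\ref{PhiB.Lem_18_4_4}) gives $t t^* \sim_A t^* t$, and $t^* t \precsim_A b$ because $t^* t \in \ov{b A b}$; combining these yields $( a - \dt^{1/2} )_+ \precsim_A b$, after which Lemma~\ref{Invariant.Cuntz1} one more time finishes the proof.

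The one delicate point is the numerical matching in the key estimate: the threshold $\tfrac{1}{2}$ in $(x^2 - \tfrac{1}{2})_+$ forces $\| w_m \|^2 \leq 2$, and the hypothesis $\dt < [2 \card(G)]^{-2}$ is exactly what is needed to push the cross-term total $2 n \dt$ below the prescribed error~$\dt^{1/2}$, with no slack to spare on either side.
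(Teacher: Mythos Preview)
Your overall strategy matches the paper's: lift $a \precsim_A (x^2 - \tfrac12)_+$ to a bounded approximate factorization via Lemma~\ref{A.G.J.P}, average over~$G$, and bound the cross terms by $2\,\card(G)\,\dt < \dt^{1/2}$. The one substantive difference is that the paper first passes to an \emph{exact} equation $(a - \eta)_+ = v x^2 v^*$ (with $\eta = \dt^{1/2} - 2\,\card(G)\,\dt$) using Lemma~\ref{PhiB.Lem_18_4}(\ref{A.P.T}), so that the diagonal part of $t t^*$ is literally $(a-\eta)_+$ rather than an approximation; your sequence argument is equally valid.

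However, both of your appeals to Lemma~\ref{Invariant.Cuntz1} are misapplied. That lemma does \emph{not} say that $c \precsim_A d$ with $c,d \in (A^{\af})_+$ implies $c \precsim_{A^{\af}} d$; its hypothesis is the much stronger containment $c \in \ov{d A d}$. For part~(\ref{1.2.19.a}) you do not need it at all: since $a,\, t t^* \in A^{\af}$ and $\| t t^* - a \| < \dt^{1/2}$, Lemma~\ref{PhiB.Lem_18_4}(\ref{PhiB.Lem_18_4_10.a}) applied \emph{inside the C*-algebra $A^{\af}$} already gives $(a - \dt^{1/2})_+ \precsim_{A^{\af}} t t^*$. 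For part~(\ref{1.2.19.c}) your final step fails as written, because you have no reason to expect $(a - \dt^{1/2})_+ \in \ov{b A b}$. The correct route --- the one the paper takes --- is to invoke Lemma~\ref{Invariant.Cuntz1} on the pair $t^* t,\, b$ (this \emph{is} legitimate, by your part~(\ref{1.2.19.b})), obtaining $t^* t \precsim_{A^{\af}} b$; then note $t t^* \sim_{A^{\af}} t^* t$ since $t \in A^{\af}$, and chain with part~(\ref{1.2.19.a}).
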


\begin{proof}
To prove~(\ref{1.2.19.a}),
set $\eta = \sqrt{\dt} - 2 \, \card (G) \, \dt$.
Since $\dt \in \left( 0, \, [2 \, \card (G)]^{-2} \right)$,
it follows that $\eta > 0$.
Since $a \precsim_{A} (x^2 - \tfrac{1}{2})_{+}$,
by \Lem{A.G.J.P} there exists
$w \in A$ such that $\| a - w x^{2} w^* \| < \eta$
and $\| w \| \leq \sqrt{2}$.
Using \Lem{PhiB.Lem_18_4}(\ref{A.P.T}),
we find  $d \in A$ with $\| d \| \leq 1$ such that
$(a - \eta )_{+} = d w x^{2} w^* d^*$.
Set $v= d w$.
Then
\begin{equation}\label{1.2.19.1}
(a - \eta )_{+} = v x^{2} v^*
\andeqn
\| v \| \leq \sqrt{2}.
\end{equation}
Define
\[
t = \frac{1}{\card (G)^{1/2}} \sum_{g \in G} \alpha_{g} (v x )
\qquad
\mbox{and}
\qquad
s = \frac{1}{\card (G)} \sum_{g \neq h}
         \alpha_{g} (v x ) \, \alpha_{h} (x v^* ).
\]
Clearly $t \in A^{\alpha}$.

Now we claim that
$\| s \| < 2 \, \card (G) \, \dt$.
To prove the claim, for every $g, h \in G$ with $g \neq h$
we have $\| x \, \alpha_{g^{-1} h} (x) \| < \dt$.
So
\begin{equation}\label{1.2.19.3}
\| \alpha_{g} (x) \, \alpha_{h} (x) \| < \dt.
\end{equation}
Thus, using (\ref{1.2.19.1}) and (\ref{1.2.19.3}) at the second step,
\begin{align*}
\| s \|
& \leq
\frac{\| v \|^{2}}{\card (G)}
  \sum_{g \neq h} \| \alpha_{g} (x ) \, \alpha_{h} (x ) \|
< \left( \frac{2}{\card (G)} \right) \card (G)^{2} \dt
= 2 \, \card (G) \, \dt.
\end{align*}
The claim follows.

Using (\ref{1.2.19.1}) at the second step
and $a \in A^{\alpha}$ (so that $(a - \eta)_{+} \in A^{\alpha} $)
at the last step, we get
\begin{align*}
t t^{*}
& = \frac{1}{\card (G)}
  \Bigg( \sum_{g \in G} \alpha_{g} (v x^2 v^*) +
     \sum_{g \neq h} \alpha_{g} (v x) \, \alpha_{h} (x v^*) \Bigg)
\\
& = \frac{1}{\card (G)}
     \sum_{g \in G} \alpha_{g} \big( (a - \eta)_{+} \big)
    + \frac{1}{\card (G)}
      \sum_{g \neq h} \alpha_{g} (v x) \, \alpha_{h} (x v^*)
  = (a - \eta)_{+} + s.
\end{align*}
It follows that
$t t^* - (a - \eta)_{+} = s$.
Using the claim, we get
\[
\| t t^* - (a - \eta)_{+} \| < 2 \, \card (G) \, \dt.
\]
Therefore,
using \Lem{PhiB.Lem_18_4}(\ref{PhiB.Lem_18_4_8}) at the first step
and \Lem{PhiB.Lem_18_4}(\ref{PhiB.Lem_18_4_10.a}) at the second step,
\[
(a - \dt^{1/2}) = \big( (a - \eta)_{+} - 2 \, \card (G) \, \dt \big)_{+}
\precsim_{A^{\alpha}} t t^{*}.
\]
This is~(\ref{1.2.19.a}).

To prove~(\ref{1.2.19.b}), we claim that
$\alpha_{g} (x v^*) \, \alpha_{h} (v x) \in \overline{b A b}$
for all $g, h \in G$.
Since $x \in \overline{b A b}$,
there exists a sequence $(r_n)_{n \in \N}$ in $A$ such that
\begin{equation}\label{S.x.1.2.19}
x = \lim_{n \to \infty} b r_n b.
\end{equation}
So for all $g, h \in G$ we get,
using $b \in A^{\alpha}$ at the first step,
\begin{equation}\label{1.2.19.77}
\alpha_{g} (x v^*) \, \alpha_{h} (v x)
= \lim_{n \to \infty}
      b \, \alpha_{g} (r_n) \, b \, \alpha_{g} (v^*) \,
        \alpha_{h} (v) \, b \, \alpha_{h} (r_n) \, b.
\end{equation}
The claim is proved.

Use the definition of $t$ to compute
\begin{equation}\label{last.1.2.19.11}
t^* t
 = \frac{1}{\card (G)}
   \sum_{g,h \in G} \alpha_{g} (x v^*) \, \alpha_{h} (v x).
\end{equation}
By (\ref{1.2.19.77}) and (\ref{last.1.2.19.11}), we have
$t^* t \in \overline{b A b}$,
which is~(\ref{1.2.19.b}).

Finally, we prove (\ref{1.2.19.c}).
Since $b \in A^{\alpha}$ and
$t^* t \in \overline{b A b}$,
it follows from \Lem{Invariant.Cuntz1} that
$t^* t \precsim_{A^{\alpha}} b$.
Therefore, using (\ref{1.2.19.a}) at the first step and
\Lem{PhiB.Lem_18_4}(\ref{PhiB.Lem_18_4_4}) at the second step,
\[
(a - \dt^{1/2})_{+}
 \precsim_{A^{\alpha}} t t^* \sim_{A^{\alpha}} t^* t
 \precsim_{A^{\alpha}} b.
\]
This completes the proof of the lemma.
\end{proof}

\begin{lem}\label{approx.orthogonal_action}
Let $A$ be an infinite-dimensional simple unital \ca,
and let $\alpha \colon G \to \Aut (A)$ be an action
of a finite group $G$ on $A$
which has the weak tracial Rokhlin property.
Then for every $\ep > 0$ and
$b \in (A^{\alpha})_{+}$ with $\| b \| = 1$,
there is a positive element $x \in \overline{b A b}$
with $\| x \| = 1$ such that
$\| x \alpha_{g} (x) \| < \ep$
for all $g \in G \setminus \{ 1 \}$.
\end{lem}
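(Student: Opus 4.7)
The plan is to apply the weak tracial Rokhlin property (Definition~\ref{W_T_R_P_def}) directly with the finite set $F = \{b\}$, with $b$ itself as the positive norm-one element, and with a small tolerance $\ep'$ to be chosen at the end, obtaining orthogonal positive contractions $f_g \in A$ for $g \in G$ (so in particular $f_g f_h = 0$ for $g \neq h$) such that each $f_g$ nearly commutes with $b$, $\|\af_g(f_h) - f_{gh}\| < \ep'$, and $\|fbf\| > 1 - \ep'$ where $f = \sum_g f_g$. My candidate will be $x_0 = b^{1/2} f_1 b^{1/2}$, and I will then set $x = x_0/\|x_0\|$. Since $0 \leq x_0 \leq b$, automatically $x_0, x \in \overline{bAb}$.

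The almost-orthogonality of the translates is the easier part. Because $b \in A^{\af}$, we have $\af_g(x_0) = b^{1/2} \af_g(f_1) b^{1/2}$, so for $g \neq 1$,
\[
x_0 \af_g(x_0) = b^{1/2} f_1 b \af_g(f_1) b^{1/2}.
\]
Estimating the inner factor $f_1 b \af_g(f_1)$ by first replacing $f_1 b$ by $b f_1$ (error $< \ep'$) and then $\af_g(f_1)$ by $f_g$ (error $< \ep'$), one arrives at $b f_1 f_g = 0$ by the exact orthogonality of the $f_g$'s. This yields $\|x_0 \af_g(x_0)\| < 2\ep'$.

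The main obstacle is establishing a lower bound $\|x_0\| \geq 1 - C\ep'$ for a constant $C$ depending only on $\card(G)$; without this, normalization would blow up the orthogonality estimate above. To produce such a bound, I would expand $fbf = \sum_{g,h} f_g b f_h$. The off-diagonal terms ($g \neq h$) satisfy $\|f_g b f_h\| < \ep'$, obtained by moving $b$ past $f_g$ and using $f_g f_h = 0$. The diagonal elements $\{f_g b f_g\}_{g \in G}$ are genuinely pairwise orthogonal (again from $f_g f_h = 0$), so the norm of their sum equals $\max_g \|f_g b f_g\|$. Using the equivariance $\af_g(f_1) \approx f_g$ and $b \in A^{\af}$, the norms $\|f_g b f_g\|$ are all close to $\|f_1 b f_1\|$, so the latter must itself be close to~$1$. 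Finally, Lemma~\ref{PhiB.Lem_18_4}(\ref{PhiB.Lem_18_4_4}) applied to $c = f_1 b^{1/2}$ gives $\|b^{1/2} f_1^2 b^{1/2}\| = \|f_1 b f_1\|$, and $f_1^2 \leq f_1$ then implies $\|x_0\| \geq \|f_1 b f_1\| > 1 - C\ep'$. Choosing $\ep'$ sufficiently small at the outset makes $2\ep'/(1 - C\ep')^2 < \ep$, completing the argument.
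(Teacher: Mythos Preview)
Your proposal is correct and follows essentially the same approach as the paper: apply Definition~\ref{W_T_R_P_def} with a suitable finite set and tolerance, sandwich one of the Rokhlin contractions by powers of~$b$, normalize, and use the exact orthogonality $f_g f_h = 0$ together with the approximate commutation with~$b$ to get $\| x \af_g (x) \|$ small.

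There are two cosmetic differences worth noting. First, the paper takes $F = \{ b^2 \}$, uses $b^2$ as the norm-one element, and works with $y = b f_s^2 b$; you take $F = \{ b \}$ and $x_0 = b^{1/2} f_1 b^{1/2}$. Second, for the norm lower bound, the paper simply picks some $s \in G$ maximizing $\| f_s b^2 f_s \|$ and is content with $\| y \| > \tfrac{1}{2}$, whereas you use the approximate equivariance $\| \af_g (f_1) - f_g \| < \ep'$ together with $b \in A^{\af}$ to argue that all the $\| f_g b f_g \|$ are within $2 \ep'$ of each other, hence $\| f_1 b f_1 \|$ itself is close to~$1$. Your route gives a sharper bound but needs the equivariance condition~(\ref{Def.w.t.r.p.b}); the paper's choice of an optimal~$s$ avoids it at this step. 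One small comment: the equality $\| b^{1/2} f_1^2 b^{1/2} \| = \| f_1 b f_1 \|$ is just the C*-identity $\| c^* c \| = \| c c^* \|$ with $c = f_1 b^{1/2}$; Lemma~\ref{PhiB.Lem_18_4}(\ref{PhiB.Lem_18_4_4}) is about Cuntz equivalence and is not what you need here.
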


\begin{proof}
We may assume $\ep < \frac{1}{2}$.
Set
\[
F = \{ b^2 \}
\qquad
\mbox{and}
\qquad
\ep' = \frac{\ep}{8 (1 + \card (G)^{2})}.
\]
Applying \Def{W_T_R_P_def} using $F$, $\ep'$, and $b^2$,
we get positive contractions
$f_g \in A$ for $g \in G$ such that,
with $f = \sum_{g \in G} f_g$, the following hold:
\begin{enumerate}
\item\label{Weak.T1}
$\| z f_g - f_g z \| < \ep'$ for all $g \in G$ and  $z\in F$.
\item\label{Weak.T2}
$\| \alpha_{g} ( f_h ) - f_{g h} \| < \ep'$ for all $g, h \in G$.
\item\label{Weak.T3}
$\|  f b^{2} f \| > 1 - \ep'$.
\end{enumerate}
So we have, using at the first step $g \neq h$
(so that $f_{g} f_{ h} = 0$), and
using (\ref{Weak.T1}) at the last step,
\[
\sum_{g \neq h} \| f_g b^{2} f_{h} \|
\leq \sum_{g \neq h} \| f_g\| \cdot \| b^{2} f_{h} - f_h b^2 \|
< \card (G)^{2} \ep'.
\]
Using this
and orthogonality of the elements $f_{g}$ for $g \in G$
at the last step,
we estimate
\[
\| f b^{2} f \|
\leq \Bigg\| \sum_{g \in G} f_g b^{2} f_g \Bigg\|
 + \Bigg\| \sum_{g \neq h} f_{g} b^{2} f_h  \Bigg\|
< \max_{g \in G} \| f_g b^{2} f_g \| + \card (G)^{2} \ep'.
\]
Therefore, by~(\ref{Weak.T3}),
\[
\max_{g \in G} \| f_g b^{2} f_g \|
> 1 - \ep' (1 + \card (G)^{2})
> 1 - \ep > \frac{1}{2}.
\]
It follows that
there exists $s \in G$ such that
$\| f_{s} b^{2} f_{s} \| > \frac{1}{2}$.
Set $y = b f_{s}^{2} b$.
Then
\[
\| y \| = \| b f_s f_s b \| = \| f_{s} b^{2} f_{s} \| > \frac{1}{2}.
\]

Now define $x = \| y \|^{-1} \cdot y$.
We claim that
$\| x \alpha_{g} (x) \| < \ep$
for all $g \in G \setminus \{ 1 \}$.
To prove the claim, using $b \in A^{\alpha}$ at the first step,
$g \neq 1$
(so that $f_{s} f_{ gs} = 0$) at the second step,
$\| b \| = 1$ at the third step, and
(\ref{Weak.T1}) and (\ref{Weak.T2}) at the last step, we estimate
\begin{align}
\| y \alpha_{g} (y) \|
& = \| b f_{s}^{2} b^{2} \alpha_{g} (f_{s}^{2}) \, b \|
\\ \notag
& \leq
   \| b f_{s} \| \cdot \| f_{s} b^{2} - b^{2} f_s \|
       \cdot \| \alpha_{g} (f_{s}^{2}) \, b \|
\\ \notag
& \qquad
  {\mbox{}} +
   \| b f_{s} b^{2} f_s \| \cdot\| \alpha_{g} (f_{s}) - f_{g s} \|
 \cdot\| \alpha_{g} (f_{s}) \, b \|
\\ \notag
& \leq \| f_{s} b^{2} - b^{2} f_{s} \|
     + \| \alpha_{g} (f_s) - f_{gs} \|
  < 2 \ep'.
\end{align}
Since $\| y \|^{-1} < 2$, we have
\[
\| x \alpha_{g} (x) \| =
\frac{1}{\| y \|^{2}} \cdot \| y  \alpha_{g} (y) \|
 < 8 \ep'
 < \ep.
\]
This completes the proof.
\end{proof}

\begin{lem}\label{W_T_R_P_inject}
Let $A$ be an infinite-dimensional simple unital \ca{}
and let $\alpha \colon G \to \Aut (A)$
be an action of a finite group $G$ on $A$
with the weak tracial Rokhlin property.
Let
$a, b \in (A^{\alpha})_{+}$,
and suppose that $0$ is a limit point of $\spec (b)$.
Then
$a \precsim_A b$ if and only if $a \precsim_{A^{\alpha}} b$.
\end{lem}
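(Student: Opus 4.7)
The ``if'' direction is immediate: $A^{\af} \subseteq A$, so any sequence in $A^{\af}$ witnessing $a \precsim_{A^{\af}} b$ also witnesses $a \precsim_A b$. For the converse, we rescale so that $\|a\| \leq 1$ and $\|b\| = 1$, and use \Lem{PhiB.Lem_18_4}(\ref{PhiB.Lem_18_4_11}) to reduce the task to showing $(a - \ep)_{+} \precsim_{A^{\af}} b$ for every $\ep \in (0, 1)$.

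Fix such an $\ep$, choose $\dt \in \bigl( 0, (2 \card(G))^{-2} \bigr)$ with $\dt^{1/2} < \ep / 2$, and apply \Lem{PhiB.Lem_18_4}(\ref{PhiB.Lem_18_4_11}) to find $\eta > 0$ with $(a - \ep/2)_{+} \precsim_A (b - \eta)_{+}$. The strategy is to invoke \Lem{Invariant.Cuntz2}, which will require producing a positive contraction $x \in \overline{bAb}$ with $\|x\| = 1$ such that $\|x \af_g(x)\| < \dt$ for every $g \in G \setminus \{1\}$ and $(a - \ep/2)_{+} \precsim_A \bigl( x^2 - \tfrac{1}{2} \bigr)_{+}$. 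Once such an $x$ is constructed, \Lem{Invariant.Cuntz2} applied with $(a - \ep/2)_{+}$ in place of $a$ gives $\bigl( (a - \ep/2)_{+} - \dt^{1/2} \bigr)_{+} \precsim_{A^{\af}} b$, and \Lem{PhiB.Lem_18_4}(\ref{PhiB.Lem_18_4_8}) together with $\dt^{1/2} < \ep/2$ yield $(a - \ep)_{+} \precsim_{A^{\af}} b$, as required.

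The construction of $x$ is the main content of the proof, and this is where the hypothesis that $0$ is a limit point of $\spec (b)$ enters. The idea is first to extract, by functional calculus on $b$, a spectral cutoff $b_1 = h(b) \in A^{\af} \cap \overline{bAb}$ whose spectrum still accumulates at $0$ and for which $(b - \eta)_{+}$ lies in $\overline{b_1 A b_1}$. Then \Lem{invariant_contractions} and \Lem{approx.orthogonal_action}, applied inside $\overline{b_1 A b_1}$ with carefully chosen parameters (in particular forcing the complement $1 - \sum_{g \in G} f_g$ of the Rokhlin sum to be Cuntz dominated by an element that is much smaller, in Cuntz sense, than $(b - \eta)_{+}$), produce the desired $x \in \overline{b_1 A b_1} \subseteq \overline{bAb}$ satisfying the orthogonality condition.

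The principal obstacle is then to arrange $(a - \ep/2)_{+} \precsim_A \bigl( x^2 - \tfrac{1}{2} \bigr)_{+}$ \emph{simultaneously} with this orthogonality: approximate orthogonality with all $G$-translates tends to restrict $x$ to a single ``Rokhlin sector'' and shrink $\bigl\langle (x^2 - \tfrac{1}{2})_{+} \bigr\rangle$, while the Cuntz domination demands that this class be at least $\langle (a - \ep/2)_{+} \rangle$. The purely positive hypothesis on $\langle b \rangle$, equivalent to $0$ being a limit point of $\spec(b)$, is what supplies the spectral slack inside $\overline{bAb}$ needed to reconcile these two demands; without it the conclusion fails, as shown by \Exa{E_9421_NotOnK0}.
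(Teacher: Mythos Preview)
Your plan has a genuine gap in the crucial step. You propose to apply \Lem{Invariant.Cuntz2} directly to the pair $\bigl( (a-\ep/2)_{+}, b \bigr)$, which requires an $x \in \overline{bAb}$ with $\|x\af_g(x)\| < \dt$ for $g \neq 1$ \emph{and} $(a-\ep/2)_{+} \precsim_A (x^2 - \tfrac{1}{2})_{+}$. But \Lem{approx.orthogonal_action} gives no control whatsoever over the Cuntz size of $(x^2-\tfrac{1}{2})_{+}$: its proof builds $x$ from $b f_s^2 b$ for a single Rokhlin element $f_s$, so $(x^2-\tfrac{1}{2})_{+}$ lives in one Rokhlin sector. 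If $a = b$ and $\tau$ is any $G$-invariant trace, the approximate orthogonality forces $d_\tau\bigl((x^2-\tfrac{1}{2})_{+}\bigr)$ to be at most roughly $d_\tau(b)/\card(G)$, while $d_\tau\bigl((b-\ep/2)_{+}\bigr)$ can be arbitrarily close to $d_\tau(b)$. So the two requirements are incompatible in general; your final paragraph correctly identifies this tension but then asserts without argument that the spectral slack in $b$ resolves it, which it does not.

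The paper's proof uses \Lem{Invariant.Cuntz2} in a completely different way: it applies it only to a \emph{small} piece, not to $(a-\ep/2)_{+}$ itself. Concretely, one first uses the accumulation point hypothesis to split $b$ as $b' = (b-\dt)_{+}$ plus an orthogonal piece $h(b)$ with $\|h(b)\|=1$. One picks $w$ with $w b' w^* \approx a' = (a-\ep)_{+}$, then applies \Lem{invariant_contractions} with the orthogonality element $x \in \overline{h(b) A h(b)}$ from \Lem{approx.orthogonal_action} to get Rokhlin contractions $f_g$ with $(1-f-\ep')_{+} \precsim_A (x^2-\tfrac{1}{2})_{+}$. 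Now \Lem{Invariant.Cuntz2} yields $(1-f-\ep/4)_{+} \precsim_{A^{\af}} h(b)$. The main part $f a' f$ is handled by an entirely separate averaging argument: set $v = \sum_{g} \af_g(f_1 w) \in A^{\af}$ and show $\|v b' v^* - f a' f\|$ is small, so $(f a' f - \ep/4)_{+} \precsim_{A^{\af}} b'$. Finally \Lem{Lem.ANP.Dec.18} reassembles the two pieces. The point is that \Lem{Invariant.Cuntz2} is used only to absorb the leftover $(1-f)_{+}$, which is small by design; the bulk of $a'$ is transferred via the averaged element $v$, an ingredient your sketch lacks.
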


This result holds when $\af$ has the Rokhlin property,
without the requirement that $0$ be a limit point of $\spec (b)$;
in this case, $A$ can be any unital \ca.
See Theorem 4.1(ii) of~\cite{GdlStg}.

\begin{proof}[Proof of Lemma~\ref{W_T_R_P_inject}]
We need only prove the forwards implication.
So assume that $a \precsim_A b$.

We may assume
$\| a \| \leq 1$ and $\| b\| = 1$.
Let $\ep > 0$.
We may assume $\ep < [2 \card (G)]^{-2}$.
Since $a \precsim_A b$, there is $\dt > 0$ such that
$(a- \ep)_{+} \precsim_A (b-\dt)_{+}$.
We may require $\dt < 1$.
Set $a' = (a-\ep)_{+}$ and $b' = (b-\dt)_{+}$.
Choose $w \in A$
such that
$\big\| w b' w^* - a' \big\| < [40 \, \card (G)]^{-1} \ep$.
Since $b', a' \in A^{\alpha}$, it follows that
\begin{equation}\label{Eq_12_28_18_9}
\big\| \alpha_{g} (w) b' \alpha_{g} (w^*) - a' \big\|
   < \frac{\ep}{40\, \card (G)}
\end{equation}
for all $g \in G$.
Choose $\ld \in \spec (b) \cap (0, \dt)$.
Let $h \colon [0, \I) \to [0, 1]$
be a \cfn{}
such that $h (\ld) = 1$ and $\supp (h) \subseteq (0, \dt)$.
Then
\begin{equation}\label{Eq_9420_Prop_ha}
\| h (b) \| = 1,
\qquad
h (b) \perp b',
\andeqn
h (b) + b' \precsim_{A^{\alpha}} b.
\end{equation}

Applying \Lem{approx.orthogonal_action}
with $h (b)$ in place of $b$, we find
a positive element $x \in \overline{h (b) A h (b)}$
with $\| x \| = 1$ such that
\[
\big \| x \alpha_{g} (x) \big \| < \frac{\ep^{2}}{64}
\]
for all $g \in G \setminus \{ 1 \}$.
Now set
\[
F_0 = \{a', b', w, w^*\}
\andeqn
F = \bigcup_{g \in G} \af_g (F_0).
\]
Define
\[
s = \Big\| \Big( x^2 - \frac{1}{2} \Big)_{+} \Big\|^{-1}
      \cdot \Big( x^2 - \frac{1}{2} \Big)_{+}
\andeqn
\ep' = \frac{\ep}{40 ( \| w \| + 1 )^2 \card (G)^4}.
\]
Set $M = \max \bigl( 1, \, \max_{z \in F} \| z \| \bigr)$,
and use Lemma~\ref{L_9420_NearZero}
and Lemma~\ref{9625_FuncCalcCom}
to choose $\dt > 0$ such that the following hold:
\begin{enumerate}
\item\label{Item_9417_dtlessep}
$\dt \leq \ep'$.
\item\label{Item_9417_Prod}
If
$c, d \in A_{+}$ satisfy $\| c \|, \, \| d \| \leq 1$
and $\| c d \| < \dt$,
then $\big\| c^{1/2} d^{1/2} \big\| < \ep'$.
\item\label{Item_9417_Comm}
If
$c \in A_{+}$ satisfies $\| c \| \leq M$
and $z \in A$ satisfies $\| z \| \leq M$
and $\| c z - z c \| < \dt$,
then $\big\| c^{1/2} z - z c^{1/2} \big\| < \ep'$.
\setcounter{TmpEnumi}{\value{enumi}}
\end{enumerate}

Applying \Lem{invariant_contractions} with $F$ as given,
with $\dt$ in place of $\ep$, and with $s$ in place of $x$,
we get positive contractions
$f_g \in A$ for $g \in G$ such that,
with $f = \sum_{g \in G} f_g$, the following hold:
\begin{enumerate}
\setcounter{enumi}{\value{TmpEnumi}}
\item\label{WTRP4}
$\| f_g f_h \| < \dt$ for all $g, h \in G$ with $g \neq h$.
\item\label{WTRP1}
$\| z f_g - f_g z \| < \dt$ for all $g \in G$ and $z \in F$.
\item\label{WTRP2}
$\alpha_{g} ( f_h ) = f_{g h}$ for all $g, h \in G$.
\item\label{WTRP3}
$f \in A^{\alpha}$ and $\| f \| = 1$.
\item\label{WTRP5}
$(1 - f - \dt )_{+} \precsim_{A} s$.
\setcounter{TmpEnumi}{\value{enumi}}
\end{enumerate}
Then also:
\begin{enumerate}
\setcounter{enumi}{\value{TmpEnumi}}
\item\label{Item_9417_WTRP4_13}
$\big\| f_g^{1/2} f_h^{1/2} \big\| < \ep'$
for all $g, h \in G$ with $g \neq h$.
\item\label{Item_9417_WTRP1_13}
$\big\| z f_g^{1/2} - f_g^{1/2} z \big\| < \ep'$
for all $g \in G$ and $z \in F$.
\item\label{Item_9417_WTRP2_13}
$\alpha_{g} \big( f_h^{1/2} \big) = f_{g h}^{1/2}$
for all $g, h \in G$.
\end{enumerate}

Since $\dt \leq \ep' < \frac{\ep}{8}$, (\ref{WTRP5}) implies that
$\big(   1 - f - \frac{\ep}{8} \big)_{+}
 \precsim_{A} s
 \sim \big( x^2 - \tfrac{1}{2} \big)_{+}$.
Applying \Lem{Invariant.Cuntz2}(\ref{1.2.19.c})
with $h (b)$ in place of $b$,
with $\big( 1 - f - \frac{\ep}{8} \big)_{+}$
in place of $a$, with $x$ as given,
and with
$\ep^2 / 64$ in place of $\dt$, we get
\begin{equation}\label{Eq_8855}
\Big( 1 - f - \frac{\ep}{4} \Big)_{+}
 = \Bigg( \Big( 1 - f - \frac{\ep}{8} \Big)_{+}
     - \left( \frac{\ep^{2}}{64} \right)^{1/2} \Bigg)_{+}
 \precsim_{A^{\alpha}} h (b).
\end{equation}
Now define $v = \sum_{g \in G} \alpha_{g} (f_1 w)$.
Clearly $v \in A^{\alpha}$.
We claim that
\[
\| v b' v^* - f a' f \| < \frac{\ep}{4}.
\]

To prove the claim, define
\[
a_0 = \sum_{g \in G} f_g a' f_g,
\qquad
b_0 = \sum_{g \in G} f_g^{1/2} b' f_g^{1/2},
\andeqn
v_0 = \sum_{g \in G} \alpha_{g} (w) f_g^{1/2}.
\]
It is immediate that
\begin{equation}\label{Eq_9420_TrRokhlinNormEst_2}
\| v \|, \, \| v_0 \| \leq \card (G) \| w \|.
\end{equation}
Also set
\[
{\widetilde{f}} = \sum_{g \in G}  f_g^{1/2},
\]
giving $\bigl\| {\widetilde{f}} \bigr\| \leq \card (G)$.

For $g \in G$, by (\ref{Item_9417_dtlessep})
and~(\ref{WTRP1}) we have
$\| \alpha_{g} (w) f_g - f_g \alpha_{g} (w) \|
   < \ep'$.
Therefore,
for all $g \in G$,
using~(\ref{Eq_12_28_18_9})
on the last term at the second step
and $\| b' \| \leq 1$ at the last step,
\begin{align}\label{Eq_9421_Star}
& \bigl\| \big[ \alpha_{g} (w) f_g^{1/2} \big]
           \big[ f_g^{1/2} b' f_g^{1/2} \big]
           \big[  \alpha_{g} (w) f_g^{1/2} \big]^*
   - f_g a' f_g \bigr\|
\\
\notag
& \hspace*{3em} {\mbox{}}
\leq \| \alpha_{g} (w) f_g - f_g \alpha_{g} (w) \|
        \cdot \| b' \| \cdot \| f_g \alpha_{g} (w^*) \|
\\
\notag
& \hspace*{6em} {\mbox{}}
      + \| f_g \alpha_{g} (w) \|
        \cdot \| b' \|
        \cdot \| f_g \alpha_{g} (w^*)
            - \alpha_{g} (w^*) f_g \|
\\
\notag
& \hspace*{6em} {\mbox{}}
      + \| f_g \| \cdot \| \af_g (w) b' \af_g (w^*) - a' \|
            \cdot \| f_g \|
\\
\notag
& \hspace*{3em} {\mbox{}}
< 2 \| w \| \ep' + \frac{\ep}{40 \, \card (G)}.
\end{align}
Set $S = \{ (g, g, g) \colon g \in G \} \subseteq G^3$.
Using~(\ref{Eq_9421_Star}), (\ref{Item_9417_WTRP4_13}),
and $\| b' \| \leq 1$
at the second step,
we get
\begin{align}\label{Eq_9420_Primes}
\| v_0 b_0 v_0^* - a_0 \|
& \leq \sum_{g \in G}
   \bigl\| \big[  \alpha_{g} (w) f_g^{1/2} \big]
           \big[ f_g^{1/2} b' f_g^{1/2} \big]
           \big[  \alpha_{g} (w) f_g^{1/2} \big]^*
    - f_g a' f_g \bigr\|
\\
\notag
& \hspace*{2em} {\mbox{}}
      + \sum_{(g, t, h) \in G^3 \setminus S}
            \bigl\| \big[  \alpha_{g} (w) f_g^{1/2} \big]
            \big[ f_t^{1/2} b' f_t^{1/2} \big]
            \big[ \alpha_{h} (w) f_h^{1/2} \big]^* \bigl\|
\\
\notag
& < 2 \, \card (G) \| w \| \ep'
      + \frac{\ep}{40} + \card (G)^3 \| w \|^2 \ep'
  \leq \frac{4 \ep}{40}.
\end{align}

Next, we estimate,
using $f = \sum_{g \in G} f_g$ at the first step,
and using (\ref{Item_9417_dtlessep}),
(\ref{WTRP4}),
and~(\ref{WTRP1}) at the third step,
\begin{align}\label{Eq_9420_12_28_18_5_y}
\| a_0 - f a' f \|
& \leq \sum_{g \neq h} \| f_g a' f_h \|
  \leq \sum_{g \neq h}
      \big( \| f_g \| \cdot \| a' f_h - f_h a' \|
             + \| f_g f_h \| \cdot \| a' \| \big)
\\
\notag
& < 2 \, \card (G)^2 \ep'
  \leq \frac{2 \ep}{40}.
\end{align}
A similar calculation, this time using
(\ref{Item_9417_WTRP4_13}), (\ref{Item_9417_WTRP1_13}),
and~(\ref{Item_9417_dtlessep}),
gives
\begin{equation}\label{Eq_9420_12_28_18_5_y_Forb}
\big\| b_0 - {\widetilde{f}} b' {\widetilde{f}} \big\|
  < 2 \, \card (G)^2 \ep'.
\end{equation}

The next step is to estimate $\bigl\| v - v_0 {\widetilde{f}} \bigr\|$.
For all $g\in G$,
using (\ref{WTRP1}), (\ref{WTRP2}), and~(\ref{Item_9417_dtlessep}),
we get
\[
\| \alpha_{g} (f_1 w) - \alpha_{g} ( w ) f_g \|
  = \| \alpha_{g} (f_1 w - w f_1) \|
  < \dt
  \leq \ep'.
\]
Now, by~(\ref{Item_9417_WTRP1_13}),
\begin{align}\label{Eq_9419_N1}
\bigl\| v - v_0 {\widetilde{f}} \bigr\|
& \leq \sum_{g \in G}
       \big\| \alpha_{g} (f_1 w) - \alpha_{g} ( w ) f_g \big\|
     + \sum_{g \neq h}
       \big\| \alpha_{g} ( w ) f_g^{1/2} f_h^{1/2} \big\|
\\
\notag
& < \card (G) \ep' + \card (G)^2 \| w \| \ep'
  \leq \card (G)^2 (\| w \| + 1) \ep'.
\end{align}
It follows,
using this, (\ref{Eq_9420_12_28_18_5_y_Forb}),
and~(\ref{Eq_9420_TrRokhlinNormEst_2}) at the second step,
that
\begin{align*}
\| v b' v^* - v_0 b_0 v_0^* \|
& \leq \bigl\| v - v_0 {\widetilde{f}} \bigr\|
        \cdot \| b' \| \cdot \| v^* \|
       + \| v_0 \| \cdot \big\| {\widetilde{f}} \big\| \cdot \| b' \|
         \cdot \bigl\| \bigl( v - v_0 {\widetilde{f}} \bigr)^* \bigr\|
\\
\notag
& \hspace*{3em} {\mbox{}}
      + \| v_0 \|
         \cdot \big\| {\widetilde{f}} b' {\widetilde{f}} - b_0 \big\|
         \cdot \| v_0^* \|
\\
\notag
& < \card (G)^2 (\| w \| + 1) \ep' \cdot \card (G) \| w \|
\\
\notag
& \hspace*{3em} {\mbox{}}
      + \card (G)^2 \| w \| \cdot \card (G)^2 \ep' (\| w \| + 1)
\\
\notag
& \hspace*{3em} {\mbox{}}
      + \card (G) \| w \| \cdot 2 \, \card (G)^2 \ep'
             \cdot \card (G) \| w \|
\\
\notag
& \leq \frac{4 \ep}{40}.
\end{align*}

Combining this with (\ref{Eq_9420_Primes})
and~(\ref{Eq_9420_12_28_18_5_y}),
we now have
\begin{align*}
\| v b' v^* - f a' f \|
& \leq \| v b' v^* - v_0 b_0 v_0^* \|
             + \| v_0 b_0 v_0^* - a_0 \|
             + \| a_0 - f a' f \|
\\
& < \frac{4 \ep}{40} + \frac{4 \ep}{40} + \frac{2 \ep}{40}
  = \frac{\ep}{4}.
\end{align*}
The claim is proved.

The claim implies that
\begin{equation}\label{Eq.7744}
\Big( f a' f - \frac{\ep}{4} \Big)_{+}
 \precsim_{A^{\alpha}} v b' v^*
 \precsim_{A^{\alpha}} b'.
\end{equation}
Applying \Lem{Lem.ANP.Dec.18} with
$\frac{\ep}{4}$ in place of $\ep_1$,
with $\frac{\ep}{2}$ in place of $\ep_{2}$,
with $a'$ in place of $a$,
and with $1 - f$ in place of $g$, we get
\begin{equation}\label{Eq.6633}
\Big( a' - \frac{3 \ep}{4} \Big)_{+}
 \precsim_{A^{\alpha}}
   \Big(f a' f - \frac{\ep}{4} \Big)_{+}
      \oplus \Big( 1 - f - \frac{\ep}{4} \Big)_{+}.
\end{equation}
Using (\ref{Eq.6633}) at the second step,
(\ref{Eq.7744}) and (\ref{Eq_8855}) at the third step,
and (\ref{Eq_9420_Prop_ha}) at the last step, we have
\[
(a - \ep)_{+}
  = \Big( a' - \frac{3 \ep}{4} \Big)_{+}
  \precsim_{A^{\alpha}} \Big( f a' f - \frac{\ep}{4} \Big)_{+}
       \oplus \Big( 1 - f - \frac{\ep}{4} \Big)_{+}
  \precsim_{A^{\alpha}} b' \oplus h (b)
  \precsim_{A^{\alpha}} b.
\]
Therefore
$(a - \ep)_{+} \precsim_{A^{\alpha}} b$.
\end{proof}

\begin{dfn}\label{D_9421_Pure}
Let $A$ be a \ca.
Following the discussion before Corollary 2.24 of~\cite{APT11}
and Definition~3.1 of~\cite{Ph14},
with slight changes in notation,
we define
\[
A_{++} = \big\{ a \in A_{+} \colon
  \mbox{there is no projection $p \in M_{\infty} (A)$
         such that $\langle a \rangle_A = \langle p \rangle_A$} \big\},
\]
\[
\Cu_{+} (A)
 = \big\{ \langle a \rangle_A \colon a \in (K \otimes A)_{++} \},
\andeqn
\W_{+} (A)
 = \Cu_{+} (A) \cap \W (A).
\]
The elements of $A_{++}$ are called {\emph{purely positive}}.
\end{dfn}

We recall some properties of $\W_{+} (A)$ and $\Cu_{+} (A)$.

\begin{lem}\label{L_9421_PropOfPP}
Let $A$ be a stably finite simple \uca.
Then:
\begin{enumerate}
\item\label{Item_L_9421_PropOfPP_Diff}
$\Cu (A) \setminus \W (A) \subseteq \Cu_{+} (A)$.
\item\label{Item_L_9421_PropOfPP_ZInSp}
$(K \otimes A)_{++}
  = \big\{ a \in (K \otimes A)_{+} \colon
   \mbox{$0$ is a limit point of $\spec (a)$} \big\}$.
\item\label{Item_L_9421_PropOfPP_SbSGp}
$\W_{+} (A) \cup \{ 0 \}$ and $\Cu_{+} (A) \cup \{ 0 \}$
are unital subsemigroups of $\W (A)$ and $\Cu (A)$.
\item\label{Item_L_9421_PropOfPP_ClosedUnderSups}
Let $\et_1, \et_2, \ldots \in \Cu_{+} (A) \cup \{ 0 \}$
satisfy $\et_1 \leq \et_2 \leq \cdots$.
Then $\sup \big( \{ \et_n \colon n \in \Nz \} \big)$,
evaluated in $\Cu (A)$,
is in $\Cu_{+} (A) \cup \{ 0 \}$.
\end{enumerate}
\end{lem}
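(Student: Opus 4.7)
The plan is to prove (ii) first---a spectral characterization of purely positive elements---and then derive (i), (iii), and (iv) as consequences. Throughout I will use that in a stably finite simple unital \ca, every projection $p \in K \otimes A$ is Murray-von Neumann equivalent to a projection in $M_n (A)$ for some~$n$ (via finite-rank approximation in $K$ and functional calculus), so $\langle p \rangle_A \in \W (A)$ and $d_\ta (p) < \I$ for every $\ta \in \QT (A)$.

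For the easy direction of (ii), if $0$ is not a limit point of $\spec (a)$ then $\chi_{(0, \I)} (a)$ is a well-defined projection by continuous functional calculus and is Cuntz equivalent to~$a$. For the converse, suppose $\langle a \rangle_A = \langle p \rangle_A$ for a projection~$p$. Applying \Lem{PhiB.Lem_18_4}(\ref{PhiB.Lem_18_4_11.c}) with $\ep = 1/2$ to $p \precsim_A a$ yields $\dt > 0$ with $(p - 1/2)_{+} \precsim_A (a - \dt)_{+}$; since $(p - 1/2)_{+} = \tfrac{1}{2} p \sim_A p$ and $a \precsim_A p$, I deduce $\langle a \rangle_A = \langle (a - \dt)_{+} \rangle_A$. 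If $0$ were a limit point of $\spec (a)$, I would choose $\ld \in \spec (a) \cap (0, \dt)$ and a continuous bump $h \colon [0, \I) \to [0, 1]$ supported in $(0, \dt)$ with $h (\ld) = 1$, so that $h (a) \neq 0$ and $h (a) \perp (a - \dt)_{+}$. Then
\[
\langle h (a) \rangle_A + \langle (a - \dt)_{+} \rangle_A
  = \langle h (a) + (a - \dt)_{+} \rangle_A
  \leq \langle a \rangle_A
  = \langle (a - \dt)_{+} \rangle_A.
\]
Applying $d_\ta$ for some $\ta \in \QT (A)$ (nonempty by Proposition~\ref{P_9312_ExistQT}) and using $d_\ta ( (a - \dt)_{+} ) \leq d_\ta (p) < \I$ forces $d_\ta ( h (a) ) = 0$; but faithfulness of $d_\ta$ on nonzero positive elements---from \Lem{Ntau.ideal} together with simplicity of $A$---gives $h (a) = 0$, a contradiction.

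Part~(i) is then immediate: a class $\langle p \rangle_A$ for a projection $p \in K \otimes A$ lies in $\W (A)$ by the opening observation, so $\Cu (A) \setminus \W (A) \subseteq \Cu_{+} (A)$. Part~(iii) follows from (ii) since $\spec (a \oplus b) \supseteq \spec (a) \cup \spec (b)$ retains $0$ as a limit point whenever one summand does; closure under addition of $0$ and the fact that $\langle 0 \rangle_A$ is the additive unit are trivial. For~(iv), set $\et = \sup_n \et_n$ and suppose for contradiction $\et = \langle p \rangle_A$ for a nonzero projection~$p$. Using $(p - 1/2)_{+} \sim_A p$ together with the standard fact $\langle (p - 1/2)_{+} \rangle_A \ll \langle p \rangle_A$ in $\Cu (A)$ gives $\langle p \rangle_A \ll \langle p \rangle_A$; combined with $\et_n \nearrow \langle p \rangle_A$, this forces $\et_n \geq \langle p \rangle_A$ for some~$n$, whence $\et_n = \langle p \rangle_A$. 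But $\et_n \in \Cu_{+} (A) \cup \{ 0 \}$ then implies either $\et_n = 0$ (so $p = 0$, a contradiction) or $\et_n \in \Cu_{+} (A)$ (so $\et_n$ is not a projection class, again a contradiction).

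The main obstacle is the converse in (ii), which is the only place where both hypotheses enter essentially: stable finiteness gives $d_\ta (p) < \I$, and simplicity gives faithfulness of $d_\ta$. Without these the statement fails---for example in $\cO_2$, where every nonzero positive element has the same Cuntz class as~$1$.
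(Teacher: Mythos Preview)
Your argument is correct. The paper's own proof consists entirely of citations to \cite{Ph14}, \cite{PT}, and \cite{ERS11}, so your self-contained treatment is different in presentation though not in underlying ideas: the cancellation trick in~(ii) (using $\langle p\rangle = \langle (p-\tfrac12)_+\rangle$ to get $\langle a\rangle = \langle (a-\dt)_+\rangle$ and then squeezing in an orthogonal nonzero piece $h(a)$) and the compactness argument in~(iv) (projections are way-below themselves in $\Cu(A)$) are exactly what one finds in the cited sources. One small point worth tightening: your faithfulness step ``$d_\ta(h(a))=0 \Rightarrow h(a)=0$'' is phrased for $A$ but is being applied in $K\otimes A$; the cleanest fix is to note that $\|h(a)\|=1$ lets you cut down to a nonzero $(c-\tfrac12)_+ \precsim_A h(a)$ with $c\in M_n(A)_+$, and then simplicity of $A$ gives $d_\ta((c-\tfrac12)_+)>0$ as you intend.
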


\begin{proof}
Parts (\ref{Item_L_9421_PropOfPP_Diff})
and~(\ref{Item_L_9421_PropOfPP_ZInSp})
are Lemma~3.2 of~\cite{Ph14}.
Part~(\ref{Item_L_9421_PropOfPP_SbSGp})
for $\W (A)$ is Corollary~2.9(i) of~\cite{PT}.
For $\Cu (A)$ it is Corollary~3.3 of~\cite{Ph14}.
Part~(\ref{Item_L_9421_PropOfPP_ClosedUnderSups})
is Lemma~3.5 of~\cite{Ph14}
(originally Parts (i) and~(iv) of Proposition~6.4 of~\cite{ERS11}).
\end{proof}

There are further interesting properties:
still assuming $A$ is stably finite and simple,
$\Cu_{+} (A) \cup \{ 0 \}$ is absorbing
(this follows from Corollary~3.3 of~\cite{Ph14})
and, if $A$ is not of type~I, has the same functionals as $\Cu (A)$
(Lemma~3.8 of~\cite{Ph14}).

We will use the following result several times.
The main work for the last sentence of the proof
is in~\cite{AS}.

\begin{lem}\label{L_9606_Spec01}
Let $A$ be an infinite-dimensional simple unital C*-algebra,
let $G$ be a finite group,
and let $\alpha \colon G \to \Aut (A)$
be an action of $G$ on $A$
which has the weak tracial Rokhlin property.
Then, for every $x \in (A^{\alpha})_{+} \setminus \{ 0 \}$,
there exists $c \in (A^{\alpha})_{+}$
such that $c \precsim_{A^{\alpha}} x$ and $\spec (c) = [0, 1]$.
\end{lem}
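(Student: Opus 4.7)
The plan is to produce $c$ as an element of the hereditary C*-subalgebra $B = \ov{x A^\af x}$ of~$A^\af$, by invoking the key fact from~\cite{AS} that every simple infinite-dimensional C*-algebra contains a positive element with spectrum equal to $[0, 1]$.

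First I would verify that $A^\af$ is simple and infinite-dimensional. For $G = \{ 1 \}$ this is just the hypothesis on~$A$. For $G \ne \{ 1 \}$ the weak tracial Rokhlin property forces $\af$ to be pointwise outer (Proposition~3.2 of~\cite{FG17}, noted in the remark after \Def{W_T_R_P_def}), and a standard argument shows that the fixed point algebra of a pointwise outer finite group action on a simple unital \ca{} is again simple. Moreover, the averaging conditional expectation $E \colon A \to A^\af$ has finite index, so by Watatani's theorem $A$ is a finitely generated $A^\af$-module; hence finite-dimensionality of $A^\af$ would force $A$ itself to be finite-dimensional, contradicting the hypothesis. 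Consequently $B = \ov{x A^\af x}$ is a nonzero hereditary subalgebra of a simple unital infinite-dimensional \ca, and is therefore itself simple and infinite-dimensional.

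Applying the cited result from~\cite{AS} to~$B$ yields $c \in B_{+}$ with $\spec (c) = [0, 1]$. To finish, I would show $c \precsim_{A^\af} x$ as follows. Since $c^{1/2} \in \ov{x A^\af x}$, there is a sequence $(r_n)$ in $A^\af$ with $c^{1/2} = \lim_n x r_n$, and hence $c = \lim_n r_n^* x^2 r_n$. Given $\ep > 0$, pick $n$ with $\| c - r_n^* x^2 r_n \| < \ep$. Then \Lem{PhiB.Lem_18_4}(\ref{PhiB.Lem_18_4_10.a}) and \Lem{PhiB.Lem_18_4}(\ref{PhiB.Lem_18_4_4}) give
\[
(c - \ep)_{+}
 \precsim_{A^\af} r_n^* x^2 r_n
 \sim_{A^\af} x r_n r_n^* x
 \precsim_{A^\af} x^2
 \sim_{A^\af} x,
\]
and letting $\ep \to 0$ together with \Lem{PhiB.Lem_18_4}(\ref{PhiB.Lem_18_4_11}) yields $c \precsim_{A^\af} x$.

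The main substance of the argument is the input from~\cite{AS}; everything else is a routine assembly of the preliminaries in Section~\ref{Sec_Cu}. The only non-obvious preliminary check is infinite-dimensionality of $A^\af$, which is handled above via the finite-index conditional expectation.
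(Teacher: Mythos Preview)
Your approach is essentially the same as the paper's: show $A^\af$ is simple and not of type~I, then invoke the Akemann--Shultz result inside the hereditary subalgebra $\ov{x A^\af x}$. The paper reaches the prerequisites by slightly different routes---it cites Theorem~4.1 of~\cite{Rffl} for ``$A$ not type~I $\Rightarrow$ $A^\af$ not type~I'' and obtains simplicity of $A^\af$ from simplicity of the crossed product via \Lem{Fixedpoint_corner}(\ref{Fixedpoint_corner_d})---but your arguments via pointwise outerness and the finite-index conditional expectation are also valid.

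There is one imprecision worth fixing. You state the result from~\cite{AS} as holding for every \emph{simple infinite-dimensional} \ca, and apply it to $B = \ov{x A^\af x}$. But $K(\ell^2)$ is simple and infinite-dimensional yet contains no positive element with spectrum $[0,1]$, since a compact operator has countable spectrum accumulating only at~$0$. The correct hypothesis is ``not type~I'' (this is what Lemma~2.1 of~\cite{Ph14} uses). For $A^\af$ itself this is automatic, since a simple \emph{unital} infinite-dimensional \ca{} is not type~I; but $B$ need not be unital, so ``simple and infinite-dimensional'' is not enough for~$B$. The fix is easy: $A^\af$ is not type~I, and a nonzero hereditary subalgebra of a simple non-type-I \ca{} is again not type~I (a minimal projection in $B$ would be minimal in $A^\af$). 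With that adjustment your argument is complete; the explicit verification of $c \precsim_{A^\af} x$ at the end is correct but already implicit in $c \in \ov{x A^\af x}$.
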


\begin{proof}
The algebra~$A$ is not type~I,
so Theorem~4.1 of~\cite{Rffl}
implies that $A^{\af}$ is not type~I.
Since $C^* (G, A, \af)$ is simple,
Lemma \ref{Fixedpoint_corner}(\ref{Fixedpoint_corner_d})
below
(or \cite{Ros79})
implies that $A^{\af}$ is simple.
Apply Lemma~2.1 of~\cite{Ph14} to ${\overline{x A^{\alpha} x}}$.
\end{proof}

\begin{lem}\label{WC_plus_injectivity}
Let $A$ be a stably finite simple unital \ca{} which is not of type~I
and let $\alpha \colon G \to \Aut (A)$
be an action of a finite group $G$ on $A$
with the weak tracial Rokhlin property.
Let $\iota \colon A^{\alpha} \to A$ be the inclusion map.
Then:
\begin{enumerate}
\item\label{WC_plus_injectivity_a}
The map $\W (\iota) \colon \W (A^{\alpha}) \to \W (A)$
induces an isomorphism of ordered semigroups
from $\W_{+} (A^{\alpha}) \cup \{ 0 \}$
to its image in $\W (A)$.
\item\label{WC_plus_injectivity_b}
The map $\Cu (\iota) \colon \Cu (A^{\alpha}) \to \Cu (A)$
induces an isomorphism of ordered semigroups
from $\Cu_{+} (A^{\alpha}) \cup \{ 0 \}$
to its image in $\Cu (A)$.
\end{enumerate}
\end{lem}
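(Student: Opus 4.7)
The plan is to prove part~(b) first; part~(a) then follows by the same argument, using that the canonical inclusion $M_\infty(A^\alpha) \hookrightarrow K \otimes A^\alpha$ identifies $\W(A^\alpha)$ with a subsemigroup of $\Cu(A^\alpha)$ compatibly with~$\Cu(\iota)$. Since $\Cu(\iota)$ is automatically an ordered semigroup homomorphism, and $\Cu_+(A^\alpha) \cup \{0\}$ is a subsemigroup by \Lem{L_9421_PropOfPP}(\ref{Item_L_9421_PropOfPP_SbSGp}), the content of~(b) is that the restriction of $\Cu(\iota)$ to $\Cu_+(A^\alpha) \cup \{0\}$ reflects the order: if $\eta, \zeta \in \Cu_+(A^\alpha) \cup \{0\}$ satisfy $\Cu(\iota)(\eta) \leq \Cu(\iota)(\zeta)$, then $\eta \leq \zeta$. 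Injectivity then follows from antisymmetry of the order on $\Cu(A^\alpha)$.

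Given such $\eta, \zeta$, pick representatives $a, b \in (K \otimes A^\alpha)_+$. We have $a \precsim_A b$ in $K \otimes A$, and we want $a \precsim_{A^\alpha} b$ in $K \otimes A^\alpha$. If $\zeta = 0$, take $b = 0$; then $a \precsim_A 0$ forces $a = 0$. Otherwise $b$ is purely positive, so by \Lem{L_9421_PropOfPP}(\ref{Item_L_9421_PropOfPP_ZInSp}), $0$ is a limit point of $\spec(b)$, a property independent of the ambient algebra since the nonzero spectrum is invariant under enlargement. Choose $n \in \N$ with $a, b \in M_n(A^\alpha)_+$. The action $\alpha$ extends diagonally to $\alpha^{(n)} \colon G \to \Aut(M_n(A))$, whose fixed point algebra is $M_n(A^\alpha)$, and Cuntz subequivalence in~$A$ (resp.\ in~$A^\alpha$) restricted to elements of $M_n(A)$ (resp.\ $M_n(A^\alpha)$) agrees with Cuntz subequivalence in $M_n(A)$ (resp.\ $M_n(A^\alpha)$). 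The idea is then to invoke \Lem{W_T_R_P_inject} applied to the simple unital \ca~$M_n(A)$, the action~$\alpha^{(n)}$, and the elements $a, b$, which since $0$ is a limit point of $\spec(b)$ yields $a \precsim_{M_n(A^\alpha)} b$ and hence $a \precsim_{A^\alpha} b$.

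The main technical obstacle is verifying that $\alpha^{(n)}$ has the weak tracial Rokhlin property on $M_n(A)$. The strategy is the standard matrix amplification. Given $F \subseteq M_n(A)$ finite, $\ep > 0$, and $x \in M_n(A)_+$ with $\|x\| = 1$, first produce a nonzero $y \in A_+$ with $y \otimes 1_n \precsim_{M_n(A)} x$ by applying Lemma~2.1 of~\cite{Ph14} inside the hereditary subalgebra $\overline{x M_n(A) x}$ of the simple non-type-I algebra $M_n(A)$ to obtain an element with spectrum~$[0,1]$, splitting its spectrum into $n$ disjoint bumps to extract mutually orthogonal pieces, and conjugating by permutation matrix units to produce a common Cuntz subordinate inside the corner $A \otimes e_{11}$. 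Then apply \Def{W_T_R_P_def} for $\alpha$ on~$A$ with~$y$, the finite set $F_0 \subseteq A$ of all matrix entries of elements of~$F$, and a sufficiently small tolerance, obtaining contractions $f_g \in A$. The lifts $\tilde{f}_g = f_g \otimes 1_n \in M_n(A)$ inherit orthogonality and equivariance from~$f_g$; approximate commutation with~$F$ follows from approximate commutation with matrix entries; the comparison condition~(\ref{Def.w.t.r.p.c}) holds via $(1_A - \sum_{g} f_g) \otimes 1_n \precsim_{M_n(A)} y \otimes 1_n \precsim_{M_n(A)} x$; and the norm condition~(\ref{Def.w.t.r.p.d}) follows from a direct estimate using approximate commutation with entries of~$x$. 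The delicate step is the production of~$y$, where the structural input from~\cite{Ph14} is essential.
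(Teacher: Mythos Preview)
Your argument has a genuine gap in the proof of part~(\ref{WC_plus_injectivity_b}). You write ``Choose $n \in \N$ with $a, b \in M_n(A^\alpha)_+$,'' but for $\Cu$ the representatives $a, b$ live in $(K \otimes A^\alpha)_+$, and an arbitrary element there need not lie in (or even be Cuntz equivalent to an element of) any $M_n(A^\alpha)$. So the reduction to \Lem{W_T_R_P_inject} via matrix amplification is not available in general. What your argument actually establishes is part~(\ref{WC_plus_injectivity_a}), where representatives do lie in $M_\infty(A^\alpha)$; your deduction ``(\ref{WC_plus_injectivity_a}) follows from~(\ref{WC_plus_injectivity_b})'' then has the logical dependence backwards.

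The paper proceeds in exactly the opposite order: it proves~(\ref{WC_plus_injectivity_a}) first by the matrix amplification argument (citing Corollary~4.6 of~\cite{FG17} rather than reproving permanence of the weak tracial Rokhlin property under $M_n(\,\cdot\,)$, so your lengthy sketch of that step is unnecessary), and then derives~(\ref{WC_plus_injectivity_b}) from~(\ref{WC_plus_injectivity_a}) by an approximation argument. Given $a, b \in (K \otimes A^\alpha)_{++}$ with $a \precsim_A b$ and $\ep > 0$, one passes to cut-downs $(a - \ep/3)_+ \precsim_A (b - \dt)_+$, approximates these in some $M_n(A^\alpha)$, and---crucially---pads the right-hand side with an auxiliary $c \in M_n(A^\alpha)_+$ having $\spec(c) = [0,1]$ (obtained from \Lem{L_9606_Spec01} applied to a bump function supported in $\spec(b) \cap (0,\dt/3)$). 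The padding ensures that the approximant on the right is purely positive, so that~(\ref{WC_plus_injectivity_a}) applies, and then one reabsorbs $c$ into $b$ using the orthogonality built into the choice of bump function. This extra work is precisely what bridges the gap between $M_\infty$ and $K \otimes A$.
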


\begin{proof}
In both parts,
we need only prove injectivity and order isomorphism.

By Corollary 4.6 of~\cite{FG17}, for every $n \in \N$
the action $g \mapsto \id_{M_n} \otimes \af_g$
of $G$ on $M_n (A)$
has the weak tracial Rokhlin property.
With $\W_{+} (A^{\alpha})$
in place of $\W_{+} (A^{\alpha}) \cup \{ 0 \}$,
Part~(\ref{WC_plus_injectivity_a})
now follows from
Lemma \ref{L_9421_PropOfPP}(\ref{Item_L_9421_PropOfPP_ZInSp})
and Lemma~\ref{W_T_R_P_inject}.
Part~(\ref{WC_plus_injectivity_a}) as stated is then immediate.

We prove~(\ref{WC_plus_injectivity_b}).
It suffices to prove
that if $a, b \in ( K \otimes A^{\alpha})_{++}$
satisfy $a \precsim_A b$,
then $a \precsim_{A^{\af}} b$.
Let $\ep > 0$;
we prove that
$(a - \ep)_{+} \precsim_{A^{\af}} b$.

Choose $\dt > 0$ such that
\begin{equation}\label{Eq1_2019_04_14_wTRP}
\left( a - \frac{\ep}{3} \right)_{+} \precsim_A (b - \dt)_{+}.
\end{equation}
Choose $\ld \in \spec (b) \cap \big( 0, \frac{\dt}{3} \big)$.
Let $h \colon [0, \I) \to [0, 1]$
be a \cfn{}
such that $h (\ld) = 1$
and $\supp (h) \subseteq \big( 0, \frac{\dt}{3} \big)$.
Then
\begin{equation}\label{Eq_9421_Prop_hb}
\| h (b) \| = 1,
\qquad
h (b) \perp \Big( b - \frac{\dt}{3} \Big)_{+},
\andeqn
h (b) + \Big( b - \frac{\dt}{3} \Big)_{+} \precsim_{A^{\alpha}} b.
\end{equation}

Choose $n \in \N$ and $a_0, b_0, c_0 \in M_n (A^{\alpha})_{+}$
such that
\[
\bigg\| a_0 - \Big( a - \frac{\ep}{3} \Big)_{+} \bigg\|
   < \frac{\ep}{3},
\qquad
\bigg\| b_0 - \Big( b - \frac{\dt}{3} \Big)_{+} \bigg\|
   < \frac{\dt}{3},
\andeqn
\| c_0 - h (b) \| < \frac{1}{3}.
\]
It follows from Lemma \ref{PhiB.Lem_18_4}(\ref{Item_9420_LgSb_1_6})
that
\begin{equation}\label{Eq_9417_FromIneq_wTRP}
(a - \ep)_{+}
 \precsim_{A^{\af}} \Big( a_0 - \frac{\ep}{3} \Big)_{+}
 \precsim_{A^{\af}} \Big( a - \frac{\ep}{3} \Big)_{+}
\end{equation}
and
\begin{equation}\label{Eq_9417_FromIneq_2_wTRP}
(b - \dt)_{+}
 \precsim_{A^{\af}} \Big( b_0 - \frac{\dt}{3} \Big)_{+}
 \precsim_{A^{\af}} \Big( b - \frac{\dt}{3} \Big)_{+}.
\end{equation}
Set $c_1 = \big( c_0 - \frac{1}{3} \big)_{+}$.
Then $\| c_1 \| > \frac{1}{3}$, so $c_1 \neq 0$.
Since the action induced by $\af$ on $M_n (A)$
has the weak tracial Rokhlin property,
Lemma~\ref{L_9606_Spec01}
provides $c \in M_n (A^{\af})_{+}$
such that $c \precsim_{A^{\af}} c_1$ and $\spec (c) = [0, 1]$.
In particular,
\begin{equation}\label{Eq_9606_chb}
c \precsim_{A^{\af}} h (b).
\end{equation}

At the first step
combining the second part of~(\ref{Eq_9417_FromIneq_wTRP}),
(\ref{Eq1_2019_04_14_wTRP}),
and the first part of~(\ref{Eq_9417_FromIneq_2_wTRP}),
we get
\begin{equation}\label{Eq_9417_InA_wTRP}
\Big( a_0 - \frac{\ep}{3} \Big)_{+}
  \precsim_{A} \Big( b_0 - \frac{\dt}{3} \Big)_{+}
  \precsim_{A} \Big( b_0 - \frac{\dt}{3} \Big)_{+} \oplus c.
\end{equation}
Since
\[
a_0, \, \Big( b_0 - \frac{\dt}{3} \Big)_{+}, \, c
  \in \bigcup_{k = 1}^{\I} M_k (A^{\af}),
\]
because
$0$ is a limit point of the spectrum of
$\big( b_0 - \frac{\dt}{3} \big)_{+} \oplus c$,
and using
Lemma \ref{L_9421_PropOfPP}(\ref{Item_L_9421_PropOfPP_ZInSp}),
Part~(\ref{WC_plus_injectivity_a})
and~(\ref{Eq_9417_InA_wTRP})
imply
\begin{equation}\label{Eq_9423_InAaf_wTRP}
\Big( a_0 - \frac{\ep}{3} \Big)_{+}
  \precsim_{A^{\af}} \Big( b_0 - \frac{\dt}{3} \Big)_{+} \oplus c.
\end{equation}
Using, in order, the first part of~(\ref{Eq_9417_FromIneq_wTRP}),
(\ref{Eq_9423_InAaf_wTRP}),
(\ref{Eq_9606_chb})
and the second part of~(\ref{Eq_9417_FromIneq_2_wTRP}),
and~(\ref{Eq_9421_Prop_hb}),
we get
\[
(a - \ep)_{+}
 \precsim_{A^{\af}} \Big( a_0 - \frac{\ep}{3} \Big)_{+}
 \precsim_{A^{\af}} \Big( b_0 - \frac{\dt}{3} \Big)_{+} \oplus c
 \precsim_{A^{\af}} \Big( b - \frac{\dt}{3} \Big)_{+} \oplus h (b)
 \precsim_{A^{\af}} b.
\]
This completes the proof.
\end{proof}

Lemma~\ref{WC_plus_injectivity}
fails if we don't restrict to the purely positive elements.
See Example~\ref{E_9421_NotOnK0}.
We postpone this example,
since it uses Lemma~\ref{Fixedpoint_corner}.

\section{Radius of comparison of the fixed point algebra
  and crossed product}\label{Sec_rcCP}

\indent
In the next section,
we identify the range of the map
$\Cu_{+} (A^{\af}) \to \Cu (A)$
when $\af$ has the weak tracial Rokhlin property:
it is $\Cu_{+} (A)^{\af}$.
This information is not needed for our estimate
on the radius of comparison,
and does not seem to help
with the (still open) opposite inequality to the one we prove.
So we prove the radius of comparison results now.
Then we discuss what happens
under weaker hypotheses on the action,
and give the example promised at the end of Section~\ref{Sec_rcFix}.

\begin{thm}\label{Main.Thm1}
Let $G$ be a finite group,
let $A$ be
an infinite-dimensional stably finite simple unital C*-algebra,
and let $\alpha \colon G \to \Aut (A)$ be an action of
$G$ on $A$ which has the
weak tracial Rokhlin property.
Then
$\rc (A^{\alpha}) \leq \rc (A)$.
\end{thm}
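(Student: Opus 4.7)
The plan is to combine a quasitrace restriction argument with the injectivity result proved in Section~\ref{Sec_rcFix}. I would fix arbitrary $a, b \in M_\infty(A^\alpha)_+$ satisfying $d_\tau(a) + \rc(A) < d_\tau(b)$ for every $\tau \in \QT(A^\alpha)$, and aim to deduce $a \precsim_{A^\alpha} b$; by \Def{rc_dfn} this will give $\rc(A^\alpha) \leq \rc(A)$.

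First I would observe that each $\tau \in \QT(A)$ restricts to a normalized quasitrace on $A^\alpha$, since $1 \in A^\alpha$ and the restriction of a quasitrace is again a quasitrace. The hypothesis therefore gives $d_\tau(a) + \rc(A) < d_\tau(b)$ for every $\tau \in \QT(A)$ as well. Since $A$ is simple, stably finite, and unital, Proposition~6.3 of~\cite{Tom06} guarantees that $A$ has $\rc(A)$-comparison, so this inequality yields $a \precsim_A b$.

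The second and more delicate step is to upgrade Cuntz subequivalence in $A$ to Cuntz subequivalence in $A^\alpha$. When $b$ is purely positive in the sense of \Def{D_9421_Pure} (equivalently, when $0$ is a limit point of $\spec(b)$), I would apply \Lem{W_T_R_P_inject} to the appropriate matrix amplification of~$\af$, whose induced action still has the weak tracial Rokhlin property by Corollary~4.6 of~\cite{FG17}; this converts $a \precsim_A b$ directly into $a \precsim_{A^\alpha} b$. To handle general $b$, I would exploit the strict inequality to produce slack: use compactness of $\QT(A^\alpha)$ together with the continuity properties of dimension functions on projection classes to extract $\eta > 0$ with $d_\tau(a) + \rc(A) + \eta < d_\tau(b)$, then use \Lem{L_9606_Spec01} and the simplicity of $A^\alpha$ to produce a nonzero purely positive element of $A^\alpha$ whose dimension function is smaller than $\eta$, and use this to replace $b$ by a purely positive witness that still dominates~$a$ enough for the previous case to apply.

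The main obstacle is effectively packaged inside \Lem{W_T_R_P_inject}, which carries out the approximate $G$-averaging that substitutes for the clean ``average $v$ over $G$'' procedure available in the ordinary Rokhlin property case. Given that lemma, the remaining work in the present theorem is the quasitrace restriction and the dimension-function bookkeeping needed to handle the non-purely-positive case; both are comparatively straightforward.
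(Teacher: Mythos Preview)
Your overall strategy---restrict quasitraces from $A$ to $A^\alpha$, obtain $a \precsim_A b$ from $\rc(A)$-comparison, and then upgrade to $a \precsim_{A^\alpha} b$ via \Lem{W_T_R_P_inject}---is sound and is the backbone of the paper's argument as well. However, your treatment of the case where $b$ is not purely positive has a genuine gap. You propose to extract a uniform $\eta > 0$ with $d_\tau(a) + \rc(A) + \eta < d_\tau(b)$ from compactness of $\QT(A^\alpha)$ and continuity of $\tau \mapsto d_\tau(b) = \tau(p)$. The difficulty is that $\tau \mapsto d_\tau(a)$ is only lower semicontinuous, so $\tau \mapsto d_\tau(b) - d_\tau(a) - \rc(A)$ is only upper semicontinuous, and a strictly positive upper semicontinuous function on a compact set need not be bounded below by a positive constant. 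This is repairable: fix $\ep > 0$, choose a continuous $g \colon [0,\infty) \to [0,1]$ vanishing near~$0$ and equal to~$1$ on $[\ep,\infty)$, and use $d_\tau((a-\ep)_+) \leq \tau(g(a)) \leq d_\tau(a)$; then $\tau(p) - \tau(g(a)) - \rc(A)$ is genuinely continuous and positive, hence bounded below by some $\eta > 0$. Your ``replace $b$ by a purely positive witness'' step is also underspecified---a small purely positive element of $A^\alpha$ by itself does not tell you how to modify~$b$. One way to make it precise: find $z$ with $\spec(z) = [0,1]$ and $d_\tau(z) < \eta$ \emph{inside the corner} $p M_n(A^\alpha) p$, and use $p - z$, which is purely positive and satisfies $d_\tau(p - z) \geq \tau(p) - d_\tau(z) > d_\tau((a-\ep)_+) + \rc(A)$.

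The paper sidesteps both issues by abandoning the direct definition of $\rc$ in favor of the algebraic characterization in Theorem~12.4.4(ii) of~\cite{GKPT18}: one checks that $(n+1)\langle a \rangle + m\langle 1 \rangle \leq n\langle b \rangle$ with $m/n > \rc(A)$ forces $a \precsim_{A^\alpha} b$. From this Cuntz semigroup inequality the paper passes to $(a-\ep)_+$ and $(b-\dt)_+$ without any quasitrace compactness argument, and then invokes Lemma~2.7 of~\cite{Ph14} to split $\langle b \rangle_{A^\alpha}$ as $\langle c \rangle + \langle y \rangle$ with $y \neq 0$; after shrinking $y$ via \Lem{L_9606_Spec01} to have spectrum $[0,1]$, the sum $c \oplus y$ is the purely positive replacement to which \Lem{W_T_R_P_inject} applies. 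Your route, once the two gaps above are filled, is a legitimate alternative that stays closer to the definition of~$\rc$; the paper's route trades those analytic details for reliance on the auxiliary results just cited.
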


\begin{proof}
We use Theorem 12.4.4(ii) of \cite{GKPT18}.
Thus, let $m, n \in \N$ satisfy $\frac{m}{n} > \rc (A)$.
Let $l \in \N$, and let $a, b \in ( A^{\alpha} \otimes M_l )_{+}$
with $\| a \| = \| b \| = 1$ satisfy
\[
(n + 1) \langle a \rangle_{A^{\alpha}}
    + m \langle 1 \rangle_{A^{\alpha}}
 \leq n \langle b \rangle_{A^{\alpha}}
\]
in $\W ({A^{\alpha}})$.
Corollary 4.6 of~\cite{FG17},
the action
$\alpha \otimes \id_{M_{l}}\colon G \to \Aut (A\otimes M_{l})$,
defined by
\[
\left(\alpha \otimes \id_{M_{l}} \right)_{g}
        (a \otimes (\lambda_{j, k})_{j, k = 1}^{n})
= \alpha_{g} (a) \otimes (\lambda_{j, k})_{j, k = 1}^{n},
\]
also has the weak tracial property.
We may therefore assume $l = 1$.

We must prove that $a \precsim_{A^{\alpha}} b$.
Moreover, by \Lem{PhiB.Lem_18_4}(\ref{PhiB.Lem_18_4_11.b}),
it is enough to show that for every $\ep > 0$
we have $(a - \ep)_{+} \precsim_{A^{\alpha}} b$.

So let $\ep > 0$.
\Wolog{} $\ep < \frac{1}{2}$.
Choose $k \in \N$ such that
\begin{equation}\label{Eq_9422_kmkn1}
\frac{k m}{k n + 1} > {\rc} (A).
\end{equation}
Then in $\W ({A^{\alpha}})$ we have
\[
(k n + 1) \langle a \rangle_{A^{\alpha}}
 + k m \langle 1 \rangle_{A^{\alpha}}
  \leq k (n + 1) \langle a \rangle_{A^{\alpha}}
     + k m \langle 1 \rangle_{A^{\alpha}}
  \leq k n \langle b \rangle_{A^{\alpha}}.
\]
Let $u \in M_{\infty} ({A^{\alpha}})_{+}$
be the direct sum of $k n + 1$ copies of~$a$,
let $z \in M_{\infty} ({A^{\alpha}})_{+}$
be the direct sum of $k n$ copies of~$b$,
and let $q \in M_{\infty} ({A^{\alpha}})_{+}$
be the direct sum of $k m$ copies of~$1_A$.
Then, by definition,
$u \oplus q \precsim_{A^{\alpha}} z$.
Therefore \Lem{PhiB.Lem_18_4}(\ref{PhiB.Lem_18_4_11.c})
provides $\dt > 0$ such that
$\big( u \oplus q - \ep \big)_{+}
 \precsim_{A^{\alpha}} (z - \dt)_{+}$.
 Since $\ep < \frac{1}{2}$, we have
 \[
( u \oplus q - \ep )_{+}
  = ( u - \ep )_{+}
     \oplus ( q - \ep )_{+}
  \sim_{A^{\alpha}} ( u - \ep )_{+} \oplus q,
\]
so
\[
(k n + 1)
    \langle ( a - \ep )_{+} \rangle_{A^{\alpha}}
              + k m \langle 1 \rangle_{A^{\alpha}}
  \leq k n \langle (b - \dt)_{+} \rangle_{A^{\alpha}}.
\]
Set $a' = (a - \ep)_{+}$ and $b' = (b - \dt)_{+}$.
Then
\begin{equation}\label{Eq_9606_Star}
(k n + 1)
    \langle a' \rangle_{A^{\alpha}}
              + k m \langle 1 \rangle_{A^{\alpha}}
  \leq k n \langle b' \rangle_{A^{\alpha}}.
\end{equation}
Lemma~2.7 of~\cite{Ph14}
provides positive elements $c \in {A^{\alpha}}$
and $y \in {A^{\alpha}} \setminus \{ 0 \}$
such that
\begin{equation}\label{Eq_5513_cyb}
k n \langle b' \rangle_{A^{\alpha}}
   \leq (k n + 1) \langle c \rangle_{A^{\alpha}}
\andeqn
\langle c \rangle_{A^{\alpha}} + \langle y \rangle_{A^{\alpha}}
  \leq \langle b \rangle_{A^{\alpha}}
\end{equation}
in $\W (A^{\alpha})$.
By Lemma~\ref{L_9606_Spec01},
there is $y_0 \in (A^{\af})_{+}$
such that $y_0 \precsim_{A^{\af}} y$
and $\spec (y_0) = [0, 1]$.
Replacing $y$ with this element,
we may assume that $y$ is purely positive.
By (\ref{Eq_9606_Star}) and~(\ref{Eq_5513_cyb}),
\[
(k n + 1)
  \langle a' \rangle_{A^{\alpha}}
              + k m \langle 1 \rangle_{A^{\alpha}}
  \leq (k n + 1) \langle c \rangle_{A^{\alpha}}.
\]
This relation also holds in $\W (A)$.
For $\ta \in \QT (A)$,
apply $d_{\ta}$ and divide by $k n + 1$
to get
\[
d_{\ta} ( a' ) + \frac{k m}{k n + 1}
\leq d_{\ta} (c).
\]
So $a' \precsim_A c$ by~(\ref{Eq_9422_kmkn1}).
Therefore,
using \Lem{W_T_R_P_inject}
with $c \oplus y$ in place of $b$ at the second step,
and using (\ref{Eq_5513_cyb}) at the third step,
\[
(a - \ep)_{+}
  = a' \precsim_{A^{\alpha}} c \oplus y
  \precsim_{A^{\alpha}} b.
\]
This completes the proof.
\end{proof}

Using~\cite{GdlStg},
we get the same conclusion for
Rokhlin actions on stably finite unital C*-algebras.

\begin{thm}\label{T_9802_RokhlinMain}
Let $G$ be a finite group,
let $A$ be a stably finite unital C*-algebra,
and let $\alpha \colon G \to \Aut (A)$ be an action of
$G$ on $A$ which has the Rokhlin property.
Then $\rc (A^{\alpha}) \leq \rc (A)$.
\end{thm}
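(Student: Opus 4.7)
The plan is to mirror the proof of Theorem~\ref{Main.Thm1}, but to exploit the much stronger comparison-transfer available for Rokhlin actions, which allows us to skip the machinery that required simplicity. In outline: reduce the statement to an abstract combinatorial inequality in the Cuntz semigroup via the characterization of the radius of comparison, push that inequality through the inclusion $A^{\af} \hookrightarrow A$, apply $\rc(A)$-comparison in $A$, and then transfer the resulting Cuntz subequivalence back to $A^{\af}$ using Theorem~4.1(ii) of~\cite{GdlStg}.

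First I would invoke Theorem~12.4.4(ii) of~\cite{GKPT18}, as in the proof of Theorem~\ref{Main.Thm1}. Thus let $m, n \in \N$ satisfy $\frac{m}{n} > \rc(A)$, let $l \in \N$, and let $a, b \in (A^{\af} \otimes M_l)_{+}$ satisfy
\[
(n + 1) \langle a \rangle_{A^{\af}} + m \langle 1 \rangle_{A^{\af}}
   \leq n \langle b \rangle_{A^{\af}}
\]
in $\W(A^{\af})$; the task is to conclude $a \precsim_{A^{\af}} b$. Since the Rokhlin property passes to matrix algebras (the action $\af \otimes \id_{M_l}$ on $A \otimes M_l$ again has the Rokhlin property), one reduces to $l = 1$.

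Next, the inclusion $\iota \colon A^{\af} \to A$ is a \hm, so it sends $\langle\cdot\rangle_{A^{\af}}$ to $\langle\cdot\rangle_A$ and the same inequality
\[
(n + 1) \langle a \rangle_{A} + m \langle 1 \rangle_{A}
   \leq n \langle b \rangle_{A}
\]
holds in $\W(A)$. Because $\tfrac{m}{n} > \rc(A)$, Theorem~12.4.4(ii) of~\cite{GKPT18} applied to $A$ gives $a \precsim_A b$. Now I would apply Theorem~4.1(ii) of~\cite{GdlStg}, the Rokhlin-property version cited in the remark after \Lem{W_T_R_P_inject}: for Rokhlin actions of finite groups on an \emph{arbitrary} \uca, and for any $a, b \in (A^{\af})_{+}$, one has $a \precsim_A b \Rightarrow a \precsim_{A^{\af}} b$ with no condition of simplicity on~$A$ or spectral hypothesis on~$b$. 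This yields $a \precsim_{A^{\af}} b$, as required; a final application of Theorem~12.4.4(ii) of~\cite{GKPT18} to $A^{\af}$ delivers $\rc(A^{\af}) \leq \rc(A)$.

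The main obstacle in the weak tracial Rokhlin setting of Theorem~\ref{Main.Thm1} was the need to discard a small purely positive piece (using \Lem{L_9606_Spec01} and Lemma~2.7 of~\cite{Ph14}) so that the hypothesis of \Lem{W_T_R_P_inject} could be met; this step used simplicity of~$A$ in an essential way. Here that difficulty disappears entirely, because Theorem~4.1(ii) of~\cite{GdlStg} handles the transfer from $A$ to $A^{\af}$ without any spectral hypothesis on~$b$. The only minor point to verify is that the combinatorial formulation of $r$-comparison from Theorem~12.4.4(ii) of~\cite{GKPT18} is applicable without simplicity, which it is: that characterization is valid for any stably finite \uca.
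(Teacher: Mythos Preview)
Your proof is correct, and it uses the same key ingredient as the paper---Theorem~4.1(ii) of~\cite{GdlStg}---to transfer Cuntz subequivalence from $A$ back to~$A^{\af}$. The route differs slightly: you go through the combinatorial characterization of $\rc$ in Theorem~12.4.4(ii) of~\cite{GKPT18}, mirroring the proof of Theorem~\ref{Main.Thm1}, whereas the paper works directly from Definition~\ref{rc_dfn}. Its argument runs as follows: assume $A$ has $r$-comparison, take $a, b \in M_{\infty}(A^{\af})_{+}$ with $d_{\tau}(a) + r < d_{\tau}(b)$ for all $\tau \in \QT(A^{\af})$, observe that every $\tau \in \QT(A)$ restricts to an element of $\QT(A^{\af})$ so the same inequality holds over $\QT(A)$, conclude $a \precsim_A b$ by $r$-comparison of~$A$, and then apply Theorem~4.1(ii) of~\cite{GdlStg} to get $a \precsim_{A^{\af}} b$. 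This avoids the need to check that Theorem~12.4.4(ii) of~\cite{GKPT18} applies in the non-simple setting and that the Rokhlin property passes to matrix amplifications (both of which are true, but the paper's route simply sidesteps them). Your approach has the virtue of parallelism with the proof of Theorem~\ref{Main.Thm1}; the paper's is shorter and more self-contained.
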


\begin{proof}
We may clearly assume $\rc (A) < \infty$.
Let $r \in [0, \I)$ and suppose that $A$ has $r$-comparison.
Let $a, b \in M_{\I} (A^{\alpha})_{+}$
satisfy $d_{\ta} (a) + r < d_{\ta} (b)$
for all $\tau \in \QT (A^{\alpha})$.
Since every quasitrace on~$A$ restricts to a quasitrace on~$A^{\alpha}$,
we have $d_{\ta} (a) + r < d_{\ta} (b)$
for all $\ta \in \QT (A)$.
Since $A$ has $r$-comparison, we get $a \precsim_{A} b$.
Now $a \precsim_{A^{\alpha}} b$
by Theorem 4.1(ii) of~\cite{GdlStg}.
So $\rc (A^{\alpha}) \leq r$.
Taking the infimum over $r \in [0, \I)$
such that $A$ has $r$-comparison,
we get $\rc (A^{\alpha}) \leq \rc (A)$.
\end{proof}

We now turn to the radius of comparison of the crossed product.

Parts (\ref{Fixedpoint_corner_a})--(\ref{Fixedpoint_corner_d})
of the following lemma are originally taken from \cite{Ros79}.
Since some properties of the projection $p$ are
needed in our computations,
we give a more detailed statement.

\begin{lem}\label{Fixedpoint_corner}
Let $G$ be a finite group,
let $A$ be a unital \ca, and
let $\alpha \colon G \to \Aut (A)$  be an action of $G$ on~$A$.
Recalling from Notation~\ref{N_9408_StdNotation_CP}
that $(u_g)_{g \in G}$ is the family of standard unitaries
in $\CGAa$,
define
$p = \frac{1}{\card (G)} \sum_{g \in G} u_g$.
Then:
\begin{enumerate}
\item\label{Fixedpoint_corner_a}
$p$ is a projection in $\CGAa$.
\item\label{Fixedpoint_corner_b}
$p a p
 = \Big( \frac{1}{\card (G)} \sum_{g \in G} \alpha_{g} (a) \Big) p$
for all $a \in A$.
\item\label{Fixedpoint_corner_c}
If $a \in A^{\alpha}$, then $p a p = ap$.
\item\label{Fixedpoint_corner_d}
The map $a \mapsto a p$ is an isomorphism from
$A^{\alpha}$ to the corner $p \CGAa p$.
\item\label{Fixedpoint_corner_tsr}
If $\CGAa$ has stable rank one,
then $A^{\alpha}$ has stable rank one.
\item\label{Fixedpoint_corner_Rokhlin}
If $\af$ has the Rokhlin property,
then $p$ is full in $\CGAa$.
\end{enumerate}
\end{lem}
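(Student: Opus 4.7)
The plan is to handle parts~(\ref{Fixedpoint_corner_a})--(\ref{Fixedpoint_corner_c}) by direct computation using the defining relations $u_g u_h = u_{gh}$, $u_g^{*} = u_{g^{-1}}$, and $u_g a = \af_g (a) u_g$ in $\CGAa$. For~(\ref{Fixedpoint_corner_a}), the identity $p^* = p$ comes from reindexing $g \mapsto g^{-1}$, and $p^2 = \card(G)^{-2} \sum_{g, h} u_{g h} = p$ since for each $g$ the inner sum over $h$ equals $\card(G) \, p$. For~(\ref{Fixedpoint_corner_b}), pushing $a$ past each $u_g$ gives $p a p = \card(G)^{-2} \sum_{g, h} \af_g (a) \, u_{gh}$, and again the inner sum collapses to $\card(G) \, p$. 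Part~(\ref{Fixedpoint_corner_c}) is then immediate.

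For~(\ref{Fixedpoint_corner_d}), I would first observe that if $a \in A^{\af}$ then $u_g a = \af_g (a) u_g = a u_g$ for every~$g$, so $a p = p a$. Hence for $a, b \in A^{\af}$ we have $(a p)(b p) = a (p b) p = a (b p) = (a b) p$, and together with $1 \mapsto p$ and the obvious behavior under $*$, the map $a \mapsto a p$ is a unital $*$-homomorphism into $p \CGAa p$. Injectivity follows by applying the canonical conditional expectation $E \colon \CGAa \to A$ sending $\sum_g a_g u_g$ to $a_1$: if $a p = 0$ then $E (a p) = a / \card(G) = 0$. For surjectivity, the relation $u_h p = \card(G)^{-1} \sum_g u_{h g} = p$ (reindexing) gives, for any $x = \sum_h a_h u_h \in C_{\mathrm{c}} (G, A, \af)$,
\[
p x p = \sum_h p a_h u_h p = \sum_h p a_h p
      = \Bigg( \sum_h \frac{1}{\card(G)} \sum_g \af_g (a_h) \Bigg) p,
\]
where the bracketed element is in~$A^{\af}$ by~(\ref{Fixedpoint_corner_b}). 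Combining this with density of $C_{\mathrm{c}} (G, A, \af)$ and the bound $\| a p \| \geq \| E (a p) \| = \| a \| / \card(G)$ (so the image is closed) finishes surjectivity.

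Part~(\ref{Fixedpoint_corner_tsr}) then follows immediately from the standard fact that a hereditary C*-subalgebra of a \ca{} with stable rank one again has stable rank one, applied to the corner $p \CGAa p \cong A^{\af}$.

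The main content is~(\ref{Fixedpoint_corner_Rokhlin}). Given $\ep > 0$, apply the Rokhlin property to obtain \mops{} $(e_g)_{g \in G}$ in~$A$ with $\sum_g e_g = 1$ and $\| \af_g (e_h) - e_{g h} \| < \ep$ for all $g, h \in G$. The key local computation is
\[
e_1 p e_1 = \frac{1}{\card(G)} \sum_{g \in G} e_1 \af_g (e_1) u_g.
\]
The $g = 1$ term contributes $e_1$, while for $g \neq 1$ we have
$\| e_1 \af_g (e_1) \|
 \leq \| e_1 e_g \| + \| e_1 (\af_g (e_1) - e_g) \| < \ep$
by orthogonality of the~$e_h$. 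Hence
$\| \card(G) \cdot e_1 p e_1 - e_1 \| < \card(G) \ep$,
so $e_1$ lies within $\card(G) \ep$ of the closed two-sided ideal~$J$ generated by~$p$. Since $J$ is invariant under each $\Ad (u_g)$, every $u_g e_1 u_g^* = \af_g (e_1)$ is also within $\card(G) \ep$ of~$J$, and
\[
\Bigg\| \sum_{g \in G} \af_g (e_1) - 1 \Bigg\|
   \leq \sum_{g \in G} \| \af_g (e_1) - e_g \|
   < \card(G) \ep.
\]
Letting $\ep \to 0$ shows $1 \in J$, so $p$ is full. The main obstacle is just careful tracking of the error terms in this last step to extract an honest element of~$J$ converging to~$1$; the algebraic core is the identity $e_1 p e_1 \approx e_1 / \card(G)$ combined with $G$-invariance of~$J$.
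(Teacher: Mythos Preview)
Your proof is correct. Parts~(\ref{Fixedpoint_corner_a})--(\ref{Fixedpoint_corner_tsr}) match the paper's treatment: the paper simply says these are computations (citing Rosenberg) and, for~(\ref{Fixedpoint_corner_tsr}), invokes the fact that corners of stable rank one algebras have stable rank one. Your added detail for~(\ref{Fixedpoint_corner_d}) (commutation of $a \in A^{\af}$ with~$p$, injectivity via~$E$, surjectivity via $u_h p = p$) is exactly what the omitted computation amounts to.

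For~(\ref{Fixedpoint_corner_Rokhlin}) you take a genuinely different route. The paper argues as follows: with $J$ the closed ideal generated by~$p$ and $I = J \cap A$, the proof of Proposition~10.3.13 of~\cite{GKPT18} (which uses the Rokhlin property) gives $E (J) \subseteq I$; since $E (\card (G) \cdot p) = 1$, one gets $1 \in I \subseteq J$ immediately. Your argument instead works directly with a Rokhlin family $(e_g)_{g \in G}$, shows $\card (G) \, e_1 p e_1 \approx e_1$ via the covariance relation and orthogonality, and then uses $\Ad (u_g)$-invariance of~$J$ together with $\sum_g \af_g (e_1) \approx 1$ to force $1 \in J$. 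Your approach is more hands-on and fully self-contained, at the cost of some $\ep$-bookkeeping; the paper's is a one-line consequence of a cited structural fact about ideals in Rokhlin crossed products. Both are perfectly good; yours has the advantage of not depending on an external reference.
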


\begin{proof}
Parts (\ref{Fixedpoint_corner_a})--(\ref{Fixedpoint_corner_d})
are computations.
(Also see~\cite{Ros79}.)

Next,
if $\CGAa$ has stable rank one,
then Theorem 3.1.8 of~\cite{LnBook}
implies that $p \CGAa p$ has stable rank one,
so (\ref{Fixedpoint_corner_tsr})
follows from~(\ref{Fixedpoint_corner_d}).

For~(\ref{Fixedpoint_corner_Rokhlin}),
let $J \subseteq \CGAa$
be the closed ideal generated by~$p$,
and set $I = J \cap A$.
Let $E \colon \CGAa \to A$ be the standard
conditional expectation.
The proof of Proposition~10.3.13 of \cite{GKPT18}
shows that $E (J) \subseteq I$.
Since $E ( \card (G) \cdot p) = 1$,
we have $1 \in I$,
so $1 \in J$.
\end{proof}

The proof of the following lemma is easier,
and well known, for tracial states.
For example, the inequality~(\ref{QT_claim_d})
is trivial for tracial states,
but it seems to require some effort for quasitraces.

\begin{lem}\label{proj.quasitrace}
Let $G$ be a finite group,
let $A$ be an infinite-dimensional stably finite simple
unital \ca,
let $\alpha \colon G \to \Aut (A)$ be an action of
$G$ on $A$ which has the
weak tracial Rokhlin property,
and let $\tau \in \QT \big( \CGAa \big)$.
Let $p = \frac{1}{\card (G)} \sum_{g \in G} u_g$,
as in Lemma~\ref{Fixedpoint_corner}.
Then
$\tau (p) = \frac{1}{\card (G)}$.
\end{lem}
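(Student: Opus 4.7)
The plan is to establish the chain of (approximate) equalities
\[
\tau(p) \approx \tau(fp) = \card(G) \cdot \tau(p f_1 p) = \card(G) \cdot \tau(f_1^{1/2} p f_1^{1/2}) \approx \tau(f_1) \approx \frac{1}{\card(G)},
\]
where $(f_g)_{g \in G}$ is a weak tracial Rokhlin system with $f = \sum_g f_g \in A^{\af}$, and then let the Rokhlin parameters refine to obtain equality. Concretely: choose $x \in A_{+}$ with $\| x \| = 1$ and $d_{\tau}(x)$ as small as desired (possible since $A$ is simple and infinite-dimensional, with plenty of small positive elements in any given quasitrace). For small $\ep > 0$, apply Lemma~\ref{invariant_contractions} with this $x$ and $\ep$ to produce positive contractions $f_g \in A$ satisfying $\af_h(f_g) = f_{h g}$ exactly, $\| f_g f_h \| < \ep$ for $g \neq h$, $f = \sum_g f_g \in A^{\af}$, and $(1 - f - \ep)_{+} \precsim_A x$. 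By \Lem{Fixedpoint_corner}(\ref{Fixedpoint_corner_c}), $f$ commutes with $p$.

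For $\tau(p) \approx \tau(fp)$: since $p$ and $f$ commute, linearity of $\tau$ on the commutative subalgebra they generate gives $\tau(p) - \tau(fp) = \tau((1 - f) p) \leq \tau(1 - f)$ by monotonicity (using $(1-f)p \leq 1-f$ for commuting positive elements and $p \leq 1$); and $\tau(1 - f) \leq \tau((1 - f - \ep)_+) + \ep \leq d_{\tau}((1 - f - \ep)_+) + \ep \leq d_{\tau}(x) + \ep$, where the first inequality uses linearity on $C^{*}(f)$ plus the norm bound on $(1-f) - (1-f-\ep)_+$, and the second uses $\tau(a) \leq \| a \| d_{\tau}(a)$. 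The exact equality $\tau(fp) = \card(G) \cdot \tau(p f_1 p)$ follows from \Lem{Fixedpoint_corner}(\ref{Fixedpoint_corner_b}) applied to $a = f_1$, which gives $p f_1 p = \frac{1}{\card(G)} \sum_g \af_g(f_1) \, p = \frac{1}{\card(G)} f p$. Next, $\tau(p f_1 p) = \tau(f_1^{1/2} p f_1^{1/2})$ is the quasitrace axiom $\tau(y^* y) = \tau(y y^*)$ applied to $y = f_1^{1/2} p$. Expanding
\[
f_1^{1/2} p f_1^{1/2}
 = \frac{1}{\card(G)} \sum_g f_1^{1/2} \af_g(f_1^{1/2}) \, u_g
 = \frac{1}{\card(G)} \sum_g f_1^{1/2} f_g^{1/2} \, u_g,
\]
the $g = 1$ summand equals $\frac{1}{\card(G)} f_1$, while \Lem{L_9420_NearZero} applied to $\| f_1 f_g \| < \ep$ bounds the norm of each remaining summand by a small quantity, so by norm continuity of $\tau$ (\Prp{BH82_Cor_2_2_5}(\ref{BH82_Cor_2_2_5_e})) one gets $\tau(f_1^{1/2} p f_1^{1/2}) \approx \frac{1}{\card(G)} \tau(f_1)$.

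For the final approximation $\tau(f_1) \approx 1/\card(G)$: the quasitrace axiom applied to $u_g f_1^{1/2}$ yields $\tau(\af_g(f_1)) = \tau(f_1)$, so by exact invariance $\tau(f_g) = \tau(f_1)$ for every $g$. A standard perturbation replaces the approximately orthogonal $f_g$'s by nearby exactly orthogonal positive elements, on which linearity of $\tau$ on the resulting commutative subalgebra, combined with norm continuity, gives $\tau(f) \approx \card(G) \cdot \tau(f_1)$; combined with $\tau(1 - f) \leq d_{\tau}(x) + \ep$ small, this forces $\tau(f_1) \approx 1/\card(G)$. Letting $\ep$ and $d_{\tau}(x)$ tend to zero then yields $\tau(p) = 1/\card(G)$ exactly. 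The main obstacle is precisely this last step: for a tracial state one immediately writes $\tau(\sum_g u_g) = \sum_g \tau(u_g)$ and shows $\tau(u_g) = 0$ for $g \neq 1$ by direct averaging with the Rokhlin elements; for a quasitrace no such linearity is available, so everything has to be routed through positive elements, and the non-exact orthogonality of the $f_g$'s (the price of demanding exact $\af$-invariance) necessitates the perturbation step in order to split $\tau(f)$ into its $\card(G)$ individual contributions.
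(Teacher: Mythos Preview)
Your proof is correct, but it takes a genuinely different route from the paper's.

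The key structural difference is in the choice of Rokhlin elements. The paper applies Definition~\ref{W_T_R_P_def} directly, obtaining \emph{exactly orthogonal} positive contractions $f_g$ which are only \emph{approximately} $\af$-permuted. You instead use Lemma~\ref{invariant_contractions}, obtaining elements with \emph{exact} invariance $\af_g(f_h) = f_{gh}$ but only approximate orthogonality. This trade pays off handsomely in your middle steps: because $f = \sum_g f_g \in A^{\af}$ exactly, $f$ commutes with~$p$ (via Lemma~\ref{Fixedpoint_corner}(\ref{Fixedpoint_corner_c})), and Lemma~\ref{Fixedpoint_corner}(\ref{Fixedpoint_corner_b}) gives the exact identity $p f_1 p = \frac{1}{\card(G)} f p$; likewise $\tau(f_g) = \tau(f_1)$ is an exact consequence of the quasitrace axiom applied to $u_g f_1^{1/2}$. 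The paper has none of these exact identities and must instead prove four separate approximate claims (\ref{QT_claim_a})--(\ref{QT_claim_d}); its most delicate one, (\ref{QT_claim_d}), handles the failure of additivity of~$\tau$ on $\sum_h p f_h^2 p$ by showing that all the summands $p f_h^2 p$ are close to a single element $\frac{1}{\card(G)} b p$, whence Proposition~\ref{BH82_Cor_2_2_5}(\ref{BH82_Cor_2_2_5_c}) applies.

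The cost of your approach is that the quasitrace obstacle resurfaces at the very end: you need $\tau(f) \approx \card(G)\,\tau(f_1)$, and since your $f_g$'s are only approximately orthogonal you cannot invoke linearity of~$\tau$ on a commutative subalgebra directly. Your perturbation to exactly orthogonal $\tilde f_g$'s is legitimate---this is the stability of the relations defining $n$ orthogonal positive contractions (see, e.g., the reasoning behind Proposition~3.10 of~\cite{FG17}, or standard semiprojectivity arguments)---and once done, $\tau(\sum_g \tilde f_g) = \sum_g \tau(\tilde f_g)$ together with Proposition~\ref{BH82_Cor_2_2_5}(\ref{BH82_Cor_2_2_5_c}) finishes the job. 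So both arguments confront the same nonlinearity of~$\tau$, just at different points: the paper resolves it by showing the summands are mutually close, you by perturbing them to commute. Your route is more streamlined once the invariance lemma and the perturbation fact are in hand; the paper's route is more self-contained.
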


\begin{proof}
Let $\ep > 0$.
We show that
$\big| \frac{1}{\card (G)} - \tau (p) \big| < \ep$.
By Corollary~2.5 of~\cite{Ph14},
there is $a \in A_{+} \setminus \{ 0 \}$
such that for all $\rho \in {\operatorname{QT}} (A)$,
\begin{equation}\label{Eq_1.10.2019.1}
d_{\rho} (a) < \frac{\ep}{4}.
\end{equation}
Applying \Def{W_T_R_P_def} with $F= \varnothing$,
with $[32 \, \card (G)]^{-1} \ep$ in place of~$\ep$,
and with $\| a \|^{-1} \cdot a$ in place of~$x$,
we get orthogonal positive contractions
$f_g \in A$ for $g \in G$ such that,
with $f = \sum_{g \in G} f_g$, we have
\begin{equation}\label{Eq_9408_StSt}
1 - f \precsim_A a
\end{equation}
and
\[
\| \alpha_{g} ( f_h ) - f_{g h} \| < \frac{\ep}{32 \, \card (G)}
\]
for all $g, h \in G$.
This inequality,
together with $\| f_g \|, \| f_{g h} \| \leq 1$,
implies
\begin{align}\label{Eq_1.10.2019.14}
\| \alpha_{g} ( f_h^{2} ) - f_{g h}^{2} \|
& \leq
\| \alpha_{g} ( f_h) \| \cdot \| \alpha_{g} ( f_h ) - f_{g h} \|
    + \| \alpha_{g} ( f_h) - f_{g h} \| \cdot \| f_{g h} \|
\\
\notag
& < \frac{\ep}{16 \, \card (G)}.
\end{align}

Now we claim that the following hold:
\begin{equation}\label{QT_claim_a}
0 \leq \tau (1) - \tau (f^2) < \frac{\ep}{4},
\end{equation}
\begin{equation}\label{QT_claim_b}
0 \leq \tau (p) - \tau (p f^2 p) < \frac{\ep}{4},
\end{equation}
\begin{equation}\label{QT_claim_c}
\Bigg| \sum_{h \in G} \tau \big( p f_h^2 p \big)
    - \tau \bigg( \bigg[ \frac{1}{\card (G)} \bigg] f^2 \bigg) \Bigg|
  < \frac{\ep}{4},
\end{equation}
and
\begin{equation}\label{QT_claim_d}
\Bigg| \tau \Bigg( \sum_{h \in G} p f_h^2 p \Bigg)
  - \sum_{h \in G} \tau ( p f_h^2 p ) \Bigg|
 < \frac{\ep}{4}.
\end{equation}

We prove (\ref{QT_claim_a}).
Since $\spec (f) \subseteq [0, 1]$,
we have $1 - f^2 \sim_A 1 - f$,
so $1 - f^2 \precsim_{A} a$
by (\ref{Eq_9408_StSt}).
Clearly $\tau |_{A} \in \QT (A)$.
Therefore, using (\ref{Eq_1.10.2019.1}) at the last step,
\[
0 \leq \tau (1 - f^2)
  \leq d_{\tau} (1 - f^2)
  \leq d_{\tau} (a)
  < \frac{\ep}{4}.
\]
The relation (\ref{QT_claim_a})
follows because $1$ and $f^2$ commute.

To prove (\ref{QT_claim_b}), we start with
$(1 - f^2 )^{1/2} p (1 - f^2 )^{1/2} \leq  (1 - f^2)$.
Then, by \Prp{BH82_Cor_2_2_5}(\ref{BH82_Cor_2_2_5_b}),
\begin{equation}\label{Eq_1.10.2019.5}
\tau \big( (1 - f^2 )^{1/2} p (1 - f^2 )^{1/2} \big)
 \leq \tau (1 - f^2).
\end{equation}
Therefore, using $[p, \, p f^2 p ] = 0$ at the second step,
the trace property
(Definition \ref{quasitrace}(\ref{quasitrace.a}))
at the third step,
(\ref{Eq_1.10.2019.5}) at the fourth step,
and (\ref{QT_claim_a}) at the last step,
\[
0 \leq \tau (p ) - \tau (p f^2 p )
  = \tau \big( p(1 - f^2) p \big)
  = \tau \big( (1 - f^2)^{1/2} p (1 - f^2)^{1/2} \big)
  \leq \tau (1 - f^2)
  < \frac{\ep}{4}.
\]

For~(\ref{QT_claim_c}), first we estimate
\begin{align}\label{Eq_1.10.2019.8}
\biggl\| f_h p f_h - \frac{1}{\card (G)} f_h^{2} \biggr\|
& = \frac{1}{\card (G)} \Bigg\| \sum_{g \in G}
    f_h \alpha_{g} (f_h) u_{g} - \sum_{g \in G} f_h f_{g h} u_g \Bigg\|
\\
\notag
& \leq \frac{1}{\card (G)} \sum_{g \in G}
     \| f_h \| \cdot \| \alpha_{g} (f_h) - f_{g h} \|
< \frac{\ep}{32 \, \card (G) }.
\end{align}
Therefore, using (\ref{Eq_1.10.2019.8}) at the last step,
\[
\Bigg\| \sum_{h \in G} f_h p f_h - \frac{1}{\card (G)} f^2 \Bigg\|
 \leq \sum_{h \in G}
     \biggl\| f_h p f_h - \frac{1}{\card (G)} f^{2}_{h} \biggr\|
 < \frac{\ep}{32}
 < \frac{\ep}{4}.
\]
Now use Proposition \ref{BH82_Cor_2_2_5}(\ref{BH82_Cor_2_2_5_c})
and $N (\ta) = 1$
to get
\[
\Bigg| \ta \Bigg( \sum_{h \in G} f_h p f_h \Bigg)
    - \ta \bigg( \frac{1}{\card (G)} f^2 \bigg) \Bigg|
 < \frac{\ep}{4}.
\]
We have
$\ta \left( \sum_{h \in G} f_h p f_h \right)
  = \sum_{h \in G} \ta (f_h p f_h)$
since the elements $f_h p f_h$,
for $h \in G$, commute with each other.
The trace property
(Definition \ref{quasitrace}(\ref{quasitrace.a}))
gives $\ta (f_h p f_h) = \ta (p f_h^2 p )$ for $h \in G$.
This completes the proof of~(\ref{QT_claim_c}).

To prove (\ref{QT_claim_d}),
set $b = \sum_{g \in G} \alpha_{g} (f^{2}_1)$.
Then, for $h \in G$, using (\ref{Eq_1.10.2019.14}) at the second step,
\begin{align}\label{Eq_1.11.2019.10}
\Bigg\| \sum_{g \in G} \alpha_{g} (f_h^{2}) - b \Bigg\|
& \leq \sum_{g \in G}
      \bigl\| \alpha_{g} (f_h^{2}) - f_{g h}^{2} \bigr\|
  + \sum_{g \in G} \bigl\| f^{2}_{g} - \alpha_{g} (f^{2}_1) \bigr\|
\\
\notag
& <  2 \, \card (G) \bigg( \frac{\ep}{16 \, \card (G)} \bigg)
  = \frac{\ep}{8}.
\end{align}
Next,
using \Lem{Fixedpoint_corner}(\ref{Fixedpoint_corner_b})
at the first step
and (\ref{Eq_1.11.2019.10}) at the last step,
\begin{align}\label{EQQ.1.11.2019.12}
\biggl\| p f_h^{2} p  - \frac{1}{\card (G)} b p \biggr\|
& = \Bigg\| \Bigg( \frac{1}{\card (G)}
                \sum_{g \in G} \alpha_{g} (f^{2}_{h}) \Bigg) p
           - \frac{1}{\card (G)} b p \Bigg\|
\\ \notag
& \leq \frac{\| p \|}{\card (G)} \Bigg\| \sum_{g \in G}
     \alpha_{g} (f^{2}_{h}) - b \Bigg\|
  < \frac{\ep}{8 \, \card (G)}.
\end{align}
Then, using (\ref{EQQ.1.11.2019.12}) at the last step,
\begin{equation}\label{Eq_1.11.2019.32}
\Biggl\| \sum_{h \in G} p f_h^{2} p  - b p \Biggr\|
\leq \sum_{h \in G}
     \biggl\|  p f_h^{2} p - \frac{1}{\card (G)} b p \biggr\|
< \frac{\ep}{8}.
\end{equation}
Finally, we get, using $p b p = b p$
(by \Lem{Fixedpoint_corner}(\ref{Fixedpoint_corner_c}),
since $b \in A^{\af}$), $N (\tau) = 1$,
and Proposition \ref{BH82_Cor_2_2_5}(\ref{BH82_Cor_2_2_5_c})
at the second step,
and using
(\ref{EQQ.1.11.2019.12}) and  (\ref{Eq_1.11.2019.32}) at the third step,
\begin{align*}
& \Bigg| \tau \Bigg( \sum_{h \in G} p f_h^{2} p \Bigg)
          - \sum_{h \in G} \tau ( p f_h^{2} p) \Bigg|
\\
& \qquad
 \leq \Bigg| \tau \Bigg( \sum_{h \in G} p f_h^{2} p \Bigg)
       - \tau ( b p) \Bigg|
     + \Bigg| \sum_{h \in G} \frac{1}{\card (G)} \tau ( b p)
    - \sum_{h \in G} \tau ( p f_h^{2} p) \Bigg|
\\
& \qquad
 \leq \Bigg\| \sum_{h \in G} p f_h^{2} p - b p \Bigg\|
  + \sum_{h \in G} \Big\| \frac{1}{\card (G)} b p - p f_h^{2} p \Big\|
\\
& \qquad
 < \frac{\ep}{8} + \card (G)
    \left( \frac{\ep}{8 \, \card (G)} \right)
 = \frac{\ep}{4}.
\end{align*}
This completes the proof of the claim.

Now we estimate,
using (\ref{QT_claim_a}), (\ref{QT_claim_c}), (\ref{QT_claim_d}),
and (\ref{QT_claim_b}) at the second step,
\begin{align*}
& \Big| \frac{1}{\card (G)} - \tau (p) \Big|
\\
& \hspace*{3em} {\mbox{}}
  \leq \Big| \Bigl( \frac{1}{\card (G)} \Bigr) \tau (1)
      - \Big( \frac{1}{\card (G)} \Big) \tau (f^2) \Big|
    + \Bigg| \tau \Big( \Big[ \frac{1}{\card (G)} \Big] f^2 \Big)
    - \sum_{h \in G} \tau ( p f_h^2 p ) \Bigg|
\\
& \hspace*{6em} {\mbox{}}
 + \Bigg| \sum_{h \in G} \tau ( p f_h^2 p )
       - \tau \Bigg( \sum_{h \in G} p f^{2}_{h} p \Bigg) \Bigg|
 + \bigl| \tau (p f^2 p) - \tau (p) \bigr|
\\
& \hspace*{3em} {\mbox{}}
  < \frac{\ep}{4 \, \card (G)}
      + \frac{\ep}{4} + \frac{\ep}{4} + \frac{\ep}{4}
  \leq \ep.
\end{align*}
This completes the proof.
\end{proof}

\begin{thm}\label{T_9412_RcCrPrd}
Let $G$ be a finite group,
let $A$ be
an infinite-dimensional stably finite simple unital C*-algebra,
and let $\alpha \colon G \to \Aut (A)$  be an action of
$G$ on $A$ which has the
weak tracial Rokhlin property.
Then
\[
\rc \big( \CGAa \big) = \frac{1}{\card (G)} \cdot \rc (A^{\af})
\quad {\mbox{and}} \quad
\rc \big( \CGAa \big) \leq \frac{1}{\card (G)} \cdot \rc (A).
\]
\end{thm}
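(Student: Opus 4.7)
The plan is to realize $A^{\af}$ as a full corner of $\CGAa$ and then invoke the corner estimate of Theorem~\ref{Ourcornertheorem} together with the trace-value computation of Lemma~\ref{proj.quasitrace}. The second inequality will then follow immediately from Theorem~\ref{Main.Thm1}.

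First I would set $p = \frac{1}{\card(G)} \sum_{g \in G} u_g$ and verify that $p$ is a full projection in the stably finite \uca{}~$\CGAa$. By Lemma~\ref{Fixedpoint_corner}(\ref{Fixedpoint_corner_a}) and~(\ref{Fixedpoint_corner_d}), $p$ is a projection and $A^{\af} \cong p \CGAa p$, so $p \neq 0$ because $A^{\af}$ is unital. The weak tracial Rokhlin property forces $\af$ to be pointwise outer (as remarked after Definition~\ref{W_T_R_P_def}), whence $\CGAa$ is simple and therefore $p$ is full. Stable finiteness of $\CGAa$ follows because $A^{\af} \subseteq A$ is stably finite, so $\QT(A^{\af}) \neq \E$ by Proposition~\ref{P_9312_ExistQT}, Lemma~\ref{Ourlemma.1112}(\ref{Ourlemma.1112_b}) transports this to $\QT(\CGAa) \neq \E$, and a unital simple \ca{} with a quasitrace is stably finite.

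Next I would apply Theorem~\ref{Ourcornertheorem} to $\CGAa$ with $n = 1$ and the full projection~$p$. By Lemma~\ref{proj.quasitrace} every $\ta \in \QT(\CGAa)$ satisfies $\ta(p) = \frac{1}{\card(G)}$, so in the notation of Theorem~\ref{Ourcornertheorem} we have $\ld = \et = \frac{1}{\card(G)}$. Hence
\[
\card(G) \cdot \rc(\CGAa) \leq \rc \big( p \CGAa p \big) \leq \card(G) \cdot \rc(\CGAa),
\]
and combining with the identification $p \CGAa p \cong A^{\af}$ from Lemma~\ref{Fixedpoint_corner}(\ref{Fixedpoint_corner_d}) yields the equality $\rc(\CGAa) = \frac{1}{\card(G)} \rc(A^{\af})$. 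Finally, inserting the bound $\rc(A^{\af}) \leq \rc(A)$ from Theorem~\ref{Main.Thm1} delivers the second inequality
\[
\rc(\CGAa) = \frac{1}{\card(G)} \rc(A^{\af}) \leq \frac{1}{\card(G)} \rc(A).
\]

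The proof is essentially an assembly of machinery already in hand; no further Cuntz-comparison work is required here. The only nonroutine input is the trace computation $\ta(p) = 1/\card(G)$ of Lemma~\ref{proj.quasitrace}, whose proof is where the weak tracial Rokhlin property actually does work, together with the corner estimate of Theorem~\ref{Ourcornertheorem} which converts pinching by~$p$ into a factor of~$\card(G)$. The mild obstacle, if any, is the bookkeeping around stable finiteness and fullness of~$p$; both are settled by the simplicity of $\CGAa$ and the stable finiteness of the subalgebra $A^{\af}$ inherited from~$A$.
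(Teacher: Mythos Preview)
Your proof is correct and follows essentially the same route as the paper's: identify $A^{\af}$ with the full corner $p\,\CGAa\, p$, use Lemma~\ref{proj.quasitrace} to get $\ta(p)=1/\card(G)$ for all $\ta\in\QT(\CGAa)$, apply Theorem~\ref{Ourcornertheorem}, and finish with Theorem~\ref{Main.Thm1}. One minor bookkeeping point: invoking Lemma~\ref{Ourlemma.1112}(\ref{Ourlemma.1112_b}) to get $\QT(\CGAa)\neq\E$ is formally circular, since that lemma is stated under the hypothesis that the ambient algebra is already stably finite; the paper sidesteps this by noting that $\CGAa$ is stably isomorphic to $A^{\af}\subseteq A$ (via the full corner and Brown's theorem), and stable finiteness passes through stable isomorphism---alternatively, cite Proposition~II.4.2 of~\cite{BH82} directly for the extension of quasitraces from a full corner.
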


\begin{proof}
By \Lem{proj.quasitrace}
and Lemma \ref{Fixedpoint_corner}(\ref{Fixedpoint_corner_d}),
the projection $p \in \CGAa$ of Lemma \ref{Fixedpoint_corner}
satisfies $\ta (p) = \card (G)^{-1}$
for all $\ta \in \QT ( \CGAa )$
and $A^{\alpha} \cong p \CGAa p$.
The algebra $\CGAa$ is simple
by Corollary 3.3 of~\cite{FG17}.
So $p$ is full.
Now $\CGAa$ is stably finite
(being stably isomorphic to $A^{\af} \subseteq A$),
so \Thm{Ourcornertheorem} implies that
$\rc \big( \CGAa \big) = \card (G)^{-1} \rc ( A^{\alpha})$.
This is the first part of the conclusion.
The second part now follows from \Thm{Main.Thm1}.
\end{proof}

We get the same outcome with the Rokhlin property
and for any stably finite unital C*-algebra,
not necessarily simple.

\begin{thm}\label{T_9803_RcCrPrdRokhlin}
Let $G$ be a finite group,
let $A$ be a stably finite unital C*-algebra,
and let $\alpha \colon G \to \Aut (A)$ be an action of
$G$ on $A$ which has the Rokhlin property.
Then
\[
\rc \big( \CGAa \big) = \frac{1}{\card (G)} \cdot \rc (A^{\af})
\quad {\mbox{and}} \quad
\rc \big( \CGAa \big) \leq \frac{1}{\card (G)} \cdot \rc (A).
\]
\end{thm}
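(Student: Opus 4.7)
The plan is to mirror the proof of \Thm{T_9412_RcCrPrd} line by line, making three substitutions that accommodate the fact that neither $A$ nor $\CGAa$ is assumed simple. Let $p = \frac{1}{\card(G)} \sum_{g \in G} u_g$ as in \Lem{Fixedpoint_corner}. First, I would note that $p$ is full in $\CGAa$ directly from \Lem{Fixedpoint_corner}(\ref{Fixedpoint_corner_Rokhlin}), rather than from simplicity of the crossed product. Second, \Lem{Fixedpoint_corner}(\ref{Fixedpoint_corner_d}) gives the corner isomorphism $A^\alpha \cong p \CGAa p$; since $A$ is stably finite and unital, its unital C*-subalgebra $A^\alpha$ is stably finite, and stable finiteness then passes to $\CGAa$ through the stable isomorphism induced by the full corner~$p$. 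Third, a Rokhlin analogue of \Lem{proj.quasitrace} gives $\tau(p) = \card(G)^{-1}$ for every $\tau \in \QT(\CGAa)$. Granting this, \Thm{Ourcornertheorem} applied with $n = 1$ and $\lambda = \eta = \card(G)^{-1}$ produces the equality $\rc(A^\alpha) = \card(G) \cdot \rc(\CGAa)$, which is the first part of the conclusion; the second part then follows from \Thm{T_9802_RokhlinMain}, which gives $\rc(A^\alpha) \leq \rc(A)$.

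The Rokhlin analogue of \Lem{proj.quasitrace} would be proved along the same lines but more cleanly, because Rokhlin towers are genuine projections rather than positive contractions. Given $\tau \in \QT(\CGAa)$, apply the identity $\tau(x^* x) = \tau(x x^*)$ with $x = a^{1/2} u_g^{-1}$ for $a \in A_{+}$ to show that $\tau$ is invariant under conjugation by each $u_g$; consequently $\tau|_A \circ \alpha_g = \tau|_A$. Fix $\ep > 0$ and, given small $\ep' > 0$, choose Rokhlin projections $(e_g)_{g \in G}$ in $A$ that are pairwise orthogonal to within~$\ep'$, sum to within $\ep'$ of~$1$, and satisfy $\| \alpha_g(e_h) - e_{gh} \| < \ep'$. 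The invariance of $\tau|_A$ together with the Lipschitz bound in \Prp{BH82_Cor_2_2_5}(\ref{BH82_Cor_2_2_5_c}) gives $\tau(e_g) \approx \tau(e_1)$, and approximate additivity of $\tau$ on the (approximately commuting) family $\{e_g\}_{g \in G}$ combined with $\sum_g e_g \approx 1$ yields $\tau(e_1) \approx \card(G)^{-1}$. A short computation using \Lem{Fixedpoint_corner}(\ref{Fixedpoint_corner_b}) shows $p e_1 p \approx \card(G)^{-1} p$, while expanding $e_1 p e_1 = \card(G)^{-1} \sum_g e_1 \alpha_g(e_1) u_g$ and using $e_1 \alpha_g(e_1) \approx e_1 e_g$ (which vanishes for $g \neq 1$) gives $e_1 p e_1 \approx \card(G)^{-1} e_1$. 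The identity $\tau(p e_1 p) = \tau(e_1 p e_1)$, coming from \Def{quasitrace}(\ref{quasitrace.a}) with $x = p e_1$, then forces $\card(G)^{-1} \tau(p) \approx \card(G)^{-2}$, so $\tau(p) = \card(G)^{-1}$ after letting $\ep \to 0$.

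The main obstacle is this last computation. Although the Rokhlin setting is technically simpler than that of \Lem{proj.quasitrace} because projections can be manipulated more freely than positive contractions, one still has to propagate several approximations through the nonlinear quasitrace functional, using the Lipschitz estimate and the approximate commutation of the family $\{e_g\}$. Once $\tau(p) = \card(G)^{-1}$ is established, the remainder of the argument is a direct transcription of the proof of \Thm{T_9412_RcCrPrd}, with \Lem{Fixedpoint_corner}(\ref{Fixedpoint_corner_Rokhlin}) and \Thm{T_9802_RokhlinMain} replacing the simplicity assertion and \Thm{Main.Thm1} respectively.
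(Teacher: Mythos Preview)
Your proposal is correct and takes the same approach as the paper: mirror the proof of \Thm{T_9412_RcCrPrd}, replacing the simplicity argument for fullness of $p$ by \Lem{Fixedpoint_corner}(\ref{Fixedpoint_corner_Rokhlin}) and replacing \Thm{Main.Thm1} by \Thm{T_9802_RokhlinMain}. In fact you are more careful than the paper on one point: the paper simply invokes \Lem{proj.quasitrace} via the phrase ``the proof is the same,'' but that lemma is stated only for simple~$A$ with the weak tracial Rokhlin property, so strictly speaking one needs the Rokhlin analogue you sketch (which is easier, since the Rokhlin projections are exactly orthogonal and sum exactly to~$1$, making the approximations in the proof of \Lem{proj.quasitrace} collapse). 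Your outline of that analogue is sound.
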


\begin{proof}
The proof is the same as that of Theorem~\ref{T_9412_RcCrPrd},
except that we now use
Lemma \ref{Fixedpoint_corner}(\ref{Fixedpoint_corner_Rokhlin})
rather than simplicity of $\CGAa$
to deduce that $p$ is full,
and we use Theorem~\ref{T_9802_RokhlinMain}
instead of Theorem~\ref{Main.Thm1} at the end.
\end{proof}

If $G = \Z / 2 \Z$ and $\af$ is the trivial action,
then the conclusions of Theorem~\ref{Main.Thm1}
and Theorem~\ref{T_9802_RokhlinMain} hold
(because $A^{\af} = A$)
but the conclusions of Theorem~\ref{T_9412_RcCrPrd}
and Theorem~\ref{T_9803_RcCrPrdRokhlin}
generally fail
(because $\CGAa \cong A \oplus A$
and $\rc (A \oplus A) = \rc (A)$).
For pointwise outer actions~$\af$,
in fact the conclusions of all these theorems can fail.
See Example~\ref{R_9412_TrRPNeeded}.

\begin{exa}\label{E_9421_NotOnK0}
We give an example
of a stably finite simple separable \uca~$D$ which is not of type~I
and an action $\alpha \colon \Z / 2 \Z \to \Aut (D)$
such that $\af$ has the weak tracial Rokhlin property
but such that the map $\W (\iota) \colon \W (D^{\alpha}) \to \W (D)$
of Lemma~\ref{WC_plus_injectivity}
is not injective.
This example also shows that
Lemma~\ref{W_T_R_P_inject} fails when $0$ is not a limit point
of $\spec (b)$.
Our algebra $D$ is in fact a UHF algebra,
and $\af$ actually has the tracial Rokhlin property.
This example is therefore a counterexample to
Proposition~6.2 and Corollary~6.3 of~\cite{OskTry2}.
(The mistake in~\cite{OskTry2} is in the use of $g_{\dt} (b)$
in the proof of Proposition~6.2 of~\cite{OskTry2}.
Since $g_{\dt} (0) \neq 0$,
$g_{\dt} (b) \not\in {\overline{b P b}}$.)

Let $D$ and $\af$ be as in Example~2.8 of~\cite{PhT4}.
Let $\bt \in \Aut \bigl( C^* (\Z / 2 \Z, D, \af) \bigr)$
be the automorphism which generates the dual action.
As shown there,
$\af$ has the tracial Rokhlin property
but not the Rokhlin property.
The algebra $C^* (\Z / 2 \Z, D, \af)$ has a unique tracial state,
which we call~$\ta$.
It is clearly $\bt$-invariant.
The algebra $D$ also has a unique tracial state~$\sm$;
necessarily $\sm = \ta |_D$.
Moreover,
there is $\et_0 \in K_0 \bigl( C^* (\Z / 2 \Z, D, \af) \bigr)$
such that $\bt_* (\et_0) \neq \et_0$.

Set $\et = \et_0 - \bt_* (\et_0)$.
Then $\et \neq 0$,
but,
since $\ta \circ \bt = \ta$,
we have $\ta_* (\et) = 0$.
It follows from
Lemma \ref{Fixedpoint_corner}(\ref{Fixedpoint_corner_d})
that $D^{\af}$ is isomorphic
to a full corner of $C^* (\Z / 2 \Z, D, \af)$.
Thus,
except for the $K_0$-class of the identity element,
the Elliott invariants of $D^{\af}$ and $C^* (\Z / 2 \Z, D, \af)$
are isomorphic.
In particular,
$D^{\af}$ has a unique tracial state~$\rh$
(necessarily equal to $\sm |_{D^{\af}}$),
and there is $\mu \in K_0 (D^{\af}) \setminus \{ 0 \}$
such that $\rh_* (\mu) = 0$.

Choose projections $p, q \in K \otimes D^{\af}$
such that $\mu = [p] - [q]$ in $K_0 (D^{\af})$.
Since $[p] \neq [q]$ and $D^{\af}$ is stably finite,
it follows that $\langle p \rangle \neq \langle q \rangle$
in $\Cu (D^{\af})$.
In fact, they are in $W (D^{\af})$.
Let $\io \colon D^{\af} \to D$ be the inclusion map.
Then $\sm (\io (p)) = \sm (\io (q))$.
Since $D$ is a UHF algebra,
this implies that $\io_* ([p]) = \io_* ([q])$ in $K_0 (D)$.
Therefore
$\W (\io) (\langle p \rangle) = \W (\io) (\langle q \rangle)$.
Thus $\W (\io)$ is not injective.
Also, $p \not\precsim_{D^{\af}} q$
but $p \precsim_{D} q$.
\end{exa}


\section{Surjectivity of
  $\Cu_{+} (A^{\af}) \to \Cu_{+} (A)^{\af}$}\label{Sec_Surj}

In this section,
we prove that if $G$ is finite,
$A$ is unital, stably finite, and simple,
and $\alpha \colon G \to \Aut (A)$
has the weak tracial Rokhlin property,
then the inclusion $A^{\af} \to A$
induces an isomorphism of the ordered semigroups
of purely positive elements
$\Cu_{+} (A^{\af}) \cup \{ 0 \} \to \Cu_{+} (A)^{\af} \cup \{ 0 \}$.
If we assume stable rank one,
then the conclusion is valid
for $\W_{+} (A^{\af})$ and $\W_{+} (A)^{\af}$
as well.
We also give the corresponding result for
$\W (A^{\af})$ when $A$ is merely unital
but $\af$ is assumed to have the Rokhlin property.
In this case, we need not discard the classes of the \pj{s},
just like in Theorem 4.1(ii) of~\cite{GdlStg}
for $\Cu (A^{\af})$.

Injectivity was proved in Section~\ref{Sec_rcFix};
the content of this section is the proof of surjectivity.

The next lemma produces the following chain of subequivalences,
for any $g \in G$:
\begin{align*}
(a - \ep)_{+}
& \precsim_A (a' - \dt_6)_{+}
  \precsim_A (a' - \dt_5)_{+}
  \precsim_A (\af_g (a') - \dt_4)_{+}
\\
& \precsim_A (\af_g (a') - \dt_2)_{+}
  \precsim_A (a' - \dt_1)_{+}
  \precsim_A a'
  \precsim_A (a - \dt)_{+}.
\end{align*}

\begin{lem}\label{L_9529_ApproxAndSubeq}
Let $A$ be a \ca,
and let $\alpha \colon G \to \Aut (A)$ be an action
of a finite group $G$ on~$A$.
Let $a \in (K \otimes A)_{+}$
satisfy $a \sim_A \alpha_{g} (a)$ for all $g \in G$
and $\| a \| \leq 1$.
Then for every $\ep > 0$
there are $m \in \N$,
$\dt, \dt_{1}, \dt_2, \ldots, \dt_6 > 0$,
and $a' \in M_m (A)_{+}$ with $\| a' \| \leq \| a \|$,
such that:
\begin{enumerate}
\item\label{L_9529_ApproxAndSubeq_DtOrder}
$0 < \dt < \dt_1
 < \dt_2
 < \dt_3
 < \dt_4
 < \dt_5
 < \dt_6
 < \ep$.
\item\label{L_9529_ApproxAndSubeq_PrDt}
$a' \precsim_A (a - \dt)_{+}$.
\item\label{L_9529_ApproxAndSubeq_21}
$(\af_g (a') - \dt_2)_{+} \precsim_A (a' - \dt_1)_{+}$
for all $g \in G$.
\item\label{L_9529_ApproxAndSubeq_54}
$(a' - \dt_5)_{+} \precsim_A (\af_g (a') - \dt_4)_{+}$
for all $g \in G$.
\item\label{L_9529_ApproxAndSubeq_Ep6}
$(a - \ep)_{+} \precsim_A (a' - \dt_6)_{+}$.
\setcounter{TmpEnumi}{\value{enumi}}
\end{enumerate}
If, in addition, $0$ is a limit point of $\spec (a)$,
then we may also require:
\begin{enumerate}
\setcounter{enumi}{\value{TmpEnumi}}
\item\label{L_9529_ApproxAndSubeq_NZero}
$\spec (a') \cap (0, \dt) \neq \E$.
\end{enumerate}
\end{lem}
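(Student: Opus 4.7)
The plan is to approximate $a$ in norm by a positive element $b \in M_m (A)_+$ for $m$ sufficiently large, with $\| a - b \| < \sm$ and $\| b \| \leq \| a \|$, and then take $a' = (b - \mu)_+$ where $\mu = \dt + \sm$. Conditions~(\ref{L_9529_ApproxAndSubeq_PrDt}) and~(\ref{L_9529_ApproxAndSubeq_Ep6}) will follow directly from the composition identity in Lemma~\ref{PhiB.Lem_18_4}(\ref{PhiB.Lem_18_4_8}) and the norm-comparison estimate in Lemma~\ref{PhiB.Lem_18_4}(\ref{Item_9420_LgSb_1_6}). The approximate $\af_g$-equivariance conditions~(\ref{L_9529_ApproxAndSubeq_21}) and~(\ref{L_9529_ApproxAndSubeq_54}) will be obtained by chaining through the Cuntz equivalence $a \sim_A \af_g (a)$, passing via the intermediate comparison with the actual element~$a$.

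The constants must be chosen from outside in. First pick $\dt_6 \in (0, \ep)$ and $\dt_5 \in (0, \dt_6)$ with room. For condition~(\ref{L_9529_ApproxAndSubeq_54}), apply Lemma~\ref{PhiB.Lem_18_4}(\ref{PhiB.Lem_18_4_11.c}) to $a \sim_A \af_g (a)$ to produce, for each $g \in G$, a number $\rh_g > 0$ with $(a - \dt_5 / 2)_+ \precsim_A (\af_g (a) - \rh_g)_+$, and set $\rh = \min_{g \in G} \rh_g > 0$. Next pick $\dt_4 \in (0, \dt_5) \cap (0, \rh)$ and freely choose $\dt_3 \in (0, \dt_4)$ and $\dt_2 \in (0, \dt_3)$. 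Apply Lemma~\ref{PhiB.Lem_18_4}(\ref{PhiB.Lem_18_4_11.c}) in the reverse direction to produce $\ta_g > 0$ with $(\af_g (a) - \dt_2 / 2)_+ \precsim_A (a - \ta_g)_+$, and set $\ta = \min_{g \in G} \ta_g > 0$. Pick $\dt_1 \in (0, \dt_2) \cap (0, \ta)$ and $\dt \in (0, \dt_1)$. Finally choose $\sm > 0$ small enough that $\sm < \dt_2 / 2$, $\dt + 2 \sm + \dt_6 < \ep$, $\dt_4 + \dt + 2 \sm < \rh$, and $\dt_1 + \dt + 2 \sm < \ta$. Since $K \otimes A = \overline{\bigcup_m M_m (A)}$, such a $b \in M_m (A)_+$ exists by a standard positive-part and functional calculus truncation.

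Verification of each condition is then a short chain. Condition~(\ref{L_9529_ApproxAndSubeq_21}) is the longest, and I would write it as
\[
( \af_g (a') - \dt_2 )_+
 \precsim_A ( \af_g (a) - (\mu + \dt_2 - \sm) )_+
 \leq ( \af_g (a) - \dt_2 / 2 )_+
 \precsim_A ( a - \ta )_+
 \precsim_A ( a' - \dt_1 )_+,
\]
where the first step is Lemma~\ref{PhiB.Lem_18_4}(\ref{Item_9420_LgSb_1_6}) applied to $\| \af_g (a) - \af_g (b) \| < \sm$, the second is a pointwise inequality on the functional calculus (using $\sm < \dt_2 / 2$), the third is the Cuntz equivalence (uniform in~$g$ after passing to the minimum $\ta$), and the last applies Lemma~\ref{PhiB.Lem_18_4}(\ref{Item_9420_LgSb_1_6}) together with $\ta \geq \mu + \dt_1 + \sm$. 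Condition~(\ref{L_9529_ApproxAndSubeq_54}) is the symmetric chain with $\rh$ in place of~$\ta$, starting from $(a' - \dt_5)_+$. For condition~(\ref{L_9529_ApproxAndSubeq_NZero}), when $0$ is a limit point of $\spec (a)$, I would refine the choice of~$\dt$: first pick $\ld_0 \in \spec (a) \cap (0, \dt_1 / 3)$, then take $\dt \in (\ld_0 / 2, \ld_0)$ and $\sm < (\ld_0 - \dt) / 2$. Since the Hausdorff distance of spectra of self-adjoint elements is bounded by the norm distance, there is $\ld \in \spec (b)$ with $|\ld - \ld_0| < \sm$, so $\ld - \mu \in (0, \dt)$ lies in $\spec (a')$ by the spectral mapping theorem.

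The main obstacle is that the Cuntz-equivalence thresholds $\rh$ and~$\ta$ furnished by Lemma~\ref{PhiB.Lem_18_4}(\ref{PhiB.Lem_18_4_11.c}) are not under our control beyond being positive. This forces the interleaved outside-in ordering: the outer thresholds $\dt_5$ and $\dt_2$ must be fixed before invoking Cuntz equivalence, while the inner $\dt_4, \dt_1, \dt$ and the norm-tolerance~$\sm$ must be pushed below the resulting $\rh$ and~$\ta$ with enough room to absorb the error $\sm$ on each side of the chain. Once the interleaving is arranged, every remaining step is a direct application of Lemma~\ref{PhiB.Lem_18_4}.
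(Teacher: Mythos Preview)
Your proof is correct and follows the same overall strategy as the paper: approximate $a$ in norm by an element of $M_m(A)$, cut down to form $a'$, and transfer the Cuntz relations $a \sim_A \af_g(a)$ to $a'$ by sandwiching through~$a$. The difference is purely in the mechanics of that transfer. The paper first invokes Lemma~\ref{A.G.J.P} to obtain explicit bounded witnesses $s_g, t_g$ for the subequivalences $(a - \ep/4)_+ \precsim_A (\af_g(a) - \bt_1)_+$ and $(\af_g(a) - \bt_1/4)_+ \precsim_A (a - \bt_2)_+$, then uses norm-continuity of functional calculus to show that the same $s_g, t_g$ approximately witness the corresponding relations for~$a'$. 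Your route bypasses the explicit witnesses entirely: each step in your chain is a direct application of Lemma~\ref{PhiB.Lem_18_4}(\ref{Item_9420_LgSb_1_6}) or a pointwise functional-calculus inequality, and the norm error~$\sm$ is absorbed by leaving additive room between consecutive~$\dt_i$. This is shorter and avoids both Lemma~\ref{A.G.J.P} and the external continuity lemma. The paper's approach, on the other hand, produces the~$\dt_i$ by explicit formula rather than by an interleaved choice procedure, which some readers may find easier to follow.
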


\begin{proof}
We may clearly assume that $a \neq 0$.

Let $\ep > 0$.
First use $\af_g (a) \sim_A a$ for $g \in G$
to choose $\bt_1 > 0$ such that $\bt_1 < \frac{\ep}{4}$
and for all $g \in G$ we have
$\bigl( a - \frac{\ep}{4} \bigr)_{+}
  \precsim_A ( \af_g (a) - \bt_1)_{+}$.
Similarly,
choose $\bt_2 > 0$ such that $\bt_2 < \frac{\bt_1}{4}$
and for all $g \in G$ we have
$\bigl( \af_g (a) - \frac{\bt_1}{4} \bigr)_{+}
  \precsim_A (a - \bt_2)_{+}$.
Set
\[
M = \max \bigl( 1, \, 2 \bt_1^{-1/2}, \, 2 \bt_2^{-1/2} \bigr)
\andeqn
\gm = \min \left( \frac{\bt_1}{12 M^2}, \, \frac{\ep}{12 M^2} \right).
\]
Then, for $g \in G$,
by Lemma~\ref{A.G.J.P} there are
$s_g, t_g \in K \otimes A$
such that
\begin{equation}\label{Eq_9519_New2019_03_22}
\Bigl\| s_g \Bigl( \af_g (a) - \frac{3 \bt_1}{4} \Bigr)_{+} s_g^*
    - \Bigl( a - \frac{\ep}{4} \Bigr)_{+} \Bigr\|
< \gm
\andeqn
\| s_g \| \leq 2 \bt_1^{-1/2}
\end{equation}
and
\begin{equation}\label{Eq1_9519_New2019_03_23}
\Bigl\| t_g \Bigl( a - \frac{3 \bt_2}{4} \Bigr)_{+} t_g^*
    - \Bigl( \af_g (a) - \frac{\bt_1}{4} \Bigr)_{+} \Bigr\|
< \gm
\andeqn
\| t_g \| \leq 2 \bt_2^{-1/2}.
\end{equation}
Use Lemma 12.4.5 of~\cite{GKPT18}
to choose $\mu > 0$ so small that whenever $B$ is a \ca{}
and $b, c \in B$
satisfy $0 \leq b, c \leq 1$ and $\| b - c \| < \mu$,
then
\begin{equation}\label{Eq_9607_1stEst}
\Bigl\| \Bigl( b - \frac{\bt_1}{4} \Bigr)_{+}
    - \Bigl( c - \frac{\bt_1}{4} \Bigr)_{+} \Bigr\| < \gm,
\qquad
\Bigl\| \Bigl( b - \frac{3 \bt_2}{4} \Bigr)_{+}
    - \Bigl( c - \frac{3 \bt_2}{4} \Bigr)_{+} \Bigr\| < \gm,
\end{equation}
\begin{equation}\label{Eq_9607_2ndEst}
\Bigl\| \Bigl( b - \frac{3 \bt_1}{4} \Bigr)_{+}
    - \Bigl( c - \frac{3 \bt_1}{4} \Bigr)_{+} \Bigr\| < \gm,
\quad {\mbox{and}} \quad
\Bigl\| \Bigl( b - \frac{\ep}{4} \Bigr)_{+}
    - \Bigl( c - \frac{\ep}{4} \Bigr)_{+} \Bigr\| < \gm.
\end{equation}
Define
\[
\nu = \min \left( \frac{\gm}{3}, \,
     \frac{\mu}{3}, \, \frac{\bt_2}{4}, \, \frac{\ep}{12} \right).
\]

If we do not need Part~(\ref{L_9529_ApproxAndSubeq_NZero})
of the conclusion,
simply take $\dt = \nu$.
Otherwise,
since $0$ is a limit point of $\spec (a)$,
we can choose $\dt > 0$
such that $\dt \leq \nu$
and $\frac{5 \dt}{2} \in \spec (a)$.
Choose $m \in \N$
such that there is $b \in M_m (A)_{+}$
with $\| b \| = \| a \|$ and $\| b - a \| < \frac{\dt}{2}$.
Define $a' = (b - 2 \dt)_{+}$.
Since $\| b - a \| < \dt$, it follows
from Lemma \ref{PhiB.Lem_18_4}(\ref{Item_9420_LgSb_1_6})
that $a' \precsim_A (a - \dt)_{+}$,
which is~(\ref{L_9529_ApproxAndSubeq_PrDt}).
If $0$ is a limit point of $\spec (a)$,
then we arranged to have $\frac{5 \dt}{2} \in \spec (a)$,
so $\| b - a \| < \frac{\dt}{2}$ implies
$\spec (b) \cap (2 \dt, 3 \dt) \neq \E$.
Thus $\spec (a') \cap (0, \dt) \neq \E$.
This is~(\ref{L_9529_ApproxAndSubeq_NZero}).

We have $\| a' - a \| < 3 \dt \leq \mu$.
For $g \in G$,
it therefore follows from (\ref{Eq_9519_New2019_03_22}),
using (\ref{Eq_9607_2ndEst}) and $\| s_g \| \leq M$,
that
\begin{align}\label{Eq_9519_New_aPr}
& \Bigl\| s_g \Bigl( \af_g (a') - \frac{3 \bt_1}{4} \Bigr)_{+} s_g^*
    - \Bigl( a' - \frac{\ep}{4} \Bigr)_{+} \Bigr\|
\\
\notag
& \hspace*{1em} {\mbox{}}
\leq
 \| s_g \|^2
        \Bigl\| \Bigl( a' - \frac{3 \bt_1}{4} \Bigr)_{+}
           - \Bigl( a - \frac{3 \bt_1}{4} \Bigr)_{+} \Bigr\|
\\
\notag
& \hspace*{2em} {\mbox{}}
    + \Bigl\| s_g \Bigl( \af_g (a) - \frac{3 \bt_1}{4} \Bigr)_{+} s_g^*
          - \Bigl( a - \frac{\ep}{4} \Bigr)_{+} \Bigr\|
    + \Bigl\| \Bigl( a - \frac{\ep}{4} \Bigr)_{+}
          - \Bigl( a' - \frac{\ep}{4} \Bigr)_{+} \Bigr\|
\\
\notag
& \hspace*{1em} {\mbox{}}
< M^2 \gm + \gm + \gm
\leq 3 M^2 \gm,
\end{align}
and similarly,
using~(\ref{Eq1_9519_New2019_03_23}),
(\ref{Eq_9607_1stEst}), and $\| t_g \| \leq M$,
\begin{equation}\label{Eq1_9519_New_aPrime}
\Bigl\| t_g \Bigl( a' - \frac{3 \bt_2}{4} \Bigr)_{+} t_g^*
    - \Bigl( \af_g (a') - \frac{\bt_1}{4} \Bigr)_{+} \Bigr\|
\leq 3 M^2 \gm.
\end{equation}

Define
\[
\dt_1 = \frac{3 \bt_2}{4},
\quad
\dt_2 = \frac{\bt_1}{2},
\quad
\dt_3 = \frac{5 \bt_1}{8},
\quad
\dt_4 = \frac{3 \bt_1}{4},
\quad
\dt_5 = \frac{\ep}{2},
\quad {\mbox{and}} \quad
\dt_6 = \frac{3 \ep}{4}.
\]
Then (\ref{L_9529_ApproxAndSubeq_DtOrder}) is clear.
Moreover, we have
\[
\frac{\ep}{4} + 3 M^2 \gm \leq \frac{\ep}{2} = \dt_5
\andeqn
\frac{\bt_1}{4} + 3 M^2 \gm \leq \frac{\bt_1}{2} = \dt_2.
\]
So (\ref{Eq_9519_New_aPr})
and~(\ref{Eq1_9519_New_aPrime})
imply, for $g \in G$,
\[
(a' - \dt_5)_{+}
 \leq \Bigl( a' - \Bigl( \frac{\ep}{4} + 3 M^2 \gm \Bigr) \Bigr)_{+}
 \precsim_A (\af_g (a') - \dt_4)_{+},
\]
which is~(\ref{L_9529_ApproxAndSubeq_54}),
and
\[
(\af_g (a') - \dt_2)_{+}
 \leq \Bigl( \af_g (a')
       - \Bigl( \frac{\bt_1}{4} + 3 M^2 \gm \Bigr) \Bigr)_{+}
 \precsim_A (a' - \dt_1)_{+},
\]
which is~(\ref{L_9529_ApproxAndSubeq_21}).
Finally,
$\| a' - a \| < 3 \dt \leq \frac{\ep}{4}$,
so by Lemma \ref{PhiB.Lem_18_4}(\ref{Item_9420_LgSb_1_6}) we have
$(a - \ep)_{+}
 \precsim_A \bigl( a' - \frac{3 \ep}{4} \bigr)_{+}
 = (a' - \dt_6)_{+}$,
which is~(\ref{L_9529_ApproxAndSubeq_Ep6}).
This completes the proof.
\end{proof}

\begin{lem}\label{Small_fixed_element}
Let $A$ be a simple \ca{} which is not of type I.
Let $\alpha \colon G \to \Aut (A)$
be an action of a finite group $G$ on $A$
and let $x \in A_{+} \setminus \{ 0 \}$.
Then there exists $z \in (A^{\alpha})_{+} \setminus \{ 0 \}$ such that
$z \precsim_A x$.
\end{lem}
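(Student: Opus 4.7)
The plan is to construct a ``small'' nonzero positive element $y \in A$ whose $G$-orbit sum $z := \sum_{g \in G} \alpha_g(y)$ lies in $(A^{\alpha})_+$ (clear from $\alpha$-invariance), is nonzero since $z \geq y$ and $y \neq 0$, and satisfies $\langle z \rangle_A \leq \langle x \rangle_A$ in $\Cu(A)$. With $n = \card(G)$, this will follow once I find a pilot element $b \in A_+ \setminus \{0\}$ with $n \langle b \rangle_A \leq \langle x \rangle_A$ and choose $y$ so that $\langle \alpha_g(y) \rangle_A \leq \langle b \rangle_A$ for every $g \in G$; indeed,
\[
\langle z \rangle_A
 \leq \Big\langle \bigoplus_{g \in G} \alpha_g(y) \Big\rangle_A
 = \sum_{g \in G} \langle \alpha_g(y) \rangle_A
 \leq n \langle b \rangle_A
 \leq \langle x \rangle_A.
\]

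For the pilot, I first note that $\overline{xAx}$ is simple (as a hereditary subalgebra of a simple algebra) and Morita equivalent to $A$, so it inherits non-type-I-ness and is infinite-dimensional; Lemma~2.1 of~\cite{Ph14} then supplies $a \in \overline{xAx}_+$ with $\spec(a) = [0, 1]$. Pick nonzero continuous $f_1, \ldots, f_n \colon [0, 1] \to [0, 1]$ with pairwise disjoint supports in $(0, 1]$ and set $b_i = f_i(a)$, so the $b_i$ are pairwise orthogonal nonzero positive elements of $\overline{aAa} \subseteq \overline{xAx}$. The key technical ingredient is the following \emph{common Cuntz lower bound} assertion, which I would verify explicitly: in any simple \ca{} and for any nonzero positive $c, d$, there is a nonzero positive $e$ with $e \precsim_A c$ and $e \precsim_A d$. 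Indeed, simplicity forces $cAd \neq \{0\}$ (otherwise the ideals generated by $c$ and $d$ would be orthogonal), hence $c^{1/2} A d^{1/2} \neq \{0\}$; choosing $u \in A$ with $w := c^{1/2} u d^{1/2} \neq 0$ gives $w^*w \in \overline{dAd}$ and $ww^* \in \overline{cAc}$, and by \Lem{PhiB.Lem_18_4}(\ref{PhiB.Lem_18_4_4}) the element $e := w^*w$ satisfies $e \precsim_A d$ and $e \sim_A ww^* \precsim_A c$. Iterating over finite families, I produce first a nonzero positive $b$ with $b \precsim_A b_i$ for every $i = 1, \ldots, n$, and then a nonzero positive $y$ with $y \precsim_A \alpha_g(b)$ for every $g \in G$.

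To finish, pairwise orthogonality of the $b_i$ yields $\sum_i b_i \sim_A \bigoplus_i b_i$ in $\Cu(A)$, so
\[
n \langle b \rangle_A
 \leq \sum_{i=1}^n \langle b_i \rangle_A
 = \Big\langle \sum_{i=1}^n b_i \Big\rangle_A
 \leq \langle a \rangle_A
 \leq \langle x \rangle_A,
\]
using $a \in \overline{xAx}$ at the last step; and applying $\alpha_{g^{-1}}$ to $y \precsim_A \alpha_g(b)$ gives $\langle \alpha_g(y) \rangle_A \leq \langle b \rangle_A$ for every $g \in G$, which feeds into the sum estimate from the first paragraph to deliver $z \precsim_A x$. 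The main (though mild) obstacle is verifying that $\overline{xAx}$ is not of type~I, so that Lemma~2.1 of~\cite{Ph14} is available; the argument uses that a simple \ca{} with an elementary hereditary subalgebra is itself elementary (via minimal projections), and everything else is routine bookkeeping with orthogonality and the common-lower-bound iteration.
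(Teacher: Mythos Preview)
Your proof is correct and follows essentially the same strategy as the paper's: produce $n = \card(G)$ pairwise orthogonal nonzero positive elements under~$x$, find a common Cuntz lower bound for the $G$-translates of one of them, and take the orbit sum. The only differences are cosmetic: the paper quotes Lemma~2.4 of~\cite{Ph14} to get orthogonal \emph{equivalent} $b_1, \ldots, b_n$ (so no separate lower-bound step among the $b_i$ is needed) and cites Lemma~2.6 of~\cite{Ph14} for the common lower bound, whereas you build the $b_i$ by functional calculus on an element with full spectrum and prove the common-lower-bound fact by hand. One tiny slip: applying $\alpha_{g^{-1}}$ to $y \precsim_A \alpha_g(b)$ gives $\alpha_{g^{-1}}(y) \precsim_A b$, not $\alpha_g(y) \precsim_A b$; but since your hypothesis holds for every $g \in G$, the desired conclusion $\alpha_g(y) \precsim_A b$ for all $g$ still follows by reindexing.
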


\begin{proof}
Set $n = \card (G)$.
By Lemma 2.4 of~\cite{Ph14}, there are
$b_1, b_2, \ldots, b_n \in  A_{+} \setminus \{ 0 \}$
such that,  for $j \neq k$,
\[
b_j b_k = 0,
\quad
b_1 \sim_A b_2 \sim_A \cdots  \sim_A b_n,
\quad
\mbox{and}
\quad
b_1 + b_2 + \cdots + b_n \precsim_A x.
\]
Let $y$ be the direct sum of  $n$ copies of $b_1$.
Using Lemma~2.6 of~\cite{Ph14},
choose $c \in A_{+} \setminus \{ 0 \}$
such that
$c \precsim_A \alpha_{g^{-1}} (b_1)$ for all $g \in G$.
Then $\alpha_g (c) \precsim_A b_1$ for all $g \in G$.
Set $z = \sum_{g\in G} \alpha_g (c)$.
Clearly $z \in ( A^{\alpha} )_{+} \setminus \{ 0 \}$.
Then
\[
z = \sum_g \alpha_g (c)
  \precsim_A \bigoplus_{g \in G} \alpha_g (c)
  \precsim_A y
  \precsim_A b_1 + b_2 + \cdots + b_n
  \precsim_A x,
\]
as desired.
\end{proof}

\begin{lem}\label{Mylemma2019.03.24_wtrp}
Let $A$ be an infinite-dimensional simple unital \ca{} and let
$\alpha \colon G \to \Aut (A)$ be an action
of a finite group $G$ on $A$
which has the weak tracial Rokhlin property.
Let $a \in (K \otimes A)_{+}$
satisfy $a \sim_A \alpha_{g} (a)$ for all $g \in G$
and assume that $0$ is a limit point of $\spec (a)$.
Then for every $\ep > 0$ there exist $\delta > 0$, $m \in \N$,
and $b \in M_m (A^{\alpha})_{+}$ such that
\[
(a - \ep)_{+}
  \precsim_A b \precsim_A (a - \delta)_{+}
\qquad
{\mbox{and}}
\qquad
[0, 1] \subseteq \spec (b).
\]
\end{lem}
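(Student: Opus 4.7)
The plan is to apply \Lem{L_9529_ApproxAndSubeq} to~$a$ (valid because $a \sim_A \af_g(a)$ for all $g$ and $0$ is a limit point of $\spec(a)$), yielding $a' \in M_m(A)_+$ and constants $\delta < \delta_1 < \delta_2 < \ldots < \delta_6 < \ep$ with $a' \precsim_A (a - \delta)_+$, $(a - \ep)_+ \precsim_A (a' - \delta_6)_+$, the translation estimates $(\af_g(a') - \delta_2)_+ \precsim_A (a' - \delta_1)_+$ and $(a' - \delta_5)_+ \precsim_A (\af_g(a') - \delta_4)_+$ for $g \in G$, and $\spec(a') \cap (0, \delta) \neq \E$. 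Since $\af \otimes \id_{M_m}$ on $M_m(A)$ has the weak tracial Rokhlin property by Corollary~4.6 of~\cite{FG17}, I would then invoke \Lem{invariant_contractions} with the finite set $F = \{\af_g(a') : g \in G\}$ and a preassigned small element $x \in (A^\af)_+$ of norm one (built from a spectral cutoff of~$a$ near~$0$ via \Lem{Small_fixed_element}). This gives positive contractions $(f_g)_{g \in G}$ in $M_m(A)_+$ with $\af_g(f_1) = f_g$, pairwise approximate orthogonality, approximate commutativity with every $\af_h(a')$, and $f = \sum_g f_g \in M_m(A^\af)$ satisfying $(1 - f - \text{small})_+ \precsim_A x$.

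The candidate fixed element is
\[
b_0 = \sum_{g \in G} \af_g(f_1 a' f_1)
    = \sum_{g \in G} f_g \af_g(a') f_g
    \in M_m(A^\af)_+,
\]
which is invariant because $\af_h(f_g) = f_{hg}$. For the lower inequality $(a - \ep)_+ \precsim_A b_0 \oplus x$, I would start from $(a - \ep)_+ \precsim_A (a' - \delta_6)_+$, then apply \Lem{Lem.ANP.Dec.18} with $a'$ in the role of~$a$ and $1 - f$ in the role of~$g$ to split into a $(f a' f - \text{small})_+$ main term plus a $(1 - f - \text{small})_+$ tail absorbed by~$x$. The main term $f a' f$ is then rewritten as $b_0$ modulo small perturbations by expanding $f a' f = \sum_{g, h} f_g a' f_h$, discarding off-diagonal terms via approximate orthogonality, and trading $a'$ for $\af_g(a')$ inside each diagonal summand using the translation estimate~(\ref{L_9529_ApproxAndSubeq_54}).

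The upper inequality $b_0 \precsim_A (a - \delta)_+$ is the principal obstacle. Writing $b_0 = T T^*$ for the row matrix $T = (f_g^{1/2} \af_g(a')^{1/2})_{g \in G}$, \Lem{PhiB.Lem_18_4}(\ref{PhiB.Lem_18_4_4}) gives $b_0 \sim_A T^* T$, a $|G| \times |G|$ matrix whose off-diagonal entries are small (by \Lem{L_9420_NearZero}, since $\|f_g f_h\|$ small forces $\|f_g^{1/2} f_h^{1/2}\|$ small) and whose diagonals are Cuntz-equivalent to $f_g \af_g(a') f_g$. A naive application of approximate orthogonality would yield $b_0 \sim_A |G| \cdot \langle f_1 a' f_1 \rangle_A$ and hence only $b_0 \precsim_A |G| \cdot \langle (a - \delta)_+ \rangle$. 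To obtain a single copy of $(a - \delta)_+$ on the right, one exploits the hypothesis that $0$ is a limit point of $\spec(a)$: choose pairwise orthogonal spectral pieces $h_1(a), \ldots, h_{|G|}(a) \in \overline{a A a}$ supported in disjoint subintervals of $\spec(a) \cap (\delta, \|a\|]$, each Cuntz-dominating $f_1 a' f_1$ (possible since $f_1 a' f_1 \precsim_A a'$ is ``of trace size $1/\card(G)$'' via $\tau(f_1) \approx 1/\card(G)$ together with the translation estimates); summing yields $\card(G) \cdot \langle f_1 a' f_1 \rangle \leq \langle (a - \delta)_+ \rangle$.

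Finally, to secure $[0, 1] \subseteq \spec(b)$, \Lem{L_9606_Spec01} furnishes $c \in (A^\af)_+$ with $\spec(c) = [0, 1]$ and $c$ Cuntz-sub to a nonzero fixed element dominated by a tail piece $h(a) \in A_+$ orthogonal to $(a - \delta)_+$. Setting $b = b_0 \oplus c \oplus x \in M_{m + 2}(A^\af)_+$ preserves $(a - \ep)_+ \precsim_A b_0 \oplus x \precsim_A b$ and yields $b \precsim_A (a - \delta')_+$ for a slightly smaller $\delta'$, since $b_0$, $c$, and $x$ each fit into orthogonal spectral pieces of $(a - \delta'/2)_+$. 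The single-copy upper bound $b_0 \precsim_A (a - \delta)_+$ -- avoiding the naive $|G|$-fold Cuntz blowup -- is where essentially all the technical work lies.
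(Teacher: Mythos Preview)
Your lower-bound sketch is roughly on the right track, but the upper bound $b_0 \precsim_A (a-\dt)_+$ has a genuine gap, and your proposed rescue does not work.

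You correctly observe that the naive argument only gives $b_0 \precsim_A \card(G) \cdot \langle (a-\dt)_+ \rangle$. Your fix is to find $\card(G)$ mutually orthogonal spectral pieces $h_1(a),\ldots,h_{\card(G)}(a)$ of $a$, each of which Cuntz-dominates $f_1 a' f_1$, the heuristic being that $\ta(f_1) \approx 1/\card(G)$ forces $d_\ta(f_1 a' f_1) \lesssim d_\ta(a)/\card(G)$. But passing from a trace inequality to a Cuntz subequivalence is exactly strict comparison, which $A$ need not have; indeed the whole point of this section is to treat algebras with possibly large radius of comparison. There is also no reason the $h_j(a)$, being determined only by the shape of $\spec(a)$, should individually be Cuntz-large: some spectral intervals may have arbitrarily small trace. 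So this step fails.

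The paper avoids the $\card(G)$-blowup by a completely different mechanism. It does not take $b_0=\sum_g f_g \af_g(a') f_g$ as the invariant element. Instead it first chooses, via \Lem{A.G.J.P}, explicit implementers $v_g$ of the subequivalences $(\af_g(a')-\dt_2)_+ \precsim_A (a'-\dt_1)_+$ from \Lem{L_9529_ApproxAndSubeq}(\ref{L_9529_ApproxAndSubeq_21}), includes them in the finite set~$F$ fed to \Lem{invariant_contractions}, and then sets
\[
c_0 = \sum_{g\in G} e_g (\af_g(a')-\dt_2)_+ e_g
\qquad\text{and}\qquad
x = \sum_{g\in G} e_g v_g.
\]
Because each $e_g$ approximately commutes with $v_g$, $v_g^*$, and $(a'-\dt)_+$, and because the $e_g$ are approximately pairwise orthogonal, one computes directly that $x(a'-\dt)_+ x^*$ is norm-close to $c_0$. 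This yields $(c_0-\mu)_+ \precsim_A (a'-\dt)_+$ for a \emph{single} copy of $(a'-\dt)_+$---no tracial comparison is invoked anywhere. The fixed element is then $b=(c_0-\mu)_+ \oplus s$, with $s$ the full-spectrum piece in $M_m(A^\af)$ absorbing the $(1-f)$ tail. The lower bound is handled symmetrically with implementers $w_g$ of~(\ref{L_9529_ApproxAndSubeq_54}) and $y=\sum_g f_g w_g$, giving $y c y^*$ close to $f(a'-\dt_5)_+ f$; this is the rigorous version of what you called ``trading $a'$ for $\af_g(a')$,'' and it too needs the explicit $w_g$ rather than just the abstract subequivalence.

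The missing idea, then, is to put the Cuntz-subequivalence witnesses $v_g, w_g$ into the finite set for the Rokhlin contractions and use them to build \emph{one} conjugating element in $M_m(A)$, rather than trying to compare the pieces tracially after the fact.
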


\begin{proof}
We may assume $\| a \| = 1$.
Set $n = \card (G)$.

Let $\ep > 0$.
Apply the version of Lemma~\ref{L_9529_ApproxAndSubeq}
which assumes $0$ is a limit point of $\spec (a)$,
and let the notation be as in its conclusion.

By Lemma \ref{L_9529_ApproxAndSubeq}(\ref{L_9529_ApproxAndSubeq_NZero}),
there is $\ld \in \spec (a') \cap (0, \dt)$.
Choose  a \cfn{}
$h \colon [0, \I) \to [0, 1]$
such that $\supp (h) \subseteq (0, \dt)$
and $h (\ld) = 1$.
Use Lemma~\ref{Small_fixed_element}
to choose $d \in M_m (A^{\af})_{+} \setminus \{ 0 \}$
such that $d \precsim_A h (a')$.
Since the action induced by $\af$ on $M_m (A)$
has the weak tracial Rokhlin property
(Corollary 4.6 of~\cite{FG17}),
Lemma~\ref{L_9606_Spec01}
provides $s \in M_m (A^{\af})_{+}$
such that $s \precsim_{A^{\af}} d$ and $\spec (s) = [0, 1]$.

Define
\begin{equation}\label{Eq_wtrp_9519_rh}
\rh = \min \bigl( 1, \, \dt_1 - \dt, \, \dt_3 - \dt_2,
   \, \dt_4 - \dt_3, \, \dt_6 - \dt_5 \bigr),
\end{equation}
\begin{equation}\label{Eq_wtrp_9519_rh_2}
\mu = \frac{\rh^2}{6 n^2},
\andeqn
\ep' = \frac{\rh^3}{36 n^4}.
\end{equation}
For $g \in G$ use Lemma~\ref{A.G.J.P},
Lemma \ref{L_9529_ApproxAndSubeq}(\ref{L_9529_ApproxAndSubeq_21}),
Lemma \ref{L_9529_ApproxAndSubeq}(\ref{L_9529_ApproxAndSubeq_54}),
and~(\ref{Eq_wtrp_9519_rh}) to choose
$v_g, w_g \in M_m (A)$
such that
\begin{equation}\label{Eq_wtrp_2019_03_22}
\bigl\| v_g (a' - \dt)_{+} v_g^*
    - ( \alpha_g (a') - \dt_2)_{+} \bigr\|
< \ep'
\andeqn
\| v_g \| \leq \rh^{-1/2}
\end{equation}
and
\begin{equation}\label{Eq_wtrp1_2019_03_23}
\bigl\| w_g ( \alpha_g (a') - \dt_3)_{+} w_g^*
    - (a' - \dt_5)_{+} \bigr\|
< \ep'
\andeqn
\| w_g \| \leq \rh^{-1/2}.
\end{equation}

Set
\[
F = \bigl\{ v_g, v_g^*, w_{g}, w_{g}^* \colon g \in G \bigr\}
  \cup \bigl\{ (a' - \dt)_{+}, \, (a' - \dt_5)_{+} \bigr\}.
\]
Since the induced action on $M_m (A)$
has the weak tracial Rokhlin property,
we can apply \Lem{invariant_contractions}
with $M_{m} (A)$ in place of $A$, with $F$ as above,
with $\ep'$ in place of~$\ep$, and with $s$ in place of~$x$.
We get positive contractions
$e_g, f_g \in M_m (A)$ for $g \in G$ such that,
with $e = \sum_{g \in G} e_g$ and $f = \sum_{g \in G} f_g$,
the following hold:
\begin{enumerate}
\item\label{WTRP5_e_esff}
$\| e_g e_h \| < \ep'$ and $\| f_g f_h \| < \ep'$
for all $g, h \in G$.
\item\label{WTRP1_b_16}
$\| z e_g - e_g z \| < \ep'$ and $\| z f_g - f_g z \| < \ep'$
for all $g \in G$ and all $z \in F$.
\item\label{WTRP5_e_16}
$(1 - f - \ep' )_{+} \precsim_{A} s$.
\item\label{WTRP2_c_16}
$\alpha_{g} ( e_h ) = e_{g h}$
and $\alpha_{g} ( f_h ) = f_{g h}$ for all $g, h \in G$.
\item\label{WTRP3_d_16}
$e, f \in A^{\alpha}$ and $\| f \| = 1$.
\item\label{WTRP5_e_ef}
$e_g f_g = f_g$ for all $g \in G$.
\end{enumerate}

Define
\[
x = \sum_{g \in G} e_g v_g
\andeqn
y = \sum_{g \in G} f_g w_{g}.
\]
Then
\begin{equation}\label{Eq_wtrp_9520_NormXY}
\| x \|
  \leq \sum_{g \in G} \| v_g \|
  \leq n \rh^{- 1/2}
\andeqn
\| y \|
  \leq \sum_{g \in G} \| w_g \|
  \leq n \rh^{- 1/2}.
\end{equation}
Further define
\[
a_0 = \sum_{g \in G} f_g (a' - \dt_5)_{+} f_g,
\quad
c_0 = \sum_{g \in G} e_g ( \alpha_g (a') - \dt_2)_{+} e_g,
\quad {\mbox{and}} \quad
c = (c_0 - \mu )_{+}.
\]
Then $\| c_0 \| \leq n$ and $\| c \| \leq n$.
Also, $c_0, c \in (A^{\af})_{+}$ by~(\ref{WTRP2_c_16}).

We claim that
\begin{equation}\label{Eq_wtrp_9519_x}
\bigl\| x (a' - \dt)_{+} x^* - c_0 \bigr\| < \mu
\end{equation}
and
\begin{equation}\label{Eq_wtrp_9519_yy}
\bigl\| y c y^* - f (a' - \dt_5)_{+} f \bigr\| < \frac{\rh}{3}.
\end{equation}

We prove~(\ref{Eq_wtrp_9519_x}).
First,
if $g \neq h$
then,
using (\ref{WTRP1_b_16}),
the second part of~(\ref{Eq_wtrp_2019_03_22}),
and (\ref{WTRP5_e_esff}) at the second step,
and~(\ref{Eq_wtrp_9519_rh}) at the last step,
\begin{align*}
\| e_g v_g (a' - \dt)_{+} v_h^* e_h \|
& \leq \| e_g v_g - v_g e_g \| \cdot \| (a' - \dt)_{+} \|
           \cdot \| v_h^* \| \cdot \| e_h \|
\\
& \hspace*{3em} {\mbox{}}
      + \| v_g \| \cdot \| e_g (a' - \dt)_{+} - (a' - \dt)_{+} e_g \|
           \cdot \| v_h^* \| \cdot \| e_h \|
\\
& \hspace*{3em} {\mbox{}}
      + \| v_g \| \cdot \| (a' - \dt)_{+} \|
            \cdot \| e_g v_h^* - v_h^* e_g \| \cdot \| e_h \|
\\
& \hspace*{3em} {\mbox{}}
      + \| v_g \| \cdot \| (a' - \dt)_{+} \|
            \cdot \| v_h^* \| \cdot \| e_g e_h \|
\\
& < \ep' \rh^{- 1/2} + \ep' \rh^{- 1} + \ep' \rh^{- 1/2}
       + \ep' \rh^{- 1}
  \leq 4 \ep' \rh^{- 1}.
\end{align*}
Therefore, using the first part of~(\ref{Eq_wtrp_2019_03_22})
and this estimate at the second step,
\begin{align*}
\bigl\| x (a' - \dt)_{+} x^* - c_0 \bigr\|
& \leq \sum_{g \in G} \| e_g \|
         \cdot \bigl\| v_g (a' - \dt)_{+} v_g^*
             - ( \alpha_g (a') - \dt_2)_{+} \bigr\|
            \cdot \| e_g \|
\\
& \hspace*{3em} {\mbox{}}
          + \sum_{g \neq h} \| e_g v_g (a' - \dt)_{+} v_h^* e_h \|
\\
& < n \ep' + 4 n^2 \ep' \rh^{- 1}
  \leq 5 n^2 \ep' \rh^{- 1}
  < \mu.
\end{align*}
This is~(\ref{Eq_wtrp_9519_x}).

Now we prove~(\ref{Eq_wtrp_9519_yy}).
First,
by~(\ref{Eq_wtrp_9520_NormXY})
and~(\ref{Eq_wtrp_9519_rh_2}),
\begin{equation}\label{Eq_9528_y1}
\| y c y^* - y c_0 y^* \|
  \leq n^2 \rh^{-1} \mu
  = \frac{\rh}{6}.
\end{equation}
Next,
for $g, h, k \in G$ we have,
using (\ref{WTRP1_b_16})
and the second part of~(\ref{Eq_wtrp1_2019_03_23})
at the second step,
\begin{align*}
& \bigl\| f_g w_g e_k ( \alpha_k (a') - \dt_2)_{+} e_k w_h^* f_h
       - f_g e_k w_g ( \alpha_k (a') - \dt_2)_{+} w_h^* e_k f_h \bigr\|
\\
& \hspace*{3em} {\mbox{}}
  \leq \| f_g \| \cdot \| w_g e_k - e_k w_g \|
       \cdot \bigl\| ( \alpha_k (a') - \dt_2)_{+} e_k w_h^* f_h \bigr\|
\\
& \hspace*{6em} {\mbox{}}
     + \bigl\| f_g e_k w_g( \alpha_k (a') - \dt_2)_{+} \bigr\|
          \cdot \| e_k w_h^* - w_h^* e_k \| \cdot \| f_h \|
\\
& \hspace*{3em} {\mbox{}}
  < \ep' \rh^{- 1/2} + \ep' \rh^{- 1/2}
  \leq \frac{\rh}{18}.
\end{align*}
Therefore, by~(\ref{Eq_wtrp_9519_rh_2}),
\begin{equation}\label{Eq_9528_y2_sum}
\Biggl\| y c_0 y^*
    - \sum_{g, h, k \in G}
          f_g e_k w_g ( \alpha_k (a') - \dt_2)_{+} w_h^* e_k f_h
   \Biggr\|
< 2 n^3 \ep' \rh^{- 1/2}
\leq \frac{\rh}{18}.
\end{equation}

Set $S = \{ (g, g, g) \colon g \in G \} \subseteq G^3$.
If $g, k \in G$ are distinct,
then
$\| f_g e_k \| = \| f_g f_k e_k \| < \ep'$
by (\ref{WTRP5_e_ef}) and~(\ref{WTRP5_e_esff}).
Similarly,
if $h, k \in G$ are distinct,
then $\| e_k f_h \| < \ep'$.
In both cases, by~(\ref{Eq_wtrp1_2019_03_23}),
\[
\bigl\| f_g e_k w_g ( \alpha_k (a') - \dt_2)_{+} w_h^* e_k f_h \bigr\|
 < \ep' \rh^{-1}.
\]
So, by~(\ref{Eq_wtrp_9519_rh_2}),
\begin{equation}\label{Eq_9528_y3_CrossTerms}
\Biggl\| \sum_{g, h, k \in G \setminus S}
          f_g e_k w_g ( \alpha_k (a') - \dt_2)_{+} w_h^* e_k f_h
   \Biggr\|
< n^3 \ep' \rh^{- 1}
\leq \frac{\rh}{36}.
\end{equation}
Meanwhile,
by (\ref{WTRP5_e_ef})
and the first part of~(\ref{Eq_wtrp1_2019_03_23}),
\begin{align}\label{Eq_9528_y4_MainTerms}
& \Biggl\| \sum_{g \in G}
          f_g e_g w_g ( \alpha_g (a') - \dt_3)_{+} w_g^* e_g f_g
        - a_0
   \Biggr\|
\\
\notag
& \hspace*{3em} {\mbox{}}
\leq \sum_{g \in G}
   \bigl\| f_g w_g ( \alpha_g (a') - \dt_3)_{+} w_g^* f_g
     - f_g (a' - \dt_5)_{+} f_g \bigr\|
  < n \ep'
  \leq \frac{\rh}{36}.
\end{align}
Finally,
using (\ref{WTRP1_b_16}) and~(\ref{WTRP5_e_esff}),
and using~(\ref{Eq_wtrp_9519_rh_2}) at the last step,
\begin{align}\label{Eq_9528_y5_a0}
& \| a_0 - f (a' - \dt_5)_{+} f \|
\\
\notag
& \hspace*{2em} {\mbox{}}
  \leq \sum_{g \neq h} \| f_g (a' - \dt_5)_{+} f_h \|
\\
\notag
& \hspace*{2em} {\mbox{}}
  \leq \sum_{g \neq h}
        \bigl[ \| f_g (a' - \dt_5)_{+} - (a' - \dt_5)_{+} f_g \|
          \cdot \| f_h \|
        + \| (a' - \dt_5)_{+} \|
           \cdot \| f_g f_h \| \bigr]
\\
\notag
& \hspace*{2em} {\mbox{}}
  < n^2 (\ep' + \ep')
  \leq \frac{\rh}{18}.
\end{align}

Combining (\ref{Eq_9528_y1}),
(\ref{Eq_9528_y2_sum}),
(\ref{Eq_9528_y3_CrossTerms}),
(\ref{Eq_9528_y4_MainTerms}),
and~(\ref{Eq_9528_y5_a0}),
we get
\[
\bigl\| y c y^* - f (a' - \dt_5)_{+} f \bigr\|
 < \frac{\rh}{3},
\]
which is~(\ref{Eq_wtrp_9519_yy}).
The claim is proved.

Define $b = c \oplus s$,
which is in $M_{2 m} (A^{\af})_{+}$.
{}From~(\ref{Eq_wtrp_9519_x})
we get
\[
c = (c_0 - \mu)_{+}
  \precsim_A x (a' - \dt)_{+} x^*
  \precsim_A (a' - \dt)_{+}.
\]
Using $s \precsim_A d \precsim_A h (a')$
and $h (a') \perp (a' - \dt)_{+}$,
as well as
Lemma \ref{L_9529_ApproxAndSubeq}(\ref{L_9529_ApproxAndSubeq_PrDt}),
we have
\[
b \precsim_A (a' - \dt)_{+} \oplus h (a')
  \precsim_A a'
  \precsim_A (a - \dt)_{+}.
\]
Using Lemma \ref{L_9529_ApproxAndSubeq}(\ref{L_9529_ApproxAndSubeq_Ep6})
at the first step,
(\ref{Eq_wtrp_9519_rh}) at the second step,
Lemma~\ref{Lem.ANP.Dec.18} at the third step,
(\ref{Eq_wtrp_9519_yy})
and $\ep' < \frac{\rh}{3}$
(by (\ref{Eq_wtrp_9519_rh_2})) at the fourth step,
and (\ref{WTRP5_e_16}) at the fifth step,
we get
\begin{align*}
(a - \ep)_{+}
& \precsim_A (a' - \dt_6)_{+}
  \leq (a' - \dt_5 - \rh)_{+}
\\
& \precsim_A \Bigl( f (a' - \dt_5)_{+} f - \frac{\rh}{3} \Bigr)_{+}
     \oplus \Bigl( 1 - f - \frac{\rh}{3} \Bigr)_{+}
\\
& \precsim_A y c y^* \oplus (1 - f - \ep' )_{+}
  \precsim_A c \oplus s
  = b.
\end{align*}
The last two relations complete the proof.
\end{proof}

\begin{lem}\label{L_9529_Cu_wtrp_Surjec}
Let $A$ be an infinite-dimensional
stably finite simple unital \ca{} and let
$\alpha \colon G \to \Aut (A)$ be an action
of a finite group $G$ on $A$
which has the weak tracial Rokhlin property.
Recalling the notation of Definition~\ref{D_9421_Pure},
let $a \in (K \otimes A)_{++}$
satisfy $a \sim_A \alpha_{g} (a)$ for all $g \in G$.
Then there exists $b \in (K \otimes A^{\alpha})_{++}$ such that:
\begin{enumerate}
\item\label{Item_9529_Cu_wtrp_Surjec_Same}
$\langle b \rangle_A = \langle a \rangle_A$.
\item\label{Item_9529_Cu_wtrp_Surjec_SupMi}
There are $\et_0, \et_1, \ldots \in \W_{+} (A^{\af})$
such that $\et_0 \leq \et_1 \leq \cdots$ and
$\langle b \rangle_{A^{\af}} = \sup_{n \in \Nz} \et_n$.
\end{enumerate}
\end{lem}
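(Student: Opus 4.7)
The plan is to apply Lemma~\ref{Mylemma2019.03.24_wtrp} iteratively to manufacture a Cuntz-increasing sequence of purely positive elements in $M_{m_n}(A^\alpha)$ whose Cuntz classes in $A$ approach $\langle a \rangle_A$ from below, and then to realize their supremum in $\Cu_+(A^\alpha) \cup \{0\}$ as a single element $b \in (K \otimes A^\alpha)_{++}$.

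Since $a \in (K \otimes A)_{++}$, the element $0$ is a limit point of $\spec (a)$ by Lemma~\ref{L_9421_PropOfPP}(\ref{Item_L_9421_PropOfPP_ZInSp}), so Lemma~\ref{Mylemma2019.03.24_wtrp} is available. First I will set $\ep_1 = 1$ and apply Lemma~\ref{Mylemma2019.03.24_wtrp} with $\ep = \ep_1$ to obtain $\dt_1 > 0$ and $b_1 \in M_{m_1}(A^\alpha)_+$ with $[0,1] \subseteq \spec (b_1)$ and $(a - \ep_1)_+ \precsim_A b_1 \precsim_A (a - \dt_1)_+$. Then I will inductively choose $\ep_{n+1} < \min \big( \dt_n, \, \tfrac{1}{n+1} \big)$ and apply Lemma~\ref{Mylemma2019.03.24_wtrp} with $\ep = \ep_{n+1}$ to produce $\dt_{n+1} > 0$ and $b_{n+1} \in M_{m_{n+1}}(A^\alpha)_+$ with the analogous properties. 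Then $\ep_n \searrow 0$, and the chain
\[
b_n \precsim_A (a - \dt_n)_+ \precsim_A (a - \ep_{n+1})_+ \precsim_A b_{n+1}
\]
shows that $b_n \precsim_A b_{n+1}$ for every $n$.

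Because $[0,1] \subseteq \spec (b_n)$, each $b_n$ lies in $M_{m_n}(A^\alpha)_{++}$ by Lemma~\ref{L_9421_PropOfPP}(\ref{Item_L_9421_PropOfPP_ZInSp}). If $G \neq \{ 1 \}$, the weak tracial Rokhlin property forces $A$ to be non-type-I (the case $G = \{ 1 \}$ is trivial with $b = a$), so Lemma~\ref{WC_plus_injectivity}(\ref{WC_plus_injectivity_a}) applies and upgrades $b_n \precsim_A b_{n+1}$ to $b_n \precsim_{A^\alpha} b_{n+1}$. Setting $\et_n = \langle b_n \rangle_{A^\alpha} \in \W_+(A^\alpha)$ yields an increasing sequence, whose supremum $\et$ lies in $\Cu_+(A^\alpha) \cup \{ 0 \}$ by Lemma~\ref{L_9421_PropOfPP}(\ref{Item_L_9421_PropOfPP_ClosedUnderSups}).

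Finally, since every element of $\Cu (A^\alpha)$ is of the form $\langle c \rangle_{A^\alpha}$ for some $c \in (K \otimes A^\alpha)_+$ and suprema of increasing sequences are realized in $\Cu$, I will pick $b \in (K \otimes A^\alpha)_+$ with $\langle b \rangle_{A^\alpha} = \et$; Lemma~\ref{L_9421_PropOfPP}(\ref{Item_L_9421_PropOfPP_ZInSp}) then gives $b \in (K \otimes A^\alpha)_{++}$, and the sequence $(\et_n)$ witnesses Part~(\ref{Item_9529_Cu_wtrp_Surjec_SupMi}). For Part~(\ref{Item_9529_Cu_wtrp_Surjec_Same}), the Cu-morphism $\Cu (\io) \colon \Cu (A^\alpha) \to \Cu (A)$ induced by the inclusion preserves increasing suprema, so $\langle b \rangle_A = \sup_n \langle b_n \rangle_A$; since $(a - \ep_n)_+ \precsim_A b_n \precsim_A a$ and $\sup_n \langle (a - \ep_n)_+ \rangle_A = \langle a \rangle_A$ (Proposition~2.3(ii) of~\cite{ERS11}), this supremum equals $\langle a \rangle_A$. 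The main delicate point will be the interleaved bookkeeping of $\ep_n$ and $\dt_n$ to enforce the Cuntz-monotonicity of $(b_n)$; beyond that, the argument combines Lemma~\ref{Mylemma2019.03.24_wtrp} with standard $\Cu$-semigroup machinery.
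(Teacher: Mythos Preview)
Your argument is correct and follows essentially the same route as the paper's proof: iterate Lemma~\ref{Mylemma2019.03.24_wtrp} to build an $A$-Cuntz-increasing sequence $(b_n)$ of matrix elements in $A^\alpha$ with $[0,1]\subseteq\spec(b_n)$ sandwiched between the $(a-\ep_n)_+$, upgrade to $A^\alpha$-subequivalence, take the supremum in $\Cu(A^\alpha)$, and push it through $\Cu(\iota)$. The only cosmetic differences are that the paper invokes Lemma~\ref{W_T_R_P_inject} directly (using that $0$ is a limit point of $\spec(b_{n+1})$) rather than the packaged Lemma~\ref{WC_plus_injectivity}(\ref{WC_plus_injectivity_a}), and it deduces $b\in(K\otimes A^\alpha)_{++}$ by contradiction via $\langle a\rangle_A=\langle b\rangle_A$ rather than via closure of $\Cu_+(A^\alpha)\cup\{0\}$ under suprema; your citation of Lemma~\ref{L_9421_PropOfPP}(\ref{Item_L_9421_PropOfPP_ZInSp}) at that last step is slightly off (you are really using that $\et\in\Cu_+(A^\alpha)$, not the spectral characterization), but the conclusion stands.
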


\begin{proof}
By induction on~$n$,
we construct sequences $(\ep_n)_{n \in \Nz}$ in $(0, \infty)$,
$(b_n)_{n \in \Nz}$ in $(K \otimes A^{\af})_{+}$,
and $(m (n) )_{n \in \Nz}$ in~$\N$,
such that $\lim_{n \to \I} \ep_n = 0$,
$b_0 \precsim_A (a - \ep_{0})_{+}$,
and for all $n \in \Nz$ we have
\[
\ep_{n + 1} < \ep_{n},
\qquad
b_n \in \bigl( M_{m (n)} (A)^{\af} \bigr)_{+},
\qquad
(a - \ep_{n})_{+}
  \precsim_A b_{n + 1} \precsim_A (a - \ep_{n + 1})_{+},
\]
and
\[
[0, 1] \subseteq \spec (b_n).
\]

To begin,
set $\ep_0 = 1$.
Given $\ep_n$ with $n \in \Nz$,
apply \Lem{Mylemma2019.03.24_wtrp}
with $\ep_n$ in place of $\ep$,
getting
$\dt > 0$, $m (n + 1) \in \N$,
and $b_{n + 1} \in M_{m (n + 1)} (A^{\alpha})_{+}$
such that
\begin{equation}\label{Eq15_2019_05_14}
(a - \ep_{n})_{+}
  \precsim_A b_{n + 1} \precsim_A (a - \dt)_{+}
\andeqn
[0, 1] \subseteq \spec (b_n).
\end{equation}
Then set $\ep_{n + 1} = \min \bigl( \dt, \frac{\ep_n}{2} \bigr)$.
The induction is complete.

We now have
$b_0
  \precsim_A b_{1}
  \precsim_A b_2
  \precsim_A \cdots$.
Since
$\id_{M_l} \otimes \alpha$ has the
weak tracial Rokhlin property for all $l \in \N$
(by Corollary 4.6 of~\cite{FG17}),
it follows from \Lem{W_T_R_P_inject} that
\begin{equation}\label{Eq21_2019_05_15}
b_0
 \precsim_{A^{\alpha}} b_1
 \precsim_{A^{\alpha}} b_2
 \precsim_{A^{\alpha}} \cdots.
\end{equation}
By Theorem 4.19 of~\cite{APT11},
there exists $b \in (K \otimes A^{\alpha})_{+}$ such that
$\langle b \rangle_{A^{\alpha}}
 = \sup_{n} \, \langle b_n \rangle_{A^{\alpha}}$.
Therefore,
using Theorem 1.16 of~\cite{Ph14}
at the third step,
\begin{align}\label{Eq7_2019_05_29}
\langle b \rangle_{A}
  = \Cu (\iota) \big( \langle b \rangle_{A^{\alpha}}\big)
& = \Cu (\iota) \Big( \sup_{n} \, \langle b_n \rangle_{A^{\alpha}} \Big)
\\
\notag
& = \sup_n \big( \Cu (\iota) \langle b_n \rangle_{A^{\alpha}} \big)
  = \sup_n \, \langle b_n \rangle_{A}.
\end{align}
Moreover,
for all $n \in \Nz$,
we have
$( a - \ep_{n})_{+} \precsim_A b_{n + 1} \precsim_A a$.
Since $\lim_{n \to \I} \ep_n = 0$,
it follows from Lemma 1.25(1) of~\cite{Ph14} that
$\sup_{n} \, \langle b_n \rangle_A = \langle a \rangle_A$.
So $\langle b \rangle_{A} = \langle a \rangle_{A}$,
which is Part~(\ref{Item_9529_Cu_wtrp_Surjec_Same})
of the conclusion.
Part~(\ref{Item_9529_Cu_wtrp_Surjec_SupMi})
follows by taking $\et_n = \langle b_n \rangle_{A^{\af}}$
for $n \in \Nz$.

Finally, we prove that $b \in (K \otimes A^{\alpha})_{++}$.
If not, then (see Definition~\ref{D_9421_Pure})
there is a \pj{} $p \in K \otimes A^{\alpha}$
such that
$\langle b \rangle_{A^{\af}} = \langle p \rangle_{A^{\af}}$.
But then $\langle a \rangle_{A} = \langle p \rangle_{A}$,
contradicting $a \in (K \otimes A)_{++}$.
\end{proof}

Recall the definition of $\Cu_{+} (A)$
(Definition~\ref{D_9421_Pure}).

\begin{thm}\label{T_9529_IsomOnCuPlus}
Let $A$ be an infinite-dimensional
stably finite simple unital \ca{} and let
$\alpha \colon G \to \Aut (A)$ be an action
of a finite group $G$ on $A$
which has the weak tracial Rokhlin property.
Then the inclusion map $\iota \colon A^{\alpha} \to A$
induces an isomorphism of ordered semigroups
$\Cu_{+} (\io) \colon
 \Cu_{+} (A^{\alpha}) \cup \{ 0 \}
  \to \Cu_{+} (A)^{\alpha} \cup \{ 0 \}$.
\end{thm}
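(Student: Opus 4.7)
The plan is straightforward assembly: essentially all the hard work has been done in the preceding lemmas of Sections~\ref{Sec_rcFix} and~\ref{Sec_Surj}, and the statement simply combines them. The injectivity and order-reflection on $\Cu_{+} (A^{\alpha}) \cup \{ 0 \}$ come from Lemma~\ref{WC_plus_injectivity}(\ref{WC_plus_injectivity_b}), while the surjectivity onto $\Cu_{+} (A)^{\alpha} \cup \{ 0 \}$ is delivered by Lemma~\ref{L_9529_Cu_wtrp_Surjec}.

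First I would verify that $\Cu (\iota)$ sends $\Cu_{+} (A^{\alpha}) \cup \{ 0 \}$ into $\Cu_{+} (A)^{\alpha} \cup \{ 0 \}$. If $b \in (K \otimes A^{\alpha})_{+}$, then $\af_g (b) = b$, so $\Cu (\af_g) (\langle b \rangle_A) = \langle b \rangle_A$; thus the image is $\af$-invariant. The image also lies in $\Cu_{+} (A) \cup \{ 0 \}$ by Lemma~\ref{L_9421_PropOfPP}(\ref{Item_L_9421_PropOfPP_ZInSp}), since the spectrum of $b$ is unchanged when $b$ is viewed as an element of $K \otimes A$. Since $\iota$ is a \hm, $\Cu (\iota)$ is a semigroup homomorphism, and Lemma~\ref{WC_plus_injectivity}(\ref{WC_plus_injectivity_b}) shows that its restriction to $\Cu_{+} (A^{\alpha}) \cup \{ 0 \}$ is an order-isomorphism onto its image. (The hypothesis that $A$ is not of type~I in Lemma~\ref{WC_plus_injectivity} is automatic if $G \neq \{ 1 \}$, by the remark after Definition~\ref{W_T_R_P_def}; the case $G = \{ 1 \}$ is trivial since then $A^{\alpha} = A$.)

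It remains to establish surjectivity onto $\Cu_{+} (A)^{\alpha} \cup \{ 0 \}$. The class $0$ is obviously hit, so let $\xi \in \Cu_{+} (A)^{\alpha}$ and choose a representative $a \in (K \otimes A)_{++}$ with $\xi = \langle a \rangle_A$. The $\af$-invariance of $\xi$ gives $\langle \af_g (a) \rangle_A = \langle a \rangle_A$, i.e. $a \sim_A \af_g (a)$ for every $g \in G$. Now Lemma~\ref{L_9529_Cu_wtrp_Surjec} applies and produces $b \in (K \otimes A^{\alpha})_{++}$ with $\langle b \rangle_A = \langle a \rangle_A$. Then $\langle b \rangle_{A^{\alpha}} \in \Cu_{+} (A^{\alpha})$ and $\Cu (\iota) (\langle b \rangle_{A^{\alpha}}) = \langle b \rangle_A = \xi$, as desired.

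The genuine difficulties — namely the averaging argument of Lemma~\ref{W_T_R_P_inject} behind injectivity, and the iterative construction producing a fixed-point representative with full spectrum in the proof of Lemma~\ref{L_9529_Cu_wtrp_Surjec} — have already been resolved, so at this stage no serious obstacle remains beyond checking that the pieces fit together cleanly.
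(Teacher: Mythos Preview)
Your proposal is correct and follows essentially the same approach as the paper's own proof: both assemble Lemma~\ref{WC_plus_injectivity}(\ref{WC_plus_injectivity_b}) for injectivity and order-reflection, Lemma~\ref{L_9421_PropOfPP}(\ref{Item_L_9421_PropOfPP_ZInSp}) to see that the range lies in $\Cu_{+} (A)$, and Lemma~\ref{L_9529_Cu_wtrp_Surjec} for surjectivity onto $\Cu_{+} (A)^{\alpha}$. Your explicit handling of the ``not of type~I'' hypothesis (automatic for nontrivial~$G$, trivial case otherwise) is a small clarification the paper leaves implicit.
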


By Theorem 4.1(ii) of~\cite{GdlStg},
if $\af$ has the Rokhlin property,
this holds for arbitrary~$A$
and without restricting to the classes of purely positive elements.

\begin{proof}[Proof of Theorem~\ref{T_9529_IsomOnCuPlus}]
It follows from
\Lem{WC_plus_injectivity}(\ref{WC_plus_injectivity_b}),
Lemma \ref{L_9421_PropOfPP}(\ref{Item_L_9421_PropOfPP_ZInSp}),
and simplicity of $A^{\alpha}$
that the map
$\Cu_{+} (\iota) \colon \Cu_{+} (A^{\alpha}) \to \Cu (A)$
is injective
and is an order isomorphism onto its range.
It is trivial that the range is contained in $\Cu (A)^{\alpha}$,
it follows from
Lemma \ref{L_9421_PropOfPP}(\ref{Item_L_9421_PropOfPP_ZInSp})
that the range is contained in $\Cu_{+} (A)$,
and it follows from \Lem{L_9529_Cu_wtrp_Surjec}
that the range contains $\Cu_{+} (A)^{\alpha}$.
So the range is $\Cu_{+} (A)^{\alpha}$.
The extension to
$\Cu_{+} (\io) \colon
 \Cu_{+} (A^{\alpha}) \cup \{ 0 \}
  \to \Cu_{+} (A)^{\alpha} \cup \{ 0 \}$
is immediate.
\end{proof}

\begin{cor}\label{C_9529_WPlusAndSR1}
Let $A$ be an infinite-dimensional simple unital \ca.
Let $\alpha \colon G \to \Aut (A)$ be an action
of a finite group $G$ on $A$
which has the weak tracial Rokhlin property.
Assume that $A^{\af}$ has stable rank one.
Then the inclusion map $\iota \colon A^{\alpha} \to A$
induces an isomorphism of ordered semigroups
$\W_{+} (\io) \colon
 \W_{+} (A^{\alpha}) \cup \{ 0 \}
  \to \W_{+} (A)^{\alpha} \cup \{ 0 \}$.
\end{cor}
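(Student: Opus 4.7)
The plan is to combine injectivity from Lemma~\ref{WC_plus_injectivity}(\ref{WC_plus_injectivity_a}) with a surjectivity argument that refines Theorem~\ref{T_9529_IsomOnCuPlus}, using stable rank one of $A^{\af}$ to ensure that the class produced by \Lem{L_9529_Cu_wtrp_Surjec} actually lies in $\W (A^{\af})$ and not merely in $\Cu (A^{\af})$.

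Injectivity of $\W_{+} (\io)$ and the order isomorphism onto its image follow at once from Lemma~\ref{WC_plus_injectivity}(\ref{WC_plus_injectivity_a}), once one notes that an element of $(A^{\af})_{+}$ is purely positive in $A^{\af}$ if and only if it is purely positive in $A$ (the spectrum of a selfadjoint element is the same whether computed in $A^{\af}$ or in $A$, so this follows from Lemma~\ref{L_9421_PropOfPP}(\ref{Item_L_9421_PropOfPP_ZInSp})). The same observation shows that $\W_{+} (\io)$ lands in $\W_{+} (A)^{\af} \cup \{ 0 \}$.

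For surjectivity, take $\et = \langle a \rangle_{A} \in \W_{+} (A)^{\af}$ with $a \in M_{N} (A)_{++}$ and $\af_{g} (a) \sim_{A} a$ for all $g \in G$, and run the construction of \Lem{L_9529_Cu_wtrp_Surjec}. This yields $b \in (K \otimes A^{\af})_{++}$ with $\langle b \rangle_{A} = \langle a \rangle_{A}$ and a Cuntz-increasing sequence $b_{1} \precsim_{A^{\af}} b_{2} \precsim_{A^{\af}} \cdots$ whose supremum in $\Cu (A^{\af})$ equals $\langle b \rangle_{A^{\af}}$. The key refinement is to trace through the proofs of \Lem{Mylemma2019.03.24_wtrp} and \Lem{L_9529_ApproxAndSubeq} and notice that, when the input $a$ already lies in $M_{N} (A)_{+}$, the intermediate element $a'$ of \Lem{L_9529_ApproxAndSubeq} may be taken in $M_{N} (A)_{+}$ by choosing the approximant to be $a$ itself; the construction of \Lem{Mylemma2019.03.24_wtrp} then produces $c, s \in M_{N} (A^{\af})_{+}$ and sets $b = c \oplus s$, so each $b_{n}$ can be arranged in $M_{2 N} (A^{\af})_{+}$. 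Now I invoke stable rank one of $M_{2 N} (A^{\af})$: under stable rank one the canonical map $\W (M_{2 N} (A^{\af})) \to \Cu (A^{\af})$ is an order embedding whose image equals $\{ x \in \Cu (A^{\af}) \colon x \leq 2 N \langle 1_{A^{\af}} \rangle_{A^{\af}} \}$ (Coward-Elliott-Ivanescu). Since $\sup_{n} \langle b_{n} \rangle_{A^{\af}} \leq 2 N \langle 1_{A^{\af}} \rangle_{A^{\af}}$, there exists $b' \in M_{2 N} (A^{\af})_{+}$ with $\langle b' \rangle_{A^{\af}} = \langle b \rangle_{A^{\af}}$. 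Since $b'$ is Cuntz equivalent to $b$ it is purely positive, so $\W_{+} (\io) \bigl( \langle b' \rangle_{A^{\af}} \bigr) = \langle b \rangle_{A} = \langle a \rangle_{A} = \et$.

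The main obstacle is the appeal to the Coward-Elliott-Ivanescu identification of $\W (M_{2 N} (A^{\af}))$ with the lower set $\{ x \leq 2 N \langle 1_{A^{\af}} \rangle_{A^{\af}} \} \subseteq \Cu (A^{\af})$ under stable rank one. If a direct citation proves awkward, one can argue by hand using the fact that, in any C*-algebra $B$ of stable rank one, $c \precsim_{B} p$ for a projection $p$ implies that $c$ is Cuntz equivalent in $B$ to an element of $\ov{p (K \otimes B) p}$. Applying this inductively, together with \Lem{A.G.J.P} and the $v_{n}$-trick of Rørdam, one fits each truncation $(b_{n} - 1 / n)_{+}$ into the hereditary subalgebra generated by $b_{n + 1}$ in $M_{2 N} (A^{\af})$, and the resulting nested family produces a single $b' \in M_{2 N} (A^{\af})_{+}$ with the required Cuntz class.
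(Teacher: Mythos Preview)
Your overall strategy matches the paper's: use the injectivity result already established, then for surjectivity apply \Lem{L_9529_Cu_wtrp_Surjec} and invoke stable rank one of $A^{\af}$ to ensure the supremum lands in $\W (A^{\af})$ rather than merely in $\Cu (A^{\af})$. However, there is one genuine gap and one unnecessary detour.

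\textbf{The gap.} You cite \Lem{WC_plus_injectivity}(\ref{WC_plus_injectivity_a}) and \Lem{L_9421_PropOfPP}(\ref{Item_L_9421_PropOfPP_ZInSp}), but both require $A$ to be stably finite, and the hypotheses of the corollary do not include this. The paper fills this in explicitly: since $A^{\af}$ has stable rank one it is stably finite; by \Lem{Fixedpoint_corner}(\ref{Fixedpoint_corner_d}) and simplicity of $\CGAa$, the algebra $A^{\af}$ is stably isomorphic to $\CGAa$, so $\CGAa$ is stably finite; and $A$ sits inside $\CGAa$, so $A$ is stably finite. Without this step, your appeal to the spectral characterization of pure positivity and to \Lem{WC_plus_injectivity} is not justified.

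\textbf{The detour.} Your refinement --- tracing through the proofs of \Lem{L_9529_ApproxAndSubeq} and \Lem{Mylemma2019.03.24_wtrp} to pin every $b_n$ inside a fixed $M_{2N} (A^{\af})$ --- is correct but unnecessary. The paper simply uses \Lem{L_9529_Cu_wtrp_Surjec}(\ref{Item_9529_Cu_wtrp_Surjec_SupMi}) as a black box: it already hands you $\et_n \in \W_{+} (A^{\af})$ with $\langle b \rangle_{A^{\af}} = \sup_n \et_n$, and since $\langle b \rangle_A = \langle a \rangle_A$ with $a \in M_m (A)_{+}$, the sequence is bounded in $\W (A^{\af})$. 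Then \Prp{P_9728_NonsepSupW} (the nonseparable form of the Coward--Elliott--Ivanescu / Ara--Perera--Toms result you invoke) gives $\langle b \rangle_{A^{\af}} \in \W (A^{\af})$ directly, and \Lem{L_9421_PropOfPP}(\ref{Item_L_9421_PropOfPP_ClosedUnderSups}) gives pure positivity. This avoids reopening the proofs of two technical lemmas.
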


It is presumably true that
if $A$ is an infinite-dimensional stably finite simple unital \ca{}
with stable rank one,
$G$ is a finite group,
and $\alpha \colon G \to \Aut (A)$
has the weak tracial Rokhlin property,
then $C^* (G, A, \af)$
and $A^{\af}$ have stable rank one.
However,
this has not been proved,
and a proof presumably requires methods like those in~\cite{ArPh}.
It is known that if
$\alpha$ has the tracial Rokhlin property,
then $C^* (G, A, \af)$ has stable rank one.
This is claimed in Theorem~3.1 of~\cite{FnFg}.
We could not follow the proof there,
but a proof will appear in~\cite{Glsn}.
In this case,
$A^{\af}$ has stable rank one
by Lemma \ref{Fixedpoint_corner}(\ref{Fixedpoint_corner_tsr}).

We need the following fact.
It is part of Theorem 5.15 of~\cite{APT11},
except that we omit the separability hypothesis there.
That hypothesis isn't actually needed for the proof given there.
(The statement in~\cite{APT11} omits ``nondecreasing'',
but, as one sees from the proof, this hypothesis is intended.)

\begin{prp}\label{P_9728_NonsepSupW}
Let $A$ be a unital \ca{} with stable rank one.
Let $(\et_n)_{n \in \Nz}$ be
a bounded nondecreasing sequence in $\W (A)$.
Let $\et = \sup_{n \in \Nz} \et_n$,
evaluated in $\Cu (A)$.
Then $\et \in \W  (A)$.
\end{prp}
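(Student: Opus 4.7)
The plan is to follow the proof of Theorem~5.15 of~\cite{APT11} and verify that separability is not essential; the idea is to realize~$\eta$ as the Cuntz class of a single element of $M_N(A)_+$ for a fixed~$N$, using stable rank one to nest hereditary subalgebras.

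First, since $(\eta_n)$ is bounded, fix $\xi = \langle b \rangle_A \in \W(A)$ with $b \in M_N(A)_+$ and $\eta_n \leq \xi$ for every~$n$; then $\eta \leq \xi$ in $\Cu(A)$ as well. I would first show that each $\eta_n$ admits a representative $a_n \in M_N(A)_+$: if $\eta_n = \langle c \rangle_A$ with $c \precsim_A b$, then for every $\epsilon > 0$, Lemma~\ref{A.G.J.P} produces $v \in K \otimes A$ with $v b v^*$ arbitrarily close to $(c - \epsilon)_+$; replacing~$v$ by $v P_N$ (where $P_N$ is the unit of $M_N(A) \subseteq K \otimes A$) and applying Lemma~\ref{PhiB.Lem_18_4}(\ref{PhiB.Lem_18_4_4}) gives $v b v^* \sim_A b^{1/2} v^* v b^{1/2} \in M_N(A)_+$, producing an element of $M_N(A)_+$ Cuntz equivalent to $(c - \epsilon)_+$. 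Stable rank one of $M_N(A)$ (Theorem~3.1.8 of~\cite{LnBook}) together with the nesting step below consolidates these approximations, as $\epsilon \to 0$, into a single element $a_n \in M_N(A)_+$ with $\langle a_n \rangle_A = \eta_n$.

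Second, I would use stable rank one to build an increasing sequence of hereditary subalgebras inside $M_N(A)$ realizing the classes $\eta_n$. Using the refinement of Cuntz comparison available in algebras of stable rank one --- namely, if $c, c' \in M_N(A)_+$ satisfy $\langle c \rangle_A \leq \langle c' \rangle_A$ and $\epsilon > 0$, then $(c - \epsilon)_+$ is Cuntz equivalent to an element of $\overline{c' M_N(A) c'}$ --- I would inductively construct $c_n \in M_N(A)_+$ with $\overline{c_n M_N(A) c_n} \subseteq \overline{c_{n+1} M_N(A) c_{n+1}}$, and with the tolerance shifts $\epsilon_n \to 0$ chosen so that $\sup_n \langle c_n \rangle_A = \sup_n \langle a_n \rangle_A = \eta$. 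Then set $a = \sum_{n = 1}^{\infty} 2^{-n} \| c_n \|^{-1} c_n \in M_N(A)_+$. From $a \geq 2^{-n} \|c_n\|^{-1} c_n$ we get $c_n \precsim_A a$, so $\eta \leq \langle a \rangle_A$. Conversely, $a$ lies in the norm-closure of $\bigcup_n \overline{c_n M_N(A) c_n}$, so for every $\delta > 0$ there exists $n$ with $(a - \delta)_+ \precsim_A c_n$; letting $\delta \to 0$ yields $\langle a \rangle_A \leq \eta$. Hence $\eta = \langle a \rangle_A \in \W(A)$.

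The main obstacle is the inductive nesting in the second step, which rests on R{\o}rdam's refinement of Cuntz comparison under stable rank one. In~\cite{APT11} this is proved assuming separability of~$A$, but inspection of the argument shows that separability is used only to arrange countable approximations, which in the present setting are furnished by the countable sequence $(\eta_n)$ itself. Since the entire construction takes place inside the fixed (not necessarily separable) algebra $M_N(A)$, which inherits stable rank one from~$A$ by Theorem~3.1.8 of~\cite{LnBook}, the argument adapts without modification and yields the conclusion.
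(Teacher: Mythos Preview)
Your proposal is correct and follows essentially the same route as the paper: both invoke Theorem~5.15 of~\cite{APT11} and argue that its separability hypothesis is dispensable. The paper's treatment is more surgical---rather than reconstructing the argument, it pinpoints the single use of separability to Lemma~5.13 of~\cite{APT11}, namely the existence of a strictly positive element in the closed union $A_\infty = \overline{\bigcup_n \overline{a_n A a_n}}$, and observes that this holds because $A_\infty$ has a countable approximate identity (being a countable increasing union of $\sigma$-unital subalgebras); your explicit element $a = \sum_n 2^{-n}\|c_n\|^{-1} c_n$ is precisely such a strictly positive element, so the two arguments amount to the same thing.
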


\begin{proof}
If  $A$ is separable,
this is contained in Theorem 5.15 of~\cite{APT11}.
The only use of separability
in the proof of that theorem
is in the use of Lemma 5.13 of~\cite{APT11}.
One needs to know that the algebra $A_{\infty}$
in that proof
has a strictly positive element.
It is enough to show that
$A_{\infty}$ has a countable approximate identity,
which follows from the fact that,
using the notation there,
$A_{\infty}$ is the
countable increasing union
of subalgebras $A_{a_n} = {\overline{ a_n A a_n}}$,
each of which clearly has a countable approximate identity.
\end{proof}

\begin{proof}[Proof of Corollary~\ref{C_9529_WPlusAndSR1}]
Since $C^* (G, A, \af)$ is simple,
Theorem~2.8  of~\cite{Brwn}
and Lemma \ref{Fixedpoint_corner}(\ref{Fixedpoint_corner_d})
imply that $A^{\af}$ is stably isomorphic to $C^* (G, A, \af)$.
The algebra $A^{\af}$ is stably finite since it has stable rank one,
so $C^* (G, A, \af)$ is stably finite,
and therefore its subalgebra~$A$ is stably finite.
It now follows from Theorem~\ref{T_9529_IsomOnCuPlus}
that
$\W_{+} (\io) \colon
 \W_{+} (A^{\alpha}) \cup \{ 0 \} \to \W_{+} (A) \cup \{ 0 \}$
is an order isomorphism from $\W_{+} (A^{\alpha}) \cup \{ 0 \}$
to some subsemigroup of $\Cu_{+} (A) \cup \{ 0 \}$
which is contained in
$\bigl( \W_{+} (A) \cap \Cu_{+} (A)^{\alpha} \bigr) \cup \{ 0 \}
 = \W_{+} (A)^{\alpha} \cup \{ 0 \}$.

Now let
$\et \in
 \bigl( \W_{+} (A) \cap \Cu_{+} (A)^{\alpha} \bigr) \cup \{ 0 \}$;
we show that $\et$ is in the range of $\W_{+} (\io)$.
This is trivial if $\et = 0$.
Otherwise, choose $m \in \N$ and $a \in M_{m} (A)_{+}$
such that $\langle a \rangle_A = \et$.
Apply Lemma~\ref{L_9529_Cu_wtrp_Surjec} to~$a$,
getting $b \in (K \otimes A^{\af})_{++}$
and a nondecreasing sequence $(\et_n)_{n \in \Nz}$
in $\W_{+} (A^{\af})$
such that $\langle b \rangle_{A^{\af}} = \sup_{n \in \Nz} \et_n$.
This sequence is bounded
by $\langle 1_{M_m (A^{\af})} \rangle_{A^{\af}}$.
So $\langle b \rangle_{A^{\af}} \in \W (A^{\alpha})$
by Proposition~\ref{P_9728_NonsepSupW},
and $\langle b \rangle_{A^{\af}} \in \Cu_{+} (A^{\af})$
by
Lemma \ref{L_9421_PropOfPP}(\ref{Item_L_9421_PropOfPP_ClosedUnderSups}).
The conclusion follows.
\end{proof}

\begin{cor}\label{C_9720_CuCP_wtrp}
Let $A$ be an infinite-dimensional
stably finite simple unital \ca{} and let
$\alpha \colon G \to \Aut (A)$ be an action
of a finite group $G$ on $A$
which has the weak tracial Rokhlin property.
Then  
\[
\Cu_{+} \bigl( \CGAa \bigr) \cup \{ 0 \}
 \cong \Cu_{+} (A)^{\alpha} \cup \{ 0 \}
\]
as ordered semigroups.
If $A^{\af}$ has stable rank one,
then
\[
\W_{+} \bigl( \CGAa \bigr) \cup \{ 0 \}
 \cong \W_{+} (A)^{\alpha} \cup \{ 0 \}
\]
as ordered semigroups.
\end{cor}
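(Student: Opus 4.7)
The plan is to combine Theorem~\ref{T_9529_IsomOnCuPlus} and Corollary~\ref{C_9529_WPlusAndSR1}, which compute the purely positive parts of the Cuntz semigroups associated with $A^{\af}$, with the identification of $A^{\af}$ as a full corner of $\CGAa$. In detail, by Lemma~\ref{Fixedpoint_corner}(\ref{Fixedpoint_corner_d}) the averaging projection $p = \tfrac{1}{\card(G)} \sum_{g \in G} u_g$ satisfies $A^{\af} \cong p \CGAa p$, and since $\CGAa$ is simple (by Corollary~3.3 of~\cite{FG17}), $p$ is full in $\CGAa$. Brown's theorem (Theorem~2.8 of~\cite{Brwn}) then yields a stable isomorphism $K \otimes A^{\af} \cong K \otimes \CGAa$.

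For the first statement, I would observe that this stable isomorphism induces an ordered-semigroup isomorphism $\Cu(A^{\af}) \cong \Cu(\CGAa)$ which preserves the property of being Cuntz-equivalent to a projection, hence restricts to an isomorphism $\Cu_{+}(A^{\af}) \cup \{ 0 \} \cong \Cu_{+}(\CGAa) \cup \{ 0 \}$. Composing with Theorem~\ref{T_9529_IsomOnCuPlus} then gives the desired identification with $\Cu_{+}(A)^{\af} \cup \{ 0 \}$.

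For the second statement, assume $A^{\af}$ has stable rank one. I would first show that the $\Cu$-isomorphism above restricts to an isomorphism $\W(A^{\af}) \cong \W(\CGAa)$. For any unital C*-algebra $B$, one has
\[
\W(B) = \bigl\{ \et \in \Cu(B) : \et \leq n \langle 1_B \rangle_B \text{ for some } n \in \N \bigr\}.
\]
Under the identification $A^{\af} \cong p \CGAa p$, the class $\langle 1_{A^{\af}} \rangle$ corresponds to $\langle p \rangle \in \Cu(\CGAa)$. Fullness of $p$ together with unitality of $\CGAa$ provides $m \in \N$ with $1_{\CGAa} \lessapprox p \oplus p \oplus \cdots \oplus p$ ($m$ summands), so $\langle p \rangle \leq \langle 1_{\CGAa} \rangle \leq m \langle p \rangle$ in $\Cu(\CGAa)$; this double inequality forces $\W(\CGAa)$ and $\W(A^{\af})$ to coincide as subsemigroups of the shared $\Cu$. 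Pure positivity is again preserved under the isomorphism, so the second statement follows from Corollary~\ref{C_9529_WPlusAndSR1}.

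The main obstacle is the $\W$ case, since $\W$ (unlike $\Cu$) is not automatically preserved under stable isomorphism; the fullness of $p$ combined with the unital character of both $A^{\af}$ and $\CGAa$ is precisely what is needed to characterize $\W$ inside $\Cu$ in a way that is compatible with the canonical isomorphism.
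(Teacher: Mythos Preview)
Your proof is essentially correct and follows the same strategy as the paper: identify $A^{\af}$ with a full corner of $\CGAa$, pass to a stable (equivalently $M_\infty$) isomorphism, and then invoke Theorem~\ref{T_9529_IsomOnCuPlus} and Corollary~\ref{C_9529_WPlusAndSR1}. For the $\Cu_{+}$ statement there is no daylight between your argument and the paper's.

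For the $\W_{+}$ statement there is a small wrinkle. You assert that for \emph{any} unital $B$ one has $\W(B) = \{\eta \in \Cu(B) : \eta \leq n\langle 1_B\rangle \text{ for some } n\}$. The inclusion $\subseteq$ is clear, but the reverse inclusion amounts to knowing that suprema of bounded increasing sequences in $\W(B)$ remain in $\W(B)$, which is exactly the content of Proposition~\ref{P_9728_NonsepSupW} and needs stable rank one. You have this for $A^{\af}$ by hypothesis, but not a priori for $\CGAa$. Your argument can be repaired: the inclusion $M_\infty(pBp)_+ \subseteq M_\infty(B)_+$ (with $B=\CGAa$) gives $\W(A^{\af}) \subseteq \W(\CGAa)$ directly, and for the converse, any $a \in M_k(\CGAa)_+$ satisfies $a \precsim 1_{M_{km}(A^{\af})}$, so each $(a-\ep)_+$ is Cuntz equivalent to an element of $M_{km}(A^{\af})_+$; the supremum then lands in $\W(A^{\af})$ by Proposition~\ref{P_9728_NonsepSupW} applied to $A^{\af}$. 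The paper sidesteps this entirely by observing that the two-way full-corner relation gives $M_\infty(A^{\af}) \cong M_\infty(\CGAa)$ outright, whence $\W(A^{\af}) \cong \W(\CGAa)$ by the very definition of $\W$---a cleaner route that avoids invoking Proposition~\ref{P_9728_NonsepSupW} at this stage.
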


\begin{proof}
It suffices to prove that
$\Cu_{+} \bigl( \CGAa \bigr) \cong \Cu_{+} (A)^{\alpha}$
and, in the stable rank one case,
that
$\W_{+} \bigl( \CGAa \bigr) \cong \W_{+} (A)^{\alpha}$.

Lemma \ref{Fixedpoint_corner}(\ref{Fixedpoint_corner_d})
and simplicity of
$\CGAa$ (Corollary~3.3 of \cite{FG17})
imply that $A^{\alpha}$ is isomorphic to a full corner of $\CGAa$.
Since $A^{\alpha}$ and $\CGAa$
are both unital,
it is easy to check that there is $n \in \N$ such that $\CGAa$
is isomorphic to a full corner of $M_n (A^{\alpha})$.
Therefore
$M_{\I} (\CGAa) \cong M_{\I} (A^{\alpha})$.
In particular,
$K \otimes A^{\alpha} \cong K \otimes \CGAa$.
Using Theorem~\ref{T_9529_IsomOnCuPlus} at the second step,
we get
\[
\Cu_{+} \bigl( \CGAa \bigr)
 \cong \Cu_{+} (A^{\alpha})
 \cong \Cu_{+} (A)^{\alpha}.
\]
When $A^{\af}$ has stable rank one,
the isomorphism $\W_{+} \bigl( \CGAa \bigr) \cong \W_{+} (A)^{\alpha}$
follows similarly,
using Corollary~\ref{C_9529_WPlusAndSR1}
and $M_{\I} (\CGAa) \cong M_{\I} (A^{\alpha})$.
\end{proof}

There is an analog of Corollary~\ref{C_9529_WPlusAndSR1}
for Rokhlin actions on unital \ca{s},
whose proof uses Theorem 4.1(ii) of~\cite{GdlStg}
instead of our Theorem~\ref{T_9529_IsomOnCuPlus}.

\begin{prp}\label{C_9524_WAndSR1}
Let $A$ be a unital \ca{} with stable rank one.
Let $\alpha \colon G \to \Aut (A)$ be an action
of a finite group $G$ on $A$ which has the Rokhlin property.
Then the inclusion map $\iota \colon A^{\alpha} \to A$
induces an isomorphism of ordered semigroups
$\W (\io) \colon \W (A^{\alpha}) \to \W (A)^{\alpha}$.
\end{prp}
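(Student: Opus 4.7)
The plan is to deduce this from the Cuntz semigroup version, which is Theorem~4.1(ii) of~\cite{GdlStg}, and then use stable rank one together with Proposition~\ref{P_9728_NonsepSupW} to pass from $\Cu$ to $\W$. The overall strategy is parallel to the argument used for Corollary~\ref{C_9529_WPlusAndSR1}, but simpler: since the Rokhlin property already gives an isomorphism on the full Cuntz semigroup, there is no need to restrict to purely positive elements.

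First I would verify that $A^{\af}$ has stable rank one. The Rokhlin property provides the standard local approximation of $\CGAa$ by algebras of the form $M_n ( e A e )$ for projections $e \in A$ (see Theorem~3.2 of~\cite{OskPhl3}), and stable rank one passes through such approximations; hence $\CGAa$ has stable rank one. Then \Lem{Fixedpoint_corner}(\ref{Fixedpoint_corner_tsr}) gives stable rank one of $A^{\af}$.

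Next, by Theorem~4.1(ii) of~\cite{GdlStg}, the map $\Cu (\iota) \colon \Cu (A^{\af}) \to \Cu (A)^{\af}$ is an isomorphism of ordered semigroups. Since $\W (B) \hookrightarrow \Cu (B)$ for any \ca~$B$, this immediately gives injectivity of $\W (\iota)$, and its image is trivially contained in $\W (A)^{\af}$. For surjectivity, let $\et \in \W (A)^{\af}$, and let $\rho \in \Cu (A^{\af})$ be the unique preimage of $\et$ under $\Cu (\iota)$. It remains to show $\rho \in \W (A^{\af})$. Write $\rho = \langle a \rangle_{A^{\af}}$ with $a \in (K \otimes A^{\af})_{+}$. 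Then
\[
\rho = \sup_{n \in \N} \langle (a - 1/n)_{+} \rangle_{A^{\af}},
\]
which is a nondecreasing sequence in $\W (A^{\af})$ (each $(a - 1/n)_{+}$ lies in some $M_{k (n)} (A^{\af})$). Because $\et \in \W (A)$, there is $k \in \N$ with $\et \leq k \langle 1_A \rangle_A$; since $1_A \in A^{\af}$ and $\Cu (\iota)$ is an order isomorphism onto $\Cu (A)^{\af}$, this forces $\rho \leq k \langle 1_{A^{\af}} \rangle_{A^{\af}}$ in $\Cu (A^{\af})$. So the approximating sequence is bounded, and Proposition~\ref{P_9728_NonsepSupW} applied to $A^{\af}$ yields $\rho \in \W (A^{\af})$.

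The only real obstacle is the first step, namely verifying that $A^{\af}$ has stable rank one; once this is in hand, the rest of the argument is a routine combination of the $\Cu$-isomorphism of~\cite{GdlStg} with the boundedness/supremum machinery encapsulated in Proposition~\ref{P_9728_NonsepSupW}. Order preservation in both directions is automatic from the $\Cu$-level order isomorphism.
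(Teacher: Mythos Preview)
Your overall strategy is the same as the paper's: obtain stable rank one for $A^{\af}$ via Proposition~\ref{P_9728_NonsepTsr1} (which is exactly the local-approximation argument you sketch) and \Lem{Fixedpoint_corner}(\ref{Fixedpoint_corner_tsr}); invoke Theorem~4.1(ii) of~\cite{GdlStg} at the $\Cu$-level; and then descend to $\W$ using Proposition~\ref{P_9728_NonsepSupW}. Your boundedness argument, pulling back $\et \leq k\langle 1_A\rangle_A$ through the order isomorphism $\Cu(\iota)$, is correct and is a slight variant of the paper's (which bounds directly by $\langle 1_{M_m(A^{\af})}\rangle$).

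There is, however, one slip. You assert that each $(a - 1/n)_{+}$ lies in some $M_{k(n)}(A^{\af})$, but this is false in general: $a$ is only in $(K \otimes A^{\af})_{+}$, and functional calculus with $t\mapsto (t-1/n)_{+}$ does not push the element into a matrix algebra. So you have not actually exhibited a nondecreasing sequence \emph{in} $\W(A^{\af})$, which is what Proposition~\ref{P_9728_NonsepSupW} requires. The paper fixes this in the obvious way: choose norm approximants $c_{n+1} \in M_{m(n+1)}(A^{\af})_{+}$ with $\|c_{n+1} - b\| < \ep_{n+1}$ (the paper writes $b$ for your $a$), set $b_{n+1} = (c_{n+1} - 2\ep_{n+1})_{+} \in M_{m(n+1)}(A^{\af})$, and use \Lem{PhiB.Lem_18_4}(\ref{Item_9420_LgSb_1_6}) twice to get the sandwich
\[
(b - 3\ep_{n+1})_{+}
  \precsim_{A^{\af}} b_{n+1}
  \precsim_{A^{\af}} (b - \ep_{n+1})_{+}.
\]
With $\ep_{n+1} = \ep_n/3$ this yields an increasing sequence $\langle b_n\rangle_{A^{\af}}$ in $\W(A^{\af})$ with supremum $\langle b\rangle_{A^{\af}}$. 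Once you make this substitution, your argument is complete and coincides with the paper's.
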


We need Proposition 4.1(1) of~\cite{OskPhl3},
but without the separability hypothesis there.
We give an easy proof directly from Theorem~3.2 of~\cite{OskPhl3}.

\begin{prp}\label{P_9728_NonsepTsr1}
Let $A$ be a unital \ca{} with stable rank one.
Let $\alpha \colon G \to \Aut (A)$ be an action
of a finite group $G$ on $A$ which has the Rokhlin property.
Then $C^* (G, A, \af)$ has stable rank one.
\end{prp}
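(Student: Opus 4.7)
My approach is to deduce stable rank one for $\CGAa$ directly from the local approximation Theorem~3.2 of~\cite{OskPhl3}, which for Rokhlin actions realizes $\CGAa$ as a local limit of algebras of the form $M_n(e A e)$ with $n = \card(G)$ and $e \in A$ a projection, thereby sidestepping the separability argument in Proposition~4.1(1) of~\cite{OskPhl3}.

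Let $C = \CGAa$, fix $x \in C$ and $\ep > 0$; the task is to produce an invertible $y \in C$ with $\|y - x\| < \ep$. I would apply Theorem~3.2 of~\cite{OskPhl3} with the finite set $\{x\}$ and tolerance $\ep/2$ to obtain a projection $e \in A$, a unital $*$-homomorphism $\ph \colon M_n(e A e) \to C$, and an element $b \in M_n(e A e)$ with $\|\ph(b) - x\| < \ep/2$. The unitality of $\ph$ reflects the fact that the Rokhlin projections $(e_g)_{g \in G}$ satisfy $\sum_g e_g = 1_A$, so under the standard identification of $M_n(e A e)$ with the subalgebra of $C$ generated by the matrix units $v_{g, h} = e_g u_{g h^{-1}} e_h$, the unit $\sum_g v_{g, g} = \sum_g e_g$ equals $1_C$. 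Now $e A e$ has stable rank one because it is a corner of the stable rank one algebra $A$ (Theorem~3.1.8 of~\cite{LnBook}), and Rieffel's theorem on matrices over stable rank one algebras then yields that $M_n(e A e)$ has stable rank one. Hence I can choose an invertible $b' \in M_n(e A e)$ with $\|b' - b\| < \ep/2$; because $\ph$ is unital, $\ph(b')^{-1} = \ph(b'^{-1})$ lies in $C$, so $\ph(b')$ is invertible in $C$, and
\[
\|\ph(b') - x\| \leq \|\ph(b') - \ph(b)\| + \|\ph(b) - x\| < \ep.
\]

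The delicate point is verifying that the local model produced by Theorem~3.2 of~\cite{OskPhl3} is genuinely unital, rather than only approximately unital. If that theorem yields instead a map with $p := \ph(1_{M_n(e A e)})$ merely a projection close to $1_C$, I would recover the conclusion by setting $y = \ph(b') + \ld (1 - p)$ for a small nonzero scalar $\ld$: the element $\ph(b')$ is invertible in $p C p$ and $\ld (1 - p)$ is invertible in $(1 - p) C (1 - p)$, so their sum is invertible in $C$, and for sufficiently small $\|1_C - p\|$ and $|\ld|$ the element $y$ remains within $\ep$ of $x$. This is the only step requiring real care; the rest of the argument is formal and uses no separability.
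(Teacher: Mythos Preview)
Your proof is correct and is essentially the same as the paper's: both invoke Theorem~3.2 of~\cite{OskPhl3} to approximate an arbitrary element of $\CGAa$ by the image of some $b \in M_n(eAe)$ under a unital \hm, observe that $M_n(eAe)$ has stable rank one (the paper cites Theorems~3.1.8 and~3.1.9(1) of~\cite{LnBook}, you cite Theorem~3.1.8 and Rieffel's theorem---the same facts), perturb $b$ to an invertible, and push it forward. Your final paragraph is unnecessary caution: Theorem~3.2 of~\cite{OskPhl3} does produce a genuinely unital map (and in any case a projection $p$ with $\|1-p\|<1$ is automatically~$1$), so no workaround is needed.
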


\begin{proof}
Let $a \in C^* (G, A, \af)$ and let $\ep > 0$.
Use Theorem~3.2 of~\cite{OskPhl3}
to choose a \pj{} $f \in A$,
an integer $n \in \N$,
a unital \hm{} $\ph \colon M_n (f A f) \to C^* (G, A, \af)$,
and an element $b \in M_n (f A f)$
such that $\| \ph (b) - a \| < \frac{\ep}{2}$.
Combining Theorem 3.1.8 and
Theorem 3.1.9(1) of~\cite{LnBook},
we see that $M_n (f A f)$ has stable rank one.
Choose $c \in M_n (f A f)$
such that $c$ is invertible and $\| c - b \| < \frac{\ep}{2}$.
Then $\ph (c)$ is an invertible element of $C^* (G, A, \af)$
such that $\| \ph (c) - a \| < \ep$.
\end{proof}

\begin{proof}[Proof of Proposition~\ref{C_9524_WAndSR1}]
The algebra $C^* (G, A, \af)$ has stable rank one
by Proposition~\ref{P_9728_NonsepTsr1}.
It now follows from
Lemma \ref{Fixedpoint_corner}(\ref{Fixedpoint_corner_tsr})
that $A^{\af}$ has stable rank one.

Theorem 4.1(ii) of~\cite{GdlStg}
implies that $\W (\io) \colon \W (A^{\alpha}) \to \W (A)$
is an order isomorphism from $\W (A^{\alpha})$
to some subsemigroup of $\Cu (A)$,
which is necessarily
contained in $\W (A) \cap \Cu (A)^{\alpha} = \W (A)^{\alpha}$.

Let $\et \in \W (A) \cap \Cu (A)^{\alpha}$;
we need to show that $\et$ is in the range of $\W (\io)$.
Choose $m \in \N$ and $a \in M_{m} (A)_{+}$
such that $\langle a \rangle_A = \et$.
Since $\et \in \Cu (A)^{\alpha}$,
by Theorem 4.1(ii) of~\cite{GdlStg}
there is $b \in (K \otimes A^{\af})_{+}$
such that $\langle b \rangle_{A} = \et$.
The case $\et = 0$ is trivial,
so \wolog{} $\| b \| = 1$.
We now construct, by induction on~$n$,
sequences $(\ep_n)_{n \in \Nz}$ in $(0, \infty)$,
$(b_n)_{n \in \Nz}$ in $(K \otimes A^{\af})_{+}$,
and $(m (n) )_{n \in \Nz}$ in~$\N$,
such that $\lim_{n \to \I} \ep_n = 0$,
$b_0 \precsim_A (a - \ep_{0})_{+}$,
and for all $n \in \N$ we have
\[
\ep_{n + 1} < \ep_{n},
\quad
b_n \in \bigl( M_{m (n)} (A)^{\af} \bigr)_{+},
\quad {\mbox{and}} \quad
(b - \ep_{n})_{+}
  \precsim_{A^{\af}} b_{n + 1}
  \precsim_{A^{\af}} (b - \ep_{n + 1})_{+}.
\]
To begin,
set $\ep_0 = 1$ and $b_0 = 0$.
Given $\ep_n$ with $n \in \Nz$,
set $\ep_{n + 1} = \frac{\ep_n}{3}$.
Choose $m (n + 1) \in \N$,
and $c_{n + 1} \in M_{m (n + 1)} (A^{\alpha})_{+}$
such that $\| c_{n + 1} - b \| < \ep_{n + 1}$.
Two applications of Lemma \ref{PhiB.Lem_18_4}(\ref{Item_9420_LgSb_1_6})
give
\[
(b - \ep_{n})_{+}
  = (b - 3 \ep_{n + 1})_{+}
  \precsim_{A^{\af}} (c_{n + 1} - 2 \ep_{n + 1})_{+}
  \precsim_{A^{\af}} (b - \ep_{n + 1})_{+}.
\]
Set $b_{n + 1} = (c_{n + 1} - 2 \ep_{n + 1})_{+}$.
The induction is complete.

For $n \in \N$,
set $\et_n = \langle b_n \rangle_{A^{\af}}$,
which is in $\W (A^{\af})$.
Then $(\et_n)_{n \in \Nz}$
is a nondecreasing sequence in $\W (A^{\af})$
and, by Lemma 1.25(1) of~\cite{Ph14},
we have
$\sup_{n \in \Nz} \et_n = \langle b \rangle_{A^{\af}} = \et$.
This sequence is bounded
by $\langle 1_{M_m (A^{\af})} \rangle_{A^{\af}}$,
so Proposition~\ref{P_9728_NonsepSupW}
now implies $\langle b \rangle_{A^{\af}} \in \W (A^{\alpha})$.
\end{proof}


\section{An example}\label{Sec_Ex}

We give an example of a simple AH~algebra~$A$
with $\rc (A) > 0$ and an
action $\alpha \colon \Z / 2 \Z \to \Aut (A)$ which
has the Rokhlin property.
As discussed in the introduction,
it is not a priori obvious that such examples
should exist,
even with the weak tracial Rokhlin property
in place of the Rokhlin property.
In our example,
we get equality in Theorem~\ref{Main.Thm1}
and Theorem~\ref{T_9412_RcCrPrd}.
See Theorem~\ref{rc.ctn.11} and Theorem~\ref{RC.CP_1}.
The algebras $A$ and $A^{\af}$ have stable rank one
(Lemma~\ref{L_9308_Simple} and Corollary~\ref{C_9605_tsr1}),
and the maps
$\W (A^{\af}) \to \W (A)^{\af}$ and $\Cu (A^{\af}) \to \Cu (A)^{\af}$
are isomorphisms (Corollary~\ref{C_9422_IsomsInExample}).

The construction is motivated by~\cite{HP19},
in which two AH~direct systems with simple direct limits
are ``merged'' into a single larger system
whose direct limit is still simple but which is ``not very far''
from the direct sum of the two original direct limits.
The ``merger'' is accomplished
by writing the systems side by side,
and inserting a very small number of point evaluation maps
which go from one of the original systems to the other.
In~\cite{HP19},
the essential point was that the two systems were very different
but that the base spaces were all contractible.
Here, we use two copies of the same system.
Writing the direct system sideways,
our combined system looks like the following diagram,
in which the solid arrows represent many partial maps
and the dotted arrows represent a small number of point evaluations:
\[
\xymatrix{
C (X_1) \otimes M_{r (1)} \ar@{.>}[dr]
  \ar[r] \ar@<.5ex>[r] \ar@<1ex>[r]
 & C (X_2) \otimes M_{r (2)}\ar@{.>}[dr]
   \ar[r] \ar@<.5ex>[r] \ar@<1ex>[r]
 & C (X_3) \otimes M_{r (3)} \ar@{.>}[dr]
    \ar[r] \ar@<.5ex>[r] \ar@<1ex>[r] & \cdots
     \\
C (X_1) \otimes M_{r (1)} \ar@{.>}[ur]  \ar[r] \ar@<-.5ex>[r]
 \ar@<-1ex>[r] & C (X_2) \otimes M_{r (2)} \ar@{.>}[ur]
   \ar[r] \ar@<-.5ex>[r]
   \ar@<-1ex>[r]& C (X_3) \otimes M_{r (3)} \ar@{.>}[ur]
    \ar[r] \ar@<-.5ex>[r] \ar@<-1ex>[r] & \cdots.}
\]
The order two automorphism
exchanges the two rows.

Since we don't care about contractibility,
we can use products of copies of $S^2$
instead of cones over such spaces as in~\cite{HP19}.
We compute the radius of comparison exactly,
instead of just giving bounds as is done in~\cite{HP19}.

To keep the notation simple,
we carry out only the case of $\Z / 2 \Z$
and radius of comparison less than~$1$.
Modifications of the construction
will presumably work for any finite group
and give any value of the radius of comparison
in $[0, \infty]$.

\begin{ctn}\label{Construction_11}
We define the following objects:
\begin{enumerate}
\item\label{Construction_111}
For $n \in \Nz$, define
\begin{itemize}
\item
$d (n) = 2^{n + 1}-1$.
\item
$l(n) = 2^{n + 1}$.
\item
$r (0) = 1$ and $r (n) = \prod_{k = 1}^n 2^{k + 1}$.
\item
$s (0) = 1$ and $s (n) = \prod_{k = 1}^n (2^{k + 1}-1)$.
\item
$u (n)
 = \frac{s (n)}{r (n)}
 = \prod_{k = 1}^n \big(1 - \frac{1}{2^{k + 1}} \big)$.
\item
$t(0) = 0$ and $t (n + 1) = d (n + 1) t (n) + [r (n) - t (n)]$.
\end{itemize}
\item\label{Construction_112}
Define
$\kp = \lim_{n \to \infty} u (n)$.
\item\label{Construction_113}
For $n \in \Nz$,
define a compact space by
$X_{n} = (S^{2})^{s (n)}$.
Then the covering dimension of $X_n$
is $\dim ( X_{n} ) = 2 s (n)$.
\item\label{Construction_114}
For $n \in \Nz$
and $\nu = 1, 2, \ldots, d (n + 1)$,
let $P^{(n)}_{\nu} \colon X_{n + 1} \to X_n$
be the $\nu$~coordinate projection.
\item\label{Construction_115}
Choose points $x_m \in X_m$
for $m \in \Nz$ such that
for all $n \in \Nz$,
the set
\begin{align*}
&
\Bigl\{ \bigl( P^{(n)}_{\nu_{1}} \circ P^{(n + 1)}_{\nu_{2}}
      \circ \cdots \circ P^{(m - 1)}_{\nu_{m - n}} \bigr) (x_m) \colon
\\
& \hspace*{1em} {\mbox{}}
       {\mbox{$m = n, \, n + 1, \, \ldots$
       and $\nu_j = 1, 2, \ldots, d (n + j)$
       for $j = 1, 2, \ldots, m - n$}} \Bigr\}
\end{align*}
is dense in $X_n$.
(The contribution to this set when $m = n$ is~$x_n$.)
\item\label{Construction_116}
For $n \in \Nz$, define
\[
A_{n}
 = \bigl[ C (X_{n}) \oplus C (X_{n}) \bigr] \otimes M_{r (n)}.
\]
When convenient,
we identify $A_n$ in the obvious ways with
\[
C (X_{n}, M_{r (n)}) \oplus C (X_{n}, M_{r (n)})
\andeqn
C \bigl( X_n \amalg X_n, \, M_{r (n)} \bigr).
\]
\item\label{Construction_117}
For $n \in \Nz$,
define a unital \hm
\[
\lambda_n \colon
C (X_n) \oplus C (X_n)
\to M_{l (n + 1)} \bigl[ C (X_{n + 1}) \oplus C (X_{n + 1}) \bigr]
\]
by
\begin{align}\label{lamdaDefinition}
\lambda_{n} (f, g)
 & = \Big( \diag \big(
      f \circ P^{(n)}_1, \, f \circ P^{(n)}_2, \, \ldots, \,
      f \circ P^{(n)}_{d (n + 1)}, \, g (x_n)
    \big),
\\
& \qquad \qquad {\mbox{}}
  \diag \big( g \circ P^{(n)}_1, \, g \circ P^{(n)}_2, \, \ldots, \,
   g \circ P^{(n)}_{d (n + 1)}, \,  f (x_n) \big) \Big).
 \notag
\end{align}
\item\label{Construction_117'}
For $n \in \Nz$,
define
$\Lambda_{n + 1, \, n} \colon A_n \to A_{n + 1}$
by
$\Lambda_{n + 1, \, n} = \lambda_n \otimes \id_{M_{r (n )}}$.
Thus,
\[
\Lambda_{n + 1, n} \colon
   [C (X_n) \oplus C (X_n)] \otimes M_{ r (n) }
     \to  [ C (X_{n + 1}) \oplus C (X_{n + 1}) ] \otimes M_{r (n + 1)}
\]
is given by
\begin{align*}
& ( f, g ) \otimes c \mapsto
\\
&  \hspace*{2em} {\mbox{}}
\left( \begin{matrix}
   \bigl( f \circ P^{(n)}_{1}, \, g \circ P^{(n)}_{1} \bigr) & & & 0 \\
    & &  \ddots      &      \\
    & & & \bigl( f \circ P^{(n)}_{d (n + 1)}, \,
              g \circ P^{(n)}_{d (n + 1)} \bigr) \\
 0  & & & &  \bigl( g (x_n), f (x_n) \bigr)
 \end{matrix} \right)
\otimes c
\end{align*}
for $f, g \in C (X_{n})$ and $c \in M_{r (n)}$.
Using standard matrix unit notation,
we can also write this definition as
\begin{align}\label{Eq_9616_Formal}
& \Lambda_{n + 1, n} \bigl( ( f, g ) \otimes c \bigr)
\\
\notag
& \hspace*{2em} {\mbox{}}
 = \sum_{j = 1}^{d (n + 1)}
     \bigl( f \circ P^{(n)}_{j}, \, g \circ P^{(n)}_{j} \bigr)
       \otimes e_{j, j} \otimes c
\\
\notag
& \hspace*{4em} {\mbox{}}
   + \bigl( g (x_n) \cdot 1_{C (X_{n + 1})},
          \, f (x_n) \cdot 1_{C (X_{n + 1})} \bigr)
             \otimes e_{d (n + 1) + 1, \, d (n + 1) + 1} \otimes c.
\end{align}

For $m, n \in \Nz$ with $m \leq n$,
now define
\[
\Lambda_{n, m}
= \Lambda_{n, n - 1} \circ \Lambda_{n - 1, \, n - 2} \circ \cdots
          \circ \Lambda_{m + 1, m}
   \colon A_m \to A_n.
\]
\item\label{Construction_118}
Define
\[
A = \dirlim \big( A_n, \, (\Lambda_{m, \, n})_{m \geq n} \big).
\]
For $n \in \Nz$, it is clear that $\Lambda_{n + 1, \, n}$
is an injective unital homomorphism.
Let $\Lambda_{\infty, n} \colon A_n \to A$
be the standard map associated with the direct limit.
\item\label{Construction_119}
Write $\Z / 2 \Z = \{ 0, 1 \}$.
For $n \in \Nz$,
define
$\alpha^{(n)} \colon \Z / 2 \Z \to \Aut (A_n)$  by
\[
\alpha^{(n)}_{1} \big( (f, \, g) \otimes c \big)
 = (g, \, f) \otimes c
\]
for $f,g \in C (X_{n})$ and  $c \in M_{r (n)}$.
We also write $\alpha^{(n)}$ for
the generating automorphism $\alpha^{(n)}_{1}$.
We then have the following diagram:
\begin{equation}\label{Eq_9411_LimDiag}
\begin{CD}
A_0
@>{\Lambda_{1, \, 0}}>>
A_1
 @>{\Lambda_{2, \, 1}}>>
A_2
@>{\Lambda_{3, \, 2}}>>
A_3
@>{}>>
 \cdots   \\
@V{\alpha^{(0)}}VV  @V{\alpha^{(1)}}VV
@V{\alpha^{(2)}}VV     @V{\alpha^{(3)}}VV      \\
A_0
@>{\Lambda_{1, \, 0}}>>
A_1
@>{\Lambda_{2, \, 1}}>>
A_2
@>{\Lambda_{3, \, 2}}>>
A_3
@>{}>>
 \cdots.
\end{CD}
\end{equation}
\end{enumerate}
\end{ctn}

\begin{lem}\label{L_9409_tnrn}
Assume the notation and choices in
Construction~\ref{Construction_11}.
Then $0 \leq t (n) < r (n)$ for all $n \in \Nz$.
\end{lem}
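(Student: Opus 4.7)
The plan is a direct induction on $n$, using the recursion $t(n+1)=d(n+1)t(n)+[r(n)-t(n)]$ together with the identity $r(n+1)=l(n+1)\,r(n)=(d(n+1)+1)\,r(n)$, which follows immediately from the definitions $r(n)=\prod_{k=1}^n l(k)$, $l(n)=2^{n+1}$, and $d(n)=l(n)-1$.

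For the base case $n=0$, we have $t(0)=0$ and $r(0)=1$, so $0\le t(0)<r(0)$. For the inductive step, I would assume $0\le t(n)<r(n)$ and argue as follows. Nonnegativity of $t(n+1)$ is immediate: $d(n+1)t(n)\ge 0$ and $r(n)-t(n)>0$ (using the strict inequality in the hypothesis), so
\[
t(n+1)=d(n+1)\,t(n)+[r(n)-t(n)]\ \ge\ 0.
\]
For the strict upper bound, I would use the inductive hypothesis $t(n)<r(n)$ to estimate
\[
t(n+1)=d(n+1)\,t(n)+r(n)-t(n)\ <\ d(n+1)\,r(n)+r(n)=(d(n+1)+1)\,r(n)=r(n+1),
\]
where at the last equality I invoke the identity above. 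This completes the induction.

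There is really no obstacle here: the lemma is purely arithmetic and is the expected consistency check that the index $t(n)$ used later (presumably to distinguish distinguished summands in $\Lambda_{n+1,n}$ via matrix units $e_{j,j}$ for $j\le r(n)$) stays within the allowed range $\{0,1,\dots,r(n)-1\}$. If a tighter bound is needed in subsequent arguments (for instance, control of $t(n)/r(n)$ via the recursion $t(n+1)/r(n+1)=(1-2/l(n+1))\cdot t(n)/r(n)+1/l(n+1)$), the same induction framework extends without difficulty, but for the statement at hand the two-line inductive step above suffices.
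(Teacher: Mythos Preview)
Your proof is correct and follows essentially the same approach as the paper: a direct induction on~$n$, verifying the base case from the definitions and handling the inductive step via the recursion for $t(n+1)$ together with $r(n+1) = l(n+1)\,r(n)$. The only cosmetic difference is that the paper first rewrites $t(n+1) = [d(n+1)-1]\,t(n) + r(n)$ before bounding, whereas you bound the original expression directly; both routes are immediate.
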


\begin{proof}
The statement is true for $n = 0$ by definition.
Let $n  \in \Nz$ and assume $0 \leq t (n) < r (n)$.
Then
\[
t (n + 1)
 = d (n + 1) t (n) + [r (n) - t (n)]
 = [d (n + 1) - 1] t (n) + r (n),
\]
which implies
(using $d (n + 1) - 1 = 2^{n + 2} - 2 \geq 0$)
\[
0 \leq t (n + 1)
  \leq [d (n + 1) - 1] r (n) + r (n)
  < r (n + 1).
\]
So $0 \leq t (n) < r (n)$ for all $n \in \Nz$ by induction.
\end{proof}

\begin{lem}\label{L_9409_StrDecr}
Assume the notation and choices in
Construction~\ref{Construction_11}.
Then $(u (n))_{n \in \Nz}$ is strictly decreasing
and $0 < \kp < 1$.
\end{lem}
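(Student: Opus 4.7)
The plan is to prove both assertions by direct inspection of the infinite product
\[
u(n) = \prod_{k=1}^{n} \left( 1 - \frac{1}{2^{k+1}} \right),
\]
with $u(0) = 1$ by the empty-product convention.

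First I would verify strict monotonicity. For every $n \in \Nz$, each factor $1 - 2^{-(k+1)}$ lies in $(0,1)$, so $u(n) > 0$, and
\[
u(n+1) = u(n) \cdot \left( 1 - \frac{1}{2^{n+2}} \right) < u(n).
\]
This gives $(u(n))_{n \in \Nz}$ strictly decreasing. In particular $\kp \leq u(1) = 3/4 < 1$, which handles the upper bound in the second assertion.

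Next I would show $\kp > 0$. Since the sequence is decreasing and bounded below by $0$, the limit $\kp$ exists in $[0,1)$; the only issue is to rule out $\kp = 0$. The cleanest route is the elementary inequality
\[
\prod_{k=1}^{n} (1 - a_k) \;\geq\; 1 - \sum_{k=1}^{n} a_k
\qquad \text{whenever } a_k \in [0,1],
\]
which is proved by an easy induction on $n$. Applying it with $a_k = 2^{-(k+1)}$ and using $\sum_{k=1}^{\infty} 2^{-(k+1)} = 1/2$, we get $u(n) \geq 1/2$ for all $n$, hence $\kp \geq 1/2 > 0$.

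There is no real obstacle here; this is purely a calculation about a convergent infinite product of positive terms bounded away from both $0$ and $1$. The only small point to be careful about is that the empty product convention gives $u(0) = 1$, so the strict inequality $u(n+1) < u(n)$ needs the factor $1 - 2^{-(n+2)}$ to be strictly less than $1$, which it is.
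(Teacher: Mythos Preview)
Your proof is correct and follows essentially the same approach as the paper: the paper also dismisses monotonicity and $\kp < 1$ as clear, then proves $\kp > 0$ via the same inductive inequality $\prod_{k=1}^n (1 - a_k) \geq 1 - \sum_{k=1}^n a_k$ applied with $a_k = 2^{-(k+1)}$, obtaining $\kp \geq \tfrac{1}{2}$.
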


\begin{proof}
The first statement is clear, as is $\kp < 1$.

To prove that $\kp > 0$,
we first observe that if $\bt_1, \bt_2 \in [0, 1]$
then $(1 - \bt_1) (1 - \bt_2) \geq 1 - \bt_1 - \bt_2$.
Induction gives an analogous statement for $n$~factors,
so that, in particular,
$u (n) \geq 1 - \sum_{k = 1}^n \frac{1}{2^{k + 1}}$.
Letting $n \to \I$, we get $\kp \geq \frac{1}{2}$.
\end{proof}

\begin{lem}\label{commute1}
In Construction \ref{Construction_11}(\ref{Construction_119}),
the diagram~(\ref{Eq_9411_LimDiag}) commutes.
Moreover,
there is a unique action
$\af \colon \Z / 2 \Z \to \Aut (A)$
such that $\af = \dirlim \af^{(n)}$,
and this action has the Rokhlin property..
\end{lem}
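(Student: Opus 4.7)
The plan is to verify commutativity of the diagram~(\ref{Eq_9411_LimDiag}) by a direct computation on elementary tensors, invoke the universal property of the inductive limit to obtain $\af$, and then exhibit exact Rokhlin projections coming from the two central ``summand'' projections of $A_n$ that $\af^{(n)}$ exchanges.

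For commutativity, I would check that $\Lambda_{n + 1, n} \circ \af^{(n)} = \af^{(n + 1)} \circ \Lambda_{n + 1, n}$ on an elementary tensor $(f, g) \otimes c \in A_n$. Using formula~(\ref{Eq_9616_Formal}), both sides expand to
\[
\sum_{j = 1}^{d (n + 1)} \bigl( g \circ P^{(n)}_{j}, \, f \circ P^{(n)}_{j} \bigr) \otimes e_{j, j} \otimes c
 + \bigl( f (x_n), \, g (x_n) \bigr) \otimes e_{d (n + 1) + 1, \, d (n + 1) + 1} \otimes c,
\]
the point being that swapping $(f, g) \leftrightarrow (g, f)$ at the input of $\Lambda_{n + 1, n}$ precisely swaps the two components of the output, thanks to the way the point-evaluation terms $f (x_n)$ and $g (x_n)$ are placed in~(\ref{lamdaDefinition}). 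Combined with $(\af^{(n)})^2 = \id_{A_n}$, commutativity of~(\ref{Eq_9411_LimDiag}) and the universal property of inductive limits yield a unique order-$2$ automorphism $\af \in \Aut (A)$ such that $\af \circ \Lambda_{\infty, n} = \Lambda_{\infty, n} \circ \af^{(n)}$ for every $n \in \Nz$; this is what the notation $\af = \dirlim \af^{(n)}$ means.

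For the Rokhlin property, set
\[
p_n = (1_{C (X_n)}, \, 0) \otimes 1_{M_{r (n)}}
\andeqn
q_n = (0, \, 1_{C (X_n)}) \otimes 1_{M_{r (n)}}
\]
in $A_n$. Then $p_n$ and $q_n$ are orthogonal projections in the center of $A_n$ with $p_n + q_n = 1_{A_n}$, and $\af^{(n)} (p_n) = q_n$. Setting $P_n = \Lambda_{\infty, n} (p_n)$ and $Q_n = \Lambda_{\infty, n} (q_n)$ in $A$, unitality of the connecting maps gives $P_n + Q_n = 1_A$, orthogonality is preserved, and the intertwining above gives $\af (P_n) = Q_n$ and $\af (Q_n) = P_n$.

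Given $\ep > 0$ and a finite subset $F \subseteq A$, density of $\bigcup_{n \in \Nz} \Lambda_{\infty, n} (A_n)$ in $A$ lets me choose $n$ large enough that for every $a \in F$ there is $\tilde a \in A_n$ with $\| \Lambda_{\infty, n} (\tilde a) - a \| < \ep / 2$. Because $p_n$ is central in $A_n$, we have $p_n \tilde a = \tilde a p_n$, so applying $\Lambda_{\infty, n}$ and using the triangle inequality gives $\| P_n a - a P_n \| < \ep$, with the same bound for $Q_n$. Thus $\{ P_n, Q_n \}$ is an exact Rokhlin tower for $F$ and $\ep$, which establishes the Rokhlin property. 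There is no real obstacle here: the construction was designed so that the $\Z / 2 \Z$-action lives in the center of every $A_n$, and hence Rokhlin projections appear in $A$ via the direct limit structure without any averaging or perturbation argument.
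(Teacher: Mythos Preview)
Your proof is correct and follows essentially the same approach as the paper's, just with considerably more detail. The paper verifies commutativity by the same direct computation on $(f,g)\otimes c$, then asserts that existence of $\af$ ``follows immediately'' and that ``it is immediate that $\af^{(n)}$ has the Rokhlin property \ldots\ and it follows easily that $\af$ has the Rokhlin property''; you have spelled out exactly the Rokhlin projections (the central summand projections $(1,0)\otimes 1$ and $(0,1)\otimes 1$ in $A_n$) and the density argument that the paper leaves implicit.
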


\begin{proof}
For the first statement,
let $n \in \Nz$.
Using
\ref{Construction_11}(\ref{Construction_117})
in the second step
and \ref{Construction_11}(\ref{Construction_119}) in the third step,
for all $f, g \in C (X_{n})$ and
for all $c \in M_{r (n)}$ we have
\begin{align*}
& \big( \alpha^{(n + 1)} \circ \Lambda_{n + 1, \, n} \big)
    \big( (f, \,g) \otimes c \big)
  = \alpha^{(n + 1)} \big( \lambda_n ( (f, \,g) )
      \otimes c \big)
\\
&
\hspace*{3em} {\mbox{}}
= \Big( \diag \big(
   g \circ P^{(n)}_1, \, g \circ P^{(n)}_2, \, \ldots, \,
   g \circ P^{(n)}_{d (n + 1)}, \,
 f (x_n)
 \big),
\\
& \hspace*{6em} {\mbox{}}
 \diag \big( f \circ P^{(n)}_1, \, f \circ P^{(n)}_2, \, \ldots, \,
   f \circ P^{(n)}_{d (n + 1)}, \,
 g (x_n) \big) \Big) \otimes c
 \\
&
\hspace*{3em} {\mbox{}}
= \big( \Lambda_{n + 1, \, n} \circ \alpha^{(n)} \big)
    \big( (f, \,g) \otimes c \big).
\end{align*}

Existence of $\af$ follows immediately.
It is immediate that $\af^{(n)}$ has the Rokhlin property
for all $n \in \Nz$,
and it follows easily that $\af$ has the Rokhlin property.
\end{proof}

\begin{lem}\label{L_9308_Simple}
Assume the notation and choices in
Construction~\ref{Construction_11}.
Then the \ca~$A$ is stably finite and simple,
and has stable rank one.
\end{lem}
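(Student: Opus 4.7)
The plan is to verify each of the three assertions in turn.

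For stable finiteness, observe that each $A_n$ is isomorphic to the homogeneous \ca{} $C(X_n \amalg X_n) \otimes M_{r(n)}$, which is stably finite since every homogeneous \ca{} over a compact space is. By Construction~\ref{Construction_11}(\ref{Construction_118}), the connecting maps $\Lambda_{n + 1,\, n}$ are unital and injective. If $s \in M_k(A)$ were an isometry with $s s^* \neq 1_{M_k (A)}$, one could approximate $s$ in norm by an element of some $M_k(\Lambda_{\infty,n}(A_n))$ and, after a small functional-calculus perturbation, produce a genuine isometry in $M_k(A_n)$ with the same defect, contradicting stable finiteness of $A_n$. Hence $A$ is stably finite.

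For simplicity, I would use the standard AH criterion: $A$ is simple provided that for every nonzero $a \in A_n$ and every $\ep > 0$, there exist $m \geq n$, $N \in \N$, and elements $x_1, \ldots, x_N, y_1, \ldots, y_N \in A_m$ with $\bigl\| \sum_{j=1}^N x_j \Lambda_{m, n}(a) y_j - 1_{A_m} \bigr\| < \ep$. Take $a = (f, g) \otimes c$ nonzero in $A_n$; after composing with the order-two automorphism $\af^{(n)}$ if necessary, I may assume $f \neq 0$. The explicit formula~(\ref{Eq_9616_Formal}) shows that the first-summand component of $\Lambda_{m, n}(a)$ is block-diagonal with diagonal entries that include all compositions $f \circ P^{(n)}_{\nu_1} \circ P^{(n + 1)}_{\nu_2} \circ \cdots \circ P^{(m - 1)}_{\nu_{m - n}}$. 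By the density condition in Construction~\ref{Construction_11}(\ref{Construction_115}), evaluating these entries at the chosen base points produces values of $f$ on a dense subset of $X_n$, so the first-summand component of $\Lambda_{m, n}(a)$ becomes nonvanishing on all of $X_m$ for large $m$. Meanwhile, the cross-evaluation block $\bigl( g(x_n), f(x_n) \bigr) \otimes e_{d(n + 1) + 1, \, d(n + 1) + 1}$ in $\Lambda_{n + 1, n}$ transfers the non-vanishing of $f$ into the second summand at the next stage, so $\Lambda_{m, n}(a)$ is full in $A_m$ for all sufficiently large $m$, yielding simplicity.

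For stable rank one, the main obstacle is that Rieffel's dimension bound fails at the building-block level: $\dim(X_n) / r(n) = 2 u(n) \to 2 \kp > 0$ by Construction~\ref{Construction_11}(\ref{Construction_111}) and Lemma~\ref{L_9409_StrDecr}, so one cannot reduce to finite stable rank in each $A_n$. The plan is instead to invoke the general principle that a simple unital AH~algebra built from \emph{diagonal} connecting maps---that is, finite direct sums of $*$-\hm{s} of the form $f \mapsto f \circ p$ for continuous point maps $p$, together with constant (point-evaluation) blocks---has stable rank one. The formula~(\ref{Eq_9616_Formal}) exhibits $\Lambda_{n + 1, n}$ as manifestly of this diagonal type (the $d(n + 1)$ coordinate-projection blocks together with the cross-evaluation block), so the theorem applies. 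The substance of the argument, which I would either cite or reprove in the specific form needed, is that the point-evaluation blocks contribute scalar summands whose eigenvalues can be perturbed to adjust the spectrum of the image of a given element away from~$0$, making an approximating element invertible in the larger algebra; iterating along the direct system then produces invertibles arbitrarily close to an arbitrary element of~$A$.
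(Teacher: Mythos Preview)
Your approach matches the paper's: stable finiteness is immediate from the building blocks, simplicity via the standard AH criterion (the paper just cites Proposition~2.1(ii) of~\cite{DNN92} rather than sketching the verification), and stable rank one via the diagonal-maps theorem (the paper cites Theorem~4.1 of~\cite{ElHoTm}, which is exactly the principle you invoke).

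There is one expository gap in your simplicity sketch. The implication ``evaluating these entries at the chosen base points produces values of $f$ on a dense subset of $X_n$, so the first-summand component of $\Lambda_{m, n}(a)$ becomes nonvanishing on all of $X_m$'' is not valid: each coordinate projection $P^{(n)}_\nu$ is surjective, so if $f$ vanishes anywhere on $X_n$ then every block $f \circ P^{(n)}_{\nu_1} \circ \cdots \circ P^{(m-1)}_{\nu_{m-n}}$ also vanishes somewhere on $X_m$, regardless of what happens at~$x_m$. What actually forces nonvanishing is a \emph{constant} block coming from iterated cross-evaluations: following two point evaluations (say at steps $m-1$ and $k$, returning to the first copy) and then projections from $X_k$ down to $X_n$ contributes the constant block $a_1\bigl((P^{(n)}_{\nu_1}\circ\cdots\circ P^{(k-1)}_{\nu_{k-n}})(x_k)\bigr)$, and the density condition in Construction~\ref{Construction_11}(\ref{Construction_115}) guarantees that some choice of $k$ and indices lands this point in the support of~$a_1$. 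That constant block is then nonzero at every $y\in X_m$, giving fullness in the first summand; a single cross-evaluation handles the second summand, as you note. With this correction your argument is exactly the verification that the hypotheses of~\cite{DNN92} hold.
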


\begin{proof}
Stable finiteness is immediate.
For simplicity, it is easy to check that
the hypotheses of Proposition 2.1(ii) of \cite{DNN92} hold.
For stable rank one,
we observe that the direct system in
Construction \ref{Construction_11}(\ref{Construction_118})
has diagonal maps in the sense of Definition~2.1 of~\cite{ElHoTm}.
Therefore $A$ has stable rank one by Theorem~4.1 of~\cite{ElHoTm}.
\end{proof}

\begin{cor}\label{C_9422_IsomsInExample}
Assume the notation and choices in
Construction~\ref{Construction_11}.
Then the maps
$\Cu (A^{\af}) \to \Cu (A)^{\af}$
and $\W (A^{\af}) \to \W (A)^{\af}$
are isomorphisms.
\end{cor}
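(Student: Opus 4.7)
The plan is to derive the corollary as an immediate specialization of the general Rokhlin-property results already established in the paper. First I would collect the two properties of the construction that are relevant: by Lemma~\ref{commute1}, the action $\af \colon \Z / 2 \Z \to \Aut (A)$ has the Rokhlin property, and by Lemma~\ref{L_9308_Simple}, $A$ is a stably finite simple unital C*-algebra with stable rank one. These are exactly the hypotheses required by the relevant general theorems.

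Next, for the Cuntz semigroup, I would simply invoke Theorem~4.1(ii) of~\cite{GdlStg}, as recalled in the remark following Theorem~\ref{T_9529_IsomOnCuPlus}: for an arbitrary unital C*-algebra equipped with a finite-group Rokhlin action, the inclusion $\iota \colon A^{\af} \to A$ induces an ordered-semigroup isomorphism $\Cu (\iota) \colon \Cu (A^{\af}) \to \Cu (A)^{\af}$, and no passage to purely positive elements is needed (in contrast with the weak tracial Rokhlin case). Since $A$ is unital and $\af$ has the Rokhlin property, this applies here directly.

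For the truncated Cuntz semigroup $\W$, the statement to invoke is Proposition~\ref{C_9524_WAndSR1}, which asserts that if $A$ is a unital C*-algebra of stable rank one and $\af \colon G \to \Aut (A)$ is a Rokhlin action of a finite group, then $\W (\iota) \colon \W (A^{\af}) \to \W (A)^{\af}$ is an isomorphism of ordered semigroups; by the two lemmas just cited, all its hypotheses are satisfied. In short, there is no real obstacle: the substantive work has already been carried out, both in building the example (verifying simplicity, stable rank one, and the Rokhlin property in the preceding lemmas) and in proving the general Rokhlin-version theorems earlier in the paper, and the corollary is obtained simply by plugging one into the other.
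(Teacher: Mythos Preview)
Your proposal is correct and matches the paper's own proof essentially verbatim: the paper also cites Lemma~\ref{commute1} and Lemma~\ref{L_9308_Simple} to verify the hypotheses, and then applies Theorem~4.1(ii) of~\cite{GdlStg} for $\Cu$ and Proposition~\ref{C_9524_WAndSR1} for $\W$.
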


\begin{proof}
This follows from
Theorem 4.1(ii) of~\cite{GdlStg}
and Proposition~\ref{C_9524_WAndSR1},
by Lemma~\ref{L_9308_Simple}
and Lemma \ref{commute1}.
\end{proof}

\begin{cor}\label{C_9605_tsr1}
Assume the notation and choices in
Construction~\ref{Construction_11}.
Then $C^* (\Z / 2 \Z, A, \alpha )$ and $A^{\af}$
have stable rank one.
\end{cor}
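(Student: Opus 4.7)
The proof is essentially a routine assembly of results already established in the paper. The plan is to chain together three prior facts. First, Lemma~\ref{L_9308_Simple} gives that $A$ itself has stable rank one, since the direct system in Construction~\ref{Construction_11}(\ref{Construction_118}) has diagonal maps and Theorem~4.1 of \cite{ElHoTm} applies. Second, Lemma~\ref{commute1} tells us that the action $\alpha \colon \Z/2\Z \to \Aut(A)$ has the Rokhlin property.

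With these two ingredients, I would invoke Proposition~\ref{P_9728_NonsepTsr1}, which asserts that the crossed product of a unital C*-algebra of stable rank one by a Rokhlin action of a finite group again has stable rank one. This immediately yields that $C^*(\Z/2\Z, A, \alpha)$ has stable rank one. Finally, to pass from the crossed product to the fixed point algebra, I would apply Lemma~\ref{Fixedpoint_corner}(\ref{Fixedpoint_corner_tsr}), which states that if the crossed product has stable rank one then so does $A^{\alpha}$ (via the identification of $A^{\alpha}$ with a corner $p C^*(G, A, \alpha) p$ and Theorem~3.1.8 of \cite{LnBook}).

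There is no genuine obstacle here; both assertions follow by stringing together results already proved in the preliminary sections and in Lemma~\ref{L_9308_Simple} and Lemma~\ref{commute1} of the present section. The only thing worth noting is that one does \emph{not} need simplicity or the weak tracial Rokhlin property machinery from Section~\ref{Sec_Surj}; the stronger hypothesis that $\alpha$ has the Rokhlin property suffices to reach the conclusion directly.
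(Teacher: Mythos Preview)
Your proposal is correct and matches the paper's proof essentially line for line: the paper cites Lemma~\ref{L_9308_Simple}, Lemma~\ref{commute1}, and Proposition~4.1(1) of~\cite{OskPhl3} for the crossed product, then Lemma~\ref{Fixedpoint_corner}(\ref{Fixedpoint_corner_tsr}) for $A^{\af}$. The only cosmetic difference is that you invoke the paper's internal restatement Proposition~\ref{P_9728_NonsepTsr1} rather than the external reference~\cite{OskPhl3}, which is harmless here since $A$ is separable anyway.
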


\begin{proof}
The result for $C^* (\Z / 2 \Z, A, \alpha )$
follows from Lemma~\ref{L_9308_Simple},
Lemma~\ref{commute1},
and Proposition 4.1(1) of~\cite{OskPhl3}.
The result for $A^{\af}$
now follows from
Lemma \ref{Fixedpoint_corner}(\ref{Fixedpoint_corner_tsr}).
\end{proof}

\begin{ntn}\label{N_7806_Bott}
Let $p \in C (S^2, M_2)$ denote the Bott projection, and let
$L$ be the tautological line bundle over
$S^2 \cong \mathbb{C} \mathbb{P}^1$.
(Thus, the range of~$p$ is the section space of~$L$.)
Recall that $X_0 = S^2$.
Assuming the notation and choices in
Construction~\ref{Construction_11},
for $n \in \Nz$
set
\[
p_n = (\id_{M_2} \otimes \Lambda_{n, 0}) (p, 0) \in M_2 (A_n)
\quad {\mbox{and}} \quad
p'_n = (\id_{M_2} \otimes \Lambda_{n, 0}) (p, \, p) \in M_2 (A_n).
\]
In particular, $p_0 = (p, 0)$ and $p_0' = (p, p)$.
\end{ntn}

\begin{lem}[\cite{HP19}]\label{Phillips&illan.1}
The Cartesian product $L^{\times k}$
does not embed in a trivial bundle over $(S^2)^k$
of rank less than $2k$.
\end{lem}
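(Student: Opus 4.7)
The plan is to prove this by a standard Chern class computation. Since every subbundle of a trivial bundle of a complex vector bundle admits an orthogonal complement (choose a Hermitian metric on $\epsilon^N$), if $L^{\times k}$ embeds in the trivial bundle $\epsilon^N$ over $(S^2)^k$, then there is a complex vector bundle $E$ of rank $N - k$ with
\[
L^{\times k} \oplus E \cong \epsilon^N.
\]
The goal is then to show that $c_k(E) \neq 0$, which forces the rank $N - k$ of $E$ to be at least $k$, giving $N \geq 2k$.

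First I would set up the cohomology. Let $a \in H^2(S^2; \Z)$ be the generator with $c_1(L) = a$, and let $\pi_j \colon (S^2)^k \to S^2$ be the $j$-th coordinate projection. Set $a_j = \pi_j^* a \in H^2((S^2)^k; \Z)$. By Künneth, $H^*((S^2)^k; \Z)$ is the exterior algebra on the $a_j$, so $a_j^2 = 0$, and $a_1 a_2 \cdots a_k$ is a generator of the free rank-one group $H^{2k}((S^2)^k; \Z)$.

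Next I would compute the total Chern classes. By naturality and the Whitney sum formula applied to $L^{\times k} = \pi_1^* L \oplus \cdots \oplus \pi_k^* L$,
\[
c(L^{\times k}) = \prod_{j = 1}^{k} (1 + a_j).
\]
From $c(L^{\times k}) \, c(E) = c(\epsilon^N) = 1$ and $a_j^2 = 0$, I get
\[
c(E) = \prod_{j = 1}^{k} (1 + a_j)^{-1} = \prod_{j = 1}^{k} (1 - a_j),
\]
whose top-degree term is
\[
c_k(E) = (-1)^k a_1 a_2 \cdots a_k \neq 0 \text{ in } H^{2k}((S^2)^k; \Z).
\]

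Finally, since $c_j$ of a complex vector bundle vanishes above its rank, $c_k(E) \neq 0$ forces $\rank(E) = N - k \geq k$, i.e.\ $N \geq 2k$, contradicting $N < 2k$. The only subtle point to verify is that an embedding of complex vector bundles genuinely splits as a direct sum (yielding an honest bundle complement $E$); this is routine using a Hermitian metric on $\epsilon^N$ and the fact that $(S^2)^k$ is compact Hausdorff. Everything else is a Chern class bookkeeping computation, and I do not expect any real obstacle.
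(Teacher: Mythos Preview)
Your argument is correct. The paper itself does not give a proof: it simply cites Lemma~1.9 of~\cite{HP19}. So there is no ``paper's own proof'' to compare against; you have supplied a self-contained Chern class argument where the paper defers to an external reference.

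For completeness, your reading of $L^{\times k}$ as the external Whitney sum $\pi_1^* L \oplus \cdots \oplus \pi_k^* L$ over $(S^2)^k$ is the intended one (this is confirmed by how the paper uses the lemma in Corollary~\ref{C_7808_BigRank}, where the projection $f_n$ is built from pullbacks of the Bott projection along coordinate projections). The computation $c(E) = \prod_{j=1}^k (1 - a_j)$ and the conclusion $c_k(E) \neq 0 \Rightarrow \rank(E) \geq k$ are standard and correct; the existence of a Hermitian complement to a subbundle of a trivial bundle over a compact Hausdorff space is indeed routine.
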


\begin{proof}
This is Lemma~1.9 of~\cite{HP19}.
\end{proof}

\begin{lem}\label{ProjectionRank1}
Assume the notation and choices in
Construction~\ref{Construction_11},
and adopt \Ntn{N_7806_Bott}.
Let $n \in \Nz$.
For $j = 1, 2, \ldots, s (n)$ let
$R_j^{(n)} \colon (S^2)^{s (n)} \to S^2$
be the $j$~coordinate projection.
Then:
\begin{enumerate}
\item\label{ProjectionRank1_a}
There are orthogonal projections
$c^{(0)}_{n}$, $c^{(1)}_{n}, g_{n} \in
M_{2 r (n)} \big( C (X_n) \big)$
such that
\[
p_n = \big( c^{(0)}_{n} + c^{(1)}_{n}, \, g_{n} \big)
\andeqn
\big( \id_{M_2} \otimes \af^{(n)} \big) (p_n)
 = \big( g_{n}, \, c^{(0)}_{n} + c^{(1)}_{n} \big),
\]
$c^{(0)}_{n}$ is the direct sum of the projections
$p \circ R^{(n)}_{j}$ for $j = 1, 2, \ldots, s (n)$,
$c^{(1)}_{n}$ is a constant projection of rank
$r (n) - s (n) - t (n)$,
and
$g_{n}$ is a constant projection of rank $t (n)$.
\item\label{ProjectionRank1_b}
For every $n \in \Nz$ and $\tau \in \T (A_{n})$ we have
$d_{\tau} (p_n) \leq 1$.
\end{enumerate}
\end{lem}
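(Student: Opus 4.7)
I will prove part~(\ref{ProjectionRank1_a}) by induction on~$n$ and then deduce part~(\ref{ProjectionRank1_b}). For the base case $n = 0$, take $c^{(0)}_0 = p$ (the Bott projection), $c^{(1)}_0 = 0$, and $g_0 = 0$; all conditions are immediate, since $s(0) = 1$, $r(0) - s(0) - t(0) = 0$, $t(0) = 0$, and the two components of $p_0 = (p, 0)$ are swapped by $\id_{M_2} \otimes \af^{(0)}$.

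For the inductive step, assume part~(\ref{ProjectionRank1_a}) at stage~$n$ and apply $\id_{M_2} \otimes \Lambda_{n + 1, n}$ to $p_n = (c^{(0)}_n + c^{(1)}_n, g_n)$ using the expansion~(\ref{Eq_9616_Formal}). In the decomposition $M_2 (A_{n + 1}) = M_{2 r (n + 1)}(C(X_{n + 1})) \oplus M_{2 r (n + 1)}(C(X_{n + 1}))$, the first component of $p_{n + 1}$ is block-diagonal with the first $d (n + 1)$ blocks equal to $(c^{(0)}_n + c^{(1)}_n) \circ P^{(n)}_j$ and the last block equal to the constant $g_n (x_n)$, while the second component has the first $d (n + 1)$ blocks $g_n \circ P^{(n)}_j$ and last block $(c^{(0)}_n + c^{(1)}_n)(x_n)$. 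Accordingly, let $c^{(0)}_{n + 1}$ be the block-diagonal projection in $M_{2 r (n + 1)}(C(X_{n + 1}))$ whose first $d(n + 1)$ blocks are $c^{(0)}_n \circ P^{(n)}_j$ and whose last block is zero; let $c^{(1)}_{n + 1}$ be the block-diagonal projection whose first $d(n + 1)$ blocks are $c^{(1)}_n \circ P^{(n)}_j$ and whose last block is $g_n(x_n)$; and let $g_{n + 1}$ be the entire second component just described. Since $c^{(0)}_n = \bigoplus_k p \circ R^{(n)}_k$, the compositions $R^{(n)}_k \circ P^{(n)}_j$ are, after reordering, the $d (n + 1) s (n) = s (n + 1)$ coordinate projections $R^{(n + 1)}_m$ from $X_{n + 1} \cong (X_n)^{d (n + 1)}$ to $S^2$, so $c^{(0)}_{n + 1}$ has the required form. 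The constant projection $c^{(1)}_{n + 1}$ has rank $d (n + 1) [r (n) - s (n) - t (n)] + t (n)$, which equals $r (n + 1) - s (n + 1) - t (n + 1)$ by the recursions for $r$, $s$, and~$t$ in Construction~\ref{Construction_11}(\ref{Construction_111}); similarly $g_{n + 1}$ has rank $d (n + 1) t (n) + (r (n) - t (n)) = t (n + 1)$. Pairwise orthogonality of the three projections in $M_{2 r (n + 1)}(C(X_{n + 1}))$ is inherited block by block from the inductive orthogonality, since it passes through the pull-backs $\cdot \circ P^{(n)}_j$ and through the evaluations at $x_n$. Finally, the identity $(\id_{M_2} \otimes \af^{(n + 1)})(p_{n + 1}) = (g_{n + 1}, c^{(0)}_{n + 1} + c^{(1)}_{n + 1})$ follows by the same expansion applied to the swapped decomposition $(\id_{M_2} \otimes \af^{(n)})(p_n) = (g_n, c^{(0)}_n + c^{(1)}_n)$, combined with Lemma~\ref{commute1}.

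For part~(\ref{ProjectionRank1_b}), every tracial state on $A_n = [C(X_n) \oplus C(X_n)] \otimes M_{r (n)}$ has the form $\tau = (1 - s) \sigma_1 \oplus s \sigma_2$, with $s \in [0, 1]$ and each $\sigma_i$ given by integration of the normalized trace on $M_{r (n)}$ against a probability measure $\mu_i$ on $X_n$. By part~(\ref{ProjectionRank1_a}), the first component of $p_n$ has constant pointwise rank $s (n) + [r (n) - s (n) - t (n)] = r (n) - t (n)$, and the second has constant rank $t (n)$, so
\[
d_{\tau} (p_n) = \tau_2 (p_n)
  = (1 - s) \cdot \frac{r (n) - t (n)}{r (n)} + s \cdot \frac{t (n)}{r (n)}
  \leq 1,
\]
using $0 \leq t (n) < r (n)$ from Lemma~\ref{L_9409_tnrn}. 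The main obstacle is the bookkeeping in the inductive step of part~(\ref{ProjectionRank1_a}), especially matching the reordered coordinate projections $R^{(n + 1)}_m$ and verifying that the recursion $t (n + 1) = d (n + 1) t (n) + (r (n) - t (n))$ produces exactly the rank of $g_{n + 1}$; but the constants in Construction~\ref{Construction_11}(\ref{Construction_111}) have been designed precisely so that the counting works out.
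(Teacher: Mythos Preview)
Your proof is correct and follows essentially the same approach as the paper: induction on~$n$ for part~(\ref{ProjectionRank1_a}), tracking how each piece of $p_n$ is carried through $\Lambda_{n+1,n}$ and checking the rank arithmetic against the recursions in Construction~\ref{Construction_11}(\ref{Construction_111}), followed by a pointwise rank computation for part~(\ref{ProjectionRank1_b}). The only cosmetic differences are that the paper reduces (\ref{ProjectionRank1_b}) to extreme traces rather than writing a general trace as a convex combination, and derives the swap formula directly from the definition of~$\af^{(n)}$ rather than via Lemma~\ref{commute1}; neither changes the substance.
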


\begin{proof}
We prove the formula in (\ref{ProjectionRank1_a})
for $p_n$ by induction on~$n$.
The formula for $\big( \id_{M_2} \otimes \af^{(n)} \big) (p_n)$
then follows from the definition
of $\af^{(n)}$.

The formula holds for $n = 0$,
since $r (0) = s (0) = 1$,
$t (0) = 0$,
and $r (0) - s (0) - t (0) = 0$.

Now assume that it is known for~$n$.
Recall that
$\Lambda_{n + 1, \, n} = \lambda_n \otimes \id_{M_{r (n )}}$.
(See Construction \ref{Construction_11}(\ref{Construction_117'}).)
We suppress $\id_{M_{2}}$ in the notation.
With this convention,
first take $(f, g)$ in~(\ref{lamdaDefinition})
to be $\big( c_n^{(0)}, 0 \big)$.
The first coordinate
$\Lambda_{n + 1, n} \big( c_n^{(0)}, 0 \big)_1$
is of the form required for $c_{n + 1}^{(0)}$,
while
$\Lambda_{n + 1, n} \big( c_n^{(0)}, 0 \big)_2$
is a constant function of rank $s (n)$.
In the same manner, we see that:
\begin{itemize}
\item
$\Lambda_{n + 1, n} \big( c_n^{(1)}, 0 \big)_1$
is a constant projection of rank
$d (n + 1) [r (n) - s (n) - t (n)]$.
\item
$\Lambda_{n + 1, n} \big( c_n^{(1)}, 0 \big)_2$
is a constant projection of rank
$ r (n) - s (n) - t (n)$.
\item
$\Lambda_{n + 1, n} ( 0, g_n )_1$
is a constant projection of rank
$ t (n)$.
\item
$\Lambda_{n + 1, n} ( 0, g_n )_2$
is a constant projection of rank
$d (n + 1) t (n)$.
\end{itemize}
Putting these together,
we get in the first coordinate of
$\Lambda_{n + 1, n} (p_n)$ the direct sum of
$c_{n + 1}^{(0)}$ as described
and a constant function of rank
\[
d (n + 1) [r (n) - s (n) - t (n)] +  t (n).
\]
A computation shows that this expression is
equal to $r (n + 1) - s (n + 1) - t (n + 1)$.
In the second coordinate we get a
constant projection of rank
\[
s (n) + \big( r (n) - s (n) - t (n) \big) + d (n + 1) t (n)
 = t (n + 1).
\]
This completes the induction.

For Part~(\ref{ProjectionRank1_b}),
we may assume that $\ta$ is extreme in $\T (A_{n})$.
Then there is $x \in X_{n} \amalg X_{n}$  such that
$\tau = \tr_{r (n)} \otimes \ev_{x}$.
Therefore
\[
d_{\tau} (p_n)
  = \tau (p_n)
  = \frac{1}{r (n)} \rank (p_{n} (x))
  = \begin{cases}
        \frac{s (n)}{r (n)}
         + \frac{r (n) - s (n) - t (n)}{r (n)}
              & \qquad x \in X_{n} \amalg \varnothing \\
        \frac{t (n)}{r (n)} & \qquad x \in \varnothing \amalg X_{n}.
\end{cases}
\]
In each case, Lemma~\ref{L_9409_tnrn} implies $d_{\ta} (p_n) \leq 1$.
This completes the proof.
\end{proof}

\begin{lem}\label{ProjectionRank2}
Assume the notation and choices in
Construction~\ref{Construction_11},
and adopt the notation
of \Ntn{N_7806_Bott}.
Let $n \in \Nz$.
For $j = 1, 2, \ldots, s (n)$
let $R_j^{(n)} \colon (S^2)^{s (n)} \to S^2$
be the $j$~coordinate projection.
Then:
\begin{enumerate}
\item\label{ProjectionRank2_a}
There are orthogonal projections $f_{n}, h_{n}$ in
$M_{2 r (n)} \big( C (X_n) \big)$
such that
$p'_n = (f_{n} +h_{n}, \, f_{n} +h_{n})$, $f_{n}$ is
the direct sum of the projections
$p \circ R^{(n)}_{j}$ for $j = 1, 2, \ldots, s (n)$, and
$h_{n}$ is a constant projection of rank
$r (n) - s (n)$.
\item\label{ProjectionRank2_b}
For every $n \in \Nz$ and $\tau \in \T (A_{n})$, we have
$d_{\tau} (p'_n) = 1$.
\end{enumerate}
\end{lem}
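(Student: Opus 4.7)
The plan is to mirror the induction used in \Lem{ProjectionRank1}, exploiting the crucial observation that the connecting map $\Lambda_{n + 1, \, n}$ preserves the diagonal: if $(f, f) \in A_n$ has equal entries, then by the formula~(\ref{lamdaDefinition}) so does $\Lambda_{n + 1, \, n} (f, f)$, because both the tuple of coordinate-projection pullbacks and the point evaluation are the same in each factor. Since $p'_0 = (p, p)$ begins with this symmetry, we have $p'_n = (q_n, q_n)$ for a single projection $q_n \in M_{2 r (n)} (C (X_n))$ at every level, so the claim~(\ref{ProjectionRank2_a}) reduces to an analysis of $q_n$.

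For~(\ref{ProjectionRank2_a}), I would induct on~$n$. The base case $n = 0$ is immediate with $f_0 = p$ and $h_0 = 0$, since $r (0) = s (0) = 1$. For the inductive step, assuming $q_n = f_n + h_n$ with the stated form, applying $\Lambda_{n + 1, n}$ and using~(\ref{Eq_9616_Formal}) gives
\[
q_{n + 1}
 = \diag \big( q_n \circ P^{(n)}_1, \, q_n \circ P^{(n)}_2, \, \ldots,
    \, q_n \circ P^{(n)}_{d (n + 1)}, \, q_n (x_n) \big).
\]
The $d (n + 1)$ pullbacks of $f_n$ produce $d (n + 1) \cdot s (n) = s (n + 1)$ summands of the form $p \circ R^{(n)}_k \circ P^{(n)}_j$; after identifying $X_{n + 1} = (S^2)^{s (n + 1)}$ with $(X_n)^{d (n + 1)}$ via $s (n + 1) = d (n + 1) s (n)$, these are precisely the pullbacks by the $s (n + 1)$ coordinate projections $R^{(n + 1)}_l$ of $X_{n + 1}$, giving $f_{n + 1}$. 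The remaining constant contribution has rank $d (n + 1) [r (n) - s (n)] + r (n)$, and an easy calculation using $r (n + 1) = 2^{n + 2} r (n)$ and $s (n + 1) = (2^{n + 2} - 1) s (n)$ shows this equals $r (n + 1) - s (n + 1)$, giving $h_{n + 1}$. This completes the induction.

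For~(\ref{ProjectionRank2_b}), I reduce to extreme tracial states and use that every such $\tau \in \T (A_n)$ has the form $\tau = \tr_{r (n)} \otimes \ev_x$ for some $x \in X_n \amalg X_n$. Since $p'_n = (q_n, q_n)$ and $q_n$ has constant rank $s (n) + [r (n) - s (n)] = r (n)$ at every point of $X_n$, regardless of which copy of $X_n$ contains~$x$ we get
\[
d_\tau (p'_n) = \tau (p'_n) = \frac{\rank (q_n (x))}{r (n)} = 1.
\]

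I do not expect any real obstacles here: the argument is bookkeeping parallel to \Lem{ProjectionRank1}, and the symmetric choice $(p, p)$ rather than $(p, 0)$ makes everything cleaner since the $t (n)$ terms that complicated \Lem{ProjectionRank1} never appear. The only spot requiring any care is verifying the arithmetic identity $d (n + 1) [r (n) - s (n)] + r (n) = r (n + 1) - s (n + 1)$, which is routine.
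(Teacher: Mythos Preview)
Your argument is correct, but the paper takes a shorter route that avoids a second induction entirely. Instead of tracking $q_n$ through the system, the paper splits $(p, p) = (p, 0) + (0, p)$ and observes that $(\id_{M_2} \otimes \Lambda_{n, 0})(0, p) = (\id_{M_2} \otimes \af^{(n)})(p_n)$ by \Lem{commute1}; then \Lem{ProjectionRank1}(\ref{ProjectionRank1_a}) gives $p_n = (c_n^{(0)} + c_n^{(1)}, g_n)$ and its $\af^{(n)}$-image is $(g_n, c_n^{(0)} + c_n^{(1)})$, so $p'_n = (c_n^{(0)} + c_n^{(1)} + g_n, \, c_n^{(0)} + c_n^{(1)} + g_n)$, and one simply sets $f_n = c_n^{(0)}$ and $h_n = c_n^{(1)} + g_n$. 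Your direct induction is self-contained and makes the structure of $q_n$ transparent without reference to the asymmetric case, while the paper's argument is quicker because the bookkeeping (including the rank arithmetic) has already been done in \Lem{ProjectionRank1}. Part~(\ref{ProjectionRank2_b}) is handled the same way in both.
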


\begin{proof}
We prove Part~(\ref{ProjectionRank2_a}).
Using \Lem{ProjectionRank1}(\ref{ProjectionRank1_a}),
Lemma~\ref{commute1}, and the definition of $\af^{(n)}$
in the third step, we get
\begin{align*}
p'_n
& = (\id_{M_2} \otimes \Lambda_{n, 0}) (p, \, p)
  = (\id_{M_2} \otimes \Lambda_{n, 0}) (p, \, 0)
       + (\id_{M_2} \otimes \Lambda_{n, 0}) (0, \, p)
\\
& = \big( c^{(0)}_{n} + c^{(1)}_{n}, \, g_{n} \big)
    + \big( g_{n}, \,c^{(0)}_{n} + c^{(1)}_{n} \big)
  = \big( c^{(0)}_{n} + c^{(1)}_{n} + g_{n},
   \, c^{(0)}_{n} + c^{(1)}_{n} + g_{n} \big).
\end{align*}
Now it is enough to set
$f_{n} = c^{(0)}_{n}$
and $h_{n} = c^{(1)}_{n} + g_{n}$.

For Part~(\ref{ProjectionRank2_b}),
we may assume that $\ta$ is extreme in $\T (A_{n})$.
Then there is $x \in X_{n} \amalg X_{n}$  such that
$\tau = \tr_{r (n)} \otimes \ev_{x}$.
Adding up the ranks given in Part~(\ref{ProjectionRank2_a}),
we see that $\rank (p'_{n} (x)) = r (n)$
for all $x \in X_{n} \amalg X_{n}$.
The conclusion follows.
\end{proof}

\begin{dfn}
Let $A$ be a unital \ca{} and let $p$
be a projection in $M_{\infty} (A)$.
We call $p$ \emph{trivial}
if there is $n \in \Nz$ such that $p$ is Murray-von Neumann
equivalent to $1_{M_{n} (A)}$.
When $n = 0$, this means $p = 0$.
\end{dfn}

\begin{cor}\label{C_7808_BigRank}
Adopt the assumptions and notation of Notation~\ref{N_7806_Bott}.
Let $n \in \Nz$ and
let $e = (e_1, e_2)$ be a projection in
$M_{\infty} (A_n) \cong M_{\infty} (C (X_n) \oplus C (X_n))$
such that $e_1$ is trivial.
If there exists $x \in M_{\infty} (A_n)$
such that $\| x e x^* - p'_{n} \| < \frac{1}{2}$,
then $\rank (e_1) \geq r (n) + s (n)$.
\end{cor}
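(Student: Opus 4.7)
The plan is to translate the hypothesis into a statement about complex vector bundles over $X_n = (S^2)^{s(n)}$ and then apply a Chern class argument in the spirit of Lemma~\ref{Phillips&illan.1}.

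First, I would convert the norm estimate into a Murray--von Neumann subequivalence. Since $e$ is a projection, $xex^* = (xe)(xe)^*$, so by Lemma~\ref{PhiB.Lem_18_4}(\ref{PhiB.Lem_18_4_4}) we have $xex^* \sim_{A_n} ex^*xe$, which lies in the hereditary subalgebra generated by $e$, giving $xex^* \precsim_{A_n} e$. Combining this with $\| xex^* - p'_n \| < \tfrac{1}{2}$ and Lemma~\ref{PhiB.Lem_18_4}(\ref{PhiB.Lem_18_4_10.a}) yields $(p'_n - \tfrac{1}{2})_{+} \precsim_{A_n} xex^* \precsim_{A_n} e$. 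Since $p'_n$ is a projection, $(p'_n - \tfrac{1}{2})_{+} = \tfrac{1}{2} p'_n \sim_{A_n} p'_n$, so $p'_n \precsim_{A_n} e$. As $A_n$ is stably finite (being subhomogeneous), this is equivalent to $p'_n \lessapprox e$ in the Murray--von Neumann sense.

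Next, I would apply the $*$-homomorphism $A_n \to C(X_n) \otimes M_{r(n)}$ given by $(a,b) \mapsto a$, extended to $M_{\infty}$; as any $*$-homomorphism, it preserves Murray--von Neumann subequivalence. By Lemma~\ref{ProjectionRank2}(\ref{ProjectionRank2_a}) this sends $p'_n$ to $f_n + h_n$, and it sends $e$ to $e_1$, so $f_n + h_n \lessapprox e_1$ as projections in $M_{\infty}(C(X_n) \otimes M_{r(n)}) \cong M_{\infty}(C(X_n))$. Letting $\theta^m$ denote the trivial complex vector bundle of rank $m$ over $X_n$, and translating into vector bundle language, this says that the bundle $L^{\times s(n)} \oplus \theta^{r(n) - s(n)}$ (associated to $f_n + h_n$ by Lemma~\ref{ProjectionRank2}(\ref{ProjectionRank2_a})) embeds as a subbundle of $\theta^k$, where $k = \rank(e_1)$.

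Finally, I would run the Chern class computation. Using a Hermitian metric, write $\theta^k \cong L^{\times s(n)} \oplus \theta^{r(n) - s(n)} \oplus N$, where $N$ is a complex vector bundle of rank $k - r(n)$. For $j = 1, 2, \ldots, s(n)$, let $u_j \in H^2(X_n; \Z)$ be the pullback under $R^{(n)}_{j}$ of a generator of $H^2(S^2; \Z)$; then $c_1(R^{(n)*}_{j} L) = \pm u_j$ and $u_j^2 = 0$, since $u_j^2$ is pulled back from $H^4(S^2) = 0$. The total Chern class identity $1 = c(\theta^k) = c(L^{\times s(n)}) \, c(\theta^{r(n) - s(n)}) \, c(N)$ yields
\[
c(N) = \prod_{j=1}^{s(n)} (1 + u_j)^{-1} = \prod_{j=1}^{s(n)} (1 - u_j),
\]
whose top-degree component $(-1)^{s(n)} u_1 u_2 \cdots u_{s(n)}$ is nonzero in $H^{2 s(n)}(X_n; \Z)$ by the K\"unneth theorem. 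Hence $c_{s(n)}(N) \neq 0$, which forces $\rank(N) \geq s(n)$, and therefore $k = s(n) + (r(n) - s(n)) + \rank(N) \geq r(n) + s(n)$.

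The main obstacle is the Chern class step. It is a direct refinement of the argument behind Lemma~\ref{Phillips&illan.1}: the added content here is that the trivial summand $\theta^{r(n) - s(n)}$ splits off cleanly, its total Chern class equals $1$, and hence the lower bound coming from the nonvanishing top Chern class applies to $N$ alone. This yields $\rank(e_1) \geq r(n) + s(n)$ rather than the weaker bound $\rank(e_1) \geq 2 s(n)$ that would follow from ignoring the trivial summand and applying Lemma~\ref{Phillips&illan.1} directly to the embedding $L^{\times s(n)} \hookrightarrow \theta^k$.
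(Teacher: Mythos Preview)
Your proof is correct and, in the vector bundle step, takes a cleaner route than the paper. Both arguments first reduce to the same geometric statement: from $\|xex^*-p'_n\|<\tfrac12$ one gets $p'_n\lessapprox e$, and projecting to the first summand yields an embedding $L^{\times s(n)}\oplus\theta^{\,r(n)-s(n)}\hookrightarrow\theta^{\,k}$ with $k=\rank(e_1)$. From here the paper first cancels the trivial summand---using that $e_1$ and $h_n$ are trivial together with a stability/cancellation theorem (Theorem~9.1.5 of~\cite{HusB}) to pass from $f_n+h_n+w\approx e_0+h_n$ to $f_n+w\approx e_0$---and then invokes Lemma~\ref{Phillips&illan.1} as a black box to conclude $\rank(e_0)\geq 2s(n)$, hence $k\geq r(n)+s(n)$. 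You instead take the orthogonal complement $N$ of $L^{\times s(n)}\oplus\theta^{\,r(n)-s(n)}$ in $\theta^{\,k}$ directly and compute $c(N)=\prod_j(1\mp u_j)$, whose degree-$2s(n)$ term is nonzero, forcing $\rank N\geq s(n)$. This buys you a self-contained argument that avoids any bundle cancellation theorem; the trade-off is that you redo inline the Chern class computation underlying Lemma~\ref{Phillips&illan.1}, rather than citing it. Your observation that the trivial summand has total Chern class~$1$, so the obstruction lands entirely on~$N$, is exactly what makes this shortcut work.
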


\begin{proof}
Recall the line bundle $L$
and the projection~$p$ from Notation~\ref{N_7806_Bott}.
Also recall from Definition~\ref{N_9422_MvN}
that we use $\approx$ for Murray-von Neumann equivalence
and $\lessapprox$ for Murray-von Neumann subequivalence.

Let $f_{n}, h_{n} \in M_{2 r (n)} (C (X_n))$
be as in \Lem{ProjectionRank2},
and define $q = f_{n} + h_{n}$.
The range of $f_{n}$ is isomorphic to the section space
of the $s (n)$-dimensional vector bundle
$L^{\times s (n)}$
and
$q (p'_{n} |_{X_{n} \amalg \varnothing} ) q = q$.
Now
$\| x e x^* - p'_{n} \| < \frac{1}{2}$
implies
\[
\bigl\| q (x e x^* |_{X_{n} \amalg \varnothing\ } ) q - q \bigr\|
 < \frac{1}{2}.
\]
Since $e$ and $q$ are projections,
$q \lessapprox e |_{X_{n} \amalg \varnothing} = e_1$.
So there is projection $w \in M_{\infty} (C (X_{n}))$
such that $q + w \approx e_{1}$.
Also, $\| x e x^* - p'_{n} \| < \frac{1}{2}$
implies that $p'_{n}$ is Murray-von Neumann equivalent to
a subprojection of~$e$.
Therefore $h_{n} \lessapprox e_{1}$,
so $\rank (h_n) \leq \rank (e_{1})$.
Take
$e_{0} \in M_{\infty} (C (X_n))$ to be a trivial projection
of rank $\rank(e_{1}) - \rank (h_{n})$ such that $e_{0} \perp h_{n}$.
Since $h_n$ and $e_0$ are trivial,
$e_{0} + h_{n} \approx e_{1}$.
So
\[
f_{n} + h_{n} + w \approx e_{0} + h_{n}.
\]
Define $k = \rank (f_{n} + w)$.
Then $k \geq s (n)$.
Now:
\begin{itemize}
\item
Let $E_{1}$ be a vector bundle whose section space
is isomorphic to the range of $f_n + w$.
\item
Let $E_{2}$ be a trivial vector bundle whose section space
is isomorphic to the range of $e_{0}$.
\item
Set $l = \rank (h_n)$.
\item
Let $H^{l}$ be a trivial vector bundle whose section space
is isomorphic to the range of $h_{n}$.
\end{itemize}
Putting these together and using Theorem~9.1.5 of~\cite{HusB}, we get
$f_{n} + w \approx e_{0}$.
Therefore $f_{n} \lessapprox e_{0}$.
So $\rank ( e_{0}) \geq 2 s (n)$
by Lemma~\ref{Phillips&illan.1}.
Since $e_{0} + h_{n} \approx e_{1}$,
we have
$\rank ( e_{1}) \geq r (n) + s (n)$.
\end{proof}

\begin{rmk}\label{Ni14.remark}
We will use results of Niu from \cite{Ni14}
to obtain an upper bound
on the radius of comparison of our algebra.
Niu introduced a notion of mean dimension
for a diagonal AH-system, \cite[Definition 3.6]{Ni14}.
Suppose we are given
a direct system of homogeneous algebras of the form
\[
A_n = \bigl( C (K_{1,n}) \otimes M_{j_1 (n)} \bigr)
  \oplus \bigl( C (K_{2,n}) \otimes M_{j_2 (n)} \bigr)
   \oplus \cdots \oplus
   \bigl( C (K_{m (n), n}) \otimes M_{j_{m (n) (n)}} \bigr),
\]
in which each of the spaces involved is a connected finite CW complex,
and the connecting maps are unital diagonal maps.
Let $\gamma$ denote the mean dimension of this system,
in the sense of Niu.
It follows trivially from \cite[Definition 3.6]{Ni14} that
\[
\gamma
 \leq \lim_{n \to \infty} \max
  \left(\left\{ \frac{\dim (K_{l,n})}{j_l}
    \colon l = 1, 2, \ldots, m (n) \right\} \right).
\]
Theorem 6.2 of \cite{Ni14} then states that
if $A$ is the direct limit of a system as above,
then $\rc (A) \leq \frac{\gamma}{2}$.
Since the system we are considering here is of this type,
Niu's theorem applies.
\end{rmk}

\begin{thm}\label{rc.ctn.11}
Assume the notation and choices in Construction
\ref{Construction_11} and~\Ntn{N_7806_Bott}.
Then $\rc (A) = \kappa$.
\end{thm}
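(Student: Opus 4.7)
The plan is to establish the two inequalities $\rc(A) \leq \kappa$ and $\rc(A) \geq \kappa$ separately.

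For the upper bound, I would invoke Niu's mean-dimension theorem, as recalled in \Rmk{Ni14.remark}. The direct system of \Ctn{Construction_11} is a diagonal AH system whose building blocks $A_n = [C(X_n) \oplus C(X_n)] \otimes M_{r(n)}$ use the connected finite CW complexes $X_n = (S^2)^{s(n)}$ of dimension $2 s(n)$; formula~(\ref{Eq_9616_Formal}) shows that every matrix entry of the connecting map $\Lambda_{n+1, n}$ is either a composition with a coordinate projection $P^{(n)}_j$ or a point evaluation at $x_n$, so the maps are unital and diagonal in Niu's sense. Consequently
\[
\gamma
 \;\leq\; \lim_{n \to \infty} \max_{l \in \{1, 2\}} \frac{\dim(X_n)}{r(n)}
 \;=\; \lim_{n \to \infty} \frac{2 s(n)}{r(n)}
 \;=\; 2 \kappa,
\]
and Theorem~6.2 of~\cite{Ni14} gives $\rc(A) \leq \gamma/2 \leq \kappa$.

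For the lower bound, fix $r \in [0, \kappa)$; it suffices to produce, for $n$ sufficiently large, projections $a, b \in A_n$ satisfying $d_\tau(a) + r < d_\tau(b)$ for every $\tau \in \QT(A)$ but $a \not\precsim_A b$, so that $\rc(A) \geq r$. Take $a = p'_n$ and $b = (q, q)$, where $q = 1_{C(X_n)} \otimes p_0$ with $p_0 \in M_{r(n)}$ a projection of rank $R$ to be chosen. By \Lem{ProjectionRank2}(\ref{ProjectionRank2_b}) we have $d_\tau(a) = 1$ for every tracial state, while the triviality of $b$ forces $\tau(b) = R/r(n)$ on every extreme trace of $A_n$, hence on every $\tau \in \QT(A)$ after restriction (noting that $\QT(A) = \T(A)$ because $A$ is nuclear, via Haagerup's theorem). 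So the trace inequality reduces to the single condition $R > r(n)(1 + r)$.

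To verify $a \not\precsim_A b$, I would transfer the question to a finite stage. Because $a, b$ are projections and the $\Lambda_{m, n}$ are injective unital homomorphisms, a standard approximation-and-perturbation argument shows that $a \precsim_A b$ if and only if there exists $m \geq n$ for which $p'_m = \Lambda_{m, n}(a)$ is Murray--von Neumann subequivalent to $\Lambda_{m, n}(b)$ in $A_m$. Triviality is preserved by~(\ref{Eq_9616_Formal}), so $\Lambda_{m, n}(b)$ is the constant rank $R \cdot r(m)/r(n)$ projection in each coordinate. If such an MvN subequivalence existed, choose $v$ with $v^* v = p'_m$ and $v v^* \leq \Lambda_{m, n}(b)$, and set $x = v^*$; then $x \Lambda_{m, n}(b) x^* = p'_m$, and \Cor{C_7808_BigRank} forces $R \cdot r(m)/r(n) \geq r(m) + s(m)$, i.e., $R \geq r(n)(1 + u(m))$. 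Hence $a \not\precsim_A b$ as soon as $R < r(n)(1 + u(m))$ for every $m \geq n$; and since $(u(m))$ is strictly decreasing with infimum $\kappa$ by \Lem{L_9409_StrDecr}, this joint condition is equivalent to $R \leq r(n)(1 + \kappa)$. I therefore need an integer $R$ with
\[
r(n)(1 + r) \;<\; R \;\leq\; r(n)(1 + \kappa),
\]
which exists once $r(n)(\kappa - r) \geq 1$, hence for all $n$ sufficiently large. Letting $r \nearrow \kappa$ gives $\rc(A) \geq \kappa$.

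The main technical point I expect is the reduction from inductive-limit Cuntz subequivalence of projections to a finite-stage Murray--von Neumann subequivalence, which must be done with enough care that the exact equality $x \Lambda_{m, n}(b) x^* = p'_m$ becomes available and \Cor{C_7808_BigRank} applies without slack. Once that is in place, the corollary (which already packages the Bott-type obstruction of \Lem{Phillips&illan.1}), together with the monotone convergence $u(m) \downarrow \kappa$, closes the argument cleanly.
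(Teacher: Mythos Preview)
Your strategy matches the paper's: the upper bound via Niu's theorem is identical, and the lower bound compares $p'_n$ against a trivial projection of carefully chosen rank, invoking \Cor{C_7808_BigRank} at a finite stage to block subequivalence.

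There is, however, one concrete slip. You take $b = (q,q)$ with $q = 1_{C(X_n)} \otimes p_0$ and $p_0 \in M_{r(n)}$, so $b \in A_n$ and $R = \rank(p_0) \leq r(n)$. But the trace condition $d_\tau(b) > d_\tau(p'_n) + r$ reads $R/r(n) > 1 + r$, i.e.\ $R > r(n)$, which is impossible inside $M_{r(n)}$. The fix is immediate: let $b$ be a trivial projection in $M_\infty(A_n)$ rather than in $A_n$ itself (the paper does exactly this, taking $e \in M_\infty(A_n)$ of rank $M$ with $r(n)(1+\rho) < M < r(n)(1+\kappa)$), and the rest of your argument---the rank computation for $\Lambda_{m,n}(b)$, the application of \Cor{C_7808_BigRank}, and the choice of $n$ with $r(n)(\kappa - r) \geq 1$---goes through unchanged.

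As a minor simplification, \Cor{C_7808_BigRank} only requires $\|x e x^* - p'_m\| < \frac{1}{2}$, which follows directly from the direct-limit description of Cuntz subequivalence; you need not pass all the way to exact Murray--von Neumann subequivalence at a finite stage.
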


\begin{proof}
Since
$\limi n \frac{\dim (X_n)}{r (n)} = 2 \kappa $
and the \ca{} $A$ was constructed with
diagonal maps,
we deduce from
\Rmk{Ni14.remark} that
$\rc ( A ) \leq \kappa$.
Now it suffices to prove that
$\rc (A) \geq \kappa$.
Suppose $\rh < \kappa$.
We show that $A$ does not have $\rh$-comparison.
Choose $n \in \N$ such that $1 / {r (n)} < \kappa - \rh$.
Choose $M \in \Nz$ such that
$\rho + 1  < \frac{M}{r (n)} < \kappa + 1$.
Let $e \in M_{\infty} (A_n)$ be a trivial \pj{}
of rank~$M$.
By slight abuse of notation,
we use $\Lambda_{m, n}$ to denote the amplified map
from $M_{\infty} (A_n)$ to $M_{\infty} (A_m)$ as well.
For $m > n$, the rank of $\Lambda_{m, n} (e)$
is $M \cdot \frac{r (m)}{r (n)}$.

We claim that
the rank of $\Lambda_{m, n} (e)$
is strictly less than $r (m) + s (m)$ for $m > n$.
Suppose
$\rank \big( \Lambda_{m, n} (e) \big) \geq r (m) + s (m)$.
Then, by the choice of~$M$,
\[
r (m) + s (m) \leq M \cdot \frac{r (m)}{r (n)} < (\kappa + 1) r (m).
\]
Thus $\frac{s (m)}{r (m)} < \kappa$.
This contradicts Lemma~\ref{L_9409_StrDecr}
and \Ctn{Construction_11}(\ref{Construction_112}).
So the claim follows.

Now, for any tracial state $\tau$ on $A_m$
(and thus for any tracial state on $A$),
we have, using \Lem{ProjectionRank2}(\ref{ProjectionRank1_b})
in the last step,
\begin{equation*}
d_{\tau} (\Lambda_{m, n} (e))
 = \tau (\Lambda_{m, n} (e))
 = \frac{1}{r (m)} \cdot M \cdot \frac{r (m)}{r (n)}
 > 1 + \rh
 = d_{\tau} (p'_{m}) + \rh.
\end{equation*}

On the other hand, if
$\Lambda_{\infty, 0} \big( (p, \, p) \big)
  \lessapprox \Lambda_{\infty, n} (e)$
then, in particular, there exists some $m > n$
and $x \in M_{\infty} (A_m)$
such that $\|x \Lambda_{m, n} (e)x^* - p'_m\| < \frac{1}{2}$.
Using \Cor{C_7808_BigRank}, we have
\[
\rank (\Lambda_{m,n} (e)) \geq r (m) + s (m).
\]
This is a contradiction, and we have proved that
$A$ does not have $\rh$-comparison.
\end{proof}

We now determine the radius of comparison of the crossed product
in our example.
The methods are very similar.

\begin{ctn}\label{Construction_22}
Assume the notation and choices in
Parts (\ref{Construction_111}),
(\ref{Construction_113}),
(\ref{Construction_114}),
and
(\ref{Construction_115}) in Construction
\ref{Construction_11}.

\begin{enumerate}
\item\label{Construction_22_2}
For $n \in \Nz$, we define
$B_{n} = C (X_{n}) \otimes M_{ 2 r (n) }$,
identified with $C (X_{n}, M_2) \otimes M_{r (n) }$.
\item\label{Construction_22_3}
Let $s \in M_2$ be the unitary matrix
$s = \left( \begin{smallmatrix}
  0  &  1 \\
  1  &  0
\end{smallmatrix} \right)$.
Define
$\Lambda'_{n + 1, n} \colon B_{n} \to B_{n + 1}$
by
\[
\Lambda'_{n + 1, n} (f \otimes c)
= \left( \begin{matrix}
   f \circ P^{(n)}_{1} & & & 0 \\
     & f \circ P^{(n)}_{2} &&& \\
    & &  \ddots      &      \\
    & & & f \circ P^{(n)}_{d (n + 1)} \\
  0 & & & & s f (x_{n}) s^*
 \end{matrix} \right)
\otimes c
\]
for $f \in C (X_{n}, M_{2 } )$ and
$c \in M_{r (n)}$.
With abuse of notation
(the expression
$s f (x_n) s^* \cdot 1_{C (X_{n + 1})}$ is the constant
function $X_{n + 1} \to M_2$ with value $s f (x_n) s^*$),
the analog of~(\ref{Eq_9616_Formal}) is
\begin{align}\label{Eq_9616_Formal_CP}
& \Lambda_{n + 1, n}' ( f  \otimes c )
\\
\notag
& \hspace*{3em} {\mbox{}}
 = \sum_{j = 1}^{d (n + 1)}
     f \circ P^{(n)}_{j} \otimes e_{j, j} \otimes c
\\
\notag
& \hspace*{6em} {\mbox{}}
   + s f (x_n) s^* \cdot 1_{C (X_{n + 1})}
             \otimes e_{d (n + 1) + 1, \, d (n + 1) + 1} \otimes c.
\end{align}

It is clear that $\Lambda'_{n + 1, \, n}$
is injective for all $n \in \Nz$.
\item\label{Construction_22_4}
Define
$B = \dirlim (B_n, \, \Lambda'_{n + 1, \, n} )$.
\end{enumerate}
\end{ctn}

\begin{lem}\label{identification.CP1}
Assume the notation and choices in
\Ctn{Construction_11}
and
\Ctn{Construction_22}.
Then
$C^* (\Z / 2 \Z, A, \alpha ) \cong B$.
\end{lem}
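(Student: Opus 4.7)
The plan is to identify the crossed product at each finite stage and show these identifications are compatible with the connecting maps, then invoke the fact that the crossed product by a finite group commutes with inductive limits.

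First, at stage $n$, I would construct an explicit isomorphism $\ph_n \colon C^* (\Z / 2 \Z, A_n, \af^{(n)}) \to B_n$. The standard flip action of $\Z/2\Z$ on $\C \oplus \C$ has crossed product $M_2$, with the isomorphism sending $(\zt_1, \zt_2) \mapsto \diag (\zt_1, \zt_2)$ and the implementing unitary $u_1$ to the swap matrix $s$. Tensoring with $C (X_n) \otimes M_{r (n)}$, I obtain a unital isomorphism $\ph_n$ such that $\ph_n \big( (f, g) \otimes c \big) = \diag (f, g) \otimes c$ and $\ph_n (u_1) = 1_{C (X_n)} \otimes s \otimes 1_{M_{r (n)}}$ for $f, g \in C (X_n)$ and $c \in M_{r (n)}$. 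The crossed product relation $u_1 (f,g) u_1^* = (g,f)$ translates correctly since $s \diag (f, g) s^* = \diag (g, f)$.

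Second, by Lemma~\ref{commute1} each $\Ld_{n + 1, \, n}$ is equivariant, so it induces a map $\widetilde{\Ld}_{n + 1, \, n}$ on crossed products. The heart of the argument is to verify
\[
\ph_{n + 1} \circ \widetilde{\Ld}_{n + 1, \, n}
  = \Ld'_{n + 1, \, n} \circ \ph_n.
\]
For an element of the form $(f, g) \otimes c \in A_n$, applying formula~(\ref{Eq_9616_Formal}) and then $\ph_{n + 1}$ produces a block sum whose $j$-th diagonal block is $\diag (f \circ P^{(n)}_j, \, g \circ P^{(n)}_j) \otimes c$ for $j \leq d (n + 1)$ and whose last diagonal block is $\diag (g (x_n), f (x_n)) \otimes c$. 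The right-hand side, via $\ph_n$ followed by~(\ref{Eq_9616_Formal_CP}), yields the same blocks except that the last one is written as $s \diag (f (x_n), g (x_n)) s^* \otimes c$; but these two expressions coincide. For the generator $u_1$, one notes that $\widetilde{\Ld}_{n + 1, \, n} (u_1)$ is $u_1$ in the larger crossed product, which maps under $\ph_{n + 1}$ to $1 \otimes s \otimes 1$, while $\Ld'_{n + 1, \, n} (1 \otimes s \otimes 1)$ is computed by~(\ref{Eq_9616_Formal_CP}) to be a block-diagonal matrix with each block equal to $s$ (using $s \cdot s \cdot s^* = s$ for the last block), which is the same element.

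Third, since the full crossed product by a finite group is an exact functor that commutes with sequential direct limits of injective systems, and $A = \dirlim (A_n, \Ld_{n + 1, \, n})$ with $\af = \dirlim \af^{(n)}$ by Lemma~\ref{commute1}, we conclude
\[
C^* (\Z / 2 \Z, A, \af)
 \cong \dirlim \big( C^* (\Z / 2 \Z, A_n, \af^{(n)}),
             \, \widetilde{\Ld}_{n + 1, \, n} \big)
 \cong \dirlim \big( B_n, \, \Ld'_{n + 1, \, n} \big)
 = B,
\]
where the second isomorphism is induced by the compatible family $(\ph_n)_{n \in \Nz}$. The main (routine) obstacle is the bookkeeping needed to verify compatibility in the last coordinate block of~(\ref{Eq_9616_Formal}) versus~(\ref{Eq_9616_Formal_CP}); the presence of the swap matrix $s$ in the definition of $\Ld'_{n + 1, \, n}$ is precisely what is needed to compensate for the coordinate swap $(f (x_n), g (x_n)) \leftrightarrow (g (x_n), f (x_n))$ in the point-evaluation part of $\Ld_{n + 1, \, n}$.
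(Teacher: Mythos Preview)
Your proof is correct and is essentially the same as the paper's: your isomorphism $\ph_n$ coincides with the paper's $\varphi_n$ (the paper writes it on the spanning set $[(f_0,g_0)\otimes c_0]u_0 + [(f_1,g_1)\otimes c_1]u_1$, but on $A_n$ and $u_1$ it is exactly your $\diag(f,g)\otimes c$ and $1\otimes s\otimes 1$), and both arguments verify the square with $\Lambda'_{n+1,n}$ commutes and then pass to the direct limit using that crossed products by finite groups commute with inductive limits (the paper cites Theorem~9.4.34 of~\cite{GKPT18}). You give more detail on the block-by-block verification than the paper, which simply says one checks commutativity using~(\ref{Eq_9616_Formal}) and~(\ref{Eq_9616_Formal_CP}).
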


\begin{proof}
For $t \in \Z / 2 \Z$,
as in Notation~\ref{N_9408_StdNotation_CP}
let $u_t$ be the standard unitary
in a crossed product by $\Z / 2 \Z$.
(In this proof, no confusion will be caused
by using the same letter in all crossed products.)
For $n \in \Nz$,
there is a \hm{}
\[
\ps_{n + 1, n} \colon
C^* (\Z / 2 \Z, A_n, \, \af^{(n)} \big)
\to
C^* (\Z / 2 \Z, A_{n + 1}, \, \af^{(n + 1)} \big)
\]
such that
\begin{align*}
\ps_{n + 1, n}\big([(f, \, g) \otimes
c ] u_{t} \big)
  = \big[\Lambda_{n + 1, n} ((f, \, g) \otimes c ) \big] u_{t}
  = \big[\lambda_{ n} ( (f, \, g) ) \otimes c \big] u_{t}
\end{align*}
for $f, g \in C (X_{n})$, $t \in \Z / 2 \Z$,
and $c \in M_{r (n)}$.
In view of \Lem{commute1},
we can apply Theorem~9.4.34 of~\cite{GKPT18}
to get an isomorphism
\[
C^* (\Z / 2 \Z, \,A, \, \alpha )
\cong
\dirlim \big( C^* (\Z / 2 \Z, A_n, \, \af^{(n)} \big),
    \, (\ps_{n + 1, n})_{n \in \Nz} \big).
\]
On the other hand,
we have an isomorphism
$\varphi_{n} \colon
   C^* (\Z / 2 \Z, \,A_{n}, \, \alpha^{(n)} ) \to B_{n}$
which is defined
for $f_{m}, g_{m} \in C (X_{n})$
and $c_m \in M_{r (n)}$ for $m = 0, 1$
by
\[
[(f_{0}, \, g_{0}) \otimes c_{0} ] u_{0} +
       [(f_{1}, \, g_{1}) \otimes c_{1} ] u_{1}
\mapsto
\left( \begin{matrix}
f_{0} \otimes c_{0} &  f_{1} \otimes c_{1} \\
g_{1} \otimes c_{1} &  g_{0} \otimes c_{0}
\end{matrix}
\right).
\]
Using matrix unit notation, the right hand side is
\[
f_{0} \otimes e_{1, 1} \otimes c_{0}
 + f_{1} \otimes e_{1, 2} \otimes c_{1}
 + g_{1} \otimes e_{2, 1} \otimes c_{1}
 + g_{0} \otimes e_{2, 2} \otimes c_{0}.
\]
Using (\ref{Eq_9616_Formal}) and~(\ref{Eq_9616_Formal_CP}),
one checks that the diagram
\[
\begin{CD}
C^* (\Z / 2 \Z, \, A_n, \, \alpha^{(n)} )
@>{\psi_{n + 1, \, n}}>>
C^* (\Z / 2 \Z, \, A_{n + 1}, \, \alpha^{(n + 1)} ) \\
@VV{\varphi_{n}}V
    @VV{\varphi_{n + 1}}V \\
B_{n}
@>{\Lambda'_{n + 1, \, n}}>>
B_{n + 1}
\end{CD}
\]
commutes for every $n \in \Nz$.
The result follows.
\end{proof}

\begin{ntn}\label{N_7806_Bott2}
Let $p \in C (X_0, M_2)$ be the Bott projection,
as in Notation~\ref{N_7806_Bott}.
Assuming the notation and choices in
\ref{Construction_22},
for $n \in \Nz$
set $q_n = \Lambda'_{n, 0} (p) \in B_{n}$.
In particular, $q_0 = p$.
\end{ntn}

\begin{lem}\label{ProjectionRank22}
Adopt the assumptions and notation
of Notation~\ref{N_7806_Bott2}.
Let $n \in \Nz$ and for $j = 1, 2, \ldots, s (n)$
let $R_j^{(n)} \colon (S^2)^{s (n)} \to S^2$
be the $j$~coordinate projection.
Then:
\begin{enumerate}
\item
\label{ProjectionRank22_a}
There are orthogonal projections $y_{n}, z_{n}$ in
$M_{2 r (n)} \big( C (X_n) \big)$
such that
$q_n = y_{n} + z_{n}$, $y_{n}$ is
the direct sum of the projections
$p \circ R^{(n)}_{j}$ for $j = 1, 2, \ldots, s (n)$, and
$z_{n}$ is a constant projection of rank
$r (n) - s (n)$.
\item
\label{ProjectionRank22_b}
For every $n \in \Nz$ and $\tau \in \T (B_{n})$, we have
$d_{\tau} (q_n) = \frac{1}{2}$.
\end{enumerate}
\end{lem}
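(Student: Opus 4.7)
The plan is to follow exactly the scheme used for Lemma~\ref{ProjectionRank2}, proceeding by induction on $n$, and exploiting the fact that the unitary conjugation by $s$ appearing in the point evaluation slot of $\Lambda'_{n+1,n}$ preserves rank (since $s f(x_n) s^*$ is Murray-von Neumann equivalent to $f(x_n)$). Thus, for the purposes of tracking ranks, the map $\Lambda'_{n+1,n}$ behaves identically to the map that would arise by collapsing the two coordinates of $\Lambda_{n+1,n}$ from Construction~\ref{Construction_11}.

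For the base case $n=0$, we have $q_0 = p$, so we take $y_0 = p$ and $z_0 = 0$; and this is consistent with $r(0) = s(0) = 1$ and $r(0) - s(0) = 0$. For the inductive step, assume Part~(\ref{ProjectionRank22_a}) holds for $q_n = y_n + z_n$. Apply $\Lambda'_{n+1,n}$ to each summand using the formula~(\ref{Eq_9616_Formal_CP}). The $d(n+1)$ diagonal pullback entries from $y_n$ produce, in the aggregate, the direct sum of $d(n+1) \cdot s(n) = s(n+1)$ pullbacks of the Bott projection along coordinate projections $X_{n+1} = (S^2)^{s(n+1)} \to S^2$, which is exactly the desired $y_{n+1}$. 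The point evaluation slot of $\Lambda'_{n+1,n}$ applied to $y_n$ produces a constant projection on $X_{n+1}$ of rank $s(n)$ (the conjugation by $s$ preserves rank). Applying $\Lambda'_{n+1,n}$ to the constant projection $z_n$ yields $d(n+1)$ constant pullback entries, each of rank $r(n) - s(n)$, plus one more constant entry of rank $r(n)-s(n)$ from the point evaluation slot. Summing these constant contributions gives a constant projection $z_{n+1}$ of rank
\[
s(n) + \bigl(d(n+1) + 1\bigr)\bigl(r(n) - s(n)\bigr)
  = s(n) + l(n+1)\bigl(r(n) - s(n)\bigr)
  = r(n+1) - s(n+1),
\]
where the last equality uses $l(n+1) r(n) = r(n+1)$ and $d(n+1) s(n) = s(n+1)$. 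This establishes~(\ref{ProjectionRank22_a}).

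Part~(\ref{ProjectionRank22_b}) is then immediate: reducing to extreme tracial states, we may write $\tau = \tr_{2r(n)} \otimes \ev_x$ for some $x \in X_n$; by Part~(\ref{ProjectionRank22_a}) the rank of $q_n(x)$ is $s(n) + (r(n) - s(n)) = r(n)$ independent of~$x$, and hence
\[
d_\tau(q_n) = \tau(q_n) = \frac{r(n)}{2 r(n)} = \frac{1}{2}.
\]
No step seems genuinely difficult; the only point requiring care is keeping track of which contributions come from coordinate pullbacks (and thus merge into $y_{n+1}$) and which from point evaluations (and thus join the constant projection $z_{n+1}$), together with verifying the arithmetic identity $d(n+1) s(n) = s(n+1)$ and $(d(n+1)+1)(r(n)-s(n)) + s(n) = r(n+1) - s(n+1)$ from the defining recursions in Construction~\ref{Construction_11}(\ref{Construction_111}).
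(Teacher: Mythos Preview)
Your proof is correct and follows essentially the same inductive scheme as the paper's own argument, which refers back to Lemma~\ref{ProjectionRank2} and merely records the two key facts about $\Lambda'_{n+1,n}(y_n)$ and $\Lambda'_{n+1,n}(z_n)$ before omitting the remaining details. You have in fact supplied those details and the arithmetic verification explicitly, including the observation that the conjugation by $s$ in the point evaluation slot preserves rank.
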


\begin{proof}
The proof of~(\ref{ProjectionRank22_a})
is very similar to that of
Lemma \ref{ProjectionRank2}(\ref{ProjectionRank2_a}),
but simpler because there is only one summand.
The basic facts for the induction step are that
$\Lambda'_{n + 1, n} ( y_{n})$
is the direct sum of the projections
$p \circ R^{(n)}_{j} \circ P^{(n)}_{k}$
for $j = 1, 2, \ldots, s (n)$ and $k = 1, 2, \ldots, d (n + 1)$,
and a constant projection of rank
$s (n)$,
and that $\Lambda'_{n + 1, n} ( z_{n})$
is a constant projection of rank
$l(n + 1) [r (n) - s (n)]$.
We omit the details.

The proof of~(\ref{ProjectionRank22_b})
is essentially the same as that of
Lemma \ref{ProjectionRank2}(\ref{ProjectionRank2_b}),
and is omitted.
\end{proof}

\begin{cor}\label{C_7808_BigRank2}
Adopt the assumptions and notation of Notation~\ref{N_7806_Bott2}.
Let $n \in \Nz$ and let $e$ be a trivial projection in
$M_{\infty} (B_n) \cong M_{\infty} (C (X_n) )$.
If there exists $x \in M_{\infty} (B_n)$
such that $\| x e x^* - q_{n} \| < \frac{1}{2}$ then
$\rank (e) \geq  r (n) + s (n)$.
\end{cor}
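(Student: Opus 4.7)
The plan is to mirror the argument of Corollary \ref{C_7808_BigRank}, taking advantage of the fact that the crossed product setting has only one summand, so the restriction step ``$e = (e_1, e_2)$ and look at $X_n \amalg \varnothing$'' is unnecessary. The key structural fact is the decomposition $q_n = y_n + z_n$ from Lemma~\ref{ProjectionRank22}(\ref{ProjectionRank22_a}), in which $y_n$ has section space $L^{\times s (n)}$ and $z_n$ is a constant (hence trivial) projection of rank $r (n) - s (n)$.

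First, from $\| x e x^* - q_n \| < \tfrac{1}{2}$ applied to the two projections $q_n$ and $x e x^*$ (suitably normalized), one concludes that $q_n$ is Murray-von Neumann equivalent to a subprojection of~$e$, i.e.\ there is a projection $w \in M_{\I} (C (X_n))$ with $q_n + w \approx e$. Combining this with the decomposition of $q_n$, we get
\[
y_n \oplus z_n \oplus w \approx e.
\]
Since $e$ is trivial, this tells us that the vector bundle $E_1$ whose section space is the range of $y_n + w$ becomes trivial after adding the trivial bundle corresponding to~$z_n$. By Theorem~9.1.5 of~\cite{HusB} (cancellation of trivial summands), we may choose a trivial projection $e_0 \in M_{\I} (C (X_n))$ of rank $\rank (e) - \rank (z_n) = \rank (e) - [r (n) - s (n)]$ such that $y_n + w \approx e_0$.

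In particular, $y_n \lessapprox e_0$. Since $e_0$ is trivial and the section space of $y_n$ is isomorphic to $L^{\times s (n)}$, this is exactly the hypothesis of Lemma~\ref{Phillips&illan.1}, which forces $\rank (e_0) \geq 2 s (n)$. Therefore
\[
\rank (e)
 = \rank (e_0) + \rank (z_n)
 \geq 2 s (n) + [r (n) - s (n)]
 = r (n) + s (n),
\]
as desired. The only real work is packaging the cancellation step correctly; once that is done, Lemma~\ref{Phillips&illan.1} does the essential geometric work, and no additional obstacles arise beyond those already handled in the proof of Corollary~\ref{C_7808_BigRank}.
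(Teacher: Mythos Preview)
Your proof is correct and is essentially the same as the paper's, which simply says to repeat the argument of Corollary~\ref{C_7808_BigRank} with the projections $y_n$ and $z_n$ from Lemma~\ref{ProjectionRank22} in place of $f_n$ and~$h_n$. Your write-up is in fact a clean single-summand transcription of that argument, using the same cancellation step (Theorem~9.1.5 of~\cite{HusB}) and the same appeal to Lemma~\ref{Phillips&illan.1}.
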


\begin{proof}
The proof is essentially the same as that of
Corollary~\ref{C_7808_BigRank}.
We use Lemma \ref{ProjectionRank22}
and the projections $y_n$ and~$z_n$
instead of Lemma~\ref{ProjectionRank2}
and the projections $f_n$ and~$h_n$.
\end{proof}

The next result is the analog of Theorem~\ref{rc.ctn.11}.
It shows that in our example,
we get equality in Theorem~\ref{Main.Thm1}
and Theorem~\ref{T_9412_RcCrPrd}.

\begin{thm}\label{RC.CP_1}
Assume the notation and choices
in Construction~\ref{Construction_11} and
\Ntn{N_7806_Bott}.
Then
\[
\rc \big( C^* (\Z / 2 \Z, \, A, \, \alpha ) \big)
 = \frac{\kappa}{2}
\andeqn
\rc (A^{\af}) = \kp.
\]
\end{thm}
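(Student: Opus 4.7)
The plan is to obtain the upper bounds from the theorems already proved and to get the lower bounds by adapting the argument of Theorem~\ref{rc.ctn.11} to the crossed product realized as~$B$ via Lemma~\ref{identification.CP1}; the fixed point algebra is then handled by the identity $\rc(C^*(\Z/2\Z, A, \af)) = \tfrac12 \rc(A^\af)$ from Theorem~\ref{T_9412_RcCrPrd}.

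First I would handle the upper bounds. From Theorem~\ref{rc.ctn.11} we know $\rc(A) = \kp$. Applying Theorem~\ref{Main.Thm1} (using that $\alpha$ has the weak tracial Rokhlin property, which follows trivially from the Rokhlin property proved in Lemma~\ref{commute1}) gives $\rc(A^\af) \leq \kp$; combining this with Theorem~\ref{T_9412_RcCrPrd} gives $\rc(C^*(\Z/2\Z, A, \af)) \leq \kp/2$. Alternatively (as a sanity check), under the identification $C^*(\Z/2\Z, A, \af) \cong B$ of Lemma~\ref{identification.CP1}, the system defining~$B$ has diagonal maps in Niu's sense and satisfies $\dim(X_n)/(2r(n)) = s(n)/r(n) \to \kp$, so Remark~\ref{Ni14.remark} also gives $\rc(B) \leq \kp/2$.

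The main task is the lower bound $\rc(B) \geq \kp/2$. Fix $\rh < \kp/2$ and pick $n \in \N$ large enough that $r(n)(\kp - 2\rh) > 1$, then choose $M \in \Nz$ with $r(n)(1 + 2\rh) < M < r(n)(1 + \kp)$. Let $e \in M_{\I}(B_n)$ be a trivial projection of rank~$M$. As in the proof of Theorem~\ref{rc.ctn.11}, for $m \geq n$ and any $\ta \in \T(B_m)$ (hence any $\ta \in \T(B)$),
\[
d_\ta(\Lambda'_{m, n}(e))
 = \frac{M \cdot r(m)/r(n)}{2 r(m)}
 = \frac{M}{2 r(n)}
 > \frac{1}{2} + \rh
 = d_\ta(q_m) + \rh,
\]
using Lemma~\ref{ProjectionRank22}(\ref{ProjectionRank22_b}) at the last step. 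On the other hand, if we had $q_0 \precsim_B e$ in~$B$, then for some $m > n$ there would exist $x \in M_\I(B_m)$ with $\| x \Lambda'_{m,n}(e) x^* - q_m \| < \tfrac12$; Corollary~\ref{C_7808_BigRank2} would force $M \cdot r(m)/r(n) = \rank(\Lambda'_{m,n}(e)) \geq r(m) + s(m)$, i.e., $M \geq r(n)(1 + s(m)/r(m)) \geq r(n)(1 + \kp)$ (since $u(m) = s(m)/r(m)$ decreases to $\kp$), contradicting the choice of~$M$. Hence $B$ fails $\rh$-comparison, so $\rc(B) \geq \kp/2$.

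Combining the two bounds yields $\rc(C^*(\Z/2\Z, A, \af)) = \rc(B) = \kp/2$, and then Theorem~\ref{T_9412_RcCrPrd} forces $\rc(A^\af) = 2 \cdot \rc(C^*(\Z/2\Z, A, \af)) = \kp$. The principal obstacle is the rank--obstruction step: verifying that the constant-rank piece of $q_n$ together with the Bott-type piece $y_n$ of Lemma~\ref{ProjectionRank22} forces, via Corollary~\ref{C_7808_BigRank2} and the line bundle calculation of Lemma~\ref{Phillips&illan.1}, the sharp rank bound $r(m) + s(m)$ — everything else is a trace calculation and an application of the results in Section~\ref{Sec_rcCP}.
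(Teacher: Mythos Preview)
Your proof is correct and follows essentially the same approach as the paper: identify the crossed product with~$B$ via Lemma~\ref{identification.CP1}, get the upper bound $\rc(B)\leq\kp/2$ (the paper uses Remark~\ref{Ni14.remark} directly, you use Theorems~\ref{Main.Thm1} and~\ref{T_9412_RcCrPrd} applied to $\rc(A)=\kp$, but you also note the Niu argument), and get the lower bound by the same Bott-projection rank obstruction via Corollary~\ref{C_7808_BigRank2} and Lemma~\ref{ProjectionRank22}. The derivation of $\rc(A^\af)=\kp$ from the equality $\rc(C^*(\Z/2\Z,A,\af))=\tfrac12\rc(A^\af)$ in Theorem~\ref{T_9412_RcCrPrd} is also what the paper does.
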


The proof is similar to that of Theorem~\ref{rc.ctn.11}.
We give details to show where the factor $\frac{1}{2}$
comes from,
and for convenient reference in a paper in preparation.

\begin{proof}[Proof of Theorem~\ref{RC.CP_1}]
We prove the first part of the conclusion.
The second part then follows from Theorem~\ref{T_9412_RcCrPrd}.

Because
$C^* (\Z / 2 \Z, \, A, \, \alpha )$
is isomorphic to  the \ca~$B$
by \Lem{identification.CP1}, it suffices to show that
$\rc (B) = \frac{\kappa}{2}$.
Since
$\limi n \frac{\dim (X_n)}{2 r (n)} = \kappa$
and the \ca{} $B$ was constructed with
diagonal maps,
we deduce from \Rmk{Ni14.remark} that $\rc ( B ) \leq \frac{\kappa}{2}$.
Now it suffices to prove that
$\rc (B) \geq \frac{\kappa}{2}$.
Suppose $\rh < \frac{\kappa}{2}$.
We show that $B$ does not have $\rh$-comparison.
Choose $n \in \N$ such that $1 / {r (n)} < \frac{\kappa}{2} - \rh$.
Choose $M \in \Nz$ such that
$\rho + \frac{1}{2}
 < \frac{M}{2 r (n)}
 < \frac{\kappa}{2} + \frac{1}{2}$.
Let $e \in M_{\infty} (B_n)$ be a trivial \pj{}
of rank~$M$.
By slight abuse of notation,
we use $\Lambda'_{m, n}$ to denote the amplified map
from $M_{\infty} (B_n)$ to $M_{\infty} (B_m)$ as well.
For $m > n$, the rank of $\Lambda'_{m, n} (e)$
is $M \cdot \frac{r (m)}{r (n)}$.
We claim that
the rank of $\Lambda'_{m, n} (e)$
is strictly less than $r (m) + s (m)$ for $m > n$.
Suppose
$\rank \big( \Lambda'_{m, n} (e) \big) \geq r (m) + s (m)$.
Then, considering the choice of $M$,
\[
r (m) + s (m) \leq M \cdot \frac{r (m)}{r (n)} < (\kappa + 1) r (m).
\]
Thus $\frac{s (m)}{r (m)} < \kappa$.
This contradicts \Ctn{Construction_11}(\ref{Construction_112}).
So the claim follows.

Now, for any extreme tracial state $\tau$ on $B_m$
(and thus for any trace on $B$),
we have, using \Lem{ProjectionRank22}(\ref{ProjectionRank22_b})
in the last step,
\begin{equation*}
d_{\tau} (\Lambda'_{m, n} (e))
 = \tau (\Lambda'_{m, n} (e))
 = \frac{1}{2 r (m)} \cdot M \cdot \frac{r (m)}{r (n)}
 > \frac{1}{2} + \rh
 = d_{\tau} (q_{m}) + \rh.
\end{equation*}
On the other hand, if
$\Lambda'_{\infty, 0} (p) \lessapprox \Lambda'_{\infty, n} (e)$
then, in particular, there exists some $m > n$
and $x \in M_{\infty} (B_m)$
such that $\|x \Lambda'_{m, n} (e)x^* - q_m\| < \frac{1}{2}$.
Using \Cor{C_7808_BigRank2}, we get
\[
\rank (\Lambda'_{m,n} (e)) \geq r (m) + s (m).
\]
This is a contradiction, and we have proved that
$B$ does not have $\rh$-comparison.
\end{proof}

\begin{exa}\label{R_9412_TrRPNeeded}
We show that,
in Theorem~\ref{Main.Thm1}
and Theorem~\ref{T_9412_RcCrPrd},
the weak tracial Rokhlin property
can't be replaced by pointwise outerness.

Let $A$ and $\af \colon \Z / 2 \Z \to \Aut (A)$
be as in Lemma~\ref{commute1},
set $B = C^* (\Z / 2 \Z, \, A, \, \alpha )$,
and let $\bt = {\widehat{\af}} \colon \Z / 2 \Z \to \Aut (B)$
be the dual action.
It follows from
Theorem~\ref{rc.ctn.11},
Theorem~\ref{RC.CP_1},
and Lemma~\ref{L_9409_StrDecr}
that the inequalities
in Theorem~\ref{Main.Thm1}
and Theorem~\ref{T_9412_RcCrPrd}
fail for the action~$\bt$.

We already know that $B$ is simple,
and $B$ is stably finite because it is an AH~algebra.
It remains to show that $\bt$ is pointwise outer.
Suppose not.
Then in fact $\bt$ is an inner action,
that is, given by conjugation by a unitary of order~$2$.
(See Exercise 8.2.7 of~\cite{GKPT18}.)
So $C^* (\Z / 2 \Z, \, B, \, \bt ) \cong B \oplus B$.
But by Takai duality $C^* (\Z / 2 \Z, \, B, \, \bt ) \cong M_2 (A)$,
which is simple.
Pointwise outerness is proved.
\end{exa}


\section{Open problems}\label{Sec_Q}

The most obvious problem is whether
equality always holds in
Theorem~\ref{Main.Thm1} and Theorem~\ref{T_9412_RcCrPrd}.

\begin{qst}\label{Q_9412_eq}
Let $G$ be a finite group,
let $A$ be
an infinite-dimensional stably finite simple unital C*-algebra,
and let $\alpha \colon G \to \Aut (A)$ be an action of
$G$ on $A$ which has the
weak tracial Rokhlin property.
Does it follow that
\[
\rc (A^{\alpha}) = \rc (A)
\andeqn
\rc \big( \CGAa \big) = \frac{1}{\card (G)} \cdot \rc (A)?
\]
\end{qst}

One might even hope that the reverse inequalities
\begin{equation}\label{Eq_9607_RevIneq}
\rc (A^{\alpha}) \geq \rc (A)
\andeqn
\rc \big( \CGAa \big) \geq \frac{1}{\card (G)} \cdot \rc (A).
\end{equation}
hold without restrictions on the action.
Quite different methods seem to be needed for this question.
Suppose, for example,
that we were able to prove~(\ref{Eq_9607_RevIneq})
for pointwise outer actions.
Suppose $G$ is finite abelian,
$\af \colon G \to \Aut (A)$ is pointwise outer,
and, with $B = \CGAa$,
the dual action
$\bt = {\widehat{\af}} \colon {\widehat{G}} \to \Aut (B)$
is pointwise outer
and $B$ has strict comparison.
We would be able to deduce that
$C^* \bigl( {\widehat{G}}, B, \bt \bigr)$ has strict comparison.
This outcome is at least heuristically related
to the long standing open question
of whether the crossed product of a simple C*-algebra
with stable rank one by a finite group again has stable rank one.
Indeed, if $B$ is classifiable in the sense of the Elliott program,
and the tracial state space has
compact \fd{} extreme boundary,
it would follow
(see Corollary~7.9 of~\cite{KbgRdm4},
Corollary~1.2 of~\cite{Sato3},
or Corollary~4.7 of~\cite{TWW})
that $C^* \bigl( {\widehat{G}}, B, \bt \bigr)$
is ${\mathcal{Z}}$-stable,
and therefore from Theorem 6.7 of~\cite{Rdm7}
that $C^* \bigl( {\widehat{G}}, B, \bt \bigr)$
has stable rank one.
This case of the problem has been solved~\cite{Osk_sr},
but the proof depends on major results
in the classification program.

In the example in Section~\ref{Sec_Ex},
the group action on $\T (A)$ is highly nontrivial.

\begin{qst}\label{Q_9412_TrivOnTA}
Does there exist an action of a nontrivial finite group
with the weak tracial Rokhlin
property on a simple separable unital \ca{} $A$ with $\rc (A) > 0$
and such that every tracial state is invariant?
\end{qst}

One can ask for even more.

\begin{qst}\label{Q_9412_UniqTr}
Does there exist an action of a nontrivial finite group
with the Rokhlin property on a simple
separable unital \ca{} $A$ with $\rc (A) > 0$
and unique tracial state?
\end{qst}

By combining methods of Villadsen~\cite{Vlds}
with those of Section~\ref{Sec_Ex},
one should be able to at least produce an example of
a simple separable unital nuclear \ca~$A$
and an action $\af \colon \Z / 2 \Z \to \Aut (A)$
such that $A$ does not have stable rank one,
$\af$ has the Rokhlin property,
and $A$ has exactly two extreme tracial states,
which are interchanged by the action~$\af$.

\begin{qst}\label{Q_9612_trs_wTRP}
Let $A$ be an infinite-dimensional
simple unital \ca{} with stable rank one,
let $G$ be a finite group,
and let $\af \colon G \to \Aut (A)$
be an action with the weak tracial Rokhlin property.
Does it follow that $\CGAa$ and $A^{\af}$ have stable rank one?
\end{qst}

This is wanted for improvement of Corollary~\ref{C_9529_WPlusAndSR1}.

\begin{qst}\label{Q_9612_Need_tsr1}
Are the stable rank one hypotheses in Proposition~\ref{C_9524_WAndSR1}
and Corollary~\ref{C_9529_WPlusAndSR1}
really necessary?
\end{qst}

That is,
assuming the action
has the Rokhlin property or weak tracial Rokhlin property
as appropriate,
does one get isomorphisms
\[
\W (\io) \colon \W (A^{\alpha}) \to \W (A)^{\alpha}
\qquad {\mbox{or}} \qquad 
\W_{+} (\io) \colon
 \W_{+} (A^{\alpha}) \cup \{ 0 \}
  \to \W_{+} (A)^{\alpha} \cup \{ 0 \},
\]
rather than just
\[
\Cu (\io) \colon \Cu (A^{\alpha}) \to \Cu (A)^{\alpha}
\qquad {\mbox{or}} \qquad 
\Cu_{+} (\io) \colon
 \Cu_{+} (A^{\alpha}) \cup \{ 0 \}
  \to \Cu_{+} (A)^{\alpha} \cup \{ 0 \}?
\]

One possible generalization of the results of this paper
is to the nonunital case.
This will be treated in~\cite{Asd}
(by a different set of authors).
Complications include the additional complexity
of the definition of the weak tracial Rokhlin property
(see Definition~3.1 of~\cite{FG17}),
and what to substitute for the conventional definition
of the radius of comparison.


\section{Acknowledgments}\label{Sec_Ackn}

This research was done while the first author was
a visiting scholar at the University of Oregon
during the period March 2018 to 
September 2019.
He is thankful to that institution
for its hospitality.
He was partially supported by the University of Tehran.
This paper will be part of first author's PhD.\   dissertation.

The research of the third author was partially
supported by the
Simons Foundation Collaboration Grant for Mathematicians
\#587103.

The first author thanks Q.~Wang for pointing out
Corollary II.4.3 in \cite{BH82}.
All three authors would like to thank
M.~Amini, I.~Hirshberg, and  S.~Jamali
for sharing some of their unpublished work with us.

The first author is grateful to M.~B.\  Asadi
for motivating him to study
operator algebras in the first year of his Ph.D.\  program.


\end{document}